\numberwithin{equation}{section}
\newtheorem{lemma}[equation]{Lemma}
\newtheorem{proposition}[equation]{Proposition}
\newtheorem{theorem}[equation]{Theorem}
\newtheorem{defprop}[equation]{Definition-Proposition}
\newtheorem{corollary}[equation]{Corollary}
\theoremstyle{definition}
\newtheorem{example}[equation]{Example}
\newtheorem{question}[equation]{Question}
\newtheorem{remark}[equation]{Remark}
\newtheorem{definition}[equation]{Definition}
\newtheorem{construction}[equation]{Construction}
\newtheorem{assumption}[equation]{Assumption}
\newtheorem{notation}[equation]{Notation}
\numberwithin{equation}{section}
\newcommand{\C}{{\mathbb C}}
\newcommand{\bH}{{\mathbb H}}
\newcommand{\bP}{\mathbb{P}}
\newcommand{\N}{{\mathbb N}}
\newcommand{\Q}{{\mathbb Q}}
\newcommand{\R}{{\mathbb R}}
\newcommand{\Z}{{\mathbb Z}}
\newcommand{\cL}{{\mathcal L}}
\newcommand{\oL}{\ov{\cL}}
\newcommand{\cS}{{\mathcal S}}
\newcommand{\cM}{{\mathcal M}}
\newcommand{\cN}{{\mathcal N}}
\newcommand{\cF}{{\mathcal F}}
\newcommand{\cH}{{\mathcal H}}
\newcommand{\cE}{{\mathcal E}}
\newcommand{\cK}{{\mathcal K}}
\newcommand{\cC}{{\mathcal C}}
\newcommand{\cA}{{\mathcal A}}
\newcommand{\cO}{{\mathcal O}}
\newcommand{\cT}{{\mathcal T}}
\newcommand{\fm}{{\mathfrak m}}
\newcommand{\eps}{\varepsilon}
\newcommand{\ul}[1]{\underline{#1}}
\newcommand{\wt}[1]{\widetilde{#1}}
\newcommand{\ov}[1]{\overline{#1}}
\newcommand{\cl}{\mathrm{cl}}
\newcommand{\lc}{{\centerdot}}
\newcommand{\h}{\log}
\DeclareMathOperator{\Homm}{\mathcal{H}om}
\DeclareMathOperator{\Spec}{Spec}
\DeclareMathOperator{\Tot}{Tot}
\DeclareMathOperator{\Gdm}{Gdm}
\DeclareMathOperator{\Tors}{Tors}
\DeclareMathOperator{\Sp}{Sp}
\DeclareMathOperator{\Hom}{Hom}
\DeclareMathOperator{\End}{End}
\DeclareMathOperator{\im}{im}
\DeclareMathOperator{\Id}{Id}
\DeclareMathOperator{\Sym}{Sym}
\DeclareMathOperator{\Gr}{Gr}
\newcommand{\inv}{{\operatorname{inv}}}
\DeclareMathOperator{\supp}{supp}
\newcommand{\piG}{\pi_1(G)}
\DeclareMathOperator{\gr}{gr}
\newcommand{\f}[5]{
	\begin{array}{rcl}
		#1\colon #2 & \longrightarrow & #3 \\
		#4 & \longmapsto & #5  \\
	\end{array}
}
\definecolor{olive}{rgb}{0.42, 0.56, 0.14}
\author{Eva Elduque}
\address[E. Elduque]{Departamento de Matem\' aticas, Universidad Aut\' onoma de Madrid and ICMAT, 28049 Madrid, Spain}
\urladdr{https://matematicas.uam.es/~eva.elduque/}
\email{eva.elduque@uam.es}
\author{Mois\'es Herrad\'on Cueto}
\address[M. Herradón Cueto]{Departamento de Matem\' aticas, Universidad Aut\' onoma de Madrid, 28049 Madrid, Spain}
\urladdr{https://matematicas.uam.es/~moises.herradon/}
\email{moises.herradon@uam.es}
\keywords{abelian cover, Alexander module, mixed Hodge structure, thickened complex, hyperplane arrangements, jump loci}
\subjclass[2020]{14C30, 14D07, 14F40, 14F45, 32S22, 32S35, 32S40, 32S50, 32S55, 55N25, 55N30}
\thanks{E. Elduque and M. Herradón Cueto are partially supported by the Grant PID2022-138916NB-I00 funded by MCIN/AEI/ 10.13039/501100011033 and by ERDF A way of making Europe. E. Elduque is also partially supported by the Ramón y Cajal Grant RYC2021‐031526‐I funded by MCIN/AEI /10.13039/501100011033 and by the European Union NextGenerationEU/PRTR}
\title{Hodge theory of abelian covers of algebraic varieties}
\begin{document}
	\begin{abstract}
		Motivated by classical Alexander invariants of affine hypersurface complements, we endow certain finite dimensional quotients of the homology of abelian covers of complex algebraic varieties with a canonical and functorial mixed Hodge structure (MHS). More precisely, we focus on covers which arise algebraically in the following way: if $U$ is a smooth connected complex algebraic variety and $G$ is a complex semiabelian variety, the pullback of the exponential map by an algebraic morphism $f:U\to G$ yields a covering space $\pi:U^f\to U$ whose group of deck transformations is $\pi_1(G)$. The new MHSs are compatible with Deligne's MHS on the homology of $U$ through the covering map $\pi$ and satisfy a direct sum decomposition as MHSs into generalized eigenspaces by the action of deck transformations. This provides a vast generalization of the previous results regarding univariable Alexander modules by Geske, Maxim, Wang and the authors in \cite{mhsalexander,EvaMoises}. Lastly, we reduce the problem of whether the first Betti number of the Milnor fiber of a central hyperplane arrangement complement is combinatorial to a question about the Hodge filtration of certain MHSs defined in this paper, providing evidence that the new structures contain interesting information.
	\end{abstract}

	\maketitle

	\tableofcontents
\section{Introduction}\label{s:intro}

The goal of this note is to develop a Hodge theory for certain (infinite-sheeted) covers of smooth complex algebraic varieties. Let us start by precisely defining our setting: Let $U$ be a smooth connected complex  algebraic variety. Let $G$ be a semiabelian variety. 
 Let $TG$ denote the tangent space of $G$ at the identity, and let $\exp:TG\to G$ be the exponential map of complex Lie groups, which, since $G$ is a commutative algebraic group, is the universal covering map of $G$.

Let $f\colon U\to G$ be an algebraic morphism. The map $f$ determines an abelian cover $\pi:U^f\to U$. Indeed, $\pi:U^f\to U$ is the pullback of $\exp$ by $f$, as shown in the following diagram:
\begin{equation}\label{eq:Uf}
	\begin{tikzcd}
		U^f\subset U\times TG  \arrow[r,"\wt f"] \arrow[d,"\pi"]\arrow[dr,phantom,very near start, "\lrcorner"]&
		TG \arrow[d,"\exp"] \\
		U\arrow[r,"f"] &
		G,
	\end{tikzcd}
\end{equation}
Note that the deck transformation group of $\pi:U^f\to U$ coincides with that of $\exp:TG\to G$, and is thus isomorphic to $\pi_1(G)$, a free abelian group. Hence, the homology groups $H_j(U^f,k)$ have an  $R\coloneqq k[\pi_1(G)]$-action by deck transformations for any field $k$, which in this note will be $\Q, \R$ or $\C$. Also note that, if $g$ is the rank of $\pi_1(G)$, then $R$ is (non-canonically) isomorphic to the Laurent polynomial ring on $g$ variables over $k$.

By Deligne's theory of $1$-motives \cite{DeligneIII}, there are plenty such morphisms $f$: out of the ones that give rise to connected covers, there is one for each mixed Hodge structure quotient of $H_1(U,\Q)$. Hence, the covering spaces considered in this paper are abelian covers which arise from algebraic data that is related to the Hodge theory of $U$, thus providing a natural setting for which to develop a Hodge theory for covering spaces of algebraic varieties. One such morphism $f$ is the generalized Albanese morphism \cite{Iitaka,iitaka_1977}, which yields the  universal torsion-free abelian cover of $U$ (that is, the covering space of $U$ associated to the kernel of the projection of $\pi_1(U)$ into its maximal torsion-free abelian quotient).

A well-studied particular case is that of affine hypersurface complements. Let $$U\coloneqq\C^n\setminus\bigcup_{i=1}^m V(f_i),$$ where $f_i\in\C[x_1,\ldots,x_n]$ are pairwise coprime irreducible polynomials, and let $U^f$ be the cover induced by the map $$f=(f_1,\ldots,f_n):U\to(\C^*)^n.$$
In this case, $f_*:H_1(U,\Z)\to H_1((\C^*)^n,\Z)$ is an isomorphism, and $H_j(U^f,k)$ are classical Alexander invariants of the hypersurface $H=\cup_{i=1}^m V(f_i)$ (see Example~\ref{exam:hypersurfaces}). These kinds of multivariable Alexander invariants are typically studied through their support loci, cf. \cite{DimcaMax}, \cite{libgober92}, \cite{LiuMaxCharacteristic}, \cite{suciuSurvey}. Moreover, each morphism $\pi_1(U)\to\Z^l$ is the morphism induced at the level of fundamental groups of an algebraic morphism $U\to(\C^*)^l$ which factors through $f$. 

Let us note that, unless $G$ is a point, $\pi:U^f\to U$ is an infinite-sheeted cover, and $U^f$ is a complex analytic manifold which in general is not an algebraic variety (nor has the homotopy type of a finite CW complex). Moreover, the homology groups $H_j(U^f,k)$ are finitely generated $R$-modules, so their dimension as $k$-vector spaces is countable, but it will not be finite in general. If $\dim_k H_j(U,k)=\infty$, the dimension of its $k$-dual $H^j(U^f,k)$ will not be countable, and thus $H^j(U^f,k)$ will not be a finitely generated $R$-module. For this reason, and even if Deligne's mixed Hodge theory of algebraic varieties arises in cohomology rather than homology, we will focus on the homology groups of $U^f$ throughout this note.

Even though they are finitely generated over \(R\), the homology groups of $U^f$ can be infinite dimensional, so in order to develop a Hodge theory for them, we need to extract finite dimensional spaces from them. Since the $R$-action is by deck transformations and we want our theory to reflect the fact that $\pi:U^f\to U$ is a covering space, we will also want these finite dimensional spaces to have a natural $R$-module structure. There are two natural ways to do this:
\begin{enumerate}
	\item\label{approach1} Focus on finite dimensional $R$-submodules of $H_j(U^f,k)$:
	\begin{itemize}
		\item If $G=\C^*$, then $R\cong k[t^{\pm 1}]$, so $R$ is a principal ideal domain. Hence, the $R$-module $H_j(U^f,k)$ has a canonical direct sum decomposition into its free part and its torsion part. In particular, $\Tors_R H_j(U^f,k)$ is the maximal $R$-submodule of $H_j(U^f,k)$ which is a finite dimensional $k$-vector space. At this level of generality, a Hodge theory for these torsion submodules was developed in \cite{mhsalexander} (see also \cite{mhsSurvey} for a survey of the main results therein), although there had been prior constructions of MHSs on $\Tors_R H_j(U^f,k)$ in some special situations \cite{DL,HainHomotopy,Liu,KK,Lib96} (see the introduction of \cite{mhsalexander} for a description of the particular cases).
		\item If $G$ is not isomorphic to $\C^*$, then $R$ is not a principal ideal domain, and $H_j(U^f,k)$ no longer decomposes into its free part and its torsion part. However, by analogy with the $G=\C^*$ case, one could still focus on the maximal Artinian submodule of $H_j(U^f,k)$, which is the maximal submodule of $H_j(U^f,k)$ which is a finite dimensional $k$-vector space. If $G\cong(\C^*)^n$ for some $n\geq 1$, this was the approach that was taken in \cite{mellin}, although not for $H_j(U^f,k)$ but for the cohomological Alexander modules defined therein.
	\end{itemize}
	\item\label{approach2} Focus on finite dimensional $R$-module quotients of $H_j(U^f,k)$: This is the approach we take in this paper. More concretely, for every finite index subgroup $H\leq \pi_1(G)$, let\linebreak $\fm_H\coloneqq \left(\gamma-1\mid\gamma\in H\right)\subset k[H]$ be the augmentation ideal of $k[H]\subset R$. Then, the quotient $\frac{H_j(U^f,k)}{(\fm_H)^m H_j(U^f,k)}$ is finite dimensional for all $m\geq 1$ and all $j\geq 0$. The goal of this paper is to endow these quotients with canonical mixed Hodge structures (MHSs). 
\end{enumerate}

These kinds of quotients have interesting applications. For example, they were used in  \cite{Artal-Rybnikov} by Artal Bartolo, Carmona Ruber, Cogolludo Agustín and Marco Buzunáriz to give a proof of the fact that Rybnikov's pair of combinatorially equivalent projective line arrangements from \cite{rybnikov} have non-isomorphic fundamental groups. In their proof, they use objects such as $\frac{H_1(U^f,k)}{\fm^2 H_1(U^f,k)}$ (but with $\Z$ coefficients), where $U^f$ is the universal abelian cover of a line arrangement complement and $\fm$ is the augmentation ideal of $R$.

\begin{remark}
 Let us further justify our choice of quotients to do Hodge theory on. For these quotients to be finite dimensional, they need to be supported in a finite number of points of $\Spec R\cong (\C^*)^g$, where $\Spec$ denotes the maximal spectrum. By \cite[Theorem 2.5]{PSLoci}, we know that, for all $q\geq 0$,
\begin{equation}\label{eq:loci}
	\bigcup_{j\leq q}\supp H_j(U^f,\C)=\bigcup_{j\leq q} (f^*)^{-1}\left(\mathcal V_j(U)\right),
\end{equation}
where $f^*:\Spec R\cong\Hom(\pi_1(G),\C^*)\to \Hom(\pi_1(U),\C^*)$ is the map induced by $f$, and $$\mathcal V_j(U)=\{\text{rank $1$ $\C$-local systems $L$ on $U$}\mid H_j(U,L)\neq 0\}$$
is the $j$-th homology jump loci. Using the structure theorem of (co)homology jump loci \cite[Theorem 1.4.1]{budur2017absolute} one can show that the right hand side of \eqref{eq:loci} is a finite union of torsion translated subtori in $\Spec R$. Hence, when deciding which finite dimensional quotients of $H_j(U^f,\C)$ to study, a natural choice is to force them to be supported at the interesting torsion points, that is, those corresponding to the torsion-translated irreducible components of $\cup_{j\geq 0} \supp H_j(U^f,\C)$. This is precisely what we achieve by looking at $\frac{H_j(U^f,k)}{(\fm_H)^m H_j(U^f,k)}$ for an appropriate choice of $H\leq \pi_1(G)$. Furthermore, by making $m$ grow, we get larger and larger quotients which are supported in the same finite set of points. This corresponds to looking at an infinitesimal neighborhood of these torsion points.
\end{remark}

In principle, approaches~\eqref{approach1} and~\eqref{approach2} might seem unrelated. However, in the case when $G=\C^*$, approach~\eqref{approach2} generalizes approach~\eqref{approach1} as follows.

\begin{remark}[Generalization of \cite{mhsalexander}]\label{rem:generalization}
	Suppose that $G=\C^*$, and identify $R$ with $k[t^{\pm 1}]$. By \cite[Proposition 2.24]{mhsalexander} (based on \cite[Proposition 4.1]{BudurLiuWang}), there exists $N\in \N$ such that $\Tors_R H_j(U^f,k)$ is annihilated by a big enough power of $t^N-1$ for all $j\geq 0$. Hence, for $m\gg1$, there are canonical inclusions
	\begin{equation}\label{eq:relationTorsion}
	\Tors_R H_j(U^f,k)\hookrightarrow \frac{H_j(U^f,k)}{(t^N-1)^m H_j(U^f,k)}
\end{equation}
	for all $j\geq 0$. In \cite{mhsalexander}, $\Tors_R H_j(U^f,k)$ is endowed with a canonical and functorial MHS, but this shows that the MHS on $\frac{H_j(U^f,k)}{(t^N-1)^m H_j(U^f,k)}$ defined in this paper corresponding to the subgroup $H=\langle N\rangle\subset\Z=\pi_1\left((\C^*)^n\right)$  sees more than just the $R$-torsion, it also sees more and more of the free part as we increase the values of $N$ and $m$.
	
	In fact, we show in \cite{compatibility} that the morphism \eqref{eq:relationTorsion} is a morphism of MHSs, so the theory developed in this paper extends the theory developed in \cite{mhsalexander}.
\end{remark}

\begin{remark}[Comparison with \cite{mellin}]
Definition~\ref{def:MHSalexander} endows certain quotients of the cohomology Alexander modules considered in \cite{mellin} with a canonical MHS. However, we will not try to address how the MHS on the maximal Artinian submodules of the cohomology Alexander modules from \cite{mellin} relates to the MHS defined in this note, as the techniques used to define them are very different from one another. The reader may consult \cite[Section 1.4]{mellin} for an explanation of the main differences between~\cite{mhsalexander} and~\cite{mellin} regarding the scope and the methods used.
\end{remark}

Let us note that the MHSs found in earlier work \cite{mhsalexander,mellin} following approach~\eqref{approach1} have applications that go beyond Hodge theory. For example, in \cite{mhsalexander}, the existence and properties of the MHS on $\Tors_R H_j(U^f,k)$  give a bound on the size of the Jordan blocks of $\Tors_R H_j(U^f,k)$ for the $t$-action (see \cite[Corollary 7.20]{mhsalexander}), which in particular implies that $\Tors_R H_1(U^f,k)$ is always a semisimple $R$-module. This was unknown in this sort of generality before, see \cite[Corollary 1.7]{DL} for the case of affine curve complements. A similar bound was obtained for the Jordan blocks of the action of any element of $\pi_1\left((\C^*)^n\right)$ on the maximal Artinian submodules of the cohomological Alexander modules considered in \cite{mellin} (see \cite[Corollary 1.7(c)]{mellin}).

This note is devoted to developing a Hodge theory following approach~\eqref{approach2} which generalizes the theory developed in \cite{mhsalexander} (approach~\eqref{approach1}). The focus is on providing structural results rather than investigating possible applications outside of Hodge theory, which, given the success of the previous approaches, remains a topic for further research.

\subsection{Summary of the main results}\label{sec:summary}

In this paper we prove the following statement, which provides a generalization of \cite[Theorem 1.0.2]{mhsalexander}:

\begin{theorem}\label{thm:mainintro}
	Let $U$ be a smooth connected complex algebraic variety, let $G$ be a semiabelian variety whose tangent space at the identity is denoted by $TG$ and let $f:U\to G$ be an algebraic morphism. Denote by
	$$
	\f{\pi}{U^f\coloneqq\{(u,z)\in U\times TG\mid f(u)=\exp(z)\}\subset U\times TG}{U}{(u,z)\quad\quad\quad\quad\quad\quad\quad\quad}{u}
	$$
	the corresponding cover of $U$, with deck group isomorphic to $\pi_1(G)$, which is a finitely generated free abelian group. Let $R=k[\pi_1(G)]$, for $k=\Q$ or $\R$. Let $H\leq G$ be a finite index subgroup, and let $\fm$ be the augmentation ideal of $R$. Let $j\geq 0$ and $m\geq 1$. The following statements hold:
	\begin{enumerate}
		\item\label{part1} If $k=\R$, $\frac{H_j(U^f,k)}{\fm^m H_j(U^f,k)}$ carries a canonical $k$-MHS (see Definition~\ref{def:MHSalexander}). If $G\cong(\C^*)^n$ for some $n\geq 1$, then this holds for $k=\Q$ too (see Corollary~\ref{cor:QMHS}).
		\item\label{part2} Let $k$ be as in part~\eqref{part1}, and let $m'\geq m$. Then, the projection morphism
		$$
		\frac{H_j(U^f,k)}{\fm^{m'} H_j(U^f,k)}\twoheadrightarrow \frac{H_j(U^f,k)}{\fm^m H_j(U^f,k)}
		$$
		is a MHS morphism (see Remark~\ref{rem:projMHS}).
		\item\label{part3} Let $k$ be as in part~\eqref{part1}. 	For all $\gamma\in\pi_1(G)$, let $\log\gamma\in H_1(G,\Z)$ be the element corresponding to $\gamma$ via the abelianization map. Consider the multiplication map, defined as the only $k$-linear map satisfying
		$$
		\begin{array}{ccc}
			\displaystyle H_1(G,k)\otimes_k \frac{H_j(U^f,k)}{\mathfrak{m}^mH_j(U^f,k)} & \longrightarrow &
      \displaystyle\frac{H_j(U^f,k)}{\mathfrak{m}^mH_j(U^f,k)}\\
      \displaystyle \log \gamma \otimes v &
      \longmapsto &
      \displaystyle\log(\gamma)\cdot v \coloneqq -\sum_{i=1}^{m-1} \frac{(1-\gamma)^i\cdot v }{i}
		\end{array}
		$$
		for all $\gamma\in \pi_1(G)$ and all $v\in \frac{H_j(U^f,k)}{\mathfrak{m}^mH_j(U^f,k)}$. Then, this map  is a MHS morphism.
		\item\label{part4} Let $H$ be a finite index subgroup of $\pi_1(G)$, and let
		$$\fm_H\coloneqq \left(\gamma-1\mid \gamma\in H\right)$$ be the augmentation ideal of $k[H]\subset R$.  Then, the results in parts~\eqref{part1}--\eqref{part3} hold if we substitute $\fm^mH_j(U^f,k)$ by $(\fm_H)^m H_j(U^f,k)$, and $H_1(G,k)$ by $H_1(G_H,k)$, where $G_H\to G$ is the covering space associated to $H\leq \piG$ (see Proposition~\ref{prop:otherideals}).
		\item\label{part5} Let $K_2\leq K_1\leq G$ be a sequence of finite index subgroups. Then, the natural projection
		$$
		\frac{H_j(U^f,k)}{(\mathfrak{m}_{K_2})^mH_j(U^f,k)}\twoheadrightarrow \frac{H_j(U^f,k)}{(\mathfrak{m}_{K_1})^mH_j(U^f,k)}
		$$
		is a MHS morphism  (see Proposition~\ref{prop:proOtherIdeals}).
	\end{enumerate}
\end{theorem}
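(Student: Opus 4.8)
The plan is to follow the strategy of \cite{mhsalexander} and \cite{EvaMoises}: realize each finite-dimensional quotient as the (hyper)cohomology of an explicit bifiltered ``thickened'' complex on a good compactification of $U$, arrange for that complex to be a cohomological mixed Hodge complex, and read the MHS off its hypercohomology. The first step is to reduce parts \eqref{part4} and \eqref{part5} to the base case $H=\piG$. Given a finite index subgroup $H\leq\piG$, let $G_H\to G$ be the associated connected \'etale cover; since $\exp\colon TG\to G$ is the universal cover, it factors through $G_H$, and $TG_H=TG$ canonically. Forming $\hat U\coloneqq U\times_G G_H$ with the projection $\hat f\colon\hat U\to G_H$, one checks $(\hat U)^{\hat f}=(U\times_G G_H)\times_{G_H}TG=U\times_G TG=U^f$, now viewed as a cover of $\hat U$ with deck group $\pi_1(G_H)=H$. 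Hence $\frac{H_j(U^f,k)}{(\fm_H)^mH_j(U^f,k)}$ is exactly a base-case quotient for $\hat f$, with $H_1(G_H,k)$ playing the role of $H_1(G,k)$, so parts \eqref{part1}--\eqref{part3} applied to $\hat f$ give part \eqref{part4}. For part \eqref{part5}, a nesting $K_2\leq K_1$ induces a map $G_{K_2}\to G_{K_1}$ over $G$, hence a finite cover $\hat U_{K_2}\to\hat U_{K_1}$ over $U$, and the projection of quotients is induced by the corresponding map of thickened complexes, so it is a MHS morphism by the functoriality built into the construction.

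For the base case, the key input is that the finite-rank $k[\piG]$-module $R/\fm^m$ underlies a canonical admissible, graded-polarizable variation of mixed Hodge structure on $G$, which we pull back to $U$ along $f$. Indeed, $R/\fm^m$ is filtered by the powers of $\fm$, and $\fm/\fm^2\cong H_1(G,k)$, so $\fm^i/\fm^{i+1}\cong\Sym^i H_1(G,k)$; since $H_1(G,k)$ carries Deligne's MHS (with weights $-1$ and $-2$, the graded pieces being $H_1$ of the abelian and toric parts of $G$), this endows $R/\fm^m$ with a MHS. Taking this as the limit MHS at the identity of $G$, the $\fm$-adic filtration as the weight filtration, and the monodromy representation $R/\fm^m$ (with $\piG$ acting by multiplication) as the underlying local system $\cL_m$, one obtains a unipotent VMHS on $G$ whose pullback $\cV_m\coloneqq f^*\cL_m$ is an admissible VMHS on $U$ --- the higher-rank, semiabelian analogue of the Kummer variations used for $G=\C^*$ in \cite{mhsalexander}. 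Now fix a smooth compactification $\bar U\supset U$ with $D\coloneqq\bar U\setminus U$ a simple normal crossing divisor, take the Deligne canonical extension $(\cE_m,\nabla)$ of $\cV_m$ with its Hodge filtration, and form the log de Rham complex $(\Omega^\bullet_{\bar U}(\log D)\otimes\cE_m,\nabla)$, endowed with the Hodge filtration $F$ (the stupid filtration twisted by that of $\cE_m$) and a weight filtration $W$ obtained by interlacing Deligne's pole-order weight filtration along $D$ with the relative monodromy weight filtration of the nilpotent part of $\nabla$ coming from the $\fm$-adic direction. By \cite{EvaMoises}, a suitable ``thickened'' enlargement of this complex --- designed to absorb precisely the $\Tor^R$-contributions by which $\frac{H_j(U^f,k)}{\fm^mH_j(U^f,k)}$ differs from $H_j(U,\cV_m)$ --- has hypercohomology canonically isomorphic to that quotient (up to the usual degree shift and dualization passing between homology and compactly supported cohomology); one then checks that it underlies a cohomological mixed Hodge complex, so its hypercohomology carries a canonical MHS by the machinery of Deligne, El Zein and Navarro Aznar (alternatively, one may route the whole argument through Saito's mixed Hodge modules applied to the admissible variation $\cV_m$).

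Parts \eqref{part2} and \eqref{part3} then follow formally. The projection $R/\fm^{m'}\twoheadrightarrow R/\fm^m$ is induced by a morphism of variations $\cV_{m'}\to\cV_m$, hence by a morphism of the associated thickened complexes. The multiplication map of part \eqref{part3} sends $\log\gamma\otimes v$ to $\bigl(-\sum_{i=1}^{m-1}\tfrac{(1-\gamma)^i}{i}\bigr)\!\cdot v$, that is, to multiplication in $R/\fm^m$ by the element $\log\gamma\in\fm/\fm^m$, whose class in $\fm/\fm^2$ is the image of $\log\gamma\in H_1(G,k)$; this is compatible with the $\fm$-adic filtration and carries exactly the Tate twists dictated by the weights of $H_1(G,k)$, so it is likewise induced by a morphism of variations $H_1(G,k)\otimes\cV_m\to\cV_m$ and hence of thickened complexes. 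Finally, the rationality dichotomy in part \eqref{part1} arises because the real splittings of the weight filtration used in the construction do not in general descend to $\Q$; when $G$ is a torus one is dealing with Tate objects throughout and the $\Q$-structure is available, matching the situation in \cite{mhsalexander}.

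The main obstacle is the local analysis near $D$ needed to verify that the constructed complex is indeed a mixed Hodge complex, i.e.\ that $\gr^W$ is a direct sum of pure Hodge structures of the expected weights. This demands that the boundary-log nilpotents and the $f$-induced nilpotent together admit a relative monodromy weight filtration together with an $SL_2$-type splitting, so that purity reduces fibrewise to the multivariable limiting mixed Hodge structure of Schmid, Cattani--Kaplan--Schmid and Steenbrink--Zucker; making this compatibility precise in the presence of the extra $\fm$-adic direction is the technical heart of the argument. A secondary, more algebraic difficulty is pinning down the exact comparison between $\frac{H_j(U^f,k)}{\fm^mH_j(U^f,k)}$ and the hypercohomology of the thickened complex, which I expect to handle as in \cite{EvaMoises}.
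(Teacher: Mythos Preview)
Your reduction of parts \eqref{part4} and \eqref{part5} to the base case via the finite cover $\hat U=U\times_G G_H$ is correct and matches the paper (Section~\ref{sec:otherideals}).

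For the base case, however, your route is genuinely different from the paper's. You propose to realize $R/\fm^m$ as the fiber of an admissible unipotent VMHS on $G$ (essentially the tautological VMHS of Hain--Zucker), pull back to $U$, take Deligne's canonical extension, and verify that the resulting log de Rham complex is a mixed Hodge complex via relative monodromy weight filtrations and Cattani--Kaplan--Schmid/Steenbrink--Zucker limiting theory. The paper does \emph{not} do this. Instead, it directly constructs a ``thickened'' mixed Hodge complex by tensoring Navarro Aznar's analytic logarithmic Dolbeault complex $\cN^\bullet_{X,D,n}$ (not the holomorphic $\Omega^\bullet_X(\log D)$, which lacks enough forms when $G$ is not a torus) with $R_{-m}\cong\bigoplus_j\Sym^j H^1(G,k)$ endowed with its tensor MHS, and twists the differential by $\ov f^*\circ\Phi^Y(\eps)$ (Definition~\ref{def:thickening}). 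The key observation is that this twist lands in $W_{-1}$ of the tensor filtration, so $\Gr^W$ of the thickened complex is literally $R_{-m}\otimes\Gr^W\cN^\bullet_{X,D,n}$ with untwisted differential (equation~\eqref{eq:easyThickening}), which is a Hodge complex by the elementary Lemma~\ref{lem:MHStensorMHC}. No relative monodromy weight filtration, no $SL_2$-orbit theorem, no limiting MHS is needed; what you identify as the ``main obstacle'' simply does not arise in the paper's approach. The paper in fact explicitly flags the comparison with the VMHS route you sketch as an \emph{open question} (end of Section~\ref{s:intro}).

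There is also a genuine gap in your comparison step. The hypercohomology of the thickened complex computes $H^j(U,R_{-m}\otimes_R\ov\cL)$, whose $k$-dual is $H_j(U,R_m\otimes_R\cL)$, not $\frac{H_j(U^f,k)}{\fm^m H_j(U^f,k)}=R_m\otimes_R H_j(U,\cL)$. Your phrase ``absorb the $\Tor^R$-contributions'' does not point to a mechanism. The paper's mechanism is to pass to the inverse limit: by Proposition~\ref{prop:limvsH} (an Artin--Rees argument over the completion $R_\infty$) one has $R_\infty\otimes_R H_j(U,\cL)\cong\varprojlim_m H_j(U,R_m\otimes_R\cL)$, which carries a pro-MHS, and then $R_m\otimes_R H_j(U,\cL)$ is extracted as the cokernel of the $m$-fold multiplication map by $H_1(G,k)$, which is a pro-MHS morphism (Remark~\ref{rem:multiplicationHomology}, Corollary~\ref{cor:quotientI}). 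This is the step that makes parts \eqref{part1}--\eqref{part3} go through, and it is absent from your sketch.
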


Moreover, the MHS from Theorem~\ref{thm:mainintro} is functorial in the following sense, both in the domain and the target of $f:U\to G$ (see Theorem~\ref{thm:functorial}  combined with Proposition~\ref{prop:otherideals} and  Corollary~\ref{cor:QMHS}).

\begin{theorem}[Functoriality]\label{thm:functorialintro}
Let $U_1,U_2$ be smooth connected complex algebraic varieties, and let $G_1,G_2$ be semiabelian varieties. Consider a commutative diagram of algebraic morphisms (below, on the left hand side), where $\rho$ is a group homomorphism. 
\begin{equation}\label{eq:baseFunctorialityintro}
	\begin{tikzcd}[row sep=2em]
		U_1
		\arrow[d,"f_1"]
		\arrow[r,"g"]
		& U_2
		\arrow[d,"f_2"]
    &&
	U_1^{f_1}
	\arrow[d,"\widetilde{f_1}"]
	\arrow[r,"\wt g"]
	& U_2^{f_2}
    \arrow[d,"\wt f_2"]
    \\
		G_1
		\arrow[r,"\rho"]
		&
		G_2
    &&
	TG_1
	\arrow[r,"\wt \rho"]
	&
	TG_2
\end{tikzcd}
\end{equation}
On the right hand side, $\wt\rho$ is the unique lift of $\rho$ which is an additive group homomorphism, $\wt{f_1}$ and $\wt{f_2}$ are defined from the pullback diagrams as in \eqref{eq:Uf}, and $\wt g$ is the unique lift of $g$ that makes the diagram commute.

Let $k=\R$ unless both $G_1$ and $G_2$ are affine tori, in which case we may take $k=\Q$. For \(i=1,2\) and for all finite index subgroups $K_i\leq \pi_1(G_i)$, let $\fm_{K_i}$ be the augmentation ideal of $k[K_i]\subset k[\pi_1(G_i)]$.

Under these assumptions, the map $\wt g:U_1^{f_1}\to U_1^{f_2}$ induces MHS morphisms
$$
\wt g_{*,m}: \frac{H_j(U_1^{f_1},k)}{(\fm_{K_1})^mH_j(U_1^{f_1},k)}\to \frac{H_j(U_1^{f_1},k)}{(\fm_{K_2})^mH_j(U_1^{f_1},k)}
$$
 for all $j\geq 0$, $m\geq 1$ and all finite index subgroups $K_1\leq \pi_1(G_1)$ and $K_2\leq \pi_1(G_2)$ such that $\rho_*(K_1)\leq K_2$.
\end{theorem}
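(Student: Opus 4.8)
The plan is to identify $\wt g_{*,m}$, at the level of the mixed Hodge theoretic model underlying Definition~\ref{def:MHSalexander}, with a composite of two morphisms each of which is manifestly a MHS morphism.

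\emph{Reduction to the augmentation ideal.} First I would dispose of the subgroups $K_1,K_2$. Since $\rho_*(K_1)\le K_2$, the homomorphism $\rho$ lifts to an algebraic group homomorphism $\rho_K\colon G_{1,K_1}\to G_{2,K_2}$ between the finite covers attached to $K_1$ and $K_2$, and the square \eqref{eq:baseFunctorialityintro} lifts to a commutative square relating the base changes $U_{i,K_i}\coloneqq U_i\times_{G_i}G_{i,K_i}$ (possibly disconnected, which is harmless), the lifted morphisms $f_{i,K_i}\colon U_{i,K_i}\to G_{i,K_i}$, and the \emph{same} total space map $\wt g$, because $U_{i,K_i}^{f_{i,K_i}}\cong U_i^{f_i}$ canonically. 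By Proposition~\ref{prop:otherideals}, the MHS on $\tfrac{H_j(U_i^{f_i},k)}{(\fm_{K_i})^mH_j(U_i^{f_i},k)}$ is the one attached to $(f_{i,K_i},\pi_1(G_{i,K_i}))$ and the augmentation ideal of $k[\pi_1(G_{i,K_i})]=k[K_i]$; Corollary~\ref{cor:QMHS} supplies the rational structure when the $G_i$ are affine tori. So, after replacing each $(U_i,G_i,f_i)$ by $(U_{i,K_i},G_{i,K_i},f_{i,K_i})$, I may assume $K_i=\pi_1(G_i)$ and that $\fm_i$ is the augmentation ideal of $R_i\coloneqq k[\pi_1(G_i)]$; this reduces the statement to Theorem~\ref{thm:functorial}.

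\emph{Factoring $\wt g_{*,m}$ through the model.} Recall that $\tfrac{H_j(U_i^{f_i},k)}{\fm_i^mH_j(U_i^{f_i},k)}$ is identified, in the construction behind Definition~\ref{def:MHSalexander}, with the $j$-th homology of $U_i$ with coefficients in the unipotent local system $L_{i,m}$ of stalk $R_i/\fm_i^m$ and monodromy $\pi_1(U_i)\xrightarrow{(f_i)_*}\pi_1(G_i)\to\mathrm{Aut}_k(R_i/\fm_i^m)$ by left multiplication; equivalently $L_{i,m}=f_i^*\Lambda_{i,m}$, where $\Lambda_{i,m}$ is the tautological unipotent local system on $G_i$ with stalk $R_i/\fm_i^m$, which underlies a variation of MHS assembled from the $\fm_i$-adic filtration and Deligne's MHS on $H_1(G_i,k)\cong\fm_i/\fm_i^2$; the MHS of Definition~\ref{def:MHSalexander} is precisely the one induced on $H_j(U_i,L_{i,m})$ by the thickened mixed Hodge complex of $L_{i,m}$. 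The algebra map $R_1\to R_2$ coming from $\rho_*$ carries $\fm_1$ into $\fm_2$, hence induces $R_1/\fm_1^m\to R_2/\fm_2^m$, which is $\pi_1(G_1)$-equivariant and compatible with the Deligne MHS morphism $H_1(G_1,k)\to H_1(G_2,k)$, and therefore promotes to a morphism of variations of MHS $\Lambda_{1,m}\to\rho^*\Lambda_{2,m}$ on $G_1$. Pulling back along $f_1$ and using $f_1^*\rho^*=g^*f_2^*$ gives $L_{1,m}\to g^*L_{2,m}$ on $U_1$, hence a MHS morphism $H_j(U_1,L_{1,m})\to H_j(U_1,g^*L_{2,m})$. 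Composing with the pushforward $\wt g_*\colon H_j(U_1,g^*L_{2,m})\to H_j(U_2,L_{2,m})$ --- which is a MHS morphism by functoriality of the thickened complex construction under the algebraic map $g$ --- one obtains a MHS morphism
\[
H_j(U_1,L_{1,m})\;\longrightarrow\;H_j(U_1,g^*L_{2,m})\;\longrightarrow\;H_j(U_2,L_{2,m}),
\]
and a short diagram chase (using that $\wt g_*$ is $\rho_*$-equivariant, so $\wt g_*(\fm_1^mH_j(U_1^{f_1},k))\subseteq\fm_2^mH_j(U_2^{f_2},k)$) identifies this composite with $\wt g_{*,m}$, as desired.

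\emph{The main obstacle.} The weight of the proof is the claim that $\wt g_*$ respects the mixed Hodge structures, i.e.\ the functoriality of the construction behind Definition~\ref{def:MHSalexander} under an arbitrary algebraic morphism $g\colon U_1\to U_2$. Since the MHS is not imported from a black box but built from an explicit mixed Hodge complex of sheaves on a smooth compactification $\ov{U_i}\supset U_i$ with simple normal crossings boundary $D_i$, two points require care. First, $g$ need not extend to $\ov{U_1}\to\ov{U_2}$: one replaces $\ov{U_1}$ by a resolution of the closure of the graph of $g$ and invokes the independence of the MHS on the chosen compactification, established when it is defined. Second, and more seriously, one must compare at the level of mixed Hodge complexes --- not merely of underlying homology --- the pullback along the extended $g$ of the thickened complex of $\Lambda_{2,m}$ with the thickened complex of $L_{1,m}$; this forces a simultaneous bookkeeping of Deligne's canonical extensions across $D_1$ and $D_2$, treating separately the components of $D_1$ that $g$ sends into $D_2$ and those it sends into $U_2$. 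I expect this compatibility of thickened complexes with pullback to be the crux.
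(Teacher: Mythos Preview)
Your reduction to the augmentation ideal via Proposition~\ref{prop:otherideals} is correct and matches the paper, as does your overall two-step factorization: the paper (Remark~\ref{rem:outlineFunctoriality}) factors through the intermediate pair $(U_1,\rho\circ f_1)$, which is exactly your ``coefficient change along $\rho$, then pushforward along $g$''.

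The gap is in how you justify the coefficient-change step. You assert that $\Lambda_{i,m}$ ``underlies a variation of MHS'' and that $R_1/\fm_1^m\to R_2/\fm_2^m$ ``promotes to a morphism of variations of MHS $\Lambda_{1,m}\to\rho^*\Lambda_{2,m}$'', from which the MHS morphism on homology would follow formally. But the paper does \emph{not} establish any VMHS structure on these local systems; indeed, whether the thickened complexes endow $R_{-m}\otimes_R\ov\cL$ with an admissible VMHS is explicitly listed as an open question at the end of the introduction. The MHS here is built directly from the thickened logarithmic Dolbeault complex (Definition~\ref{def:thickening}), passed to homology via the dualization of Remark~\ref{rem:homologyVsCohomology} and the pro-MHS machinery of Corollaries~\ref{cor:completionHomology2} and~\ref{cor:quotientI}. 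To make the coefficient change a MHS morphism, the paper (Theorem~\ref{thm:functGroupHom}) needs two concrete ingredients that your argument does not supply: first, that the canonical sections $\Phi^Y_\R,\Phi^Y_\C,\Psi^Y$ on compactifications of $G_1,G_2$ are compatible with $\rho$ (Corollary~\ref{cor:PhiFunctorial}), so that Proposition~\ref{prop:thickeningFunctorialInV} produces a genuine morphism of mixed Hodge complexes $\rho_m\otimes\Id$ between the two thickenings; and second, an explicit commutativity check showing that under the resolution maps $e^{-(\Phi^{G_i}_\R)^\vee}$ of Construction~\ref{con:nu}, the sheaf map $\phi\otimes\ov{\wt\rho\circ\iota_0}\mapsto\phi\circ\rho_*\otimes\ov{\iota_0}$ of Lemma~\ref{lem:homologySheafMap} agrees with $\rho_m\otimes\Id$ on the de Rham side. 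This last computation is the heart of the $g=\Id$ case and is where the fact that $\rho$ is a group homomorphism is actually used.

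Your ``main obstacle'' paragraph identifies the compactification issue for the pushforward along $g$, and your resolution-of-the-graph strategy is exactly what the paper does in Theorem~\ref{thm:funcEasy}. But this turns out to be the more routine half: pullback of logarithmic forms along an algebraic map already respects $\wt W_\lc$ and $F^\lc$, and commutes with the comparison maps $e^{\Psi(\eps_\C)}$. The half you treated as formal is the one requiring real work.
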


Note that this is a more general version of the functoriality found in \cite[Theorem 5.4.9]{mhsalexander}, which corresponds to the diagram~\eqref{eq:baseFunctorialityintro} in the case where $G_1=G_2=\C^*$ and $\rho=\Id$. In other words, while in this paper our MHS behaves functorially in both the domain and the target of $f:U\to G$, the MHS in \cite{mhsalexander} is only functorial in the domain. Because of this more general functoriality, we obtain the following compatibility with Deligne's MHS as a Corollary of Theorem~\ref{thm:functorialintro}, by making $G_2$ be a point (see Corollary~\ref{cor:compatibleDeligne}  combined with Corollary~\ref{cor:QMHS}). This is a generalization of \cite[Theorem 1.0.3]{mhsalexander}, but the proof here is much simpler due to the extra functoriality features in this paper.

\begin{corollary}[Compatibility with Deligne's MHS]\label{cor:compatibleDeligneintro}
	Let $U$ be a smooth connected complex algebraic variety, let $G$ be a semiabelian variety, and let $f:U\to G$ be an algebraic morphism. Let $k=\R$ unless $G$ is isomorphic to an affine torus, in which case we may take $k=\Q$. Let $H$ be a finite index subgroup of $\pi_1(G)$, and let $\fm_H$ be the augmentation ideal of $k[H]\subset k[\pi_1(G)]$.
	
	Then, the covering space map $\pi:U^f\to U$
	induces the MHS morphism
	$$
	\frac{H_j(U^f,k)}{(\fm_H)^m H_j(U^f,k)}\to H_j(U,k)
	$$
	for all $j\geq 0$ and all $m\geq 1$, where $H_j(U,k)$ is endowed with Deligne's MHS.
\end{corollary}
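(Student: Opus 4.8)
The plan is to obtain this as the special case of the functoriality statement Theorem~\ref{thm:functorialintro} in which the semiabelian variety in the target is a point. Recall that a point is the trivial semiabelian variety (indeed the trivial affine torus $(\C^*)^0$), and all the constructions of this paper apply to it, with $U^f = U$ the trivial cover. So I would apply Theorem~\ref{thm:functorialintro} to the commutative square
\[
\begin{tikzcd}
U \arrow[d,"f"] \arrow[r,"\Id_U"] & U \arrow[d] \\
G \arrow[r] & \{\mathrm{pt}\}
\end{tikzcd}
\]
with $U_1 = U_2 = U$, $g = \Id_U$, $G_1 = G$, $G_2 = \{\mathrm{pt}\}$, $f_1 = f$, $f_2$ the constant morphism, $\rho\colon G \to \{\mathrm{pt}\}$ the only group homomorphism, $K_1 = H$, and $K_2 = \{0\}$ the only (finite index) subgroup of $\pi_1(\{\mathrm{pt}\}) = 0$. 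The hypothesis $\rho_*(K_1) \le K_2$ is then vacuous, and since $\{\mathrm{pt}\}$ is an affine torus, the constraint on the coefficient field $k$ in Theorem~\ref{thm:functorialintro} is exactly the one in the corollary.

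Next I would unwind the right-hand column of \eqref{eq:baseFunctorialityintro} in this degenerate case: $TG_2 = 0$, so $U_2^{f_2} = U \times \{0\} \cong U$ with covering map the identity; hence the unique lift $\wt g$ of $g = \Id_U$ is precisely the covering map $\pi\colon U^f \to U$. Moreover $\pi_1(G_2) = 0$, so $k[\pi_1(G_2)] = k$ and $\fm_{K_2} = (0)$, whence $(\fm_{K_2})^m H_j(U,k) = 0$ for every $m \ge 1$ and the target quotient appearing in Theorem~\ref{thm:functorialintro} is simply $H_j(U,k)$. Plugging this in, the theorem yields that $\pi$ induces a morphism of MHS
\[
\frac{H_j(U^f,k)}{(\fm_H)^m H_j(U^f,k)} \longrightarrow H_j(U,k)
\]
for all $j \ge 0$ and $m \ge 1$, where the MHS on the target is the one produced by Definition~\ref{def:MHSalexander} in the case $G = \{\mathrm{pt}\}$.

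The only substantive point — and the step I expect to be the main (though mild) obstacle — is to confirm that this degenerate instance of the construction really is Deligne's MHS on $H_j(U,k)$, rather than a priori some other canonical structure. When $G$ is a point the deck group is trivial, $\fm = (0)$, and the thickened complex underlying Definition~\ref{def:MHSalexander} collapses to the standard logarithmic complex computing Deligne's MHS attached to a good compactification of $U$; I would verify this by tracing the Hodge and weight filtrations through the definition and matching them with Deligne's, which is what is recorded in the body as Corollary~\ref{cor:compatibleDeligne}. Finally, when $G$ is an affine torus one may take $k = \Q$ by Corollary~\ref{cor:QMHS}, which upgrades the morphism above to a morphism of $\Q$-MHS; combining these ingredients with the functoriality input gives the full statement.
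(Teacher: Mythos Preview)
Your proposal is correct and follows essentially the same route as the paper: apply the functoriality theorem with $G_2$ a point (so $U_2^{f_2}=U$, $\wt g=\pi$, $\fm_{K_2}=(0)$), and then check that in this degenerate case the thickened complex collapses to the Navarro Aznar complex, which by Remark~\ref{rem:sameMHS} computes Deligne's MHS. The only cosmetic difference is that the paper first proves the case $H=\pi_1(G)$ (Corollary~\ref{cor:compatibleDeligne}) via Theorem~\ref{thm:functorial} and then upgrades to general $H$ and to $k=\Q$ via Proposition~\ref{prop:otherideals} and Corollary~\ref{cor:QMHS}, whereas you invoke the already-upgraded Theorem~\ref{thm:functorialintro} directly.
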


By Theorem~\ref{thm:mainintro} part~\eqref{part3}, the logarithm of deck transformations behaves well with respect to the MHS. Let $\gamma\in\pi_1(G)$, which we interpret as a deck transformation of $\pi:U^f\to U$. In general, $\gamma$ does not preserve the MHS, but its semisimple part does, as exemplified in the following result (see Theorem~\ref{thm:ss} combined with Corollary~\ref{cor:QMHS}), which provides a generalization of~\cite[Theorem 1.3]{EvaMoises}.
\begin{theorem}\label{thm:ssintro}
	Let $U$ be a smooth connected complex algebraic variety, let $G$ be a semiabelian variety, and let $f:U\to G$ be an algebraic morphism. Let $k=\R$ unless $G$ is isomorphic to an affine torus, in which case we may take $k=\Q$. Let $H$ be a finite index subgroup of $\pi_1(G)$, let $\gamma\in\pi_1(G)$, and let $\mathfrak{m}_H$ be the augmentation ideal of $k[H]$. Let $\gamma=\gamma_{ss}  \gamma_{u}$ be the Jordan-Chevalley decomposition of $\gamma$ acting on $\frac{H_j(U^f,k)}{(\mathfrak m_H)^m H_j(U^f,k)}$ as the product of a semisimple and a unipotent operator that commute with each other. Then,
	\[
	\gamma_{ss}  \colon \frac{H_j(U^f,k)}{(\mathfrak m_H)^m H_j(U^f,k)}\to \frac{H_j(U^f,k)}{(\mathfrak m_H)^m H_j(U^f,k)}
	\]
	is a MHS isomorphism for all $j\geq 0$ and all $m\geq 1$.
\end{theorem}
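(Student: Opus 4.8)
The plan is to reduce the statement to showing that $\gamma_{ss}$ preserves the weight filtration $W_\bullet$ over $k$ and the Hodge filtration $F^\bullet$ on $V_\C$, where $V:=\tfrac{H_j(U^f,k)}{(\mathfrak m_H)^m H_j(U^f,k)}$. By the Jordan--Chevalley decomposition over the perfect field $k\in\{\Q,\R\}$, the operator $\gamma_{ss}$ is a $k$-linear automorphism of $V$ (being the semisimple part of the automorphism $\gamma$) and is a polynomial in $\gamma$ with coefficients in $k$, so its formation is compatible with $-\otimes_k\C$. Hence, once we know $\gamma_{ss}(W_\ell)\subseteq W_\ell$ and $(\gamma_{ss})_\C(F^p)\subseteq F^p$ for all $\ell,p$, the map $\gamma_{ss}$ is a bijective morphism of $k$-MHS, that is, an isomorphism; when $G$ is an affine torus and $k=\Q$, the same applies with the $\Q$-MHS of Corollary~\ref{cor:QMHS}. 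I would also observe at the start that $(\mathfrak m_H)^m$ annihilates $V$, so every element of $H$ acts unipotently on $V$; writing $e$ for the order of the image of $\gamma$ in the finite group $\pi_1(G)/H$, it follows that $\gamma^e\in H$ acts unipotently on $V$, whence $\gamma_{ss}^e=(\gamma^e)_{ss}=\Id$, so $\gamma_{ss}$ is a finite-order semisimple operator whose eigenvalues are $e$-th roots of unity.

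The key input is the decomposition of the MHS on $V$ into the generalized eigenspaces for the deck action of $\pi_1(G)$, recorded in the abstract: after extending scalars to $\C$ this reads $V_\C=\bigoplus_\lambda V_{[\lambda]}$, with $V_{[\lambda]}$ the generalized $\lambda$-eigenspace of $\gamma$, each $V_{[\lambda]}$ a sub-$\C$-MHS of $V_\C$, and in particular $W_\bullet(V_\C)=\bigoplus_\lambda W_\bullet(V_{[\lambda]})$ and likewise for $F^\bullet$. Granting this, on $V_{[\lambda]}$ the operator $(\gamma_{ss})_\C$ is simply multiplication by the scalar $\lambda$, which preserves every linear subspace, hence every $W_\ell(V_{[\lambda]})$ and $F^p(V_{[\lambda]})$; summing over $\lambda$, $(\gamma_{ss})_\C$ preserves $W_\bullet(V_\C)$ and $F^\bullet(V_\C)$. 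Combined with the $k$-rationality of $\gamma_{ss}$ from the first paragraph — which forces $\gamma_{ss}(W_\ell)\subseteq(\gamma_{ss})_\C(W_\ell\otimes_k\C)\cap V\subseteq W_\ell$ — this proves the theorem.

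I expect the true difficulty to lie entirely in the cited ingredient: establishing the generalized-eigenspace decomposition \emph{at the level of mixed Hodge structures}, rather than merely as a decomposition of $R$-modules. This is where Theorem~\ref{thm:mainintro}\eqref{part3}--\eqref{part5} is meant to be used: on a block where the elements of $H$ act unipotently, the truncated logarithms $v\mapsto -\sum_{i\ge 1}\tfrac{(1-\delta)^i\cdot v}{i}$, for $\delta\in H$, lower both the weight and the Hodge filtrations (because $H_1(G_H,k)$, which appears in part~\eqref{part4}, has strictly negative weights), and one must promote these to the idempotents cutting out the generalized eigenspaces and check that those idempotents are MHS morphisms — the subtle point being the passage between the decomposition naturally defined over $k$ (indexed by Galois orbits of eigenvalues) and the finer one over $\C$ used above. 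Finally, I would flag for contrast that the complementary unipotent factor $\gamma_u=\exp\big(\tfrac{1}{e}\log(\gamma^e)\big)$ is \emph{not} in general an automorphism of the MHS — consistent with the remark preceding the statement that $\gamma$ itself need not preserve the MHS — so the argument cannot be short-circuited through $\gamma=\gamma_{ss}\gamma_u$ and must genuinely pass through the eigenspace decomposition. For $G=\C^*$ this recovers \cite[Theorem 1.3]{EvaMoises}.
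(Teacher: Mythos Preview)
Your argument is circular. You take as ``key input'' the generalized-eigenspace decomposition of $V_\C$ as a direct sum of sub-$\C$-MHSs, but in the paper this decomposition (Corollary~\ref{cor:eigenspaceDecomposition}) is an immediate \emph{consequence} of Theorem~\ref{thm:ss} (=Theorem~\ref{thm:ssintro}), not an independent ingredient. The logical flow in the paper is Theorem~\ref{thm:ss} $\Rightarrow$ Corollary~\ref{cor:eigenspaceDecomposition}; you are proposing the reverse implication and then declaring the corollary to be the hard part, which leaves the actual content unproved.

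Your suggested path to break the circularity does not work either. The morphisms you invoke from Theorem~\ref{thm:mainintro}\eqref{part3}--\eqref{part5} are the truncated logarithms of elements $\delta\in H$; these are \emph{nilpotent} on $V$ (since $(\delta-1)^m=0$ there) and only see the unipotent part of the action. They carry no information about the semisimple decomposition for $\gamma\notin H$, so they cannot produce the eigenspace idempotents you need. More concretely, knowing that $\log(\gamma^e)$ is obtained from a MHS morphism tells you that $\log\gamma_u=\tfrac{1}{e}\log(\gamma^e)$ shifts the weight and Hodge filtrations; it does not make $\gamma_u$ (and hence $\gamma$) a MHS endomorphism, and it says nothing about $\gamma_{ss}$.

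The paper's proof is entirely different and geometric. One passes to the finite cover $U_H$, so that the MHS on $V$ is identified with the MHS on $R_m^H\otimes_{R^H} H_j(U_H^{f_H},\R)$. The deck transformation $\cT_\gamma\colon U_H\to U_H$ is an \emph{algebraic automorphism}, and one computes (Lemma~\ref{lem:t-liftstodR}) that the $\gamma$-action lifts to the thickened de Rham complex as $\omega\mapsto e^{\wt p_H^{-1}(\log\gamma)}\otimes(\cT_\gamma)^*\omega$. Dropping the exponential factor gives $\wt\cM_\gamma^{ss}\colon \omega\mapsto 1\otimes(\cT_\gamma)^*\omega$, which is just pullback by an algebraic map and therefore extends to a morphism of mixed Hodge complexes of sheaves (Lemma~\ref{lem:gammasscohomology}); its induced map $h_{ss}$ is thus a MHS isomorphism. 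One then checks directly (Lemma~\ref{lem:hss}) that $h_{ss}$ is the semisimple part of $h$: indeed $(h_{ss})^N=\Id$ because $\cT_\gamma^N=\Id$ on $U_H$, and $h\circ h_{ss}^{-1}$ is multiplication by $e^{\wt p_H^{-1}(\log\gamma)}\in R_\infty^H$, hence unipotent. The eigenspace decomposition then falls out as a corollary. The essential idea you are missing is this explicit geometric realization of $\gamma_{ss}$ as pullback along a deck transformation of the finite cover.
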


As a consequence of the theorem above, we obtain that the direct sum decomposition of the quotient $\frac{H_j(U^f,\C)}{(\mathfrak m_H)^m H_j(U^f,\C)}$ into its generalized eigenspaces by the action of $\gamma$ is a MHS decomposition, see Corollary~\ref{cor:eigenspaceDecomposition}, which also contains a version for $\R$-coefficients that is extended to $\Q$-coefficients by Corollary~\ref{cor:QMHS}.

\begin{remark}\label{rem:kahler}
	All of the results in this paper for $\R$ and $\C$ coefficients also hold for holomorphic maps $f:U\to G$, where $U$ is a compact Kähler manifold and $G$ a (compact) complex torus which are not necessarily algebraic. Indeed, following the notation of Definition~\ref{def:NavarroAznar},
	$$
	\left(\cA_{U,\R}^\bullet,\quad\left(\cA_{U,\C}^\bullet, F^{\lc}\right),\quad\alpha\right)
	$$
	is a (pure) Hodge complex of weight $0$ which endows the cohomology of $U$ with the usual pure Hodge structure of compact Kähler manifolds (\cite[Théorème 8.8]{navarroAznar}, \cite[Example 2.34]{peters2008mixed}). The constructions of Definition-Proposition~\ref{defPhiPsiY} and Definition~\ref{def:endowedMHS} can be carried out in the exact same way in this setting, and the remaining results in the paper follow from this.
\end{remark}

Lastly, we use the celebrated result of Budur and Saito \cite{budurSaito} on the combinatorial nature of the spectrum of a hyperplane arrangement to reduce the open problem of whether the first Betti number of the Milnor fiber of a central hyperplane arrangement complement in $\C^n$ is combinatorial to a question about the MHSs defined in this paper (see Corollary~\ref{cor:summary}). Specifically, it is reduced to a question about the combinatorial nature of the Hodge filtration of  $\frac{H_2(U^f,\C)}{\fm_HH_2(U^f,\C)}$, where $U$ is an essential line arrangement complement in $\C^2$ of three or more lines, $f:U\to\C^*$ is the defining (reduced) polynomial of the arrangement, and $H$ is a subgroup of $\pi_1(\C^*)$ which is determined by the combinatorial data of the arrangement (see Lemma~\ref{lem:reduction} and Theorem~\ref{thm:combinatorics}). We highlight a couple of aspects of this reduction:
\begin{itemize}
	\item In this case, $H_2(U^f,\C)$ is a free $R=\C[t^{\pm 1}]$-module. Hence, even if $G=\C^*$, the MHS of $\frac{H_2(U^f,\C)}{\fm_HH_2(U^f,\C)}$ is completely new from this paper, as \cite{mhsalexander} only dealt with the torsion part of the homology of $U^f$.
	\item The rank of $H_2(U^f,\C)$ as a free $R=\C[t^{\pm 1}]$-module is determined by the combinatorics of the essential line arrangement, which we denote by $\mathcal H$. Hence, the dimension of $\frac{H_2(U^f,\C)}{\fm_HH_2(U^f,\C)}$ is also determined by combinatorial data of $\mathcal H$. The work in Section~\ref{sec:hyperplanes} shows that, even if $\frac{H_2(U^f,\C)}{\fm_HH_2(U^f,\C)}$ is well understood, its MHS contains interesting information.
	\item The MHS on $\frac{H_2(U^f,\C)}{\fm_HH_2(U^f,\C)}$ only has three non-trivial graded pieces by the Hodge filtration, and we show that the dimension of the middle piece is also determined by the combinatorics of $\mathcal H$. Specifically, Theorem~\ref{thm:combinatorics} reduces the problem of whether the first Betti number of the Milnor fiber of a central hyperplane arrangement complement is combinatorial to the question of whether $\dim_\C F^0 \frac{H_2(U^f,\C)}{\fm_HH_2(U^f,\C)}$ is determined by the combinatorics of $\mathcal H$ for every essential line arrangement $\mathcal H$ in $\C^2$.
\end{itemize}

The last point motivates further work regarding the development of techniques that allow the computation of examples of the MHS defined in this paper (or at the very least of its Hodge filtration). This note is devoted to proving structural results and developing a new theory, not the computation of examples. However, using Remark~\ref{rem:generalization}, note that the examples from \cite[Chapter 10]{mhsalexander} regarding affine hyperplane arrangement complements are also examples of the MHS $\frac{H_j(U^f,\Q)}{\fm_H H_j(U^f,\Q)}$ from this paper for suitable $H$, since in those cases $H_j(U^f,\Q)$ was a semisimple torsion module for the chosen $j$. Similarly, the results from \cite[Sections 5 and 6]{EvaMoises} (the ones which help with the computation of the MHS in \cite{mhsalexander} in cases such as when $U$ is formal or the affine complement of a hypersurface which is transversal at infinity) also apply to the MHS $\frac{H_j(U^f,\Q)}{\fm_H H_j(U^f,\Q)}$ for suitable $H$, since $H_j(U^f,\Q)$ is semisimple and torsion for all but one $j$.

\subsection{Outline of the paper}
This paper provides a vast generalization of the main results in \cite{mhsalexander} and in \cite{EvaMoises}. More precisely, Sections~\ref{sec:thickening} to~\ref{sec:otherideals} and Section~\ref{s:comparison} generalize the results in \cite[Chapters 3--6]{mhsalexander}, which are precisely the chapters which have clear analogies in this general setting (they do not depend on $G$ being $\C^*$), and Section~\ref{sec:eigenspace} generalizes \cite{EvaMoises}.

In Section~\ref{sec:prelim} we recall the relevant background and set notations for the rest of the paper. We review results regarding semiabelian varieties and Albanese morphisms, homology of abelian covers, local systems and how to interpret the homology groups of $U^f$ as the homology groups of a local system $\cL$ on $U$, the compactifications of algebraic varieties that will be used throughout this paper, differential graded algebras (both cdga's and dgla's), and mixed Hodge complexes of sheaves. The latter include the analytic logarithmic Dolbeault mixed Hodge complex of sheaves from Navarro Aznar \cite{navarroAznar}, which endows the cohomology of smooth complex  algebraic varieties with the same MHS as Deligne (which he obtained using holomorphic logarithmic forms).

Sections~\ref{sec:thickening}--\ref{sec:thickeningResolves} provide the theoretical framework needed to develop a Hodge theory for $U^f$. In Section~\ref{sec:thickening} we describe a general procedure to obtain mixed Hodge complexes of sheaves as ``thickenings'', i.e. infinitesimal deformations, of other known mixed Hodge complexes of sheaves. Very roughly speaking, the construction amounts to tensoring the complexes of sheaves by a mixed Hodge structure and twisting the differentials. In Section~\ref{sec:thickDolbeault}, we give an explicit description of how to perform suitable thickenings of the analytic logarithmic Dolbeault mixed Hodge complex of sheaves. In Section~\ref{sec:thickeningResolves} we show that these thickenings realize certain truncated local systems obtained from $\cL$.

In Section~\ref{sec:MHS} we start by endowing the cohomology of the aforementioned truncated local systems with MHSs (see Definition~\ref{def:endowedMHS}), show that those MHS are independent of the choices used in their construction via mixed Hodge complexes of sheaves, and finally arrive at the first three parts of Theorem~\ref{thm:mainintro} for $k=\R$ in Section~\ref{sec:MHSAlexander}. We also endow other related (co)homology groups with canonical MHSs in Section~\ref{sec:MHSAlexander}.

Section~\ref{sec:functoriality} is devoted to proving the version of the functoriality Theorem~\ref{thm:functorialintro} for $k=\R$, $K_1=\pi_1(G_1)$ and $K_2=\pi_1(G_2)$, that is, when $\fm_{K_i}$ is the augmentation ideal of $\R[\pi_1(G_i)]$ (Theorem~\ref{thm:functorial}), as well as the corresponding version of  Corollary~\ref{cor:compatibleDeligneintro}  (Corollary~\ref{cor:compatibleDeligne}).  We also study how the statement of Theorem~\ref{thm:functorialintro} changes if $\rho$ in diagram~\eqref{eq:baseFunctorialityintro} is not a group homomorphism, but just a morphism of algebraic varieties (see Theorem~\ref{thm:dependenceTranslation}). This has the following interesting consequence: if $f$ is the generalized Albanese morphism, then the MHSs obtained in this paper are invariants of the topology and the algebraic structure of $U$, but its isomorphism class does not depend on the choice of  $f$ (which is defined up to translation and isomorphism of semiabelian varieties), see Example~\ref{eg:algebraicInvariant}.

In Section~\ref{sec:otherideals} we prove parts~\eqref{part4} and~\eqref{part5} of Theorem~\ref{thm:mainintro} in the case where $k=\R$. The results in this section can be used to immediately generalize the main results in Section~\ref{sec:functoriality} to the form they have in Section~\ref{sec:summary} (namely Theorem~\ref{thm:functorialintro} and Corollary~\ref{cor:compatibleDeligneintro}) (for  $k=\R$).

Theorem~\ref{thm:ssintro} is proved in Section~\ref{sec:eigenspace} in the case where $k=\R$. The consequent eigenspace decomposition appears in full detail in Corollary~\ref{cor:eigenspaceDecomposition}.

In Section~\ref{s:comparison} we show that the different MHSs defined in Section~\ref{sec:MHS} are in fact defined over $\Q$ if $G\cong(\C^*)^n$ for some $n\geq 1$ (Corollary~\ref{cor:QMHS}). The reason for this distinction is that we perform a thickening of a particular $\Q$-mixed Hodge complex of sheaves and show that it computes the cohomology of the local systems used in this paper, but this specific construction cannot be carried out if $G$ is not an affine torus. We expect the result to be true in general, but were unable so far to find an explicit description of a multiplicative $\Q$-mixed Hodge complex of sheaves that we could use to perform the needed thickenings. The construction of such thickenings over $\Q$ would involve fixing a particular morphism relating the appropriate $\Q$-local systems to the corresponding thickened complexes in a compatible way with the construction over $\R$.

In Section~\ref{sec:hyperplanes} we discuss the applications to the study of Milnor fibers of hyperplane arrangements discussed above.

\subsection{Summary of the techniques and new insights}
Roughly speaking, the strategy  in both \cite{mhsalexander} and this paper is as follows. First, we interpret $H_j(U^f,k)$ as the $j$-th homology group of a rank $1$ local system of free $R$-modules $\cL$ on $U$. We also consider the $R$-dual local system $\oL$ of $\cL$. The local system $\oL$ has infinite dimensional stalks, so we truncate it by quotienting by powers of the augmentation ideal. Then, we create a mixed Hodge complex of sheaves which endows the cohomology groups of these truncated local systems with canonical mixed Hodge structures. These MHSs are used to endow the desired objects ($\Tors_R H_j(U^f,k)$ in \cite{mhsalexander}, where $G=\C^*$, or the aforementioned quotients of $H_j(U^f,k)$ in this paper) with canonical MHSs in different ways. In both cases, the mixed Hodge complexes of sheaves that we use are obtained by thickening known mixed Hodge complexes of sheaves which endow the cohomology of $U$ (or of a finite cover of $U$) with Deligne's MHS. This thickening process consists on tensoring the complexes of sheaves by a finite dimensional vector space $V$ (endowed with a MHS) and  by twisting the differentials.

However, although in principle the techniques seem similar in both papers, there are several new key  technical insights in this note that make the generalization possible:
\begin{itemize}
	\item In \cite{mhsalexander} $V$ was chosen to be $k[t^{\pm 1}]/(t-1)^m$ and the weight and Hodge filtrations were defined by hand. This was possible because we picked coordinates in $G=\C^*$. However, in this paper $G$ can be any semiabelian variety, and we go further than in \cite{mhsalexander} and also explore connections between these different mixed Hodge structures that arise from morphisms between the corresponding semiabelian varieties (see Theorem~\ref{thm:functorialintro}). Hence, we use a coordinate-free description of $V$ (see Remark~\ref{rem:MHSRm} for the definition of the MHS $R_m$ and $R_{-m}$ used in place of $V$ in this paper) and construct the thickened complexes without fixed coordinates (see Sections~\ref{sec:thickDolbeault} and~\ref{sec:thickeningResolves}). As a result of this generality, the new coordinate-free construction is more involved than the construction in \cite{mhsalexander}, see Definition-Proposition~\ref{defprop:PhiPsiG} to illustrate this. However, the choice of the maps used to construct the thickenings can be made explicit by picking coordinates in the case when $G$ is an affine torus (see Section~\ref{s:comparison}).
		
	\item Deligne's mixed Hodge complex of sheaves considered in \cite{mhsalexander}, which consists on logarithmic holomorphic forms on a compactification of $U$, could not be used in the new construction, as it does not contain enough forms to construct a thickening representing the truncations of $\oL$ if $G$ is a semiabelian variety in general (not isomorphic to $(\C^*)^n$).  For this reason, we need to consider a mixed Hodge complex of sheaves due to Navarro Aznar consisting on logarithmic analytic forms on a compactification of $U$ instead.
	
	\item $R$ is a principal ideal domain in the case when $G=\C^*$, so the Universal Coefficient Theorem is available. We use it in \cite[Proposition 2.14]{mhsalexander} to obtain that $\Tors_R H^{j+1}(U,\oL)$ (the torsion part of the $(j+1)$-th cohomology Alexander module) is canonically isomorphic to the $k$-dual of $\Tors_R H_j(U^f,k)$ (the torsion part of the $j$-th homology Alexander module). The MHS on $\Tors_R H_j(U^f,k)$ is endowed through this isomorphism by a MHS on $\Tors_R H^{j+1}(U,\oL)$ obtained using the aforementioned methods. However, there is no analogous duality between the objects we consider in this paper in homology and cohomology, namely, quotients of $H_*(U^f,k)$ and $H^*(U,\oL)$ by powers of the augmentation ideal. To overcome this, we need to define different mixed Hodge complexes of sheaves to study quotients of $H_*(U^f,k)$ and of $H^*(U,\oL)$ respectively: the mixed Hodge complex of sheaves that we use for homology and cohomology are thickenings of the same mixed Hodge complex of sheaves, but the MHSs $R_{-m}$ and $R_m$ that we tensor by are different and dual to one another.
	\end{itemize}

The lack of duality in this paper mentioned in the last point turns out to be a blessing in disguise: In \cite{mhsalexander}, the statements regarding the behavior of the MHS with respect to morphisms which had a natural geometric interpretation for homology Alexander modules but not for cohomology Alexander modules were very difficult to prove. The difficulty stemmed from the fact that the MHS had a natural interpretation in cohomology but not in homology, and the duality map used to define the MHS in homology was explicit but not easy to work with. Examples of these kinds of results are the compatibility with Deligne's MHS on $U$ \cite[Theorem 6.1]{mhsalexander} or the independence of the MHS of the choice of finite cover of $U$ used in the construction \cite[Theorem 5.22]{mhsalexander}. However, the generalization of these results in this paper (namely Corollary~\ref{cor:compatibleDeligne} and Proposition~\ref{prop:proOtherIdeals}) have much simpler proofs due to the fact that the mixed Hodge complex of sheaves constructed in this paper was designed for homology. As a result, and although the construction of the MHS in this note is longer, this paper is shorter in length than \cite{mhsalexander} despite providing a vast generalization of the main results of loc. cit. (Sections~\ref{sec:thickening} to~\ref{sec:otherideals} in this paper) and also of the main result of \cite{EvaMoises} (Section~\ref{sec:eigenspace}).

Lastly, we want to address a possible connection with the works of Hain and Zucker \cite{HainZuckerVMHS,HainHomotopy}, which is also related to the work of Sullivan \cite{sullivan} and Morgan \cite{morgan} (see \cite[Remark 9.25]{peters2008mixed} for the relation). The tautological variations of mixed Hodge structures (VMHS) of \cite{HainZuckerVMHS} (whose stalks are MHSs defined in \cite{HainHomotopy}) for a semiabelian variety $G$ have $R/\fm^m\otimes_R\cL_G$ as underlying local systems, where $m\geq 1$, $\fm$ is the augmentation ideal of $R$ and $\cL_G=\exp_! \ul k_{TG}$. In this paper, the truncated local systems that we consider in order to endow quotients of $H_j(U^f,k)$ by powers of $\fm$ with canonical MHSs are $R_{-m}\otimes_R\oL$, which, by Remarks~\ref{rem:RmVSRmodm} and~\ref{rem:homologyVsCohomology}, are the $k$-dual local systems to $f^{-1}(R/\fm^m\otimes_R\cL_G)$ for each $m\geq 1$. The following questions remain open:
\begin{itemize}
	\item Can the mixed Hodge complexes of sheaves defined in this paper be used to endow $R_{-m}\otimes_R\oL$ with the structure of an admissible VMHS on $U$? If so, since the cohomology of an admissible VMHS is endowed with a MHS, does this MHS coincide with the MHS on $H^*(U,R_{-m}\otimes_R\oL)$ from Definition~\ref{def:endowedMHS}? We note that these MHSs on $H^*(U,R_{-m}\otimes_R\oL)$ are the ingredients needed to endow the quotients of $H_*(U^f,k)$ by powers of the augmentation ideal with canonical MHSs (see Definition~\ref{def:MHSalexander}).
	\item Is the MHS on $H^*(U, f^{-1}(R/\fm^m\otimes_R\cL_G))$ endowed through Hain and Zucker's work related to the MHS from Definition~\ref{def:endowedMHS}? If so, how?
\end{itemize}

 Both techniques have different scopes, so establishing a relationship between the two as in the previous two questions (which pertain to their common intersection) would potentially expand their respective strengths. The tautological VMHS of Hain and Zucker is a very general construction which is a key player in the classification of admissible unipotent VMHS on a smooth quasi-projective variety \cite[Theorem 1.6]{HainZuckerVMHS}. On the other hand, the thickening process described in Section~\ref{sec:thickening} can be applied to any mixed Hodge complex of sheaves, not just those resolving a local system potentially underlying a VMHS. The definition of the tautological VMHS makes heavy use of Chen's iterated integrals and the bar construction. Its description in the case of $(\C^*)^n$ is explicit, but not in general. If a relation between Hain and Zucker's construction and our work could be shown, the thickening construction in this paper could yield an alternative interpretation of Hain and Zucker's construction for semiabelian varieties. We want to note that the explicit description of the thickened mixed Hodge complexes of sheaves defined in this paper is heavily used in many of the proofs. One example of this is the proof of Theorem~\ref{thm:ssintro}, which to the best of our knowledge has no analogue for the tautological VMHS of Hain and Zucker. The explicit nature of the thickened complex in \cite{mhsalexander} also allowed us to stablish a relationship between the MHS on univariable ($G=\C^*$) Alexander modules defined therein and the limit MHS constructed using the nearby cycles functor (see \cite[Theorem 1.8, Theorem 9.8]{mhsalexander}).

\subsection*{Acknowledgements} The authors would like to thank Richard Hain, Lauren\c{t}iu Maxim, Jörg \linebreak Schürmann, Alex Suciu and Botong Wang for helpful discussions.

\section{Preliminaries}\label{sec:prelim}
\subsection{Semiabelian varieties}

We begin this section recalling the Chevalley decomposition of complex connected algebraic groups (cf.  \cite[Theorem 1.1]{conrad}).

\begin{theorem}[Chevalley decomposition]\label{thm:chevalley}
	Let $G$ be a complex connected algebraic group. Then there exists a
	unique normal affine algebraic closed subgroup $H$ of $G$ for which $G/H$ is an abelian variety $A$. That is, there
	is a unique short exact sequence of algebraic groups
	$$
	1\to H\to G\to A\to 1.
	$$
\end{theorem}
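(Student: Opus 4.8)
This is the classical Barsotti--Chevalley theorem, and the plan is to follow the proof of \cite{conrad}, separating the easy part (uniqueness, and the identification of the subgroup $H$) from the one deep ingredient (existence, i.e.\ affineness of $H$). Since $G$ is a smooth connected complex algebraic group it admits an Albanese morphism $a\colon G\to\mathrm{Alb}(G)$ to an abelian variety; after translating so that $a(e)=e$ it is a \emph{surjective} homomorphism (its image generates $\mathrm{Alb}(G)$, whence surjectivity), and one checks its kernel is connected. Set $H\coloneqq\ker a$. Then $H\trianglelefteq G$ is closed and connected and $G/H\cong\mathrm{Alb}(G)$ is an abelian variety, so $H$ is the required subgroup as soon as we know it is affine.

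\textbf{Uniqueness.} If $N\trianglelefteq G$ is a closed connected subgroup with $G/N$ an abelian variety, then the quotient morphism $G\to G/N$ (sending $e$ to the identity) factors through $a$ by the universal property of the Albanese, so $N\supseteq\ker a=H$; thus $H$ is the smallest normal subgroup with abelian variety quotient. If in addition $N$ is affine, then $N/H$ is affine (a quotient of the affine group $N$) and also a closed connected subgroup of the abelian variety $G/H$, hence proper; a connected algebraic group which is both affine and proper is trivial, so $N=H$. (Alternatively, without invoking the Albanese: given two such $H_1,H_2$, the subgroup $H_1H_2$ is closed, connected and normal, and $H_1H_2/H_1\cong H_2/(H_1\cap H_2)$ is affine and also a closed connected subgroup of the abelian variety $G/H_1$, hence trivial, so $H_2\subseteq H_1$, and by symmetry $H_1=H_2$.)

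\textbf{Existence and the main obstacle.} Everything now reduces to showing that $H=\ker a$ is affine, which is equivalent to the assertion that a connected algebraic group with no nontrivial closed connected affine normal subgroup is proper (hence an abelian variety). Granting this: the closed connected affine normal subgroups of $G$ are closed under taking the subgroup generated by two members, so by the descending chain condition on closed subgroups there is a largest one, $H_{\max}$; then $G/H_{\max}$ has no such subgroup, hence is an abelian variety, so $H=\ker a\subseteq H_{\max}$, while $H_{\max}/H$ embeds in $\mathrm{Alb}(G)$ and is affine and connected, hence trivial --- so $H=H_{\max}$ is affine. Proving the properness statement itself is the crux and where I expect all the difficulty to lie: following \cite{conrad}, one uses Borel subgroups of $G$ together with a $G$-equivariant projective compactification of $G$, and analyzes the orbit and stabilizer structure along the boundary to manufacture a nontrivial closed connected affine normal subgroup whenever $G$ is not proper. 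It is this boundary analysis --- not any of the bookkeeping above --- that constitutes the real content of the theorem.

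\textbf{Remarks on the complex setting.} Over $\C$ there are no non-reducedness subtleties (group schemes of finite type in characteristic zero are smooth, by Cartier), so ``connected algebraic group'' may be read naively and the reductions above are unconditional; one is also free to use analytic input in the properness step, e.g.\ that an algebraic group which is analytically a compact connected complex Lie group must be an abelian variety.
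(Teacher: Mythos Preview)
Your sketch is a correct outline of the Barsotti--Chevalley argument as presented in \cite{conrad}, and you have correctly identified both the elementary parts (uniqueness via the Albanese, or the symmetric $H_1H_2$ argument) and the genuinely hard step (affineness of the kernel). However, the paper does not prove this theorem: it is stated in the preliminaries section with a direct citation to \cite[Theorem 1.1]{conrad} and no proof is given. The authors simply recall it as a known structural result needed to define semiabelian varieties and to set up their compactifications.

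So there is nothing to compare your proposal against here. If anything, your write-up goes well beyond what the paper does; in a paper like this one would typically just cite the result, as the authors have done.
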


\begin{definition}[Semiabelian variety]
	Let $G$ be a complex connected algebraic group, and let $H$ be as in Theorem~\ref{thm:chevalley}. We say that $G$ is a semiabelian variety if $H\cong (\C^*)^n$ for some $n\geq 0$.
\end{definition}

\begin{remark}
	Semiabelian varieties are commutative groups (see \cite[Lemma 4]{iitaka_1977}), so we will denote their Chevalley decompositions with additive notation ($0$ instead of $1$).
\end{remark}

The following is well known.
\begin{proposition}[Functoriality of the Chevalley decomposition]\label{prop:funChevalley}
	Let $f:G_1\to G_2$ be a morphism of algebraic groups between two semiabelian varieties. Let $0\to(G_i)_T\xrightarrow{t_i}  G_i\xrightarrow{(p_A)_i}  (G_i)_A\to 0$ be the Chevalley decomposition of $G_i$ for $i=1,2$. Then, $f((G_1)_T)\subset (G_2)_T$.
\end{proposition}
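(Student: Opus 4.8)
The plan is to reduce the statement to the classical rigidity fact that there is no nonconstant homomorphism of algebraic groups from an affine torus to an abelian variety. Write $t_i\colon (G_i)_T\to G_i$ and $(p_A)_i\colon G_i\to (G_i)_A$ for the maps appearing in the Chevalley decomposition of $G_i$, so that $\ker((p_A)_2)=\im(t_2)=(G_2)_T$. Then the assertion $f((G_1)_T)\subset (G_2)_T$ is equivalent to the vanishing of the composition
\[
\phi\;\coloneqq\;(p_A)_2\circ f\circ t_1\colon (G_1)_T\longrightarrow (G_2)_A,
\]
so it suffices to show that $\phi$ is the trivial homomorphism.

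To do this I would argue as follows. The image of a morphism of algebraic groups is a closed algebraic subgroup; hence $\im(\phi)$ is a closed subgroup of the abelian variety $(G_2)_A$, and therefore complete. On the other hand $\im(\phi)\cong (G_1)_T/\ker(\phi)$ is a quotient of the torus $(G_1)_T\cong(\C^*)^{n_1}$ by a closed normal subgroup, hence an affine algebraic group. A connected algebraic group which is simultaneously affine and complete must be trivial (a complete affine variety is $0$-dimensional, and it is connected here), so $\im(\phi)=\{0\}$. Consequently $f(t_1((G_1)_T))\subset\ker((p_A)_2)=(G_2)_T$, which is the claim.

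Since this is ``well known'', there is essentially no obstacle; the only input requiring care is the rigidity step, for which the argument above (affine $\cap$ complete $=$ trivial) is the cleanest, but one could equally restrict $\phi$ to each $\C^*$-factor of $(G_1)_T$ and invoke the valuative criterion of properness to extend the resulting morphism $\C^*\to (G_2)_A$ to a morphism $\bP^1\to (G_2)_A$, which is then constant because $\bP^1$ carries no nonzero global holomorphic $1$-forms while those of the abelian variety $(G_2)_A$ separate points, and a pointed constant group homomorphism is trivial; see \cite{conrad,iitaka_1977} for the relevant structure theory.
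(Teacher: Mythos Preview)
Your proof is correct and follows the same reduction as the paper: both define $\phi=(p_A)_2\circ f\circ t_1$ and show it is trivial, whence $f((G_1)_T)\subset\ker((p_A)_2)=(G_2)_T$. The only difference is in how the rigidity step is justified: the paper simply cites \cite[Lemma~2.3]{conrad} (any morphism from an affine group to an abelian variety fixing the identity is constant), whereas you supply a self-contained argument via ``affine $\cap$ complete $=$ trivial'' applied to $\im(\phi)$.
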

\begin{proof}
	$(p_A)_2\circ f\circ t_1$ is an algebraic morphism between an affine algebraic group and an abelian variety. Since it is a group homomorphism, it sends the identity in $(G_1)_T$ to the identity in $(G_2)_A$. By \cite[Lemma 2.3]{conrad}, $(p_A)_2\circ f\circ t_1$ is the constant morphism to the identity in $(G_2)_A$. Hence, $f((G_1)_T)\subset\ker (p_A)_2=(G_2)_T$.
\end{proof}

\begin{proposition}\label{prop:torsor}
	Let $G$ be a semiabelian variety. Its Chevalley decomposition
	\[
	0\to G_T\to G\to G_A\to 0
	\]
	gives $G$ the structure of a $G_T$-torsor over $G_A$. This torsor is Zariski-locally trivial, i.e. there is a Zariski open covering of $G_A$ over which $G\cong G_T\times G_A$.
\end{proposition}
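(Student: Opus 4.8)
The plan is to prove the two assertions in turn: first that the Chevalley projection $p\colon G\to G_A$ exhibits $G$ as a $G_T$-torsor over $G_A$, and then that this torsor becomes trivial over the members of a suitable Zariski-open cover of $G_A$.

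For the torsor structure, recall from the Chevalley decomposition (Theorem~\ref{thm:chevalley}) that $G_T$ is a closed normal subgroup of $G$ with quotient $G/G_T\cong G_A$, and that $p\colon G\to G_A$ is the quotient map. Since $G$ and $G_A$ are smooth algebraic groups over $\C$, $p$ is a smooth surjection, in particular faithfully flat. The subgroup $G_T$ acts freely on $G$ by translation, and the natural map
\[
G\times G_T\longrightarrow G\times_{G_A}G,\qquad (g,t)\longmapsto (g,\,g+t),
\]
is an isomorphism of schemes over $G$. Together with the faithful flatness of $p$, this is precisely the statement that $p\colon G\to G_A$ is a $G_T$-torsor (in the fppf topology, equivalently the étale topology, since $G_T$ is smooth).

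For the local triviality I would reduce to the case of $\C^*$-torsors. Choosing an isomorphism $G_T\cong(\C^*)^n$ and composing with the $n$ coordinate projections yields characters $\chi_1,\dots,\chi_n\colon G_T\to\C^*$ that jointly recover the isomorphism. Extending the structure group of the torsor $G\to G_A$ along each $\chi_i$ produces a $\C^*$-torsor $P_i\to G_A$, and there is a canonical isomorphism of $G_T$-torsors $G\cong P_1\times_{G_A}\cdots\times_{G_A}P_n$, since a torsor under a product of groups is the same datum as the tuple of its pushouts along the projections. Now each $P_i$ is the complement of the zero section in the total space of a line bundle $L_i$ on $G_A$ — isomorphism classes of $\C^*$-torsors on a scheme being classified by its Picard group — and every line bundle is trivial over the members of some Zariski-open cover. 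As $G_A$ is quasi-compact and there are only finitely many $L_i$, one may pick a single finite Zariski-open cover $\{V_\alpha\}$ of $G_A$ over which all the $L_i$ are simultaneously trivial, and then $G|_{V_\alpha}\cong G_T\times V_\alpha$ for every $\alpha$, as desired. Alternatively, one can invoke directly that $\C^*$ is a special algebraic group in the sense of Serre, and that a finite product of special groups is special, whence every $G_T$-torsor is automatically Zariski-locally trivial.

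I do not anticipate any real obstacle: the statement is essentially formal once the framework is set up. The only point needing genuine care is the verification that $p\colon G\to G_A$ is a torsor — that is, the faithful flatness of the Chevalley quotient map together with the isomorphism $G\times G_T\xrightarrow{\ \sim\ }G\times_{G_A}G$ — since this is what permits the reduction to the classification of $\C^*$-torsors by line bundles. Everything after that (the passage from $(\C^*)^n$-torsors to tuples of $\C^*$-torsors, and the Zariski-local triviality of line bundles) is standard.
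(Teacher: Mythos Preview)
Your argument is correct. The paper's own proof is a one-line citation to Milne's \emph{\'Etale Cohomology}, Propositions~III.4.6 and~III.4.9, which establish precisely that a quotient by a smooth closed normal subgroup is a torsor and that $\mathbb G_m$-torsors (hence $(\C^*)^n$-torsors) are Zariski-locally trivial via the identification with $\mathrm{Pic}$; your proof simply unpacks the content of that citation, so the approaches coincide.
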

\begin{proof}
	This follows from the results in \cite[III.4.]{milne}, concretely, Propositions 4.6 and 4.9.
\end{proof}

\begin{proposition}\label{prop:noHomsToC}
  If \(G\) is a semiabelian variety, the only holomorphic group homomorphism \(G\to \C\) is trivial.
\end{proposition}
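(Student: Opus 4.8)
The plan is to split along the Chevalley decomposition $0\to G_T\to G\to G_A\to 0$ of Theorem~\ref{thm:chevalley}, where $G_T\cong(\C^*)^n$ and $G_A$ is an abelian variety, and to treat the toric and abelian parts separately. Fix a holomorphic group homomorphism $\phi\colon G\to\C$.

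First I would show that $\phi$ vanishes on $G_T$. The exponential map $\exp\colon TG_T\to G_T$ of the complex Lie group $G_T$ is a surjective holomorphic homomorphism whose kernel is a finitely generated free abelian group that spans $TG_T$ as a $\C$-vector space. Hence $\phi|_{G_T}\circ\exp\colon TG_T\to\C$ is a holomorphic homomorphism of complex vector groups, and such a map is automatically $\C$-linear (it is $\Q$-linear by the homomorphism property, $\R$-linear by continuity, and $\C$-linear since it is holomorphic). Being $\C$-linear and killing a $\C$-spanning set, it is zero; as $\exp$ is onto, $\phi|_{G_T}\equiv 0$.

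Next, since $G_T$ is a normal subgroup on which $\phi$ vanishes, $\phi$ descends to a group homomorphism $\overline{\phi}\colon G/G_T=G_A\to\C$; this $\overline{\phi}$ is holomorphic because the quotient map $G\to G_A$ is a holomorphic submersion (indeed Zariski-locally trivial, by Proposition~\ref{prop:torsor}). But $G_A$, being an abelian variety, is a compact complex manifold, so every holomorphic function on it is constant; as $\overline{\phi}$ is a homomorphism it takes the value $0$ at the identity, hence $\overline{\phi}\equiv 0$, and therefore $\phi\equiv 0$.

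I expect no genuine obstacle: all that is used is that $\ker(\exp\colon TG_T\to G_T)$ spans $TG_T$ over $\C$, that holomorphic homomorphisms between complex vector groups are $\C$-linear, and that holomorphic functions on a compact complex manifold are constant. If one prefers to avoid splitting off $G_T$, an alternative is to precompose directly with $\exp\colon TG\to G$ and verify, again via the Chevalley decomposition, that $\pi_1(G)=\ker(\exp)$ spans $TG$ over $\C$ — the part coming from $\pi_1(G_T)$ spans $TG_T$, while the image of $\pi_1(G)$ in $TG_A$ is the full period lattice of the compact torus $G_A$ and so spans $TG_A$ — whence the $\C$-linear map $\phi\circ\exp$ vanishes and $\phi\equiv 0$ by surjectivity of $\exp$.
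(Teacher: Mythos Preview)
Your proof is correct and follows the same overall strategy as the paper: split along the Chevalley decomposition, kill the restriction to $G_T$, then use compactness of $G_A$. The only minor difference is in how you show $\phi|_{G_T}=0$: the paper observes that a homomorphism to the torsion-free group $\C$ kills the (dense) torsion of $(\C^*)^n$, while you lift through $\exp$ and use that the kernel spans $TG_T$ over $\C$; both arguments are standard and equally short.
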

\begin{proof}
  Consider a group homomorphism \(\rho\colon G\to \C\), and \(G\)'s Chevalley decomposition as in Proposition~\ref{prop:torsor}. Then, \(\rho|_{G_T}\colon G_T \to \C\) is a holomorphic group homomorphism, so, since the torsion points are mapped to $0$, it must be  trivial. Therefore, \(\rho\) descends to a holomorphic map \(\ov\rho \colon G_A\to \C\), which must be constant, since \(G_A\) is compact.
\end{proof}

\begin{remark}[Universal cover of a semiabelian variety]\label{rem:universalCover}
  As complex manifolds, every semiabelian variety $G$ is isomorphic to $\C^g/\Z^r$ for $g = \dim G$ and some $r\in \Z_{\ge 0}$, where $\Z^r$ is embedded into $\C^g$ as a discrete subgroup. In particular, $r\le 2g$. Also, \(\Z^r\) must generate \(\C^g\) as a \(\C\)-vector space: otherwise, \(G\) would have a nontrivial holomorphic homomorphism to \(\C\), contradicting Proposition~\ref{prop:noHomsToC}. In particular, \(g\le r\). The universal cover of $G$ is given by the exponential map of Lie groups $\exp:TG\to G$, where $TG\coloneqq T_eG$ is the tangent space of $G$ at the identity $e\in G$. Note that, since $G$ is an abelian group, $\exp$ is a group homomorphism, where $TG$ is seen as a group under addition. Note that $TG\cong \C^g$, and $\exp^{-1}(e)$ is identified with the lattice $\Z^r$ through this identification, so $r=\mathrm{rank}\ \piG$.
\end{remark}

\begin{remark}\label{rem:groupStructure}
	Let $f:G_1\to G_2$ be an algebraic morphism between two semiabelian varieties. Up to translation, $f$ is also a group homomorphism. Indeed, up to translation in $G_2$, we can assume that $f$ takes the identity to the identity. Such $f$ induces a linear map (the differential of $f$ at the identity) between the universal covers given by the exponential map, which implies that $f$ is also a group homomorphism.
\end{remark}

\subsection{Alexander modules}\label{ss:alexanderModules}
Let $U$ be a smooth connected complex algebraic variety. Let $G$ be a complex semiabelian variety and let $e\in G$ be its identity. Let $f\colon U\to G$ be an algebraic map. Let $U^f$ be the complex manifold defined as the pullback of the universal cover of $G$ as in the commutative diagram~\eqref{eq:Uf}.

Since $\pi_1(G,e)$ is abelian, $\pi_1(G,x)$ is canonically identified with $\pi_1(G,e)$ for all $x\in G$. Therefore, we will not specify the choice of base points, and will denote the fundamental group of $G$ by $\piG$. The  fundamental group $\piG$ acts on $TG$ by deck transformations. By the universal property of fiber products, $\pi:U^f\to U$ is also a covering map whose deck action comes from the lift of the deck action of $\pi_1(G)$ on $TG$, and its deck transformation group is $\piG$.

Let $k$ be a field (which for us will be $\Q$, $\R$ or $\C$) and let $R = k[\piG]$. Since $\piG\cong \Z^r$, $R$ is non-canonically isomorphic to the ring of Laurent polynomials $k[t_1^{\pm 1},\ldots , t_r^{\pm 1}]$.

\begin{definition}\label{def:alexander}
	Let $k$ be a field. The $i$-th (multivariable) homology Alexander module associated to $(U,f)$ is $H_i(U^f,k)$. It is an $R=k[\piG]$-module via the deck action of $\piG$ on $U^f$.
\end{definition}

\begin{remark}
	Since $U$ has the homotopy type of a finite CW complex, $H_i(U^f,k)$ is a finitely generated $R$-module.
\end{remark}

For our purposes, it will be useful to realize the Alexander modules as homology groups of certain local systems on $U$.

\begin{definition}\label{def:LG}
	Let $k =\Q, \R$ or $\C$. In the notation of \eqref{eq:Uf}, we define $\cL_{G}\coloneqq \exp_! \ul k_{TG}$. 
\end{definition}
The action of $\piG$ on $TG$ by deck transformation induces an automorphism of $\cL_G$, making $\cL_G$ into a local system of rank $1$ free $R$-modules. For any $z\in G$, the stalks are given by
$
(\cL_G)_z=\bigoplus\limits_{z'\in\exp^{-1}(z)} k
$. The monodromy action of a loop $\gamma\in\piG$ on $(\cL_G)_z$ interchanges the summands according to the monodromy action of $\gamma$ on $\exp^{-1}(z)$.

\begin{definition}\label{def:L}
	Let $k = \Q, \R$ or $\C$. In the notation of \eqref{eq:Uf}, we define $\cL\coloneqq f^{-1}\exp_!\ul k_{TG}$, which is a rank 1 local system of free $R$-modules. Similarly, we let $\ov\cL= R\otimes_{\gamma\mapsto \gamma^{-1}} \cL$ denote the same local system, with a new $R$-module structure where $\gamma\in \piG$ acts in the way that $\gamma^{-1}$ acts on $\cL$. 
\end{definition}

\begin{remark}\label{rem:coverVsL}
There is a natural $R$-module isomorphism \(H_i(U^f,k)\cong H_i(U,\cL)\). This follows from the definition of the right hand side, since the chain complex that computes it is the same chain complex that computes the homology of \(U^f\) (see \cite[Section 2.5]{dimca2004sheaves}).
\end{remark}

\begin{remark}
	If $V\subset U$ is a simply connected open set, $\pi^{-1}(V)\cong \piG\times V$. For any  $\gamma\in \pi_1(U)$, the action of $\gamma$ on the stalk $\cL_x$ is given by multiplication by $f_*(\gamma)\in \piG$.
\end{remark}

\begin{remark}\label{remk:sectionsOfL}
	Let $\cS$ be the sheaf (of sets) of lifts of $f$ to $TG$, i.e. $\Gamma(\cS,V)= \{\iota\colon V\to TG \mid \exp\circ \iota =f \}$  for any open set $V\subseteq U$. For every $x\in U$, the stalk $\cS_x$ is canonically isomorphic to $\exp^{-1}(f(x))$, and it carries a $\piG$-action coming from the action on $TG$.
	
	On the other hand, a basis of the stalk of $\cL_G$ at $f(x)$ is given by $\exp^{-1}(f(x))$, where each point $z'$ on the fiber corresponds to the locally constant function that is 1 around $z'$ and $0$ elsewhere on the fiber, and this bijection is also compatible with the $\piG$ action. The same can be said of the stalk of $\cL$ at $x$, since $f^{-1}$ preserves stalks.
	
	This provides us with a map of sheaves $\cS\to \cL$ that sends $\cS$ to a basis of $\cL$ on each open set, and it is compatible with the action of $\piG$. Thus, a (locally defined) function $ \iota\colon U\to TG$ such that $\exp\circ \iota = f$ can be seen as a (local) section of $\cL$, and these locally form a $k$-basis. For $\gamma \in \piG$, we will denote $\gamma\cdot \iota \coloneqq \gamma\circ\iota$.
\end{remark}

\begin{notation}\label{not:iotaBar}
	We will denote the (identity) $R$-antilinear isomorphism $\cL \to \ov\cL$ by $\iota\mapsto \ov\iota$, that is, $\ov\iota$ is the notation that we will use to refer to $\iota$ when seen in $\ov\cL$. This way, for $\iota\in \cL$ and $\gamma\in \piG$, we have $\gamma \cdot\ov\iota = \ov{\gamma^{-1}\circ \iota}$.
\end{notation}

\begin{remark}\label{rem:ConjAndDual}
	There is a canonical isomorphism $\Homm_R(\cL,R)\cong \ov\cL$. On our local $k$-bases of $\cL$ and $\ov\cL$, it is given by the pairing:
	\[
	\ov\cL
	\times 
	\cL
	\longrightarrow
	\ul R_U
	\]
	defined by $\langle \gamma_1 \cdot \ov\iota,\gamma_2\cdot\iota\rangle = \gamma_1\gamma_2\in \piG \subseteq R$ for every $\iota,\gamma_1,\gamma_2$. One readily verifies that this is well-defined. Extending it in a $k$-bilinear way makes it automatically $R$-bilinear and it induces the above isomorphism.
\end{remark}

\begin{remark}\label{rem:ConjAndDualFiniteIndex}
	The observation from Remark~\ref{rem:ConjAndDual} also holds if we replace $R$ by $k[H]$, where $H$ is a finite index subgroup of $\piG$. Indeed, let $H\gamma_1,\ldots,H\gamma_n$ be the distinct elements of $\piG/H$, seen as right cosets. Then, $\{\gamma_1\cdot\iota,\ldots,\gamma_n\cdot\iota\}$ is a local $k[H]$-basis of $\cL$, and $\cL$ is a rank $n$ free $k[H]$-module. Similarly, since $\piG$ is abelian, $\{\gamma_1^{-1}\cdot\ov\iota,\ldots,\gamma_n^{-1}\cdot\ov\iota\}$ is a local $k[H]$-basis of $\ov\cL$. We define the $k[H]$-bilinear pairing $\ov\cL\times\cL\to \underline{k[H]}_U$ by the $k$-bilinear extension of the pairing given by
	$$\langle \delta_1\gamma_i^{-1}\cdot\ov\iota,\delta_2\gamma_j\cdot\iota\rangle=\left\{\begin{array}{lr}
		0 &\text{if }i\neq j,\\
		\delta_1\delta_2 &\text{if }i= j
		\end{array}\right.$$
	for all $\delta_1,\delta_2\in H$, $i,j\in\{1,\ldots,n\}$. One readily verifies that this is well-defined and induces the  isomorphism of sheaves of $k[H]$-modules
	$$
	\Homm_{k[H]}(\cL,k[H])\cong\ov\cL,
	$$
	which is also an isomorphism of sheaves of $R$-modules.
\end{remark}

Remarks~\ref{rem:coverVsL} and \ref{rem:ConjAndDual} motivate the following definition. Indeed, since the stalks of $\cL$ are infinite dimensional vector spaces but rank $1$ free $R$-modules, it seems more reasonable to dualize over $R$ rather than over $k$ to define the cohomological version of Alexander modules.

\begin{definition}\label{def:alexanderCohomology}
	Let $k$ be a field. The $i$-th (multivariable) cohomology Alexander module associated to $(U,f)$ is the $R$-module $H^i(U,\ov\cL)$, where $R=k[\piG]$.
\end{definition}

\subsection{Truncated local systems}\label{sec:truncations}

For the purposes of doing Hodge theory on Alexander modules, we will have to work with truncated versions of the local systems $\cL$ and $\ov\cL$. 

\begin{definition}\label{def:Rminfty}
	Let $m\in\Z_{>0}$, and let $k=\Q,\R,\C$. We define the rings $R_{\infty}$ and $R_{m}$ by $$R_{\infty}\coloneqq \prod\limits_{j=0}^\infty \Sym^j H_1(G,k);\quad R_{m}\coloneqq\frac{R_{\infty}}{\prod\limits_{j=m}^\infty \Sym^j H_1(G,k)}, $$
	and the $R_{m}$-module $R_{-m}$ by
	$$
	R_{-m}\coloneqq \Hom_k(R_{m},k).
	$$
\end{definition}
Note that for all $m>0$, $R_{m}$ and $R_{-m}$ have natural $R_{\infty}$-module structures. Also note that the field $k$ does not appear in the notation for $R_\infty$,  $R_m$ and $R_{-m}$ (like it did not appear in $R$), but whenever we use this notation, the base field will either be explicitly specified or clear from context.

\begin{notation}\label{not:abelianization}
	For all $\gamma\in\piG$ we denote its corresponding element in $H_1(G,\Z)\subset H_1(G,k)$ by $\log\gamma$.
\end{notation}
Even if $\piG$ is abelian and thus isomorphic to $H_1(G,\Z)$, this notation is useful because $\piG$ will be thought of as having multiplication as its group operation, but $H_1(G,\Z)$ has the sum as its group operation.

\begin{definition}[The $R$-module structure of $R_{\infty}$, $R_m$ and $R_{-m}$]\label{def:Rmodstructure}
	 Let $m>0$. The $k$-linear ring monomorphism
	$$
	\begin{array}{rcl}
		R=k[\piG] & \longrightarrow & R_{\infty}\\
    \gamma &\longmapsto &e^{\log\gamma} = \sum_{j=0}^\infty \frac{(\log \gamma)^j}{j!}
	\end{array}
	$$
	endows $R_\infty$, $R_m$ and $R_{-m}$ with $R$-module structures.
\end{definition}

\begin{remark}[$R_m$ and $R/\mathfrak m^m$ are isomorphic $R$-modules]\label{rem:RmVSRmodm}
	Let $\mathfrak m\coloneqq \left( \gamma-1\mid \gamma\in\pi_1(G)\right)$ be the augmentation ideal of $R$, and let $m\geq 1$. The image of $\mathfrak m$ by the $k$-linear ring monomorphism described in Definition~\ref{def:Rmodstructure} lies in $\prod_{j=1}^\infty\Sym^j H_1(G,k)$, so one gets an induced ring homomorphism $R/\mathfrak m^m\to R_m$, which is also an $R$-module homomorphism. In fact, it is an $R$-module isomorphism. To see this, it suffices to see that $R/\mathfrak m^m\to R_m$ is an isomorphism of $k$-vector spaces. Let $\gamma_1,\ldots,\gamma_r$ be a basis of generators of $\pi_1(G)$, and let us consider the bases
	\begin{align*}
		\cup_{j=0}^{m-1}\{(\gamma_1-1)^{i_1}\cdots(\gamma_r-1)^{i_r}&\mid i_l\geq 0 \text{ for all }l=1,\ldots,r,\quad i_1+\cdots+i_r= j\},\\
		\cup_{j=0}^{m-1}\{(\log\gamma_1)^{i_1}\cdots(\log\gamma_r)^{i_r}&\mid i_l\geq 0 \text{ for all }l=1,\ldots,r,\quad i_1+\cdots+i_r= j\}
	\end{align*}
of $R/\mathfrak m^m$ and $R_m$ respectively, where both are ordered in the same way by increasing order of $j$, and amongst the ones with the same $j$, by lexicographical order. The square matrix representing the $R$ module homomorphism between $R/\mathfrak m^m$ and $R_m$ (seen as a $k$-linear homomorphism) in these bases is triangular with ones along the diagonal.
\end{remark}

\begin{remark}\label{rem:RinftyVSInverseLimit}
	Let  $\fm\subset R$ be as in the previous remark. Taking inverse limits in the isomorphism between $\frac{R}{\fm^m}$ and $R_m$ from the previous remark, one obtains an $R$-module isomorphism
	$$
	\varprojlim_m \frac{R}{\fm^m}\cong R_\infty.
	$$
\end{remark}

In light of the Definition~\ref{def:Rmodstructure}, we can think about the local systems $R_\infty\otimes_R\cL$, $R_m\otimes_R\cL$ and $R_{-m}\otimes_R \cL$, and similarly with $\ov\cL$. Note that, by tensoring $\cL$ or $\ov\cL$ with $R_m$ (resp. $R_{-m}$) over $R$, we obtain finite dimensional $k$-local systems whose stalk is isomorphic to $R_m$ (resp. $R_{-m}$). These will be the truncated local systems that we consider. 

Let us understand the relationship between the homology and the cohomology of these truncated local systems. We begin by recalling a well-known duality result for finite dimensional $k$-local systems.

\begin{proposition}[cf. \cite{dimca2004sheaves} section 2.5]\label{prop:dimca-homology}
	Let $L$ be a finite dimensional local system over a field $k$ on a connected algebraic variety $X$. Then, for all $i\geq 0$, there is a natural isomorphism
	\[
	\Hom_k(H_i(X,L),k) \cong H^i(X, \Homm_k(L,k)).
	\]
\end{proposition}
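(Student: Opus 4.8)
The plan is to reduce the statement to the standard duality between singular homology and cohomology with coefficients in the dual local system, which one obtains essentially by a universal coefficients argument at the level of chain complexes. First I would fix a finite CW structure on $X$ (available since $X$ is a connected algebraic variety and hence homotopy equivalent to a finite CW complex), and pass to the universal cover $\widetilde X \to X$ with deck group $\pi = \pi_1(X)$; write $\Lambda = k[\pi]$. A finite dimensional $k$-local system $L$ corresponds to a left $\Lambda$-module $M$ which is finite dimensional over $k$, and the homology $H_i(X,L)$ is computed by the chain complex $C_\bullet(\widetilde X; k)\otimes_\Lambda M$, where $C_\bullet(\widetilde X;k)$ is a complex of finitely generated free left $\Lambda$-modules (finitely many cells in each degree, finitely many degrees).

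Next I would identify the cochain complex computing $H^i(X, \Homm_k(L,k))$. The dual local system $\Homm_k(L,k)$ corresponds to the $\Lambda$-module $\Hom_k(M,k)$ with the contragredient action, and its cohomology is computed by $\Hom_\Lambda\bigl(C_\bullet(\widetilde X;k), \Hom_k(M,k)\bigr)$. Using the adjunction/tensor-hom identity $\Hom_\Lambda(C, \Hom_k(M,k)) \cong \Hom_k(C\otimes_\Lambda M, k)$, valid here because $C$ is a finitely generated free $\Lambda$-module in each degree and everything is happening over a field, I get that the cochain complex computing $H^i(X,\Homm_k(L,k))$ is precisely the $k$-linear dual of the chain complex computing $H_i(X,L)$. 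Since $k$ is a field, the functor $\Hom_k(-,k)$ is exact, so it commutes with taking (co)homology, giving the natural isomorphism $H^i(X,\Homm_k(L,k)) \cong \Hom_k(H_i(X,L),k)$. Naturality in $X$ and in $L$ follows because every step — the cellular chain complex, the adjunction isomorphism, and dualization — is functorial.

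The one point requiring a little care, and the step I expect to be the main (mild) obstacle, is making the adjunction isomorphism $\Hom_\Lambda(C\otimes_\Lambda M, k) \cong \Hom_\Lambda(C, \Hom_k(M,k))$ genuinely canonical and checking it is compatible with the differentials and with the local-system bookkeeping: one must be careful about left versus right module structures (the chain complex $C_\bullet(\widetilde X)$ is naturally a complex of right $\Lambda$-modules via deck transformations, or a left one after the standard antipode twist), and about the sign conventions in turning a chain complex into a cochain complex. None of this is deep — it is the content of \cite[Section 2.5]{dimca2004sheaves} — but it is the place where an argument could go subtly wrong. Alternatively, one can bypass the explicit CW model and argue sheaf-theoretically: $\Homm_k(L,k)$ is the Verdier-type dual appropriate to local systems, and the statement is the degreewise form of the universal coefficient spectral sequence, which degenerates because $k$ is a field; but the CW/chain-level argument is the most transparent and is what the cited reference uses.
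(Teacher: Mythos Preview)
Your proposal is correct and follows the standard argument. Note, however, that the paper does not supply its own proof of this proposition: it is stated with a reference to \cite[Section 2.5]{dimca2004sheaves} and used as a black box, so there is no paper proof to compare against. Your chain-level tensor--hom adjunction argument is exactly the content of that reference (modulo the minor typo in your final paragraph, where $\Hom_\Lambda(C\otimes_\Lambda M, k)$ should read $\Hom_k(C\otimes_\Lambda M, k)$).
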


\begin{remark}[Relationship between homology and cohomology]\label{rem:homologyVsCohomology}
	For all $m\neq 0$, we have a chain of natural isomorphisms
	\begin{align*}
		\Homm_k(R_m \otimes_R \cL, k)
		&\cong
		\Homm_R(\cL ,\Hom_k(R_m, k))
		&
		\text{(Tensor-hom adjunction)}\\
		&= \Homm_R(\cL ,R_{-m})\\
		&\cong
		\Homm_R(\cL, R) \otimes_R R_{-m}	&
		\text{(Because \(\cL\) is locally free over $R$)}
		\\
		&\cong\ov \cL\otimes_R R_{-m}.&
		\text{Remark~\ref{rem:ConjAndDual}}
	\end{align*}
Since $R$ is commutative, one can identify $\ov\cL\otimes_R R_{-m}$ with $R_{-m}\otimes_R\ov\cL$. We apply Proposition~\ref{prop:dimca-homology} to $L = R_m \otimes_R \cL$, and the above to get $R$ and $R_\infty$-module isomorphisms for all $i\geq 0$ and $m\neq 0$:
	\begin{align*}
		\Hom_k(H_i(U,R_m \otimes_R \cL),k)
		&\cong
		H^i(U,\Homm_k(R_m \otimes_R \cL,k)). & \text{Proposition~\ref{prop:dimca-homology}}
		\\
		&\cong
		H^i(U,R_{-m}\otimes_R\ov\cL ).
	\end{align*}
\end{remark}

\begin{remark}\label{rem:homologyVsCohomologyFiniteIndex}
	Let $H$ be a finite index subgroup of $\piG$, and let $\pi_H:G_H\to G$ be the corresponding finite cover. $G_H$ is the quotient of $TG$ by $H$, where $H\leq \piG$ acts by deck transformations. In particular, $G_H$ is a commutative algebraic group, which is in fact a  semiabelian variety (see \cite[Section 3]{Conrad2011semistable}, for example), and $\pi_H$ is a morphism of algebraic groups. In that case, we may define $R^H\coloneqq k[H]=k[\pi_1(G_H)]$, and $R^H_\infty$, $R^H_m$ and $R^H_{-m}$ analogously as in Definition~\ref{def:Rminfty} using $H_1(G_H,k)$ instead of $H_1(G,k)$. Note that $\cL$ is locally free of finite rank as a sheaf of $R^H$-modules. Hence,
	using Remark~\ref{rem:ConjAndDual}, the argument in Remark~\ref{rem:homologyVsCohomology} can be replicated to obtain $R^H$ and $R^H_\infty$-module isomorphisms
	$$
	\Hom_k(H_i(U,R^H_m\otimes_{R^H}\cL),k)\cong H^i(U,R_{-m}^H\otimes_{R^H}\ov\cL)
	$$
	for all $m\neq 0$ and all $i\geq 0$.
\end{remark}

The rest of Section~\ref{sec:truncations} will be devoted to establishing the relationship between the (co)homology of these truncated local systems and the homological and cohomological Alexander modules of Definitions~\ref{def:alexander} and~\ref{def:alexanderCohomology}. For this, we will need the following technical result.

	%

\begin{proposition}\label{prop:limvsH}
	Let $(S,\mathfrak a)$ be a complete Noetherian local ring. Let $C^\bullet$ be a complex of finitely generated free $S$-modules. For $m\ge 0$, let $S_m \coloneqq S/\mathfrak a^m$. Then, the natural maps $\Xi_{\infty, m} \colon H^i(C^\bullet)\to H^i(S_m\otimes_{S} C^\bullet)$ induce an isomorphism of $S$-modules:
	\[
	\Xi\colon H^i( C^\bullet) \xrightarrow{\cong} 
	\varprojlim_m H^i\left( S_m\otimes_{S} C^\bullet\right) .
	\]
\end{proposition}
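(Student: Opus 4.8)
The plan is to trade the derived tensor products for honest quotients and then compare the cohomology of $C^\bullet$ with the inverse limit of cohomologies degree by degree. Since each $C^n$ is free, $\mathfrak a^m\otimes_S C^n=\mathfrak a^m C^n$, so $S_m\otimes_S C^\bullet=C^\bullet/\mathfrak a^m C^\bullet=:D_m^\bullet$, and the $D_m^\bullet$ form a tower of complexes with degreewise surjective transition maps. Because $S$ is a complete Noetherian local ring and each $C^n$ is finitely generated, $C^n$ is $\mathfrak a$-adically complete, hence $\varprojlim_m D_m^n=\widehat{C^n}=C^n$ compatibly with the differentials, i.e. $\varprojlim_m D_m^\bullet=C^\bullet$ as complexes. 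Thus the left-hand side is $H^i(\varprojlim_m D_m^\bullet)$ and $\Xi_{\infty,m}$ is the natural map from the cohomology of the limit to the cohomology of $D_m^\bullet$.

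Next I would invoke the Milnor $\varprojlim^1$ exact sequence. Since the transition maps $D_{m'}^n\twoheadrightarrow D_m^n$ are surjective, $R^1\varprojlim$ of each tower $\{D_m^n\}_m$ vanishes, and the short exact sequence of complexes $0\to\varprojlim_m D_m^\bullet\to\prod_m D_m^\bullet\to\prod_m D_m^\bullet\to 0$ (the last map being $1$ minus the shift) yields, on taking cohomology, for every $i$ an exact sequence
\[
0\longrightarrow {\varprojlim}^{1}_{m} H^{i-1}(D_m^\bullet)\longrightarrow H^i(C^\bullet)\xrightarrow{\ \Xi\ }\varprojlim_m H^i(D_m^\bullet)\longrightarrow 0,
\]
where the middle map is the one induced by the $\Xi_{\infty,m}$. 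So it suffices to show ${\varprojlim}^{1}_{m}H^i(D_m^\bullet)=0$ for all $i$, i.e. that each tower $\{H^i(D_m^\bullet)\}_m$ is Mittag-Leffler. (Injectivity of $\Xi$ can alternatively be seen directly: a cocycle of $C^\bullet$ that becomes a coboundary in every $D_m^\bullet$ lies in $\bigcap_m\bigl(B^i+\mathfrak a^m C^i\bigr)=B^i$ by Krull's intersection theorem, using that $S$ is local; this is where the local hypothesis is used.)

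The heart of the proof is the Mittag-Leffler claim, and this is where the Artin-Rees lemma enters. Write $B^{i+1}=d^i(C^i)\subseteq C^{i+1}$; since $S$ is Noetherian and $C^{i+1}$ is finitely generated, Artin-Rees gives a constant $c=c(i)$ with $\mathfrak a^nC^{i+1}\cap B^{i+1}\subseteq\mathfrak a^{n-c}B^{i+1}=d^i(\mathfrak a^{n-c}C^i)$ for all $n\ge c$. A class in $H^i(D_{m'}^\bullet)$ is represented by some $x\in C^i$ with $d^ix\in\mathfrak a^{m'}C^{i+1}$; as $d^ix\in B^{i+1}$ also, for $m'\ge c$ we can write $d^ix=d^iw$ with $w\in\mathfrak a^{m'-c}C^i$, so $x-w$ is an honest cocycle of $C^\bullet$ congruent to $x$ modulo $\mathfrak a^{m'-c}C^i$. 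Hence for $m'\ge m+c$ the image of $H^i(D_{m'}^\bullet)\to H^i(D_m^\bullet)$ is contained in, and therefore equal to, the image of $\Xi_{\infty,m}\colon H^i(C^\bullet)\to H^i(D_m^\bullet)$. These images stabilize, which is exactly the Mittag-Leffler condition, so ${\varprojlim}^{1}_{m}H^i(D_m^\bullet)=0$ and the displayed sequence shows $\Xi$ is an isomorphism.

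I expect the last paragraph to be the only real obstacle: identifying the tensor product with a quotient complex, recognizing $C^\bullet$ as the $\mathfrak a$-adic completion, and applying the Milnor sequence are all formal, whereas the comparison between the subspace and the image $\mathfrak a$-adic filtrations on the coboundaries — i.e. the Artin-Rees step, together with completeness of $S$ — is precisely what forces the limit of cohomologies to recover $H^i(C^\bullet)$ with no $\varprojlim^1$ correction. (Note that no boundedness assumption on $C^\bullet$ is needed, since every step is carried out one cohomological degree at a time.)
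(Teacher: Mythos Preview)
Your proof is correct, and the decisive step—the Artin--Rees argument showing that for $m'\ge m+c$ the image of $H^i(D_{m'}^\bullet)\to H^i(D_m^\bullet)$ coincides with $\im\Xi_{\infty,m}$—is exactly the same as the paper's. The difference is in the packaging: the paper argues injectivity and surjectivity of $\Xi$ separately, proving injectivity via Krull's intersection theorem (your parenthetical alternative) and surjectivity by asserting that it suffices to show $\im\Xi_{m',m}\subseteq\im\Xi_{\infty,m}$ for $m'\gg m$, then running the same Artin--Rees computation via a diagram of short exact sequences $0\to\mathfrak a^{m}C^\bullet\to C^\bullet\to S_m\otimes C^\bullet\to 0$. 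You instead route everything through the Milnor $\varprojlim^1$ sequence, which has the advantage of making transparent why the Artin--Rees step (i.e.\ Mittag--Leffler) is exactly what is needed; the paper's ``it suffices'' is correct but leaves the passage from levelwise liftability to a global lift implicit. Conversely, the paper's approach is more elementary in that it avoids the $\varprojlim^1$ machinery altogether.
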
\begin{proof}
	First, we show that the map is injective. Let $M=\frac{C^i}{dC^{i-1}}$, and let $N=H^i(C^\bullet)\subseteq M$. Following the definitions, we have that
	\[
	\ker \Xi = \bigcap_m \ker \Xi_{\infty, m} =\bigcap_m (\mathfrak a^{m} M\cap N)\subseteq \bigcap_m \mathfrak a^{m} M = 0,
	\]
	where the last equality follows from Krull's Intersection Theorem.
	
	Let us now prove that $\Xi$ is surjective. It suffices to prove that for every $m$ there exists an $m'\gg m$ such that
	\[
	\im \Xi_{m',m}\subseteq\im \Xi_{\infty, m}  ,
	\]
	where $\Xi_{m',m}$ is the natural map $H^i(S_{m'}\otimes C^\bullet)\to H^i(S_m\otimes C^\bullet)$. Consider the map of short exact sequences:
	\[
	\begin{tikzcd}[row sep = 1.5em]
		0
		\arrow[r]
		&
		\mathfrak a^{m'}C^\bullet
		\arrow[r]
		\arrow[d,hookrightarrow]
		&
		C^\bullet
		\arrow[r]\arrow[d,"="]
		&
		S_{m'}\otimes C^\bullet
		\arrow[r]\arrow[d,twoheadrightarrow]
		&
		0
		\\
		0
		\arrow[r]
		&
		\mathfrak a^{m}C^\bullet
		\arrow[r]
		&
		C^\bullet
		\arrow[r]
		&
		S_m\otimes C^\bullet
		\arrow[r]
		&
		0.
	\end{tikzcd}
	\]
	Taking cohomology, it induces the following map of exact sequences for every $i$:
	\[
	\begin{tikzcd}[row sep = 1.5em]
		H^i(C^\bullet)
		\arrow[r,"\Xi_{\infty, m'}"]\arrow[d,"="]
		&
		H^i(S_{m'}\otimes C^\bullet)
		\arrow[r]\arrow[d,"\Xi_{m',m}"]
		&
		\ker(H^{i+1}(\mathfrak a^{m'}C^\bullet)\to H^{i+1}(C^\bullet))
		\arrow[r]
		\arrow[d,"\star"]
		&
		0
		\\
		H^i(C^\bullet)
		\arrow[r,"\Xi_{\infty, m}"]
		&
		H^i(S_m\otimes C^\bullet)
		\arrow[r]
		&
		\ker(H^{i+1}(\mathfrak a^{m}C^\bullet)\to H^{i+1}(C^\bullet))
		\arrow[r]
		&
		0.
	\end{tikzcd}
	\]
	By the exactness of the rows, it is enough to show that for $m'\gg m$, $\star = 0$. By definition, 
	\[
	\ker(H^{i+1}(\mathfrak a^{m'}C^\bullet)\to H^{i+1}(C^\bullet)) = \frac{\mathfrak a^{m'}C^{i+1}\cap d^{-1}(0)\cap dC^i}{d(\mathfrak a^{m'}C^i)}\subseteq \frac{\mathfrak a^{m'}C^{i+1}\cap dC^i}{d(\mathfrak a^{m'}C^i) }
	\]
	We apply the Artin-Rees Lemma to the module $C^{i+1}$ and its submodule $dC^i$, to conclude that there exists an $m_0\gg 0$ such that for all $m\ge 0$,
	\[
	\mathfrak a^{m+m_0}C^{i+1}\cap dC^i=
	\mathfrak a^{m}(\mathfrak a^{m_0}C^{i+1}\cap dC^i)\subseteq \mathfrak a^m (dC^i)=d(\mathfrak a^m C^i).
	\]
	So, if $m'\ge m_0+m$, the starred map indeed vanishes, as desired.
\end{proof}

\begin{corollary}\label{cor:completionHomology}
	Let $R_\infty$ and $R_m$ be as in Definition~\ref{def:Rminfty}, for $m\geq 1$. 
	The natural maps induce an isomorphism of $R_\infty$-modules
	\[
	R_\infty \otimes_R H^i(U,\ov\cL)=\left( \varprojlim_m R_m\right) \otimes_R H^i(U,\ov\cL) \xrightarrow{\cong} \varprojlim_m H^i\left( U, R_m \otimes_R \ov\cL\right).
	\]
\end{corollary}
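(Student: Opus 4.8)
The plan is to reduce the statement to Proposition~\ref{prop:limvsH} together with the flatness of $\mathfrak m$-adic completion. First I would fix a chain-level model: since $U$ has the homotopy type of a finite CW complex and $\ov\cL$ is a local system of free $R$-modules of finite rank, there is a bounded complex $C^\bullet$ of finitely generated free $R$-modules equipped with isomorphisms $H^i\!\left(N\otimes_R C^\bullet\right)\cong H^i(U,N\otimes_R\ov\cL)$ for all $i$, natural in the $R$-module $N$. Concretely, one may take $C^\bullet=\Homm_{k[\pi_1(U)]}\!\left(C_\bullet(\widehat U;k),\ov\cL_{x_0}\right)$ with its induced differential, where $\widehat U\to U$ is the universal cover, $x_0\in U$, and $C_\bullet(\widehat U;k)$ is a bounded complex of finitely generated free $k[\pi_1(U)]$-modules with $\ov\cL_{x_0}\cong R$; since each term of $C_\bullet(\widehat U;k)$ is finitely generated free, the functor $\Homm_{k[\pi_1(U)]}(C_\bullet(\widehat U;k),-)$ commutes with $N\otimes_R(-)$ on coefficients, which gives the base-change compatibility. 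In particular $R_m\otimes_R C^\bullet$ computes $H^\bullet(U,R_m\otimes_R\ov\cL)$ for every $m\ge 1$.

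Next I would invoke Proposition~\ref{prop:limvsH} with $S\coloneqq R_\infty$. From the description $R_\infty=\prod_{j\ge 0}\Sym^jH_1(G,k)$, the ring $S$ is a power series ring over $k$ in finitely many variables — in particular a complete Noetherian local ring — with maximal ideal $\mathfrak a=\prod_{j\ge 1}\Sym^jH_1(G,k)$ and $S/\mathfrak a^m\cong R_m$ for all $m\ge 1$; moreover, via the ring map of Definition~\ref{def:Rmodstructure}, $S$ is the $\mathfrak m$-adic completion of $R$ (Remarks~\ref{rem:RmVSRmodm} and~\ref{rem:RinftyVSInverseLimit}), hence flat over the Noetherian ring $R$. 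Applying the proposition to the complex $S\otimes_R C^\bullet$ of finitely generated free $S$-modules, and noting $S_m\otimes_S(S\otimes_R C^\bullet)=R_m\otimes_R C^\bullet$, one obtains a natural $R_\infty$-module isomorphism
\[
H^i\!\left(R_\infty\otimes_R C^\bullet\right)\xrightarrow{\ \cong\ }\varprojlim_m H^i\!\left(R_m\otimes_R C^\bullet\right)=\varprojlim_m H^i\!\left(U,R_m\otimes_R\ov\cL\right).
\]

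To conclude, flatness of $R_\infty$ over $R$ lets one pull the tensor product through cohomology, so the left-hand side equals $R_\infty\otimes_R H^i(C^\bullet)=R_\infty\otimes_R H^i(U,\ov\cL)$, which by Remark~\ref{rem:RinftyVSInverseLimit} is $\left(\varprojlim_m R_m\right)\otimes_R H^i(U,\ov\cL)$; a diagram chase through the identifications shows the composite is the map induced by the natural maps $R_m\otimes_R H^i(U,\ov\cL)\to H^i(U,R_m\otimes_R\ov\cL)$, as in the statement. The only point needing real care — the expected main obstacle — is the base-change compatibility asserted in the first paragraph, i.e. exhibiting a single bounded complex of finitely generated free $R$-modules that computes $H^\bullet(U,\ov\cL)$ and whose reduction modulo $\mathfrak m^m$ computes $H^\bullet(U,R_m\otimes_R\ov\cL)$; granting that, the rest is the standard commutative algebra of completions at a maximal ideal of a Noetherian ring together with Proposition~\ref{prop:limvsH}.
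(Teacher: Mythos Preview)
Your proposal is correct and follows essentially the same approach as the paper: choose a bounded complex $C^\bullet$ of finitely generated free $R$-modules computing $H^\bullet(U,\ov\cL)$ (the paper cites \cite[Section 2.5]{dimca2004sheaves} for this, while you spell out the universal-cover construction), use flatness of $R_\infty$ over $R$ to identify $H^i(R_\infty\otimes_R C^\bullet)$ with $R_\infty\otimes_R H^i(U,\ov\cL)$, and then apply Proposition~\ref{prop:limvsH} with $(S,\mathfrak a)=(R_\infty,\prod_{j\ge 1}\Sym^j H_1(G,k))$ to the complex $R_\infty\otimes_R C^\bullet$. The base-change identification $H^i(R_m\otimes_R C^\bullet)\cong H^i(U,R_m\otimes_R\ov\cL)$ that you flag as the main care point is exactly what the paper records in its final line.
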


\begin{proof}
	Recall that $U$ has the homotopy type of a finite CW-complex, so $H^\bullet(U,\ov\cL)$ is represented by a bounded complex of finitely generated free $R$-modules $C^\bullet$ as in \cite[Section 2.5]{dimca2004sheaves}. Now, by Remark~\ref{rem:RinftyVSInverseLimit}, $R_\infty \cong \left( \varprojlim_m \frac{R}{\fm^m}\right)$, so the ring $R_\infty$ is flat over $R$. In particular, the cohomology of $R_\infty \otimes_R C^\bullet$ is naturally $R_\infty \otimes_R H^i(U,\ov\cL)$.
	
	 Let $(S,\mathfrak a)=\left(R_\infty,\prod_{j=1}^\infty \Sym^j H_1(G,k)\right)$ in Proposition~\ref{prop:limvsH} (so $S_m=R_m$) and apply it to the complex of free $R_\infty$-modules $R_\infty \otimes_R C^\bullet$ to obtain an isomorphism:
	\[
	\left( \varprojlim_m R_m\right) \otimes_R H^i(U,\ov\cL)\xrightarrow{\sim} \varprojlim_m H^i
	\left(
	R_m\otimes_{R_\infty} 
	R_\infty
	\otimes_ R C^\bullet\right)=
	\varprojlim_m H^i\left(R_m\otimes_R C^\bullet\right).
	\]
	Finally, notice that $H^i\left(R_m\otimes_R C^\bullet\right) = H^i\left(U,R_m\otimes_R \ov\cL\right)$.
\end{proof}


\begin{corollary}\label{cor:completionHomology2}
	Let $m\geq 1$, and $R_\infty$, $R_m$ and $R_{-m}$ as in Definition~\ref{def:Rminfty}. There is a natural isomorphism
	\[
	R_\infty \otimes_R H_i(U,\cL)\xrightarrow{\sim}  \varprojlim_m 
	\Hom_k\left(
	H^i(U,R_{-m}\otimes_R \ov\cL)
	,k
	\right)
	\]
\end{corollary}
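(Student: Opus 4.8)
The plan is to chain together the isomorphisms already established in the excerpt. First I would apply Corollary~\ref{cor:completionHomology} not to $\ov\cL$ directly but to reduce the problem to the homological side: by Proposition~\ref{prop:dimca-homology} we have natural isomorphisms $H_i(U,\cL)\cong H_i(U,R_m\otimes_R\cL)$'s inverse-limit picture, but more directly, I would dualize. Concretely, recall from Remark~\ref{rem:homologyVsCohomology} that for every $m\neq 0$ there is a natural isomorphism
\[
\Hom_k(H_i(U,R_m\otimes_R\cL),k)\cong H^i(U,R_{-m}\otimes_R\ov\cL).
\]
So the target $\varprojlim_m \Hom_k(H^i(U,R_{-m}\otimes_R\ov\cL),k)$ can be rewritten, using the double-dual of the finite-dimensional $k$-vector space $H_i(U,R_m\otimes_R\cL)$ (which is finite-dimensional because $R_m\otimes_R\cL$ is a finite-dimensional local system and $U$ has the homotopy type of a finite CW complex), as $\varprojlim_m H_i(U,R_m\otimes_R\cL)$. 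Thus it suffices to produce a natural isomorphism
\[
R_\infty\otimes_R H_i(U,\cL)\xrightarrow{\ \sim\ }\varprojlim_m H_i(U,R_m\otimes_R\cL).
\]

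Second, I would establish this last isomorphism by the same mechanism as Corollary~\ref{cor:completionHomology}, but in homology. Since $U$ has the homotopy type of a finite CW complex, $H_\bullet(U,\cL)$ is computed by a bounded complex $C_\bullet$ of finitely generated free $R$-modules. Because $R_\infty$ is flat over $R$ (by Remark~\ref{rem:RinftyVSInverseLimit}, $R_\infty\cong\varprojlim_m R/\fm^m$ and one argues flatness as in the proof of Corollary~\ref{cor:completionHomology}), the homology of $R_\infty\otimes_R C_\bullet$ is naturally $R_\infty\otimes_R H_i(U,\cL)$. Now apply Proposition~\ref{prop:limvsH} with $(S,\mathfrak a)=(R_\infty,\prod_{j\geq 1}\Sym^j H_1(G,k))$ to the complex of free $R_\infty$-modules $R_\infty\otimes_R C_\bullet$. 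Strictly speaking Proposition~\ref{prop:limvsH} is phrased for cochain complexes and cohomology, but a chain complex is a cochain complex after reindexing $C_i\mapsto C^{-i}$, so it applies verbatim and yields
\[
R_\infty\otimes_R H_i(U,\cL)=\Bigl(\varprojlim_m R_m\Bigr)\otimes_R H_i(U,\cL)\xrightarrow{\ \sim\ }\varprojlim_m H_i\bigl(R_m\otimes_R C_\bullet\bigr)=\varprojlim_m H_i(U,R_m\otimes_R\cL),
\]
where the last equality is because $R_m\otimes_R C_\bullet$ computes $H_\bullet(U,R_m\otimes_R\cL)$.

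Finally I would assemble the pieces: compose the homological completion isomorphism with the (inverse limit of the) finite-dimensional double-duality isomorphism $H_i(U,R_m\otimes_R\cL)\cong\Hom_k(\Hom_k(H_i(U,R_m\otimes_R\cL),k),k)$ and then with the inverse limit of the isomorphism from Remark~\ref{rem:homologyVsCohomology}, being careful that all maps in sight are natural in $m$ so that they commute with the transition maps of the inverse systems and hence pass to the limit. I expect the only mildly delicate point — the main obstacle, such as it is — to be bookkeeping the naturality/compatibility of the Remark~\ref{rem:homologyVsCohomology} isomorphism with the transition maps $R_{m'}\to R_m$ (equivalently $R_{-m}\hookrightarrow R_{-m'}$) as $m$ varies, so that the identification of the two inverse systems is an isomorphism of pro-objects and not merely a levelwise isomorphism; this is routine but should be spelled out. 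Everything else is a formal consequence of flatness of $R_\infty$, finiteness of the CW structure, and Proposition~\ref{prop:limvsH}.
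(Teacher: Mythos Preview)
Your proposal is correct and follows essentially the same route as the paper's own proof: first use the homological analogue of Corollary~\ref{cor:completionHomology} (i.e.\ apply Proposition~\ref{prop:limvsH} to a bounded complex of finitely generated free $R$-modules computing $H_\bullet(U,\cL)$, after reindexing) to obtain $R_\infty\otimes_R H_i(U,\cL)\cong\varprojlim_m H_i(U,R_m\otimes_R\cL)$, and then invoke the duality of Remark~\ref{rem:homologyVsCohomology} levelwise. The paper's proof is terser but structurally identical; your added remarks on naturality in $m$ and the use of finite-dimensional double duality are correct elaborations.
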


\begin{proof}
	By the analogous reasoning to the proof of Corollary~\ref{cor:completionHomology}, we have that the natural maps induce an isomorphism:
	\[
	\left( \varprojlim_m R_m\right) \otimes_R
	H_i(U,\cL)\xrightarrow{\sim}
	\varprojlim_m H_i\left(
	U,R_m\otimes_R \cL
	\right).
	\]
	Taking duals in Remark~\ref{rem:homologyVsCohomology}, one obtains
		\[
		H_i\left(
		U,R_m\otimes_R \cL
		\right)\cong \Hom_k\left(H^i(
		U, R_{-m}\otimes_R\ov\cL ),k
		\right).
		\]
\end{proof}

\subsection{Generalized Albanese varieties}

Iitaka (\cite{Iitaka}, \cite{iitaka_1977}) generalized the Albanese morphism of smooth complete complex algebraic varieties to smooth varieties as follows. For a detailed description, see \cite{Fujino-qa}.

\begin{definition}[Iitaka's generalized Albanese maps]\label{def:quasialbanese}
	Let $U$ be a smooth connected complex algebraic variety.
	The \emph{Albanese map} $\alpha_U: U\to G_U$ is a morphism to a semiabelian
	variety $G_U$ satisfying the following universal property:
	for any other morphism $\beta: U \to G'$ to a semiabelian variety $G'$, there exists a unique algebraic morphism $f : G_U\to G'$ such that $\beta=f\circ \alpha_U$. Such $G_U$ is usually called the \emph{Albanese} variety of $U$.
\end{definition}

\begin{remark}[Existence of the Albanese map]\label{remk:existencequasialbanese}
	The Albanese map $\alpha_U$ exists for any smooth connected complex algebraic variety (see \cite{Fujino-qa}), and hence the Albanese variety $G_U$ is well defined up to algebraic isomorphism, which, up to translation, will be a group homomorphism as well by Remark~\ref{rem:groupStructure}. Once $G_U$ is fixed, $\alpha_U$ is uniquely defined defined up to translation in $G_U$ and isomorphism of algebraic groups from $G_U$ to itself.
\end{remark}

\begin{remark}
	If $U$ is a smooth connected complex projective variety, $G_U$ in Definition~\ref{def:quasialbanese} is an abelian variety, and $\alpha_U$ is the usual Albanese map.
\end{remark}

\begin{lemma}[\cite{Fujino-qa}, Lemma 3.11]
	Let $U$ be a smooth connected complex algebraic variety, and let $\alpha_U:U\to G_U$ be its Albanese map. Then,
	$$(\alpha_U)_*: H_1(U, \Z) \to H_1(G_U, \Z)$$
	is surjective. Moreover, the kernel of $(\alpha_U)_*$ coincides with the torsion part of $H_1(U,\Z)$.
\end{lemma}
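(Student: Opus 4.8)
The plan is as follows; this is a standard fact about generalized Albanese maps (it is \cite[Lemma 3.11]{Fujino-qa}), so I will only outline the natural argument and indicate where the real content sits.

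\emph{Step 1 (reduction).} By Remark~\ref{rem:universalCover}, $H_1(G_U,\Z)\cong\pi_1(G_U)$ is a finitely generated \emph{free} abelian group. Hence $(\alpha_U)_*$ kills $\Tors H_1(U,\Z)$ and factors as $H_1(U,\Z)\twoheadrightarrow H_1(U,\Z)/\Tors\xrightarrow{\bar\alpha}\pi_1(G_U)$, and everything reduces to showing that $\bar\alpha$ is an isomorphism of free abelian groups. In other words, one needs to identify $G_U$ precisely enough to read off $(\alpha_U)_*$.

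\emph{Step 2 (main step).} The cleanest route is to use the construction of $\alpha_U$ through the Abel--Jacobi map, equivalently through Deligne's equivalence between $1$-motives and mixed Hodge structures of the appropriate type \cite{DeligneIII}. Since $U$ is smooth, $H_1(U,\Z)/\Tors$ carries a torsion-free mixed Hodge structure with weights in $\{-2,-1\}$ whose $\Gr^W_{-2}$ is of Hodge--Tate type $(-1,-1)$ and whose $\Gr^W_{-1}$ is $H_1(\ov U,\Z)/\Tors$ for a smooth projective compactification $\ov U$ of $U$, hence polarizable. By Deligne's theorem, such a mixed Hodge structure is the $H_1$ of a unique semiabelian variety $J$. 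One then checks that the associated Abel--Jacobi morphism $\alpha\colon U\to J$ is algebraic, induces on $H_1(-,\Z)$ the canonical projection $H_1(U,\Z)\twoheadrightarrow H_1(U,\Z)/\Tors=H_1(J,\Z)$, and enjoys the universal property of Definition~\ref{def:quasialbanese}. By Remark~\ref{remk:existencequasialbanese} this forces $(J,\alpha)$ to coincide with $(G_U,\alpha_U)$ up to translation and group isomorphism; in particular $(\alpha_U)_*$ is the projection $H_1(U,\Z)\twoheadrightarrow H_1(U,\Z)/\Tors$, which is surjective with kernel exactly $\Tors H_1(U,\Z)$, as claimed.

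\emph{Step 3 (alternative; the main obstacle).} If one insists on arguing from the universal property alone, surjectivity of $\bar\alpha$ follows from the sub-lemma that \emph{a morphism $\beta\colon U\to G'$ from a smooth variety to a semiabelian variety inducing the zero map on $H_1(-,\Z)$ is constant}. Indeed, such a $\beta$ pulls back every translation-invariant holomorphic $1$-form on $G'$ to an algebraic $1$-form on $U$ whose periods are the components of $\beta_*$ and therefore vanish, so these forms are exact; integrating their (regular) primitives produces an \emph{algebraic} lift $\wt\beta\colon U\to TG'$ with $\beta=\exp\circ\wt\beta$. Writing $G'$ through its Chevalley decomposition, a rational map from a smooth variety to an abelian variety is a morphism, so $\beta$ extends over any smooth compactification of $U$; this forces the component of $\wt\beta$ along the abelian part to extend as well, hence to be constant since its target is affine and the compactification is proper. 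The remaining component is a morphism $U\to(\C^*)^n$, i.e.\ a tuple of units $e^{\wt\beta_j}$ with $\wt\beta_j$ regular, and restricting to curves one sees that $e^{\wt\beta_j}$ would acquire an essential singularity at a compactification point where $\wt\beta_j$ had a pole, which is impossible; hence $\wt\beta$, and so $\beta$, is constant. Applying this to the composites of $\alpha_U$ with proper quotients of $G_U$, together with the universal property (so that $\alpha_U(U)$ cannot lie in a proper translated subgroup), gives surjectivity of $\bar\alpha$; injectivity then follows because the Abel--Jacobi morphism $\alpha\colon U\to J$ inducing the full projection provides, via the universal property of $G_U$, a left inverse to $\bar\alpha$. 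The main obstacle in this approach is precisely the constant-maps sub-lemma together with the existence of this Abel--Jacobi morphism: the algebraic lift exists formally, but ruling out nonconstant lifts genuinely uses the two structure-specific facts above and a careful reduction to the Chevalley factors, which is why the construction-based proof of \cite{Fujino-qa} (Step~2) is the more economical one.
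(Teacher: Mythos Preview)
The paper does not prove this lemma; it is quoted directly as \cite[Lemma~3.11]{Fujino-qa} and used as a black box. Your Step~2 is precisely the construction-based argument of that reference: the generalized Albanese is \emph{built} so that $H_1(G_U,\Z)=H_1(U,\Z)/\Tors$ via Deligne's theory of $1$-motives, and $(\alpha_U)_*$ is then tautologically the projection. So there is nothing in the paper to compare against, and your outline is correct.

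One small tightening for your Step~3: the surjectivity argument via the sub-lemma works cleanly when $\im\bar\alpha$ has strictly smaller rank in $\pi_1(G_U)$ (you get a nontrivial semiabelian quotient $G_U\to\C^*$ through which $U$ maps constantly). If instead $\im\bar\alpha$ has full rank but finite index $n>1$, there is no such quotient; in that case you should lift $\alpha_U$ through the degree-$n$ isogeny of $G_U$ corresponding to $\im\bar\alpha$ and use the universal property to produce a section of that isogeny, which is impossible. Your phrase ``together with the universal property'' may be gesturing at this, but it is worth making the two cases explicit. Also note that your Step~3 is not genuinely independent of Step~2, since the injectivity half still invokes the existence of the Abel--Jacobi morphism $\alpha\colon U\to J$; you acknowledge this, but it means Step~2 is really where the content lives.
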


\begin{remark}\label{remk:universalAbelianCover}
	Consider the pullback of \eqref{eq:Uf} for the map $\alpha_U$. If $H_1(U,\Z)$ is torsion free, $U^{\alpha_U}$ is the universal abelian cover of $U$.
\end{remark}

\begin{example}[Affine hypersurface complements]\label{exam:hypersurfaces}
	Suppose that $U=\C^n\setminus H$ is an affine hypersurface complement, where $H=V(f_1\cdots f_m)$, and $f_i$ are non-constant irreducible polynomials in $\C[x_1,\ldots,x_n]$ such that $f_i$ and $f_j$ do not have any non-constant common factors for all $i\neq j\in\{1,\ldots,m\}$. Then, $H_1(U,\Z)\cong \Z^m$ is generated by a choice of a positively oriented meridians around each of the $m$ irreducible components of $H$. Hence, the map
	$$
	\f{f=(f_1,\ldots,f_m)}{U}{(\C^*)^m}{x}{(f_1(x),\ldots,f_n(x))}
	$$
	induces an isomorphism on first (integral) homology groups, so $U^f$ is the universal abelian cover of $U$. In this case, $H_1(U^f,\Q)$ is generally called the Alexander invariant (with $\Q$-coefficients) of the hypersurface $H$.
	
	Let us see that the map $f=(f_1,\ldots,f_m)$ coincides with the Albanese map of $U$. Since both $\alpha_U$ and $f$ induce isomorphisms in first homology with $\Q$-coefficients, the mixed Hodge structure on $H^1(G_U,\Q)$ is pure of type $(1,1)$. The Chevalley decomposition of $G_U$ induces a short exact sequence between (abelian) fundamental groups, so  if $A$ is the abelian variety in the Chevalley decomposition of $G_U$, $H^1(A,\Q)\xrightarrow{(p_A)^*} H^1(G_U,\Q)$ is an injective morphism between pure Hodge structures of weights 1 and 2 respectively. Thus $H^1(A,\Q)=0$ and $A$ is a point, so $G_U$ is a torus which, looking at the rank of $H_1(G_U,\Q)$, must be isomorphic to $(\C^*)^m$.  By the universal property of the Albanese, there exists a unique algebraic morphism $h:G_U\cong (\C^*)^m\to (\C^*)^m$ such that $f=h\circ\alpha_U$ and which, up to translation in the target, is an algebraic group homomorphism between $(\C^*)^m$ and itself which induces an isomorphism between fundamental groups. This implies that $h$ is an isomorphism of algebraic varieties, so $f$ is the Albanese map of $U$.
\end{example}

\subsection{Compactifications}

Let $U$ be a smooth connected complex algebraic variety, let $G$ be a complex semiabelian variety and let $f:U\to G$ be an algebraic morphism. For the construction of the mixed Hodge structures in this paper, we will need to compactify $f$ in appropriate ways. First of all, the compactifications of $U$ and $G$ that will appear in this paper will always be \emph{good compactifications}, as defined below.

\begin{definition}[Good compactification]
	Let $U$ be a smooth connected complex algebraic variety, and let $X$ be a smooth compactification of $U$. $X$ is a \emph{good compactification} of $U$ if $D\coloneqq X\setminus U$ is a simple normal crossings divisor.
\end{definition}

Let us now explain which compactifications of $G$ will appear in this paper.
\begin{corollary}[Of Proposition~\ref{prop:torsor}]\label{cor:compactification}
	Let $G$ be a semiabelian variety and let $0\to G_T\to G\to G_A\to 0$ be its Chevalley decomposition. Then, $G$ has a good compactification $\ov G$ which has the structure of a fibration as follows:
	\[
	\ov{G_T}\hookrightarrow \ov{G} \twoheadrightarrow G_A,
	\]
	where $\ov{G_T}$ is a compactification of $G_T$ by a product of $\bP^1$'s.
\end{corollary}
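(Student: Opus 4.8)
The plan is to realize $\ov G$ as the fibre bundle over $G_A$ associated to the $G_T$-torsor $G\to G_A$ of Proposition~\ref{prop:torsor} and to the standard toric compactification $G_T\hookrightarrow(\bP^1)^n$ of $G_T\cong(\C^*)^n$. So first I would fix an isomorphism $G_T\cong(\C^*)^n$ and set $\ov{G_T}\coloneqq(\bP^1)^n$, with $G_T$ embedded as the product of the standard open copies $\C^*\subset\bP^1$. Then $\ov{G_T}\setminus G_T$ is the union of the $2n$ smooth divisors $\{x_i=0\}$ and $\{x_i=\infty\}$, which cross normally, so $\ov{G_T}$ is a good compactification of $G_T$ by a product of $\bP^1$'s. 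The key point is that the translation action of $G_T$ on itself extends to an algebraic action on $\ov{G_T}$, namely $t\cdot[x:y]=[tx:y]$ on the $i$-th factor; this action preserves the open orbit $G_T$, and it fixes each of the $2n$ boundary divisors individually.

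Next I would use the Zariski-local triviality in Proposition~\ref{prop:torsor}: pick a finite Zariski open cover $\{V_\alpha\}$ of $G_A$ together with torsor trivializations $G|_{V_\alpha}\cong G_T\times V_\alpha$, whose transition functions are translations by regular maps $g_{\alpha\beta}\colon V_\alpha\cap V_\beta\to G_T$ obeying the cocycle condition. Gluing the pieces $\ov{G_T}\times V_\alpha$ along the $g_{\alpha\beta}$ --- now acting through the extended action on $\ov{G_T}$ --- yields a variety $\ov G$ with a morphism $p\colon\ov G\to G_A$. Since each chart $\ov{G_T}\times V_\alpha$ is projective over $V_\alpha$ and the gluing is compatible, $p$ is projective; as $G_A$ is an abelian variety and hence projective, $\ov G$ is a projective variety, and it is smooth because it is locally a product of smooth varieties. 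The original torsor $G$ sits inside $\ov G$ as the subset that equals $G_T\times V_\alpha$ in each chart: this is well defined precisely because $G_T\subset\ov{G_T}$ is invariant under the extended action, and it is an open dense subvariety, compatibly with $p$ and with the original projection $G\to G_A$, so $p$ is surjective. Its complement $D\coloneqq\ov G\setminus G$ restricts in each chart to $(\ov{G_T}\setminus G_T)\times V_\alpha$, so $D$ is a simple normal crossings divisor, as being SNC is local on $\ov G$; moreover, because the $g_{\alpha\beta}$ fix each boundary divisor of $\ov{G_T}$, the $2n$ local boundary components glue to global irreducible components of $D$. Hence $\ov G$ is a good compactification of $G$, and $\ov{G_T}\hookrightarrow\ov G\xrightarrow{p}G_A$ is the desired fibration.

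The genuinely non-formal ingredients are Proposition~\ref{prop:torsor} itself (the Zariski-local triviality of the torsor) and the fact that the compactification $(\C^*)^n\hookrightarrow(\bP^1)^n$ carries an action of $(\C^*)^n$ fixing the boundary strata; everything else --- that the gluing data define a separated scheme (automatic from projectivity over $G_A$), smoothness, density of $G$, and the SNC property of $D$ --- is local and routine. I expect the only point requiring a little care in the write-up to be the verification that $D$ is SNC \emph{globally}, i.e.\ that after gluing the boundary components still meet transversally; this follows at once from the observation above that the transition automorphisms of $\ov{G_T}$ preserve the boundary stratification component by component, so transversality, being checkable in a single chart, is preserved.
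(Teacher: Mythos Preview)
Your proposal is correct and follows essentially the same approach as the paper: both construct $\ov G$ by gluing local pieces $(\bP^1)^n\times V_\alpha$ using the torsor transition functions, which extend to $(\bP^1)^n$ and fix the boundary divisors, and both observe that the SNC property can be checked locally. Your write-up is considerably more detailed than the paper's three-sentence proof (you address projectivity, separatedness, and the global gluing of boundary components explicitly), but the underlying argument is identical.
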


\begin{proof}
	Over an open covering of $G_A$ this is the compactification of $(\C^*)^j\times G_A$ by $(\bP^1)^j\times G_A$. These compactifications can be glued: by Proposition~\ref{prop:torsor} the transition functions are multiplication in $G_T$ by locally defined functions $G_A\to G_T$, which fix the divisors at infinity of $\ov{G_T}$. Finally, the divisor at infinity of $\ov G$ has normal crossings, since this can be checked on an open cover.
\end{proof}

\begin{definition}[Allowed compactifications of $G$]\label{def:allowedComp}
  Let $Y$ be a good compactification of $G$. We say that $Y$ is an \emph{allowed compactification} of $G$ if there exists an algebraic map $p:Y\to \ov G$ satisfying that $j_{\ov G}=p\circ j_{Y}$,  where $\ov G$ is a compactification of $G$ such as the one described in Corollary~\ref{cor:compactification}, and $j_{\ov G}$ and $j_Y$ are the inclusions of $G$ into its compactifications $\ov G$ and $Y$. 
\end{definition}

\begin{definition}[Compatible compactifications with respect to  $f$]\label{def:compatibleCompf}
	Let $X$ be a good compactification of $U$ and let $Y$ be an allowed compactification of $G$. We say that $X$ and $Y$ are \emph{compatible compactifications with respect to $f:U\to G$} if $f$ extends to an algebraic morphism $\ov f:X\to Y$.
\end{definition}

More generally, we have the following definition.
\begin{definition}[Compatible compactification with respect to a commutative diagram]\label{def:compatibleCompDiagram}
	Suppose that $U_1$ and $U_2$ are smooth connected complex algebraic varieties, $G_1$ and $G_2$ are complex semiabelian varieties, and that we have the following commutative diagram of algebraic maps
	$$
	\begin{tikzcd}
		U_1\arrow[r, "f_1"]\arrow[d, "h"] & G_1\arrow[d, "g"]\\
		U_2\arrow[r, "f_2"] & G_2.
	\end{tikzcd}
	$$
	Let $X_i$ be a good compactification of $U_i$ and let $Y_i$ be an allowed compactification of $G_i$ for $i=1,2$. We say that $X_1$, $X_2$, $Y_1$ and $Y_2$ are \emph{compatible compactifications with respect to the commutative diagram $g\circ f_1=f_2\circ h$} if the morphisms in the commutative diagram extend to algebraic morphisms which fit into the following commutative diagram:
	$$
	\begin{tikzcd}
		X_1\arrow[r, "\ov{f_1}"]\arrow[d, "\ov h"] & Y_1\arrow[d, "\ov g"]\\
		X_2\arrow[r, "\ov{f_2}"] & Y_2.
	\end{tikzcd}
	$$
\end{definition}

The next result follows from a standard argument.

\begin{lemma}[Existence of compatible compactifications]\label{lem:existenceComp}
	Let $U_1$ and $U_2$ be smooth connected complex algebraic varieties, let $G_1$ and $G_2$ be complex semiabelian varieties, and suppose that we have the following commutative diagram of algebraic maps
	$$
	\begin{tikzcd}
		U_1\arrow[r, "f_1"]\arrow[d, "h"] & G_1\arrow[d, "g"]\\
		U_2\arrow[r, "f_2"] & G_2.
	\end{tikzcd}
	$$
	Then, there exist compatible compactifications of \(U_1,U_2,G_1,G_2\) with respect to the commutative diagram $g\circ f_1=f_2\circ h$.
	
	In particular, if $f:U\to G$ is an algebraic morphism from a smooth connected complex algebraic variety to a complex semiabelian variety, there exist compatible compactifications with respect to $f$.
\end{lemma}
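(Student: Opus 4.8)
The plan is the standard \emph{graph-closure-plus-resolution} argument, organized so that the four compactifications are built in an order which only ever requires extending finitely many already-constructed morphisms. First I would fix good compactifications $\ov{G_1}$, $\ov{G_2}$ as produced by Corollary~\ref{cor:compactification}, and set $Y_2\coloneqq\ov{G_2}$; this is an allowed compactification of $G_2$, taking $p=\Id$ in Definition~\ref{def:allowedComp}.

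Next I would construct $Y_1$ so that it is allowed \emph{and} $g$ extends over it. Consider the locally closed immersion $G_1\hookrightarrow \ov{G_1}\times Y_2$ given by $x\mapsto\bigl(j_{\ov{G_1}}(x),\, j_{Y_2}(g(x))\bigr)$, essentially the graph of $g$, and let $Z$ be the closure of its image, a complete compactification of $G_1$. Applying Hironaka's resolution of singularities yields a proper birational $\nu\colon Y_1\to Z$ which is an isomorphism over the smooth locus of $Z$ (in particular over $G_1$), with $Y_1$ smooth, and which, after composing if necessary with further blow-ups supported on the boundary (hence trivial over $G_1$), can be arranged so that $Y_1\setminus G_1$ is a simple normal crossings divisor. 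Then $p\coloneqq\mathrm{pr}_1\circ\nu\colon Y_1\to\ov{G_1}$ satisfies $j_{\ov{G_1}}=p\circ j_{Y_1}$, so $Y_1$ is an allowed compactification of $G_1$, while $\ov g\coloneqq\mathrm{pr}_2\circ\nu\colon Y_1\to Y_2$ extends $g$.

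I would then compactify $f_2$ and finally $f_1$ and $h$ by the same device. Fixing any complete compactification $\ov{U_2}$ of $U_2$, I take the closure $W_2$ of the graph of $x\mapsto(x,f_2(x))$ in $\ov{U_2}\times Y_2$ and resolve it — isomorphically over $U_2$, with simple normal crossings boundary — to obtain a good compactification $X_2$ together with $\ov{f_2}\colon X_2\to Y_2$ extending $f_2$. Likewise, fixing a complete compactification $\ov{U_1}$ of $U_1$, I take the closure $W_1$ of the graph of $x\mapsto(x,f_1(x),h(x))$ in $\ov{U_1}\times Y_1\times X_2$ and resolve it to a good compactification $X_1$, obtaining morphisms $\ov{f_1}\colon X_1\to Y_1$ and $\ov h\colon X_1\to X_2$ extending $f_1$ and $h$. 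Finally, $\ov g\circ\ov{f_1}$ and $\ov{f_2}\circ\ov h$ are two morphisms $X_1\to Y_2$ agreeing on the dense open subset $U_1$, where both restrict to $g\circ f_1=f_2\circ h$; since $Y_2$ is separated and $X_1$ is a variety, they coincide, giving the required commutative square. The ``in particular'' clause is then the special case $U_1=U_2=U$, $G_1=G_2=G$, $g=h=\Id$, $f_1=f_2=f$, which returns exactly the data of Definition~\ref{def:compatibleCompf}.

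The only delicate points are bookkeeping ones. One must keep the factor $\ov{G_1}$ (resp.\ $Y_1$, resp.\ $X_2$) present in each graph closure, so that the projections supply precisely the maps demanded by Definitions~\ref{def:allowedComp},~\ref{def:compatibleCompf} and~\ref{def:compatibleCompDiagram}; and one must invoke the version of resolution of singularities that is an isomorphism over the already-smooth locus and that can be further improved so the boundary is simple normal crossings. Separatedness of the targets is what allows the extended morphisms to be identified on dense opens. None of these steps is a serious obstacle, consistent with the text's description of this as a standard argument.
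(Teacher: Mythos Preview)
Your proof is correct and follows essentially the same graph-closure-plus-resolution strategy as the paper. The only organizational difference is that you construct $X_1$ in a single step by resolving the closure of the graph of $x\mapsto(x,f_1(x),h(x))$ in a triple product $\ov{U_1}\times Y_1\times X_2$, whereas the paper builds separate compactifications $X_1'$ (for $f_1$) and $X_1''$ (for $h$) and then dominates both by resolving the closure of the diagonal in $X_1'\times X_1''$; your route is slightly more economical but not materially different.
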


\begin{proof}
	Let $Z_2$ be a good compactification of $U_2$, and let $\ov{G_2}$ be a compactification of $G_2$ as in Corollary~\ref{cor:compactification}. Let $X_2$ be a resolution of singularities of the closure of the graph of $f_2$ inside of $Z_2\times\ov{G_2}$, such that $X_2$ is a good compactification of $U_2$. By construction, $f_2$ extends to an algebraic map $\ov{f_2}:X_2\to\ov{G_2}$.
	
	Now, fix $\ov{G_1}$, a compactification of $G_1$ as in Corollary~\ref{cor:compactification}. By looking at the closure of the graph of $h$ inside of $\ov{G_1}\times\ov{G_2}$ and resolving singularities as in the previous paragraph, we find an allowed compactification $Y_1$ of $G_1$.
	
	Following this argument, we can find good compactifications $X_1'$ and $X_1''$ of $U_1$ such that $f_1$ and $h$ extend to algebraic morphisms $\ov{f_1}':X_1'\to Y_1$ and $\ov{h}'':X_1''\to X_2$. Let $X_1$ be a resolution of singularities of the closure of the graph of the identity map of \(U_1\) inside of $X_1'\times X_1''$, such that $X_1$ is a good compactification of $U$. By construction, there exist algebraic maps $p_1:X_1\to X_1'$ and $p_2:X_1\to X_1''$ which extend the identity from $U_1$ to itself. Let $\ov{f_1}=\ov{f_1}'\circ p_1$, and $\ov{h}=\ov{h}''\circ p_2$.
	
	We claim that $X_1$, $X_2$, $Y_1$ and $\ov{G_2}$ are compatible compactifications with respect to $g\circ f_1=f_2\circ h$. This follows from the fact that $\ov g\circ \ov{f_1}$ and $\ov{f_2}\circ\ov{h}$ both agree on $U_1$, and there exists a unique way of extending them continuously to $X_1$.
\end{proof}

\subsection{Commutative Differential Graded Algebras} 

\begin{definition}[Commutative differential graded algebra (cdga)]
	A commutative differential graded $k$-algebra (cdga) is a triple $$(A,d,\wedge)$$ such that:
	\begin{itemize}
		\item $(A, \wedge)$ is a non-negatively graded unitary associative $k$-algebra.
		\item $a \wedge b = (-1)^{|a||b|} b \wedge a$ for homogeneous $a,b \in A$ of degrees \(|a|\) and \(|b|\).
		\item $(A,d)$ is a cochain complex.
		\item $d(a \wedge b) = da \wedge b + (-1)^{|a|} a \wedge db$ for $a,b \in A$, and \(a\) homogeneous of degree \(|a|\).
	\end{itemize} 
\end{definition}

Notice that when we write a cdga, the field $k$ is implicit.
We often will write $A$ instead of $(A,d,\wedge)$ when the differential and multiplication are understood.

When we discuss Hodge complexes in Section~\ref{ss:MHSsAndComplexes}, we will often work with filtered cdgas whose filtrations are compatible with the differential and the multiplication.

\begin{definition}[cdga filtrations]
	Suppose $(A,d,\wedge)$ is a cdga.
	An \emph{increasing cdga filtration} on $(A, d, \wedge)$ is an increasing filtration $W_{\lc}$ on $A$ such that 
	\[
	W_iA \wedge W_jA \subset W_{i+j}A \quad\text{and}\quad d(W_iA)\subset W_iA
	\]
	for all integers $i$ and $j$.
	By a \emph{decreasing cdga filtration} on $(A, d, \wedge)$ we mean a decreasing filtration $F^{\lc}$ on $A$ such that 
	\[
	F^iA \wedge F^jA \subset F^{i+j}A \quad\text{and}\quad d(F^iA)\subset F^iA
	\]
	for all integers $i$ and $j$.
	
	One defines \emph{cdga filtrations on a sheaf of cdgas} analogously, by looking at the cdgas of sections over arbitrary open subsets. 
\end{definition}

\subsection{Differential graded Lie algebras and deformation theory}

Differential graded Lie algebras (dglas) provide a compact way to package the deformation theory of an object, in our case, a chain complex. We will review the definitions for the purpose of fixing notation. We will work over a (commutative, unital) ring $A$, which we will later assume to be Artinian local.

\begin{definition}
	A differential graded Lie algebra (dgla) over $A$ is a graded $A$-module $M = \bigoplus_{j\in \Z} M^j$ together with two $A$-(bi)linear operations:
	\begin{itemize}
		\item a differential $d\colon M\to M$ which has degree $1$, i.e. $dM^j\subseteq M^{j+1}$, and
		\item a bracket $[\cdot,\cdot]\colon M\otimes_A M\to M$ of degree $0$, i.e. $[M^j,M^{j'}] \subseteq M^{j+j'}$
	\end{itemize}
	subject to the following restrictions: throughout, suppose $a,b,c\in M$ are homogeneous elements of degrees $|a|,|b|,|c|$, respectively.
	\begin{itemize}
		\item $(M,d)$ is a complex, i.e. $d^2=0$.
		\item The bracket is graded-anticommutative:
		\[
		[a,b] = -(-1)^{|a||b|}[b,a].
		\]
		\item The bracket satisfies the graded Jacobi identity:
		\[
		[a,[b,c]] = [[a,b],c] + (-1)^{|a||b|} [b,[a,c]].
		\]
		\item The differential is a graded derivation for the bracket:
		\[
		d[a,b] = [da,b] + (-1)^{|a|}[a,db].
		\]
	\end{itemize}
\end{definition}

\begin{remark}\label{rem:dga-dgla}
	Our main example of a dgla is the following: suppose $(M^\bullet,d,\cdot)$ is a differential graded associative algebra, i.e. $(M^\bullet,d)$ is a complex, and $\cdot$ is an associative product for which $d(a\cdot b)=(da)\cdot b +  (-1)^{|a|} a\cdot (db)$. Then, automatically $(M^\bullet,d,[,])$ is a dgla with the bracket given by
	\[
	[a,b] = a\cdot b - (-1)^{|a||b|} b\cdot a.
	\]
\end{remark}

\begin{definition}
  Let $(A,\fm)$ be an Artinian local $k$-algebra with a fixed map $A\twoheadrightarrow k$, and let $(C^\bullet,d)$ be a bounded complex of $k$-vector spaces. A \emph{deformation} of $(C^\bullet,d)$ over $A$ is a complex $(\wt C^\bullet,\widehat D)$ of free $A$-modules, together with an isomorphism of complexes $k\otimes_A \wt C^\bullet \cong C^\bullet$.
\end{definition}

We will be interested in how endomorphisms of $C^\bullet$ give rise to deformations.

\begin{remark}
	Let $C^\bullet$ be a bounded complex of $k$-vector spaces. Then the vector space of $k$-linear endomorphisms $\End^\bullet_k(C^\bullet)$ is a differential graded associative algebra, where the homogeneous elements of degree $k$ are linear maps $\phi$ such that $\phi(C^j)\subseteq C^{j+k}$. The product is composition, and the differential is the graded commutator with $d$, i.e. if $\phi$ has degree $|\phi|$,
	\[
	d\cdot \phi \coloneqq d\circ \phi - (-1)^{|\phi|} \phi\circ d.
	\]
	Note that with this differential, $H^j(\End^\bullet_k(C^\bullet))$ is the group of homotopy classes of morphisms of complexes $C^\bullet\to C^\bullet[j]$ (recall that by convention the differential on $C^\bullet[j]$ is $(-1)^jd$). By Remark~\ref{rem:dga-dgla}, $\End_k^\bullet(C^\bullet)$ is a dgla with the bracket given by the commutator.
\end{remark}

\begin{remark}
	Let $S_1\to S_2$ be a ring map. If $L^\bullet$ is a $S_1$-dgla, $S_2\otimes_{S_1} L^\bullet$ becomes a dgla with the bracket $[a_1\otimes m_1,a_2\otimes m_2]\coloneqq a_1a_2\otimes[m_1,m_2]$. We are interested in the ring map $k\to A$ and the $k$-dgla $L^\bullet = \End^\bullet_k(C^\bullet)$. In this case, $A\otimes_k L^\bullet = \End^\bullet_A(A\otimes_k C^\bullet)$, and the Lie bracket extended from $k$ coincides with the commutator of endomorphisms. Furthermore, $\fm\End^\bullet(C^\bullet)\coloneqq \fm\otimes_k \End^\bullet_k(C^\bullet)=  \Hom^\bullet_k( C^\bullet,\fm\otimes_k C^\bullet)= \Hom^\bullet_A(A\otimes_k C^\bullet,\fm\otimes_k C^\bullet)$ is a sub-dgla.
\end{remark}


\begin{lemma}\label{lem:dglas}
	Let $k = \Q, \R$ or $\C$, and let $(A,\fm)$ be a local Artinian $k$-algebra with residue field $k$. Let $(C^\bullet,d)$ be a bounded complex of $A$-modules. Suppose $\phi\in \fm\End^1(C^\bullet)$ satisfies the Maurer-Cartan equation, i.e.:
	\[
	d\cdot \phi + \frac{1}{2}[\phi,\phi]=0.
	\]
	Then, $(A\otimes_k C^\bullet,d+\phi)$ is a complex of $A$-modules. Furthermore, for any $\rho\in \fm\End^0(C^\bullet)$, one obtains an isomorphism $e^\rho\coloneqq \sum_{k=0}^\infty \frac{1}{k!} \rho^k$:
	\[
	e^{\rho}\colon (A\otimes_k C^\bullet,d+\phi)\xrightarrow{\sim} (A\otimes_k C^\bullet,d+\phi+[e^\rho ,d+\phi]e^{-\rho})
	\]
	If $[\rho,[\rho,d+\phi]]=0$, then $e^\rho$ is an isomorphism:
	\[
	e^{\rho}\colon (A\otimes_k C^\bullet,d+\phi)\xrightarrow{\sim} (A\otimes_k C^\bullet,d+\phi-d\cdot\rho +[\rho,\phi]).
	\]
	The same result holds for sheaves: Let $(\cK^\bullet,d)$ is a bounded complex of sheaves of $k$-vector spaces. We obtain analogous statements for $A\otimes_k\cK^\bullet$ , $\phi\in \Hom_A^1(A\otimes_k\cK^\bullet, \fm \otimes_k\cK^\bullet)$ and $\rho\in \Hom_A^0(A\otimes_k\cK^\bullet, \fm \otimes_k\cK^\bullet)$.
\end{lemma}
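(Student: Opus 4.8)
The strategy is to carry out all of the claims as short formal computations inside the differential graded associative algebra $\End^\bullet(C^\bullet)$ (respectively its sheaf analogue), using only two soft inputs: that $\fm$ is nilpotent, since $A$ is Artinian local, so every element of $\fm\End^\bullet(C^\bullet)$ is nilpotent and all the exponential series below are finite sums; and that $\operatorname{char} k = 0$, so the coefficients $1/\ell!$ make sense. Write $D\coloneqq d+\phi$, a degree-one $A$-linear operator on $A\otimes_k C^\bullet$. For the first claim I would simply expand $D^2 = d^2 + (d\circ\phi + \phi\circ d) + \phi^2$: here $d^2=0$; since $|\phi|=1$ the $\End^\bullet$-differential of $\phi$ is $d\cdot\phi = d\circ\phi + \phi\circ d$; and $\tfrac12[\phi,\phi] = \phi^2$, because the graded commutator gives $[\phi,\phi] = \phi^2 - (-1)^{1}\phi^2 = 2\phi^2$. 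Hence $D^2 = d\cdot\phi + \tfrac12[\phi,\phi] = 0$ by the Maurer–Cartan hypothesis, so $(A\otimes_k C^\bullet, D)$ is a complex of $A$-modules.

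For the second claim, nilpotence of $\rho\in\fm\End^0(C^\bullet)$ makes $e^{\pm\rho}$ well-defined $A$-linear degree-zero operators with $e^\rho e^{-\rho}=1$, so $e^\rho$ is an $A$-module automorphism. Conjugating the square-zero operator $D$ yields another square-zero degree-one operator $e^\rho D e^{-\rho}$, and $e^\rho$ is by construction an isomorphism of complexes $(A\otimes_k C^\bullet, D)\xrightarrow{\ \sim\ }(A\otimes_k C^\bullet, e^\rho D e^{-\rho})$. I would then rewrite the target differential: $e^\rho D e^{-\rho} = D + (e^\rho D - D e^\rho)e^{-\rho} = D + [e^\rho, D]e^{-\rho}$, the bracket being the graded commutator (an ordinary one here, as $e^\rho$ has degree zero). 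A final bookkeeping check — writing $e^\rho = 1+\nu$ with $\nu\in\fm\End^0$, so $[e^\rho,D] = [\nu,d]+[\nu,\phi] = -(d\cdot\nu)+[\nu,\phi]\in\fm\End^1$ — confirms that the new differential is again of the form $d + (\text{element of }\fm\End^1)$, as needed for the statement to make sense.

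For the third claim I would package the conjugation through the left and right multiplication operators $L_\rho, R_\rho$ acting on $\End^\bullet(C^\bullet)$: these commute by associativity, both are nilpotent, and $L_\rho - R_\rho = \operatorname{ad}_\rho \coloneqq [\rho,-]$, so $e^\rho(\,\cdot\,)e^{-\rho} = e^{L_\rho}e^{-R_\rho} = e^{L_\rho - R_\rho} = e^{\operatorname{ad}_\rho}$. Under the extra hypothesis $[\rho,[\rho,D]] = \operatorname{ad}_\rho^2(D) = 0$, all terms $\operatorname{ad}_\rho^n(D)$ with $n\ge 2$ vanish, so the series truncates to $e^\rho D e^{-\rho} = D + [\rho, D] = d + \phi + [\rho,d] + [\rho,\phi]$. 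Since $|\rho|=0$ we have $d\cdot\rho = d\circ\rho - \rho\circ d$, hence $[\rho,d] = \rho\circ d - d\circ\rho = -(d\cdot\rho)$, giving $e^\rho D e^{-\rho} = d + \phi - d\cdot\rho + [\rho,\phi]$. Combined with the identity from the previous paragraph this is exactly the asserted isomorphism.

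Finally, for the sheaf version there is nothing new to do: every step above is a manipulation of the operators $d$, $\phi$, $\rho$ viewed as morphisms of the bounded complex of sheaves $A\otimes_k\cK^\bullet$ (composition, graded commutators, finite exponential series), and the identities invoked — $(d+\phi)^2=0$, $e^\rho e^{-\rho}=1$, and the two formulas for $e^\rho(d+\phi)e^{-\rho}$ — are equalities of sheaf morphisms which may be checked on stalks, where they reduce to the same algebra. I do not anticipate a real obstacle in any of this; the only thing requiring care is keeping the signs in the graded commutators and in the $\End^\bullet$-differential consistent, together with the routine but essential remark that nilpotence of $\fm$ turns every exponential into an honest finite sum of operators.
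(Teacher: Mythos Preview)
Your proposal is correct and follows exactly the approach the paper takes: the paper's proof consists of the single sentence ``This is all direct computation. Note that $A$ is Artinian local, so $\fm$ is nilpotent, which ensures that $e^\rho$ is well-defined,'' and you have simply written out that direct computation in full, with correct sign bookkeeping throughout.
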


\begin{proof}
	This is all direct computation. Note that $A$ is Artinian local, so $\fm$ is nilpotent, which ensures that $e^\rho$ is well-defined.
	
%
\end{proof}

\begin{remark}\label{rem:dglaTensorM}
	In the notation of Lemma~\ref{lem:dglas}, if we let $M$ be a (left) $A$-module, the analogous statements can be made for the complexes $M\otimes_k C^\bullet$, since these are simply obtained from $A\otimes_k C^\bullet$ by tensoring over $A$ with $M$.
\end{remark}

\begin{remark}\label{rem:dgla-cdga}
	Let $k$ be a field, let $A$ be a local Artinian $k$-algebra with maximal ideal $\fm$. Let $(C^\bullet,d,\cdot)$ be an $A$-cdga and let $(M^\bullet,d)$ be a $C^\bullet$-differential graded (left) module. In other words, multiplication induces an $A$-dga homomorphism $C^\bullet\to \End_A(A\otimes_k M^\bullet)$. Let us abuse notation and use the same letter for elements of $C^\bullet$ and their multiplication endomorphism.
	\begin{enumerate}
		\item For any $\phi\in \fm C^1$, $[\phi,\phi]=0$, so the Maurer-Cartan equation is equivalent to $[d,\phi]=0\in \End_A(A\otimes_k M^\bullet)$, and therefore the condition that $\phi$ is closed in $C^\bullet$ is sufficient for the Maurer-Cartan equation to hold.
		\item For any $\phi\in \fm C^1$ and $\rho\in \fm C^0$, $[\rho,d+\phi]=-d \rho$. Therefore, $[\rho,[\rho,d+\phi]]=0$. Applying Lemma~\ref{lem:dglas}, $e^\rho$ is an isomorphism between $(A\otimes_k M^\bullet,d+\phi)$ and $(A\otimes_k M^\bullet,d+\phi-d \rho)$.
	\end{enumerate}
The same result also holds in the case of sheaves, as in Lemma~\ref{lem:dglas}.
\end{remark}

\subsection{Mixed Hodge structures and complexes}\label{ss:MHSsAndComplexes} 
The purpose of this section is to compile relevant definitions and to set notations related to mixed Hodge structures (MHSs) and mixed Hodge complexes of sheaves. Throughout this section, $k$ will be a subfield of $\R$ and $X$ will be a topological space. We start by recalling how multi-linear algebra constructions behave with respect to MHSs.

\begin{defprop}[MHS on the dual, tensor product and symmetric product, cf. Examples 3.2 in \cite{peters2008mixed}]\label{defprop:multilinear}
	Let $(V,W_{\lc},F^{\lc})$ and $(V',W_{\lc},F^{\lc})$ be $k$-vector spaces endowed with a MHS, where $W_{\lc}$ are the decreasing weight filtrations in $V$ and $V'$ and $F^{\lc}$ are the increasing Hodge filtrations in $V_\C$ and $V'_\C$. Here $V_\C$ (resp. $V'_\C$) denotes $V\otimes_k\C$ (resp. $V'\otimes_k\C$).
	\begin{itemize}
		\item $(\Hom_k(V,k), W_{\lc},F^{\lc})$ is a MHS, where 
		\begin{align*}
			W_{-n}\Hom_k(V,k)&=\{f:V\to k\mid W_{n-1}V\subset\ker f \}\quad\text{for all }n,\text{ and}\\
			F^{-p}\left(\Hom_k(V,k)_\C\right)&=F^{-p}\Hom_\C(V_\C,\C)\\&=\{f:V_\C\to \C\mid F^{p+1}(V_\C)\subset\ker f \}\quad\text{for all }p.\\
		\end{align*}
		\item $(V\otimes_k V', W_{\lc},F^{\lc})$ is a MHS, where
		\begin{align*}
			W_{n}(V\otimes_k V')&=\sum_{m}W_m V\otimes_k W_{n-m}V'\quad\text{for all }n,\text{ and}\\
			F^{p}(V_\C\otimes_\C V'_\C)&=\sum_{m}F^m V_\C\otimes_\C F^{p-m}V'_\C\quad\text{for all }p.\\
		\end{align*}
		\item Let $j\geq 1$. The projection $\underbrace{V\otimes_k V\otimes_k\cdots\otimes_k V}_{j}\to\Sym^j V$ induces a MHS on $\Sym^j V$ given by the image of the filtrations $W_{\lc}$ and $F^{\lc}$ in $\underbrace{V\otimes_k V\otimes_k\cdots\otimes_k V}_{j}$. By convention, $\Sym^0 V=k$ is a pure Hodge structure of type $(0,0)$.
		\item The multiplication map $V\otimes_k \Sym^j V\to \Sym^{j+1} V$ is a MHS morphism.
	\end{itemize}
\end{defprop}

\begin{remark}[MHS in homology]\label{rem:DeligneHomology}
	Let $X$ be a complex algebraic variety. By work of Deligne \cite{DeligneII,DeligneIII}, $H^i(X,k)$ carries a canonical and functorial MHS for all $i\geq 0$. Since $H^i(X,k)$ is finite dimensional, its dual is canonically isomorphic to $H_i(X,k)$. By Definition-Proposition~\ref{defprop:multilinear}, $H_i(X,k)$ also carries a canonical and functorial MHS.
\end{remark}

\begin{remark}[MHS on $R_m$ and $R_{-m}$]\label{rem:MHSRm}
	Let $m>0$, and let $R_m$ and $R_{-m}$ as in Definition~\ref{def:Rminfty}. Since $$R_{m}\cong\prod\limits_{j=0}^{m-1} \Sym^j H_1(G,k)=\bigoplus\limits_{j=0}^{m-1}\Sym^j H_1(G,k)$$ and the direct sum of MHSs is a MHS, the MHS on $H_1(G,k)$ endows both $R_m$ and $R_{-m}$ (its $k$-dual) with a MHS by Definition-Proposition~\ref{defprop:multilinear}.
\end{remark}

In this paper, we will obtain infinite sequences of MHS of the form 
\[
\cdots \twoheadrightarrow
V_{m+1} \twoheadrightarrow
V_{m} \twoheadrightarrow
V_{m-1} \twoheadrightarrow\cdots \twoheadrightarrow V_1.
\]
The inverse limit of such a sequence can be regarded as a pro-MHS. We will not use the definition in this paper, as we will just  construct some pro-MHS and morphisms between them naively.

\begin{remark}[Pro-MHS]
	Let $V=\varprojlim_m V_m$, where each $V_m$ is a $k$-MHS for all $m\geq 1$, and all the morphisms involved are MHS morphisms. This data can be regarded as a \emph{pro-MHS}. There is a category of pro-MHS that can be constructed as the usual abstract nonsense pro-completion: one would simply have to replace the index set $\Z_{> 0}$ by a more general filtered set (or category) to define a pro-MHS in full generality. Morphisms are defined as follows. Suppose we are given two pro-MHS constructed in this way, $V=\varprojlim_m V_m$ and $W=\varprojlim_{m'} W_{m'}$. Then,
	\[
\Hom_{\mathrm{pro-MHS}}\left(
V,W
\right)
=
\Hom_{\mathrm{pro-MHS}}\left(
\varprojlim_{m} V_m, \varprojlim_{m'} W_{m'}
\right)	\coloneqq 
\varprojlim_{m'} \varinjlim_m \Hom_{\mathrm{MHS}}(V_m, W_{m'})
	\]
	Plainly, a morphism consists of: for every $m'$ one must choose an $m$ and a morphism $V_m\to W_{m'}$, and these must be all compatible in the obvious ways. In this paper, the only such morphisms that will appear will be constructed in the most naive way: for every $m'$, we will take $m=m'$. I.e. given morphisms of MHS $V_m\to W_m$ for all $m\geq 1$ commuting with the linear maps $V_{m'}\to V_{m}$ and $W_{m'}\to W_{m}$ for all $m'\geq m\geq 1$, we obtain a \emph{morphism of pro-MHSs} $V\to W$.
\end{remark}

\begin{remark}\label{rem:proMHScategory}
	Inverse limits are left exact, and in the context of inverse limits of finite dimensional vector spaces, they are also right exact, since these inverse limits always satisfy the Mittag-Leffler condition. Hence, the category of pro-MHS has kernels, images and cokernels, and they coincide with the kernels, images and cokernels of the underlying vector spaces \(V\).
\end{remark}

\begin{definition}[{\cite[Definition 3.13]{peters2008mixed}}]
	A \emph{$k$-mixed Hodge complex of sheaves on a topological space $X$} is a triple $$\cK^\bullet= ((\cK^\bullet_k,W_{\lc}), (\cK^\bullet_\C, W_{\lc},F^{\lc}), \alpha)$$ 
	where:
	\begin{itemize}
		\item $\cK^\bullet_k$ is a bounded below complex of sheaves of $k$-vector spaces on $X$ such that $\mathbb{H}^*(X,\cK_k^\bullet)$ are finite-dimensional, and $W_{\lc}$ is an increasing (weight) filtration on $\cK^\bullet_k$.
		\item $\cK^\bullet_\C$ is a bounded below complex of sheaves of $\C$-vector spaces on $X$, $W_{\lc}$ is an increasing (weight) filtration and $F^{\lc}$ a decreasing (Hodge) filtration on $\cK^\bullet_\C$.
		\item $\alpha\colon (\cK^\bullet_k,W_{\lc}) \dashrightarrow (\cK^\bullet_\C,W_{\lc})$ is a \emph{pseudo-morphism of filtered complexes} of sheaves of $k$-vector spaces on $X$ (i.e.\ a chain of morphisms of bounded-below complexes of sheaves as in \cite[Definition 2.31]{peters2008mixed} except that each complex in the chain is filtered, as are all the morphisms) that induces a filtered pseudo-isomorphism
		\[
		\alpha \otimes 1\colon (\cK^\bullet_k \otimes \C, W_{\lc} \otimes \C) \dashrightarrow (\cK^\bullet_\C,W_{\lc})
		\]
		that is, a pseudo-isomorphism on each graded component.
		\item for $m \in \Z$, the $m$-th $W$-graded component 
		\[
		\Gr_m^W\cK^\bullet = \Big(\Gr_m^W\cK^\bullet_k, \big(\Gr_m^W\cK^\bullet_\C, F^{\lc}\big), \Gr_m^W\alpha\Big)
		\]
		is a $k$-Hodge complex of sheaves \cite[Definition 2.32]{peters2008mixed} on $X$ of weight $m$, where $F^{\lc}$ denotes the induced filtration.
	\end{itemize}
\end{definition}

	We have not explicitly defined the concept of a $k$-Hodge complex of sheaves, but we will only use it in the proof of Lemma~\ref{lem:MHStensorMHC}, where we will enumerate the conditions that need to be verified. The strategy that we will follow to prove that the rest of our constructions yield new mixed Hodge complexes of sheaves will consist on proving that its $W$-graded components coincide with complexes that are known to be Hodge complexes of sheaves by Lemma~\ref{lem:MHStensorMHC}.
	
	We will sometimes introduce a $k$-mixed Hodge complex of sheaves on $X$ simply as $\cK^\bullet$ and implicitly assume the components of the triple to be notationally the same as in the above definition. 
\begin{definition}\label{def:multiplicativeMHC}
	A \emph{multiplicative $k$-mixed Hodge complex of sheaves on $X$} is a $k$-mixed Hodge complex of sheaves $\cK^\bullet$ on $X$ such that the pseudo-morphism $\alpha$ has a distinguished representative given by a chain of morphisms of sheaves of cdgas on $X$ (with all but $\cK^\bullet_k$ being a sheaf of $\C$-cdgas), and such that \textit{all} filtrations (including those in the chain) are cdga-filtrations (over $\C$ except for the weight filtration on $\cK^\bullet_k$). 
\end{definition}

From a given mixed Hodge complex of sheaves, one can construct others (translation, Tate twists) as follows:
We can also obtain new MHSs from a given MHS by shifting the filtrations appropriately.

\begin{definition}[Tate twist]\ \\
	
	\begin{itemize}
		%
		\item Suppose $\cK^\bullet$ is a $k$-mixed Hodge complex of sheaves on $X$.
		The \emph{$j$-th Tate twist of $\cK^\bullet$} is the triple
		\[
		\cK^\bullet(j) = \Big(\big(\cK^\bullet_k,W[2j]_{\lc}\big), \big(\cK^\bullet_\C, W[2j]_{\lc},F[j]^{\lc}\big), \alpha\Big)
		\]
		where $W[2j]_i = W_{2j+i}$ and $F[j]^i = F^{j+i}$ are shifted filtrations.
		$\cK^\bullet(j)$ is again a $k$-mixed Hodge complex of sheaves on $X$. For details see \cite[Definition 3.14]{peters2008mixed}.
		
		\item The \emph{$j$-th Tate twist of a $k$-mixed Hodge structure} is defined by shifting the weight and Hodge filtrations with the same formula we used for mixed Hodge complexes above. See \cite[Example 3.2(3)]{peters2008mixed} for an explicit definition.
	\end{itemize}
\end{definition}
Notice that we have changed the convention of \cite[Examples 3.2 (3)]{peters2008mixed} in all of these definitions of Tate twists by selecting not to multiply by $(2 \pi i )^{2j}$.

\begin{remark}\label{rem:TateCohomology}
	The two definitions of Tate twist above are compatible in the following sense. Let $\mathbb{H}^i(X,\cK^\bullet_k(j))$ (resp. $\mathbb{H}^i(X,\cK^\bullet_k)$)  be the $k$-MHS induced in hypercohomology by the $k$-mixed Hodge complex of sheaves $\cK^\bullet(j)$ (resp. $\cK^\bullet$). By \cite[Theorem 3.18]{peters2008mixed},  $\mathbb{H}^i(X,\cK^\bullet_k(j))=\mathbb{H}^i(X,\cK^\bullet_k)(j)$.
\end{remark}

\begin{example}\label{eg:TateRm}
Suppose that $G=\C^*$ in Remark~\ref{rem:MHSRm}, and let $m\geq 1$. We have that $H_1(G,k)$ is a pure Hodge structure of type $(-1,-1)$. Let $s$ be a generator of $H_1(G,\Z)$ (seen inside of $H_1(G,k)$). We have that $\{1=s^0,s,s^2,\ldots, s^{m-1}\}$ is a $k$ basis of $R_m$, and let $\{1^\vee=(s^0)^\vee,s^\vee,(s^2)^\vee,\ldots, (s^{m-1})^\vee\}$ be its dual basis. The following $k$-linear isomorphism defined on a basis as
$$
\begin{array}{cccl}
	A_m:&R_m & \longrightarrow & R_{-m}\\
	&s^j & \longmapsto & (s^{m-1-j})^\vee \quad\text{ for all }j=0,\ldots, m-1
\end{array}
$$
is also an $R_\infty$-linear isomorphism which induces a MHS isomorphism $R_m(1-m)\to R_{-m}$, where $(1-m)$ denotes the $(1-m)$-th Tate twist.
\end{example}

\begin{definition}[Translation of a mixed Hodge complex of sheaves]\label{def:translatedMHC}
	\ 
	\begin{itemize}
		\item If $\mathcal{F}^{\bullet}$ is a complex of sheaves on $X$, then its \emph{translation} by $r\in\Z$ is the complex $\mathcal{F}^{\bullet}[r] = \mathcal{F}^{\bullet+r}$ with differential $d^\bullet[r] = -(1)^rd^{\bullet+r}$.
		\item Suppose that $\cK^\bullet$ is a $k$-mixed Hodge complex of sheaves on $X$.
		The \emph{translation of $\cK^\bullet$} by $r\in \Z$ is the triple
		\[
		\cK^\bullet[r] = ((\cK^\bullet_k[r], W[-r]_{\lc}), (\cK^\bullet_\C[r], W[-r]_{\lc}, F^{\lc}), \alpha[r])
		\]
		where the filtrations are described by:
		\begin{align*}
			&(W[-r])_i\left(\cK^\bullet_k[r]\right) = \left(W_{i-r}\cK^\bullet_k\right)[r],\ (W[-r])_i\left(\cK^\bullet_\C[r]\right) = \left(W_{i-r}\cK^\bullet_\C\right)[r],\ i \in \Z,\\
			&F^p\left(\cK^\bullet_\C[r]\right) = \left(F^{p}\cK^\bullet_\C\right)[r],\ p \in \Z.
		\end{align*}
		This is again a $k$-mixed Hodge complex of sheaves on $X$.
	\end{itemize}
\end{definition}
Note that this does not agree with the translation of a pure Hodge complex as defined in \cite[Lemma-Definition 2.35]{peters2008mixed}. In fact, this notion of translation increases the weight of a pure Hodge complex by 1, whereas the translation in loc. cit. decreases it (contrary to what is stated in loc. cit.). It does agree with the translation of mixed Hodge complexes implicit in \cite[Theorem~3.22]{peters2008mixed}.

\begin{remark}\label{transvstate}
	Suppose $\cK^\bullet$ is a $k$-mixed Hodge complex of sheaves on $X$.
	By \cite[Theorem 3.18.II]{peters2008mixed} the hypercohomology vector spaces $\mathbb{H}^*(X, \cK^\bullet_k)$ inherit $k$-mixed Hodge structures. Furthermore, it can be easily shown that
	\[\mathbb{H}^{*}(X, \cK^\bullet_k[r]) \cong \mathbb{H}^{*+r}(X, \cK^\bullet_k),\]
	where the $k$-mixed Hodge structure on the left-hand side has been induced by the translated $k$-mixed Hodge complex $\cK^\bullet[r]$.
\end{remark}

\begin{definition}[Derived direct image of a mixed Hodge complex of sheaves.]\label{def:directim}
	Let $\cK^\bullet$ be a $k$-mixed Hodge complex of sheaves on $X$ where the filtrations $W_{\lc}$ and $F^{\lc}$ are biregular (i.e. for all $m$, the filtrations induced on $\cK^m$ are finite). Suppose that $g\colon X\rightarrow Y$ is a continuous map between two topological spaces. The derived direct image of $\cK^\bullet$ via $g$ is again a mixed Hodge complex of sheaves, and it is defined as follows (\cite[B.2.5]{peters2008mixed}).
	
	Let $\Tot[\cC_{\Gdm}^\bullet\cF^{\bullet}]$ be the Godement resolution of a complex of sheaves $\cF^{\bullet}$ as defined in \cite[B.2.1]{peters2008mixed}, which is a flabby resolution. Here, $\Tot[\cC_{\Gdm}^\bullet\cF^{\bullet}]$ denotes the simple complex associated to the double complex $\cC_{\Gdm}^\bullet\cF^{\bullet}$. We define $Rg_*\cK^\bullet$ to be the triple
	\[
		\begin{split}
		\Big(\big(g_* \Tot[\cC_{\Gdm}^\bullet \cK^\bullet_k],g_* \Tot[\cC_{\Gdm}^\bullet W_{\lc}]\big),\quad
		\big(g_* \Tot[\cC_{\Gdm}^\bullet \cK^\bullet_\C], g_* \Tot[\cC_{\Gdm}^\bullet W_{\lc}], g_* \Tot[\cC_{\Gdm}^\bullet F^{\lc}]\big),\quad
		g_*\alpha\Big),
		\end{split}
	\]
	where $g_*\alpha$ is the pseudo-morphism of filtered complexes of sheaves of $k$-vector spaces induced by $\alpha$ and the functoriality of both $g_*$ and the Godement resolution.
\end{definition}

\subsection{The analytic logarithmic Dolbeault complex}\label{ss:analyticDolbeault}

Let $U$ be a smooth algebraic variety, let $X$ be a good compactification of $U$ and let $D\coloneqq X\setminus U$. Deligne defined a mixed Hodge complex of sheaves on $X$ whose hypercohomology computes $H^*(U,\R)$, endowing it with a canonical and functorial mixed Hodge structure. If $j\colon U\hookrightarrow X$ is the inclusion, Deligne considered:
\[
(j_*\cE^\bullet_{U,\R},\tau_{\leq \lc}) \to 
(j_*\cE^\bullet_{U,\C},\tau_{\leq \lc}) \xhookleftarrow{\sim} 
(\Omega^\bullet_{X}(\log D),\tau_{\leq \lc})\xhookrightarrow{\sim}
(\Omega_X^\bullet(\log D),W_{\lc},F^{\lc}),
\]
where $\cE^\bullet_{U,\R}$ is the real ($C^{\infty}$) de Rham complex on $U$, $\cE^\bullet_{U,\C}\coloneqq \cE^\bullet_{U,\R}\otimes_{\R}\C$, $\Omega_U^\bullet$ is the holomorphic de Rham complex on $U$, and $\Omega_X^\bullet(\log D)$ is the subcomplex of $j_*\Omega_U^\bullet$ formed by the forms $\omega$ with logarithmic poles along $D$ (both $\omega$ and $d\omega$ have at most a pole of order $1$ along $D$). $\tau_{\leq \lc}$ is the canonical increasing filtration, $F^{\lc}$ is the decreasing trivial filtration, and $W_{\lc}$ is given by the order of the poles (see \cite[Theorem 4.2]{peters2008mixed} for the precise definition).

Let us introduce different de Rham complexes from the ones in Deligne's mixed Hodge complex of sheaves.

\begin{definition}[Real analytic de Rham complex of sheaves]\label{def:analyticDeRham}
	For every smooth complex algebraic variety $Y$,
	\begin{itemize}
		\item $(\cA_{Y,\R}^\bullet,d)$ denotes the real analytic de Rham complex of sheaves on $Y$,
		\item $(\cA_{Y,\C}^\bullet,d)$ denotes the real analytic de Rham complex of sheaves on $Y$ with values over $\C$, i.e. $\cA_{Y,\C}^\bullet\coloneqq \C \otimes_\R \cA_{Y,\R}^\bullet$.
		\item $\cA_{Y,\C}^\bullet$ has a bigrading induced by the complex structure on $Y$, which we denote by $\cA_{Y,\C}^{\bullet\bullet}$, as follows: if $z_1,\ldots,z_n$ are local holomorphic coordinates of $Y$, the forms in $\cA_{Y,\C}^{p,q}$ are locally generated over $\cA_{Y,\C}^0$ by $dz_{i_1}\wedge\ldots\wedge dz_{i_p}\wedge d\ov{z_{j_i}}\wedge\ldots\wedge d\ov{z_{j_q}}$ for $i_1,\ldots,i_p,j_i,\ldots,j_q\in\{1,\ldots,n\}$.
	\end{itemize}
\end{definition}

\begin{remark}\label{rmk:acyclicity}
	$\mathcal A^\bullet_{U,\C}$ (and thus also $\mathcal A^\bullet_{U,\R}$) is a complex of $j_*$-acyclic and $\Gamma$-acyclic sheaves. The $\Gamma$-acyclicity is well known (see \cite[p. 127]{ks}, for example). For the $j_*$-acyclicity, it suffices to show that $\bH^i(V_x\cap U,\mathcal A^k_{U,\C})=0$ for all $i>0$, $k\geq 0$, $x\in X$ and certain $V_x$ neighborhoods of $x$ in $X$ forming a basis. Let $Y=V_x\cap U$, which is a complex (and thus real) analytic manifold. As a real analytic manifold, $Y$ can be embedded into $Y\times \overline Y$ as the diagonal, where $\overline Y$ is the complex conjugate of $Y$, and the restriction of $\mathcal{O}_{Y\times\overline Y}$ (the sheaf of complex analytic functions on $Y\times\overline Y$) to $Y$ is $\mathcal A^0_{U,\C}$. By \cite[Proposition 5.42]{SteinWeinstein}, $Y$ possesses arbitrarily small neighborhoods in $Y\times \overline Y$ which are Stein. Let $W$ be one such Stein neighborhood. By Oka's coherence theorem and Cartan's theorem B, one gets  $\bH^i(W,\mathcal O_{Y\times\overline Y})=0$ for all $i>0$, so $R\Gamma (W,\mathcal O_{Y\times\overline Y})\cong\Gamma (W,\mathcal O_{Y\times\overline Y})$. Since $Y$ is closed in the paracompact space $W$, taking direct limits on $W$ approaching $Y$ yields $R\Gamma (Y,\mathcal A^0_{U,\C})=\varinjlim\limits_{ W}\Gamma (W,\mathcal O_{Y\times\overline Y})=\Gamma(Y,\mathcal A^0_{U,\C})$, 
	obtaining the $j_*$-acyclicity of $\mathcal A^0_{U,\C}$. For $k\geq 1$ we follow the same argument, replacing $\mathcal O_{Y\times\overline Y}$ by $\Omega_{Y\times\overline Y}^k$, which is locally free as an $\mathcal O_{Y\times\overline Y}$-module and thus it is also coherent.
\end{remark}

In \cite{navarroAznar} (see also \cite{burgos} for a similar complex using $C^{\infty}$ functions), Navarro Aznar defined a different mixed Hodge complex of sheaves. The complexes of sheaves involved in its construction are the real and complex-valued logarithmic Dolbeault complexes, defined as follows.

\begin{definition}[Logarithmic Dolbeault Complex]\label{def:logDolbeault}
	Let $U$ be a smooth connected complex algebraic variety, let $X$ be a good compactification of $U$, let $D=X\setminus U$, and let $j:U\hookrightarrow X$ be the inclusion. Let us write local holomorphic coordinates $(z_i)$ on $X$ such that $D$ has equation $z_1\cdots z_r=0$. The real logarithmic Dolbeault complex $\cA^{\bullet}_{X,\R}(\log D)$ is the sub-$\cA^0_{X,\R}$-algebra of $j_*\cA^\bullet_{U,\R}$ generated by the local sections
	\[
	\begin{split}
		\log (z_i\ov {z_i}),\Re\frac{dz_i}{z_i},\Im\frac{dz_i}{z_i}\quad \text{for }1\le i\le r,\quad 
		\Re dz_i,\Im dz_i\quad \text{for }i>r.
	\end{split}
	\]
	Similarly, the complex logarithmic Dolbeault complex is defined by $\cA^{\bullet}_{X,\C}(\log D)\coloneqq \cA^{\bullet}_{X,\R}(\log D)\otimes_\R \C$.
	
	Moreover, we define a bigrading on $\cA^\bullet_{X}(\log D)$ induced by the bigrading on $\cA^{\bullet}_U$ as follows:
	\[
	\cA^{p,q}_{X,\C}(\log D)\coloneqq \cA^{p+q}_{X,\C}(\log D)\cap j_*\cA^{p,q}_{U,\C}.
	\]
\end{definition}

\begin{definition}[Navarro Aznar's mixed Hodge complex of sheaves]\label{def:NavarroAznar}
	Let $U$, $X$, $D$, $j$, $(z_i)$ be as in Definition~\ref{def:logDolbeault}. The following data describes a mixed Hodge complex of sheaves $\wt{\cN^{\bullet}_{X,D}}$:
\[
\wt{\cN^{\bullet}_{X,D}}\coloneqq \left( (\cA^\bullet_{X,\R}(\log D),\wt{W}_{\lc}), (\cA^\bullet_{X,\C}(\log D),\wt{W}_{\lc},F^{\lc}),\alpha\right),
\]	
where
\begin{itemize}
	\item the weight filtration $\wt{W}_{\lc}$ on $\cA^\bullet_{X,\R}(\log D)$ (resp. on $\cA^\bullet_{X,\C}(\log D)$) is the multiplicative increasing filtration generated by assigning weight $0$ to the sections of $\cA_{X,\R}^\bullet$ and weight $1$ to the sections defined locally by $\log (z_i\ov {z_i}),\Re\frac{dz_i}{z_i},\Im\frac{dz_i}{z_i}$ for $1\le i\le r$,
	\item the Hodge filtration $F^{\lc}$ on $\cA^\bullet_{X,\C}(\log D)$ is defined by
	\[
	F^p\cA^\bullet_{X,\C}(\log D)\coloneqq \bigoplus_{p'\geq p }\cA^{p',\bullet}_{X,\C}(\log D), \quad\text{and}
	\]
	\item $\alpha\otimes 1:(\cA^\bullet_{X,\R}(\log D),\wt{W}_{\lc})\otimes_\R\C\to (\cA^\bullet_{X,\C}(\log D),\wt{W}_{\lc},F^{\lc})$ is the identity.
\end{itemize}
\end{definition}

We recall here some important properties of $\wt{\cN^{\bullet}_{X,D}}$.
\begin{theorem}[\cite{navarroAznar}, Theorem 8.8]\label{thm:propertiesNA}
	The inclusion $(\Omega_X^\bullet(\log D),W_{\lc},F^{\lc})\to(\cA^\bullet_{X,\C}(\log D),\wt{W}_{\lc},F^{\lc})$ is a bi-filtered quasi-isomorphism. The (weight and Hodge) filtrations on $\Omega_X^\bullet(\log D)$, which were described at the beginning of the section, coincide with the ones induced by the filtrations on $\cA^\bullet_{X,\C}(\log D)$.
\end{theorem}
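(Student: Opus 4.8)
The plan is to reduce the statement to a purely local computation on a polydisc and then combine the classical Poincaré residue with the Dolbeault--Grothendieck lemma. Both assertions (agreement of the induced filtrations, and the bifiltered quasi-isomorphism) can be checked on stalks, so we may assume $X=\Delta^n$ with holomorphic coordinates $(z_1,\dots,z_n)$, $D=V(z_1\cdots z_r)$ and $U=(\Delta^*)^r\times\Delta^{n-r}$. For the quasi-isomorphism part I would use the standard criterion that a morphism of bifiltered complexes (with biregular filtrations, which holds locally) is a bifiltered quasi-isomorphism precisely when it induces a quasi-isomorphism on every $\Gr_F^p\Gr^W_m$.

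\textbf{Agreement of the filtrations.} For the Hodge filtration this is immediate: a holomorphic logarithmic form of degree $q$ has pure bidegree $(q,0)$, so $\Omega_X^q(\log D)\cap F^p\cA^\bullet_{X,\C}(\log D)$ equals $\Omega_X^q(\log D)$ when $q\ge p$ and $0$ otherwise, which is exactly $F^p\Omega^\bullet_X(\log D)=\Omega^{\ge p}_X(\log D)$. For the weight filtration one must check $\Omega^\bullet_X(\log D)\cap\wt W_m\cA^\bullet_{X,\C}(\log D)=W_m\Omega^\bullet_X(\log D)$. The inclusion $\supseteq$ is clear, since an $m$-fold wedge of forms $\frac{dz_i}{z_i}$ has $\wt W$-weight $m$. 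For $\subseteq$ one filters holomorphic logarithmic forms by the number of factors $\frac{dz_i}{z_i}$ ($1\le i\le r$) they involve, and observes that among the multiplicative generators of $\wt W$ only these raise the pole order, whereas $\log(z_i\ov z_i)$, $\frac{d\ov z_i}{\ov z_i}$, $dz_i$, $d\ov z_i$ and sections of $\cA^0_X$ do not; hence a holomorphic logarithmic form lying in $\wt W_m$ can involve at most $m$ such factors.

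\textbf{The bifiltered quasi-isomorphism.} Passing to $\Gr_F^p$, the left-hand side becomes $\Omega_X^p(\log D)[-p]$ with zero differential, while the right-hand side becomes $(\cA^{p,\bullet}_{X,\C}(\log D),\bar\partial)[-p]$, since $\partial$ strictly raises $F$ and $\bar\partial$ preserves it; the map is the inclusion $\Omega_X^p(\log D)\hookrightarrow(\cA^{p,\bullet}_{X,\C}(\log D),\bar\partial)$, and that this is a resolution is a logarithmic version of the Dolbeault--Grothendieck lemma (solve $\bar\partial$ explicitly while controlling the logarithmic poles along $D$). Now take $\Gr^W_m$. On $\Omega^\bullet_X(\log D)$ the Poincaré residue identifies $\Gr^W_m$ with $\bigoplus_{|I|=m}a_{I*}\bigl(\Omega^\bullet_{D_I}[-m]\bigr)$, where $a_I\colon D_I=\bigcap_{i\in I}\{z_i=0\}\hookrightarrow X$, so after the previous step $\Gr^p_F\Gr^W_m\Omega^\bullet_X(\log D)\cong\bigoplus_{|I|=m}a_{I*}\Omega^{p-m}_{D_I}[-p]$. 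The same residue, defined on $\cA^\bullet_{X,\C}(\log D)$ by extracting the coefficient of $\frac{dz_{i_1}}{z_{i_1}}\wedge\cdots\wedge\frac{dz_{i_m}}{z_{i_m}}$, gives a quasi-isomorphism $\Gr^W_m\cA^\bullet_{X,\C}(\log D)\simeq\bigoplus_{|I|=m}a_{I*}\cA^{\bullet-m}_{D_I,\C}$: the extra weight-$1$ generators $\log(z_i\ov z_i)$ and $\frac{d\ov z_i}{\ov z_i}$ do not affect the cohomology of the graded piece, because $d\log(z_i\ov z_i)=\frac{dz_i}{z_i}+\frac{d\ov z_i}{\ov z_i}$ makes $\frac{d\ov z_i}{\ov z_i}$ cohomologous to $-\frac{dz_i}{z_i}$ in $\Gr^W$. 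Under these identifications the inclusion becomes, on the $I$-th summand, the standard Dolbeault resolution $\Omega^{p-m}_{D_I}\hookrightarrow(\cA^{p-m,\bullet}_{D_I,\C},\bar\partial)$, a quasi-isomorphism; hence the inclusion is a quasi-isomorphism on every $\Gr^p_F\Gr^W_m$, and we are done.

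\textbf{Main obstacle.} The technical heart is the behaviour of the \emph{weight} filtration on the analytic side: one must set up the residue map on $\cA^\bullet_{X,\C}(\log D)$ with care (Navarro Aznar's partial residues), verify its compatibility with the classical Poincaré residue on $\Omega^\bullet_X(\log D)$, and check that the extra generators $\log(z_i\ov z_i)$, $\frac{d\ov z_i}{\ov z_i}$ of $\wt W$ are cohomologically irrelevant on the associated graded. The logarithmic Dolbeault--Grothendieck lemma used for $\Gr_F$ is the other non-formal input. Both are exactly the content of \cite{navarroAznar}, to which we refer for the details. As a warm-up one can first establish the \emph{unfiltered} quasi-isomorphism cheaply: both complexes compute $Rj_*\C_U$, by Deligne's theorem for $\Omega^\bullet_X(\log D)$ and, for $\cA^\bullet_{X,\C}(\log D)\subset j_*\cA^\bullet_{U,\C}$, by Remark~\ref{rmk:acyclicity} together with the real-analytic Poincaré lemma; the bifiltered refinement is then obtained by the graded-piece argument above.
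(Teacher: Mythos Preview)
The paper does not give its own proof of this statement: it is quoted directly from \cite{navarroAznar} (Th\'eor\`eme 8.8) and used as a black box. There is therefore nothing to compare your argument against in the paper itself.

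That said, your sketch is the natural one and is essentially the strategy Navarro Aznar follows: reduce to stalks on a polydisc, check the induced filtrations agree by inspection of the local generators, and verify the bifiltered quasi-isomorphism by passing to $\Gr_F^p\Gr^W_m$ and invoking Poincar\'e residues plus a logarithmic Dolbeault lemma. You also correctly flag the genuine technical point, namely that on the analytic side $\wt W$ has the extra weight-$1$ generators $\log(z_i\ov z_i)$ and $\frac{d\ov z_i}{\ov z_i}$, and one must show these are cohomologically harmless on $\Gr^W$. Your one-line justification (``$d\log(z_i\ov z_i)=\frac{dz_i}{z_i}+\frac{d\ov z_i}{\ov z_i}$ makes $\frac{d\ov z_i}{\ov z_i}$ cohomologous to $-\frac{dz_i}{z_i}$'') is the right heuristic but is not by itself a proof that the residue map on $\Gr^W_m\cA^\bullet_{X,\C}(\log D)$ is a quasi-isomorphism onto $\bigoplus_{|I|=m}a_{I*}\cA^{\bullet-m}_{D_I,\C}$: one has to control products of several such generators and check acyclicity of the resulting Koszul-type subcomplex. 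This is precisely what Navarro Aznar's local lemmas (his \S8) establish, and you appropriately defer to \cite{navarroAznar} for those details.
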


\begin{proposition}[Proposition 8.4, \cite{navarroAznar}]\label{prop:adjunctionNA}
	The identity
	$$
	\left(\cA^\bullet_{X,\R}(\log D),\tau_{\leq \lc}\right)\to \left(\cA^\bullet_{X,\R}(\log D),\wt{W}_{\lc}\right)
	$$
	and the inclusion
	$$
	\left(\cA^\bullet_{X,\C}(\log D),\tau_{\leq \lc}\right)\hookrightarrow \left(j_*\cA^\bullet_{U,\C},\tau_{\leq \lc}\right)
	$$
	are both filtered quasi-isomorphisms. Moreover, the second map coincides with the adjunction $\Id\to Rj_*j^{-1}$ applied to $\cA^\bullet_{X,\C}(\log D)$ when seen as a morphism in the derived category.
\end{proposition}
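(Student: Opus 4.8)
The plan is to prove the three assertions by reducing everything to Deligne's original logarithmic complex (via Theorem~\ref{thm:propertiesNA}) and to the acyclicity recorded in Remark~\ref{rmk:acyclicity}. I would start with the second map. With the canonical (truncation) filtration $\tau_{\leq\lc}$ on both complexes, a morphism of bounded-below complexes of sheaves is a $\tau$-filtered quasi-isomorphism precisely when it is a quasi-isomorphism, since $\Gr^{\tau}_m(\cF^\bullet)$ is quasi-isomorphic to the cohomology sheaf $\mathcal H^m(\cF^\bullet)$ placed in degree $m$; so it suffices to show that the inclusion $\cA^\bullet_{X,\C}(\log D)\hookrightarrow j_*\cA^\bullet_{U,\C}$ is a quasi-isomorphism. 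By Theorem~\ref{thm:propertiesNA} the inclusion $\Omega^\bullet_X(\log D)\hookrightarrow\cA^\bullet_{X,\C}(\log D)$ is a quasi-isomorphism; on the other hand $\cA^\bullet_{U,\C}$ is a resolution of $\underline{\C}_U$ (real-analytic Poincar\'e lemma) by $j_*$-acyclic sheaves by Remark~\ref{rmk:acyclicity}, so $j_*\cA^\bullet_{U,\C}$ represents $Rj_*\underline{\C}_U$, which is also represented by $\Omega^\bullet_X(\log D)$ by Deligne's theorem, compatibly with the two inclusions above. The two-out-of-three property applied to the commuting triangle $\Omega^\bullet_X(\log D)\to\cA^\bullet_{X,\C}(\log D)\to j_*\cA^\bullet_{U,\C}$ then gives the claim. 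The third assertion is now immediate: $j^{-1}\cA^\bullet_{X,\C}(\log D)=\cA^\bullet_{U,\C}$ is $j_*$-acyclic, hence $Rj_*j^{-1}\cA^\bullet_{X,\C}(\log D)=j_*\cA^\bullet_{U,\C}$, and under this identification the unit of the adjunction $\Id\to Rj_*j^{-1}$ is exactly the inclusion appearing in the second map.

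For the first map the content is the comparison, on the real logarithmic Dolbeault complex, between the weight filtration $\wt{W}_{\lc}$ and the canonical filtration $\tau_{\leq\lc}$: they define the same object in the filtered derived category (equivalently, induce the same filtration on hypercohomology, after the customary d\'ecalage). I would follow the template of Deligne's comparison for $\Omega^\bullet_X(\log D)$ in \cite[\S3.1]{DeligneII}. The crux is to compute the weight-graded pieces: working locally on $X$ with $D=\{z_1\cdots z_r=0\}$ and strata $D_I=\{z_i=0,\ i\in I\}$, one builds a Poincar\'e-residue-type morphism identifying $\Gr^{\wt{W}}_m\cA^\bullet_{X,\R}(\log D)$, up to the shift $[-m]$ and a Tate twist, with the pushforward to $X$ of the real-analytic de Rham complex of $\bigsqcup_{|I|=m}D_I$; in particular $\mathcal H^q(\Gr^{\wt{W}}_m\cA^\bullet_{X,\R}(\log D))=0$ for $q\neq m$, the nonzero cohomology sheaf sitting in degree $m$ as the pushed-forward constant real sheaf of the $m$-codimensional strata. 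This concentration in a single degree is exactly what makes each $\Gr^{\wt{W}}_m$ a Hodge complex of weight $m$ (so it is already contained in Navarro Aznar's Theorem~\ref{thm:propertiesNA}), and, as in \cite[\S1.3--1.4 and \S3.1]{DeligneII}, it forces the filtration induced on cohomology by $\wt{W}$ to agree, after d\'ecalage, with the one induced by $\tau$, which is the assertion. Alternatively one can transport Deligne's result for $\Omega^\bullet_X(\log D)$ through the bi-filtered quasi-isomorphism of Theorem~\ref{thm:propertiesNA} and observe that each step is compatible with extension of scalars from $\R$ to $\C$, so that the statement descends to $\R$-coefficients.

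The step I expect to be the genuine obstacle is this residue computation for $\Gr^{\wt{W}}_m\cA^\bullet_{X,\R}(\log D)$. Unlike the holomorphic logarithmic complex, here the weight-one generators come in two flavours: the degree-one forms $\Re\frac{dz_i}{z_i},\Im\frac{dz_i}{z_i}$ and, crucially, the degree-zero functions $\log(z_i\ov{z_i})$, whose presence makes the weight-graded complex larger than a naive count predicts. One must therefore check that the ``logarithmic function'' directions contribute only acyclic factors (using the real-analytic Poincar\'e lemma together with the Stein-type acyclicity of Remark~\ref{rmk:acyclicity}), so that the residue morphism still lands in a complex whose cohomology sheaves are concentrated in degree $m$. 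Once this local computation is established, the remainder is the same formal d\'ecalage bookkeeping as in Deligne's treatment of $\Omega^\bullet_X(\log D)$, and the $\C$-coefficient statements follow by extension of scalars.
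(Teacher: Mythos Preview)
The paper does not give its own proof of this proposition: it is quoted as \cite[Proposition~8.4]{navarroAznar} and used as input. So there is nothing in the paper to compare your argument against.

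On the merits, your treatment of the second and third assertions is correct and efficient: with $\tau_{\le\lc}$ on both sides a filtered quasi-isomorphism is just a quasi-isomorphism, and the two-out-of-three argument through $\Omega_X^\bullet(\log D)$ via Theorem~\ref{thm:propertiesNA} together with the $j_*$-acyclicity of Remark~\ref{rmk:acyclicity} does the job; the identification with the adjunction unit is immediate once you note $j^{-1}\cA^\bullet_{X,\C}(\log D)=\cA^\bullet_{U,\C}$.

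For the first map, both of your strategies are viable. The residue computation for $\Gr^{\wt W}_m\cA^\bullet_{X,\R}(\log D)$, showing its cohomology is concentrated in degree $m$ and identified with the constant sheaf on the depth-$m$ stratum, is exactly what Navarro Aznar does; you correctly flag that the degree-zero generators $\log(z_i\ov z_i)$ make this more delicate than in the holomorphic case. Your alternative of transporting Deligne's comparison through the bi-filtered quasi-isomorphism of Theorem~\ref{thm:propertiesNA} and then descending from $\C$ to $\R$ by faithful flatness is also sound, and arguably quicker. One small caution: precisely because of those degree-zero weight-one generators, the naive inclusion $\tau_{\le m}\subset\wt W_m$ that holds for $\Omega_X^\bullet(\log D)$ can fail for $\cA^\bullet_{X,\R}(\log D)$ (e.g.\ $\log(z_1\ov z_1)\log(z_2\ov z_2)$ lies in $\tau_{\le 1}$ but not in $\wt W_1$ in degree~$0$), so the statement is best read as an identification in the filtered derived category rather than as the identity being a filtered morphism in the strictest sense. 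This does not affect the way the proposition is used in the paper (e.g.\ in the proofs of Definition-Proposition~\ref{defprop:modifiedNA} and Lemma~\ref{lem:df/f}), where only the cohomology concentration of $\Gr^{\wt W}_m$ is needed.
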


\begin{corollary}\label{cor:comparisonMHC}
	The following is a diagram of filtered quasi-isomorphisms (the last one is bi-filtered), where the maps are either the identity or the natural inclusions.
	\begin{align*}
		&\left(\cA^\bullet_{X,\C}(\log D),\wt{W}_{\lc}\right)\xleftarrow{\Id} \left(\cA^\bullet_{X,\C}(\log D),\tau_{\leq \lc}\right) \hookrightarrow \left(j_*\cE^\bullet_{U,\C},\tau_{\leq \lc}\right)\hookleftarrow \left(\Omega_X^\bullet(\log D),\tau_{\leq \lc}\right)\xrightarrow{\Id}\\ &\left(\Omega_X^\bullet(\log D),W_{\lc}, F^{\lc}\right)\hookrightarrow \left(\cA^\bullet_{X,\C}(\log D),\wt{W}_{\lc}, F^{\lc}\right)
	\end{align*}
	The composition of all of these maps is the identity in the derived category.
\end{corollary}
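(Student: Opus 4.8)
The statement just assembles results that are already available, so the plan is to treat the six arrows one at a time and then check that the zigzag collapses. I would go through the arrows from left to right. The arrow $\Id\colon(\cA^\bullet_{X,\C}(\log D),\tau_{\leq\lc})\to(\cA^\bullet_{X,\C}(\log D),\wt W_{\lc})$ is obtained from the first filtered quasi-isomorphism of Proposition~\ref{prop:adjunctionNA} by applying the exact functor $-\otimes_\R\C$, which commutes both with $\tau_{\leq\lc}$ and with the construction of $\wt W_{\lc}$ (Definition~\ref{def:NavarroAznar}) and hence preserves filtered quasi-isomorphisms. The arrow into $(j_*\cE^\bullet_{U,\C},\tau_{\leq\lc})$ I would factor as $(\cA^\bullet_{X,\C}(\log D),\tau_{\leq\lc})\hookrightarrow(j_*\cA^\bullet_{U,\C},\tau_{\leq\lc})\hookrightarrow(j_*\cE^\bullet_{U,\C},\tau_{\leq\lc})$: the first inclusion is the second filtered quasi-isomorphism of Proposition~\ref{prop:adjunctionNA}, and the second is a quasi-isomorphism because both $j_*\cA^\bullet_{U,\C}$ and $j_*\cE^\bullet_{U,\C}$ compute $Rj_*\ul\C_U$ (by Remark~\ref{rmk:acyclicity} for the analytic forms, and classically for the fine sheaves $\cE^\bullet_{U,\C}$), so, being a quasi-isomorphism of bounded-below complexes, it is automatically a $\tau_{\leq\lc}$-filtered quasi-isomorphism. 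The arrows $(\Omega_X^\bullet(\log D),\tau_{\leq\lc})\hookrightarrow(j_*\cE^\bullet_{U,\C},\tau_{\leq\lc})$ and $\Id\colon(\Omega_X^\bullet(\log D),\tau_{\leq\lc})\to(\Omega_X^\bullet(\log D),W_{\lc},F^{\lc})$ (the comparison here only concerns $W_{\lc}$, since the source carries no Hodge filtration) are precisely the quasi-isomorphisms in Deligne's construction recalled at the start of Section~\ref{ss:analyticDolbeault}, established in \cite[Theorem 4.2]{peters2008mixed}. Finally, $(\Omega_X^\bullet(\log D),W_{\lc},F^{\lc})\hookrightarrow(\cA^\bullet_{X,\C}(\log D),\wt W_{\lc},F^{\lc})$ is the bi-filtered quasi-isomorphism of Theorem~\ref{thm:propertiesNA}.

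For the last assertion I would forget all filtrations. The zigzag then becomes
\[
\cA^\bullet_{X,\C}(\log D)\xleftarrow{\Id}\cA^\bullet_{X,\C}(\log D)\xhookrightarrow{\iota_1}j_*\cE^\bullet_{U,\C}\xhookleftarrow{\iota_2}\Omega_X^\bullet(\log D)\xrightarrow{\Id}\Omega_X^\bullet(\log D)\xhookrightarrow{\iota_3}\cA^\bullet_{X,\C}(\log D),
\]
where $\iota_1,\iota_2,\iota_3$ are inclusions of subcomplexes of $j_*\cE^\bullet_{U,\C}$, all three quasi-isomorphisms because each of $\Omega_X^\bullet(\log D)$, $\cA^\bullet_{X,\C}(\log D)$ and $j_*\cE^\bullet_{U,\C}$ computes $Rj_*\ul\C_U$. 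Since these inclusions obviously form a commuting triangle, $\iota_2=\iota_1\circ\iota_3$, the morphism the zigzag defines in the derived category is $\iota_3\circ\iota_2^{-1}\circ\iota_1=\iota_3\circ(\iota_1\circ\iota_3)^{-1}\circ\iota_1=\mathrm{id}$ on $\cA^\bullet_{X,\C}(\log D)$; as every arrow respects the relevant filtrations, this is also the statement with the filtrations kept.

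The proof is essentially bookkeeping, and the only genuinely load-bearing check — rather than a citation — is the middle step: that $j_*\cA^\bullet_{U,\C}\hookrightarrow j_*\cE^\bullet_{U,\C}$ is a quasi-isomorphism and, crucially, that the three inclusions into $j_*\cE^\bullet_{U,\C}$ commute, so that the zigzag genuinely telescopes to the identity. Everything else is a direct appeal to Proposition~\ref{prop:adjunctionNA}, Theorem~\ref{thm:propertiesNA}, and the classical facts about Deligne's logarithmic complex.
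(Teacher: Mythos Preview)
Your proof is correct and follows essentially the same approach as the paper's own proof: you cite Proposition~\ref{prop:adjunctionNA} and Theorem~\ref{thm:propertiesNA} for the nontrivial filtered quasi-isomorphisms, and for the composition being the identity you use exactly the factorization $\iota_2=\iota_1\circ\iota_3$ that the paper records as ``the inclusion $\Omega_X^\bullet(\log D)\to j_*\cE^\bullet_{U,\C}$ factors through $\cA^\bullet_{X,\C}(\log D)$.'' If anything, your write-up is a bit more careful than the paper's, since you explicitly factor the second arrow through $j_*\cA^\bullet_{U,\C}$ and invoke Remark~\ref{rmk:acyclicity} to justify that $j_*\cA^\bullet_{U,\C}\hookrightarrow j_*\cE^\bullet_{U,\C}$ is a quasi-isomorphism.
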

\begin{proof}
	The (de Rham) resolution $\ul \R_U\to \cE^\bullet_{U,\R}$ factors through $\cA^\bullet_{U,\R}$, which is quasi-isomorphic (through the inclusion map) to the bigger sheaf complex $\cE^\bullet_{U,\R}$, because both resolve the trivial local system $\ul\R_U$ (see \cite[p. 127]{ks}, for example). Since $\cE^\bullet_{U,\R}$ is a complex of soft sheaves, this gives rise to an isomorphism $Rj_* \ul \R_U\to j_*\cE^\bullet_{U,\R}$ in the derived category, and Proposition~\ref{prop:adjunctionNA} shows that the second map in the chain of maps in the statement of this corollary is a (trivially filtered) quasi-isomorphism. The rest of the maps involved are (bi-)filtered quasi-isomorphisms by Theorem~\ref{thm:propertiesNA} and  Proposition~\ref{prop:adjunctionNA}.
	
	The statement about the composition of all of those maps in the derived category follows from the fact that the inclusion  $\Omega_X^\bullet(\log D)\to j_*\cE^\bullet_{U,\C}$ factors through $\cA^\bullet_{X,\C}(\log D)$.
\end{proof}

Note that the filtration $\wt{W}_{\lc}$ on the logarithmic Dolbeault complex is not biregular, a hypothesis which is needed for Definition~\ref{def:directim}, for example. However, we may tweak it a little to get a mixed Hodge complex of sheaves similar to the one from Definition~\ref{def:NavarroAznar} with biregular filtrations as follows.

\begin{defprop}[Modified mixed Hodge complex of sheaves of Navarro Aznar]\label{defprop:modifiedNA}
	Let $U$, $X$, $D$, $j$, $(z_i)$ be as in Definition~\ref{def:logDolbeault}, and let $\wt{W}_{\lc}$, $F^{\lc}$ and $\alpha$ be as in Definition~\ref{def:NavarroAznar}. 
	Let $n\geq \max\{2, \dim_\R U\}$, and let $W_{\lc}^n$ be the increasing cdga filtration on $\cA^\bullet_{X,\R}$ given by
	$$
	W_i^n \cA^j_{X,\R}\coloneqq\left\{\begin{array}{lr}\wt{W_i}\cA^\bullet_{X,\R}&\text{if }i\leq n,\\
		\cA^\bullet_{X,\R}&\text{if }i> n,\end{array}\right.
	$$
	and let $(\cA^j_{X,\C},W_{\lc}^n)\coloneqq (\cA^j_{X,\R}\otimes_\R \C,W_{\lc}^n\otimes_\R\C)$.
	
	Then, the following data describes a mixed Hodge complex of sheaves $\cN^{\bullet}_{X,D,n}$:
	\[
	\cN^{\bullet}_{X,D,n}\coloneqq \left( (\cA^\bullet_{X,\R}(\log D),W_{\lc}^n), (\cA^\bullet_{X,\C}(\log D),W_{\lc}^n,F^{\lc}),\alpha\right),
	\]	
	in which all the filtrations are biregular, and such that the identity morphism
	$$
	\wt{\cN^{\bullet}_{X,D}}\to\cN^{\bullet}_{X,D,n}\
	$$
	induces (bi-)filtered quasi-isomorphisms in its real and complex parts.
\end{defprop}
\begin{proof}
	The identity morphism $
	\wt{\cN^{\bullet}_{X,D}}\to\cN^{\bullet}_{X,D,n}
	$ induces filtered morphisms in its real and complex parts.
	By Proposition~\ref{prop:adjunctionNA}, if $m>n\geq\dim_\R U$, $\Gr_m^{\wt W} \cA^\bullet_{X,\R}$ is quasi-isomorphic to $0$, so it is exact. By induction, one can show that $\wt W_m \cA^\bullet_{X,\R}/\wt W_{n} \cA^\bullet_{X,\R}$ is an exact complex of sheaves for all $m>n$, which implies that $ \cA^\bullet_{X,\R}/\wt W_{n} \cA^\bullet_{X,\R}$ is an exact complex of sheaves, that is, quasi-isomorphic to $0$. This shows that the identity morphism induces quasi-isomorphisms between $\Gr_m^{\wt W}\cA^\bullet_{X,\R}$ and $\Gr_m^{W^n}\cA^\bullet_{X,\R}$ for all $m$. These quasi-isomorphisms are the identity if $m\leq n$. The same holds for $\cA^\bullet_{X,\C}$, which concludes our proof.
\end{proof}

\begin{remark}\label{rem:sameMHS}
	The isomorphism $Rj_* \ul \R_U\to j_*\cE^\bullet_{U,\R}$ described in the proof of Corollary~\ref{cor:comparisonMHC} is the one used to endow $H^*(U,\R)$ with Deligne's canonical mixed Hodge structure, using Deligne's mixed Hodge complex of sheaves described at the beginning of this section.
	
	The mixed Hodge complex of Navarro Aznar (Definition~\ref{def:NavarroAznar}) also induces a mixed Hodge structure on $H^*(U,\R)$ via the composition of $Rj_* \ul \R_U\to j_*\cE^\bullet_{U,\R}$ above with $\cA^\bullet_{X,\C}(\log D) \hookrightarrow j_*\cE^\bullet_{U,\C}$. By Corollary~\ref{cor:comparisonMHC}, both of these mixed Hodge structures on $H^*(U,\R)$ coincide. Consequently, this MHS coincides with the one induced by the modified complex $\cN_{X,D,n}^\bullet$ from Definition-Proposition~\ref{defprop:modifiedNA} for all $n\geq\max\{2,\dim_\R U\}$.
\end{remark}

\section{Thickening of a mixed Hodge complex of sheaves}\label{sec:thickening}

Let $k=\Q$ or $\R$. We will show how to construct a thickened mixed Hodge complex of sheaves for any multiplicative mixed Hodge complex of sheaves $\cK^\bullet$.

\subsection{The definition of the thickening}
The data required for the thickening should be understood as a MHS $V$ and a morphism $V[-1]\to \cK^\bullet$. Precisely, we require the following data.

\begin{assumption}\label{ass:ingredients}
	We consider the following objects.
	\begin{enumerate}
		\item A multiplicative mixed Hodge complex $\cK^\bullet=((\cK^\bullet_k,W_{\lc}),(\cK^\bullet_\C,W_{\lc},F^{\lc}),\alpha)$ on a topological space $X$. $\alpha$ is a filtered pseudo-morphism, which induces the filtered pseudo-isomorphism $\alpha\otimes 1$ after tensoring by $\C$ over $k$:
		\[
		(\cK^\bullet_k\otimes_{k}\C,W_{\lc}) = (\cK^\bullet_0,W_{\lc})\xrightarrow{\alpha_1}(\cK^\bullet_1,W_{\lc})\xleftarrow{\alpha_2} \cdots \xrightarrow{\alpha_{2M-1}}  (\cK^\bullet_{2M-1},W_{\lc})\xleftarrow{\alpha_{2M}} (\cK^\bullet_{2M},W_{\lc}) = (\cK^\bullet_\C,W_{\lc}).
		\]
		In addition, all the weight filtrations $W_{\lc}$ are biregular (i.e. for all $m$ and for all $0\leq i\leq 2M$, the weight
		filtration induced on $\cK^m_i$ is finite), and all the complexes of sheaves are bounded.

		\item A $k$-MHS $(V,W_{\lc},F^{\lc})$.   $V_\C$ will denote the vector space $V\otimes_k\C$.
		\item For every $i=0,\ldots,M$, a morphism
		\[
		\Phi_{2i}\colon (V_{\C},W_{\lc}[1])\to \Gamma\left(X,(\cK^{1,\cl}_{2i},W_{\lc})\right).
		\]
		Where $\cK^{1,\cl}=\ker d\subseteq \cK^1$. Additionally, $\Phi_{2M}$ is required to preserve $F^{\lc}$ and $\Phi_{0}$ must be defined over $k$.
		\item For every $i=1,\ldots, M$, a morphism
		\[
		\Psi_{2i-1}\colon (V_{\C},W_{\lc}[1])\to \Gamma\left(X,(\cK^{0}_{2i-1},W_{\lc})\right)
		\]

		such that
		\[
		d\circ \Psi_{2i-1} = \alpha_{2i-1}\circ\Phi_{2i-2}-\alpha_{2i}\circ \Phi_{2i}.
		\]
		In other words, the maps $\Phi_{2i}$ are only required to be compatible with $\alpha$ up to homotopy, and the homotopies are part of the data.
	\end{enumerate}
\end{assumption}

Our thickening, when seen as a deformation, will be parametrized by the formal neighborhood of the origin in $V$, i.e. if $V^\vee$ is the dual vector space, the base ring will be the completion of $\Sym^\bullet V^\vee$ at its maximal ideal. Concretely, the base ring will be the following: Let $V^\vee$ be the dual MHS. For the rest of this section, and for all $0\le m$, let us generalize Definition~\ref{def:Rminfty} (which assumes $V=H^1(G,k)$):
\[
R_{\infty} \coloneqq \prod_{j=0}^\infty \Sym^j V^\vee
;\quad
R_{m} \coloneqq \frac{R_{\infty}}{\prod_{j=m}^\infty \Sym^j V^\vee}\cong \bigoplus_{j=0}^{m-1} \Sym^j V^\vee
\]
We reuse the notation from Definition~\ref{def:Rminfty} because we will only construct explicit thickened complexes when $V=H^1(G,k)$, but the theory will be carried out with more generality in this section.

$R_{\infty}$ is a ring, whose multiplication is the usual multiplication in the symmetric tensor algebra, and $\prod_{j=m}^\infty \Sym^j V^\vee$ is an ideal for every $m\geq 1$. In fact, if we let
$$
\mathfrak a\coloneqq \prod_{j=1}^\infty \Sym^j V^\vee,
$$
then the ideal
$
\prod_{j=m}^\infty \Sym^j V^\vee
$ equals $\mathfrak a^m$ for all $m\geq 1$. Given a basis $s_1,\ldots, s_r$ of $V^\vee$, we obtain an isomorphism $R_{\infty}\cong k[[s_1,\ldots ,s_{r}]]$, and $R_{m}\cong k[[s_1,\ldots ,s_{r}]]/(s_1,\ldots,s_{r})^m$.

In order to work in both homology and cohomology, we will consider the $k$-dual of a deformed complex. This will require us to work over the $k$-dual modules of $R_{m}$. Let us from now on use $m$ to denote a nonnegative integer, and let:
\[
R_{-m}\coloneqq \Hom_{k}(R_{m},k) .
\]
The $R_{\infty}$-module structure of $R_{m}$ induces a module structure on $R_{-m}$. We will  abuse notation and denote by $R_\infty$, $R_{m}$ and $R_{-m}$ the same constructions but using $V_\C$ instead of $V$ and $\C$ instead of $k$. This abuse of notation will be clarified as follows: The expression $R_\infty\otimes_k -$ will assume that $R_\infty$ is constructed using $V$, whereas $R_\infty\otimes_\C -$ will assume that $R_\infty$ is constructed using $V_\C$.

For $m\ge 0$, $R_{m}$ has a MHS, namely the direct sum of the MHSs on $\Sym^j V^\vee$. Furthermore, \(R_{-m}\) has the dual MHS. In fact, using Definition-Proposition~\ref{defprop:multilinear}, one can see that the multiplication morphisms
\begin{equation}\label{eq:multMHS}
	V^\vee\otimes_k R_m \to R_m,\quad\text{and}\quad V^\vee\otimes_k R_{-m} \to R_{-m}
\end{equation}
are MHS morphisms.

\begin{definition}\label{def:epsi}
	Let $V$ as in Assumption~\ref{ass:ingredients}. We denote by $\eps_k$ the canonical element of $V^\vee\otimes_k V$, namely
	\[
	\eps_k = \sum_{i=1}^r s_i\otimes s_i^\vee,
	\]
	where $\{s_1,\ldots ,s_r\}$ is a basis of $V^\vee$ and $\{s_1^\vee,\ldots ,s_r^\vee\}$ is its dual basis. Similarly, $\eps_\C$ will denote the canonical element of $(V_\C)^\vee\otimes_\C V_\C$.
\end{definition}

\begin{definition}\label{def:imepsi}
	Consider the setup in Assumption~\ref{ass:ingredients}.
	\begin{itemize}
		\item We will denote by $\Phi_{2i}(\eps_\C)\in \Gamma(X,R_\infty\otimes_\C \cK^1_{2i})$ the image of $\eps_\C$ by $\Id_{V^\vee_\C}\otimes_\C \Phi_{2i}$ for all $i=0,\ldots, M$.
	\end{itemize}
Similarly,
\begin{itemize}
\item  If $i=0$, we will denote by $\Phi_{0}(\eps_k)\coloneqq\Id_{V^\vee}\otimes_k \Phi_{0}(\eps_k)\in \Gamma(X,R_\infty\otimes_k \cK^1_{k})$ (recall that $\Phi_0$ is defined over $k$).
\item $\Psi_{2i-1}(\eps_\C)\coloneqq\Id_{V^\vee_\C}\otimes_\C \Psi_{2i-1}(\eps_\C)\in \Gamma(X,R_\infty\otimes_\C \cK^0_{2i-1})$ for all $i=1,\ldots, M$.
\item $\alpha_{2i-1}\Phi_{2i-2}(\eps_\C)\coloneqq \Id_{V^\vee}\otimes_k \alpha_{2i-1}\circ\Phi_{2i-2}(\eps_k)\in \Gamma(X,R_\infty\otimes_\C \cK^1_{2i-1})$ for all $i=1,\ldots, M$.
\item $\alpha_{2i}\Phi_{2i}(\eps_\C)\coloneqq \Id_{V^\vee}\otimes_k \alpha_{2i}\circ\Phi_{2i}(\eps_k)\in \Gamma(X,R_\infty\otimes_\C \cK^1_{2i-1})$ for all $i=1,\ldots, M$.
\end{itemize}
\end{definition}

\begin{remark}
	Let $m\geq 1$. Left multiplication by $\Phi_{2i}(\eps_\C)$ defines an element of $$\Hom_{R_{m}}^1(R_{\pm m}\otimes_\C \cK^\bullet_{2i}, \mathfrak a R_{\pm m}\otimes_\C \cK^\bullet_{2i})$$ for all $i=0,\ldots, M$. Here $R_{-m}$ is seen as an $R_{m}$-module, and $R_m$ is a local Artinian $\C$-algebra with maximal ideal $\mathfrak a$ (where we are abusing notation and denoting by $\mathfrak a$ the image of the ideal $\mathfrak a$ of $R_\infty$ through the ring epimorphism $R_\infty\twoheadrightarrow R_m$). Similarly, left multiplication by the rest of the elements from Definition~\ref{def:imepsi} defines an element of
	$$\Hom_{R_{m}}^1(R_{\pm m}\otimes_\C \cK^\bullet_{j}, \mathfrak a R_{\pm m}\otimes_\C \cK^\bullet_{j})$$ for the appropriate $j$, except for $\Psi_{2i-1}(\eps_\C)$, which defines an element of  $$\Hom_{R_{ m}}^0(R_{\pm m}\otimes_\C \cK^\bullet_{2i-1}, \mathfrak a R_{\pm m}\otimes_\C \cK^\bullet_{2i-1})$$
	for all $i=1,\ldots, M$. In particular, since $\mathfrak a$ is a nilpotent ideal in $R_m$,
	$$
	e^{\Psi_{2i-1}(\eps_\C)}\coloneqq \sum_{j=0}^{\infty}\frac{1}{j!}(\Psi_{2i-1}(\eps_\C))^j=\sum_{j=0}^{m-1}\frac{1}{j!}(\Psi_{2i-1}(\eps_\C))^j\in \Hom_{R_{m}}^0(R_{\pm m}\otimes_\C \cK^\bullet_{2i-1}, R_{\pm m}\otimes_\C \cK^\bullet_{2i-1})
	$$
	is a globally (and well) defined endomorphism for all $i=1,\ldots,M$.
\end{remark}

\begin{lemma}\label{lem:MHStensorMHC}
  Let \(\cK^\bullet = ((\cK^\bullet_k, W_\lc), (\cK^\bullet_\C, W_\lc, F^\lc), \alpha)\) be a mixed Hodge complex of sheaves on a topological space \(X\), and let \(H\) be a $k$-MHS. Then $$
  H\otimes \cK^\bullet\coloneqq ((H\otimes_k\cK^\bullet_k, W_\lc), (H_\C\otimes_\C\cK^\bullet_\C, W_\lc, F^\lc), \Id_H\otimes\alpha),
  $$
  with the natural (tensor) filtrations from Definition-Proposition~\ref{defprop:multilinear}, differential, pseudo-morphism and \(k\)-structure, is also a mixed Hodge complex of sheaves.
\end{lemma}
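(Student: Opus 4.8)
The plan is to verify, clause by clause, the definition of a $k$-mixed Hodge complex of sheaves for the triple $H\otimes\cK^\bullet$. The single observation that drives everything is that, since $H$ is a finite-dimensional $k$-vector space, the functor $H\otimes_k-$ (and likewise $H_\C\otimes_\C-$) is exact, commutes with taking hypercohomology (e.g.\ via Godement resolutions), and—after choosing splittings of the finite filtrations $W_\lc H$ and $F^\lc H_\C$—is a finite iterated direct sum of (Tate-shifted) copies of the identity, hence carries filtered and bi-filtered quasi-isomorphisms of bounded-below complexes of sheaves to filtered and bi-filtered quasi-isomorphisms. Everything below is a formal consequence of this.

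First I would record the easy clauses. The complex $H\otimes_k\cK^\bullet_k$ is bounded below with $\mathbb{H}^*(X,H\otimes_k\cK^\bullet_k)\cong H\otimes_k\mathbb{H}^*(X,\cK^\bullet_k)$, which is finite-dimensional; the tensor filtrations $W_\lc$ on $H\otimes_k\cK^\bullet_k$ and $W_\lc$, $F^\lc$ on $H_\C\otimes_\C\cK^\bullet_\C$ (Definition-Proposition~\ref{defprop:multilinear}) are biregular because the filtrations on $H$ and on each $\cK^m$ are finite. Tensoring each complex and each arrow in the chain representing $\alpha$ by $H$ (resp.\ $H_\C$) produces the filtered pseudo-morphism $\Id_H\otimes\alpha$, and $(\Id_H\otimes\alpha)\otimes 1=\Id_{H_\C}\otimes(\alpha\otimes 1)$ induces a filtered pseudo-isomorphism: on $m$-th $W$-graded pieces it equals $\bigoplus_{p}\Gr^W_p H\otimes\Gr^W_{m-p}(\alpha\otimes 1)$, a finite direct sum of quasi-isomorphisms.

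The substance is the condition on $W$-graded components. By Definition-Proposition~\ref{defprop:multilinear} one has, as triples, $\Gr^W_m(H\otimes\cK^\bullet)=\bigoplus_{p}\big(\Gr^W_p H\big)\otimes\big(\Gr^W_{m-p}\cK^\bullet\big)$, where $\Gr^W_p H$ is a pure Hodge structure of weight $p$ and $\Gr^W_{m-p}\cK^\bullet$ is, by hypothesis, a $k$-Hodge complex of sheaves of weight $m-p$. So it suffices to check: (i) if $P$ is a pure $k$-Hodge structure of weight $p$ and $\cH^\bullet$ is a $k$-Hodge complex of sheaves of weight $q$, then $P\otimes\cH^\bullet$, with tensor filtration $F^\lc$, differential $\Id_P\otimes d$ and pseudo-morphism $\Id_P\otimes\alpha$, is a $k$-Hodge complex of sheaves of weight $p+q$; and (ii) a finite direct sum of $k$-Hodge complexes of sheaves all of weight $m$ is again a $k$-Hodge complex of sheaves of weight $m$. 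For (i): finite-dimensionality of $\mathbb{H}^*$ and the pseudo-isomorphism property follow exactly as above; strictness of $\Id_P\otimes d$ with respect to the tensor filtration $F^\lc$ holds because, after splitting $P_\C$, the complex $P_\C\otimes_\C\cH^\bullet_\C$ is a finite direct sum of Tate-shifted copies of $\cH^\bullet_\C$, on each of which $d$ is strict for $F$; and the $F$-filtration induced on $\mathbb{H}^n(X,P_\C\otimes_\C\cH^\bullet_\C)\cong P_\C\otimes_\C\mathbb{H}^n(X,\cH^\bullet_\C)$ is the tensor filtration, so by Definition-Proposition~\ref{defprop:multilinear} it endows $P\otimes_k\mathbb{H}^n(X,\cH^\bullet_k)$ with the tensor Hodge structure, which is pure of weight $p+(q+n)$. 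Claim (ii) is immediate, since hypercohomology, $F$-strictness of the differential, and purity of a fixed weight are all preserved under finite direct sums.

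I expect the only mildly delicate point to be bookkeeping: applying the graded-piece formulas of Definition-Proposition~\ref{defprop:multilinear} consistently across the whole chain of complexes defining $\alpha$, and checking that the tensor filtration on complexes really does induce the tensor filtration on hypercohomology—this is precisely where the splitting of finite filtrations over a field, together with exactness of $H\otimes_k-$, is used. There is no genuine obstacle; the lemma is a formal consequence of the fact that tensoring with a finite-dimensional MHS is exact and compatible with all the filtered structures in sight.
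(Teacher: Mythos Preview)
Your proposal is correct and follows essentially the same approach as the paper: both verify the straightforward clauses by exactness of $H\otimes_k-$, reduce the graded-piece condition via the decomposition $\Gr^W_m(H\otimes\cK^\bullet)\cong\bigoplus_p\Gr^W_pH\otimes\Gr^W_{m-p}\cK^\bullet$ to the case of a pure Hodge structure tensored with a pure Hodge complex, and then split $P_\C$ via its Hodge decomposition into Tate-shifted summands to deduce $E_1$-degeneration and purity of the correct weight. The only cosmetic difference is that the paper phrases the key step as degeneration of the spectral sequence $\bH^{p+q}(X,\Gr^p_F(H_\C\otimes_\C\cK^\bullet_\C))\Rightarrow\bH^{p+q}(X,H_\C\otimes_\C\cK^\bullet_\C)$ at $E_1$, whereas you speak of strictness of $\Id_P\otimes d$; these are equivalent formulations, and your direct-sum argument establishes both.
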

\begin{proof}
Let us start by noting that, since the complex of sheaves $\cK_k^\bullet$ is a complex of sheaves of $k$-vector spaces, the identity induces an isomorphism for all \(j\):
\begin{equation}\label{eq:grSum}
  \Gr^W_j (H \otimes_k \cK^\bullet_k) \cong \bigoplus_{a + b = j} \Gr^W_{a} H \otimes_k \Gr^W_{b}\cK^\bullet_k,
\end{equation}
and similarly for $H_\C\otimes_\C\cK_i^\bullet$ for every complex of sheaves of $\C$-vector spaces $\cK_i^\bullet$ appearing in the pseudo-isomorphism $\alpha\otimes 1$.

  In order for $H\otimes \cK^\bullet$ to be a mixed Hodge complex of sheaves, it must satisfy the following requirements. We will begin with the more straightforward properties.
  \begin{itemize}
  \item \textbf{The vector spaces \(\bH^*(X, H\otimes_k \cK^\bullet)\) are finite-dimensional:} this follows from the fact that \(k\) is a field and therefore \(H\) is flat, so these are isomorphic to \(H\otimes_k \bH^*(X, \cK^\bullet)\).
  \item \textbf{The differentials preserve the weight and Hodge filtrations:} this is a direct consequence of the definition of the differential on the tensor product as \(\Id_H\otimes d\), where $d$ denotes the differential in $\cK^\bullet$.
  \item \textbf{The maps \(\Id_H\otimes \alpha\) form a pseudo-morphism which becomes a filtered pseudo-isomorphism after tensoring by $\C$ over $k$:} this also follows also from the flatness of \(H_\C\) over \(\C\) and its graded pieces, together with the direct sum decomposition~\eqref{eq:grSum}.
  \end{itemize}
  Finally, we must show that the associated graded for the weight filtration is a pure Hodge complex of sheaves, as defined in \cite[Definition 2.32]{peters2008mixed}. Applying the decomposition~\eqref{eq:grSum}, it suffices to show this for any summand of the form \(\Gr_a^W H\otimes \Gr^W_b \cK^\bullet \). In other words, the problem is reduced to the case where \(H\) is a pure Hodge structure of weight \(a\) and \(\cK^\bullet\) is a pure Hodge complex of weight \(b\), and we need to show that \(H\otimes \cK^\bullet\) is a pure Hodge complex of weight \(a+b\). This amounts to showing the following properties:
  \begin{itemize}
  \item \textbf{The vector spaces \(\bH^*(X, H_k\otimes_k \cK^\bullet)\) are finite-dimensional:} this is the same as above.
  \item \textbf{The maps \(\Id_H\otimes \alpha\) form a pseudo-morphism which becomes a  pseudo-isomorphism after tensoring by $\C$ over $k$:} this is the same as above.
  \item \textbf{The spectral sequence \(\bH^{p+q}(X, \Gr^p_F (H_\C \otimes_\C\cK_\C^\bullet))\Rightarrow \bH^{p+q}(X, H_\C \otimes_\C\cK^\bullet_\C)\) degenerates at \(E_1\):} Let us use the Hodge decomposition of \(H\), namely $F^{p_1}H_\C=\bigoplus_{i\geq p_1}(H_\C)^{i,a-i}$, to decompose the tensor product:
  \begin{equation}\label{eq:hodgeDecomp}
  	\begin{split}
  H_\C \otimes_\C \cK^\bullet_\C &=
  \bigoplus_{p} (H_\C)^{p, a-p} \otimes_\C \cK^\bullet_\C ,
  \\
  F^p(H_\C \otimes_\C \cK^\bullet_\C) &=
  \sum_{p_1+ p_2 = p} F^{p_1}H_\C \otimes_\C F^{p_2}\cK^\bullet_\C =
  \bigoplus_{p_1+ p_2 = p} (H_\C)^{p_1, a-p_1} \otimes_\C F^{p_2}\cK^\bullet_\C .
\end{split}
  \end{equation}
Since \(\cK^\bullet_\C\) is a Hodge complex of sheaves, the spectral sequence for the direct summands converges at \(E_1\), since it only differs from the one for \(\cK^\bullet_\C\) by a tensor with a vector space and a shift in the filtration.
  
  Applying \cite[Lemma A.42]{peters2008mixed}, this in particular implies that the following morphisms, induced by the inclusion, are injective:
  \[
    \bH^{p+q}\left(X, F^p (H_\C\otimes_\C \cK^\bullet_\C)\right)\hookrightarrow \bH^{p+q}(X, H_\C\otimes_\C \cK^\bullet_\C).
  \]
  \item \textbf{The filtration induced by \(F\) endows \(\bH^j(X, H_k\otimes_k \cK^\bullet_k)\) with a Hodge structure of weight \(a+b+j\):} $F^p \bH^j(X, H_\C\otimes_\C \cK^\bullet_\C)$ is, by definition, the image of the morphism induced by the inclusion of sheaves (which is injective, as stated above)
  $$
  \bH^j\left(X, F^p (H_\C\otimes_\C \cK^\bullet_\C)\right)\hookrightarrow \bH^j(X, H_\C\otimes_\C \cK^\bullet_\C).
  $$

Applying \eqref{eq:hodgeDecomp}, $F^p\bH^j(X, H_\C\otimes_\C \cK^\bullet_\C)$ is the direct sum for all $p_1,p_2$ such that $p_1+p_2=p$ of all the images of the morphisms
$$
\bH^j\left(X, (H_\C)^{p_1,a-p_1}\otimes_\C F^{p_2}\cK^\bullet_\C\right)\to \bH^j(X, H_\C\otimes_\C \cK^\bullet_\C),
$$
that is, the direct sum of the images of 
$$
(H_\C)^{p_1,a-p_1}\otimes_\C \bH^j(X,  F^{p_2}\cK^\bullet_\C)\to H_\C\otimes_\C \bH^j(X,  \cK^\bullet_\C),
$$
which coincide with $(H_\C)^{p_1,a-p_1}\otimes_\C F^{p_2}\bH^j(X, \cK^\bullet_\C)$ for all $p_1+p_2=p$. Therefore,
$$
	F^p\bH^j(X, H_\C\otimes_\C \cK^\bullet_\C)  = \bigoplus_{p_1+p_2=p} (H_\C)^{p_1,a-p_1}\otimes_\C F^{p_2}\bH^j(X, \cK^\bullet_\C)=\sum_{p_1+p_2=p} F^{p_1}H_\C\otimes F^{p_2}\bH^j(X, \cK^\bullet_\C).
$$
That is, the filtration $F^\lc$ on $\bH^j(X, H_\C\otimes_\C \cK^\bullet_\C)$ is the tensor filtration on $H_\C\otimes_\C \bH^j(X, \cK^\bullet_\C)$. The rest follows from the fact that $H_\C$ is a Hodge structure of weight $a$ and, since $\cK^\bullet_\C$ is a Hodge complex of sheaves of weight $b$, $\bH^j(X, \cK^\bullet_\C)$ is a Hodge structure of weight $b+j$.

  \end{itemize}
\end{proof}

\begin{defprop}\label{defprop:thickMHC}
	Consider the objects in Assumption~\ref{ass:ingredients}, and let  $\mathbf{\Phi}\coloneqq(\Phi_0,\ldots,\Phi_{2M})$ and $\mathbf{\Psi}\coloneqq(\Psi_1,\ldots,\Psi_{2M-1})$. Let $\eps_\C$ and $\eps_k$ as in Definition~\ref{def:epsi}. Let $m\in \Z$  where $m\neq 0$, and let $(R_{m}, W_{\lc}, F^{\lc})$ be the MHS on $R_{m}$ as in this section.
	
	Then,
	$$
	\cK^\bullet(m,V,\mathbf{\Phi},\mathbf{\Psi})\coloneqq\left(\left(\left(R_{m}\otimes_{k} \cK^\bullet_k, d+\Phi_0(\eps_k)\right),W_{\lc}\right),\left(\left(R_{m}\otimes_{\C} \cK^\bullet_\C, d+\Phi_{2M}(\eps_\C)\right),W_{\lc}, F^{\lc}\right),\wt \alpha\right)
	$$
	is a multiplicative $k$-mixed Hodge complex of sheaves on $X$, where
	\begin{itemize}
		\item The filtrations $W_{\lc}$, $F^{\lc}$ of $R_m\otimes_k \cK_k^\bullet$ and/or $R_m\otimes_\C \cK_j^\bullet$ for all $j=0,\ldots, 2M$ that appear are the tensor filtrations defined as in Definition-Proposition~\ref{defprop:multilinear} from $(R_m,W_{\lc}, F^{\lc})$ and the filtrations in $\cK^\bullet$. 
		\item Everywhere, we write \(d+a\) to denote the sum of the differential $\Id_{R_m}\otimes d$, where $d$ is the differential in $\cK^\bullet$, and left multiplication by \(a\).
		\item $\wt\alpha$ is the filtered  pseudo-isomorphism given by
		\begin{equation}\label{eq:MHCThickening}
			\begin{tikzcd}[column sep = 5em]
				\left((R_{m}\otimes_\C \cK^\bullet_0,d+\Phi_0(\eps_\C)),W_{\lc}\right)
				\arrow[d, "\Id\otimes \alpha_1"]
				&
				\left((R_{m}\otimes_\C \cK^\bullet_2,d+\Phi_2(\eps_\C)),W_{\lc}\right)\arrow[d, "\Id\otimes \alpha_2"]
				\arrow[r,"\Id\otimes \alpha_3"]
				&
				\cdots
				\\
				\left((R_{m}\otimes_\C \cK^\bullet_1,d+\alpha_1\Phi_0(\eps)),W_{\lc}\right)\arrow[r,"e^{\Psi_1(\eps_\C)}","\sim"']
				&
				\left((R_{m}\otimes_\C \cK^\bullet_1,d+\alpha_2\Phi_2(\eps_\C)), ,W_{\lc}\right)
			\end{tikzcd}
		\end{equation}
	\end{itemize}
%
%
\end{defprop}

\begin{proof}
	First note that all of the complexes of sheaves involved in this definition are indeed complexes of sheaves by Lemma~\ref{lem:dglas} and Remark~\ref{rem:dglaTensorM}, and they are bounded  because the complexes in $\cK^\bullet$ are. All the $W_{\lc}$ filtrations that appear in the complexes appearing in $\cK^\bullet(m,V,\mathbf{\Phi},\mathbf{\Psi})$ are increasing, as they are tensor filtrations of increasing filtrations. Similarly, the $F^{\lc}$ filtration of $(R_m\otimes_\C \cK_\C^\bullet, d+\Phi_{2M}(\eps_\C))$ is a decreasing filtration. 
	
	We have to verify the following claims:
\begin{itemize}
	\item \textbf{The vector spaces $\mathbb{H}^*\left(X,(R_m\otimes_k\cK^\bullet_k, d+\Phi_0(\eps_k))\right)$ are finite-dimensional.} If $m=1$, these hypercohomology groups coincide with $\mathbb{H}^*\left(X,\cK^\bullet_k\right)$, which are finite dimensional by the hypothesis that $\cK^\bullet$ is a mixed Hodge complex of sheaves. For $m>1$, we have a short exact sequence
	\begin{equation}\label{eq:sesRm}
		0\to \Sym^{m-1} V^\vee \to R_m\to R_{m-1}\to 0
	\end{equation}
	which induces a short exact sequence of complexes of sheaves
	$$
	0\to (\Sym^{m-1}V^\vee\otimes_k \cK^\bullet_k, d)  \to (R_m\otimes_k\cK^\bullet_k, d+\Phi(\eps_k))\to (R_{m-1}\otimes_k\cK^\bullet_k, d+\Phi(\eps_k))\to 0
	$$
	We have that $\mathbb{H}^*(X,\Sym^{m-1}V^\vee\otimes_k \cK^\bullet_k)\cong \Sym^{m-1}V^\vee\otimes \mathbb{H}^*(X, \cK^\bullet_k)$, so they are finite dimensional vector spaces. The short exact sequence of complexes of sheaves induces a long exact sequence in cohomology groups, so the result for all $m>1$ follows by induction from these long exact sequences.
	
	For $m<0$ the result follows by dualizing \eqref{eq:sesRm} over $k$ and following the same inductive argument.
	
	\item \textbf{The differentials preserve the weight and Hodge filtrations,} that is, the weight and Hodge filtrations are filtrations by subcomplexes of sheaves. We start by showing that $\eps_\C\in (W_0\cap F^0)(V_\C^\vee\otimes_\C V_\C)$. Recall how the filtrations are defined on duals and tensor products in Definition-Proposition~\ref{defprop:multilinear}. Let us assume that a basis $\{s_i^\vee\}$ is chosen in a way compatible with the filtration $W_\lc$ (resp. $F^\lc$), i.e. for every $m$, $W_mV$ (resp. $F^p$) is generated by a subset of this basis. Let $\{s_i\}$ denote the dual basis. If $s_i^\vee\in W_m V$ (resp. $s_i^\vee\in F^pV_\C$), then $s_i$ (seen as a morphism from $V$ to $k$) takes $W_{m-1}V$ (resp. $F^{p+1}V_\C$) to $0$, so $s_i\in W_{-m} V^\vee$ (resp. $s_i\in F^{-p}V_\C^\vee$). In particular, $s_i\otimes s_i^\vee\in W_0 (V^\vee\otimes_k V)$ (resp. $s_i\otimes s_i^\vee\in F^{0}(V_\C^\vee\otimes_\C V_\C)$), so $\eps_\C\in (W_0\cap F^0)(V_\C^\vee\otimes_\C V_\C)$.
	
	Now, recall that $\Phi_{2i}(\eps_\C)\coloneqq(\Id\otimes \Phi_{2i})(\eps_\C)$ for all $i=0,\ldots, M$. By Assumption~\ref{ass:ingredients}, $\Phi_{2i}$ decreases the weight by $1$, so
	\[
	\Phi_{2i}(\eps_\C) \in \Gamma\left(X,W_{-1} \left(V_\C^\vee\otimes_\C \cK^1_{2i} \right)\right).
	\]
	Since $\cK^\bullet$ is a mixed Hodge complex of sheaves, $d$ preserves the weight. Since $\cK^\bullet$ is a multiplicative mixed Hodge complex of sheaves and  the multiplication morphisms in \eqref{eq:multMHS} are MHS morphisms,  multiplication by $\Phi_{2i}(\eps_\C)$ decreases the weight by 1. Hence, applying \(d+\Phi_{2i}(\eps_\C)\) preserves the weight, since \(d\) does. Since $\alpha_{2i}$ also preserves the weight for all $i$, multiplication by $\alpha_{2i-1}\Phi_{2i-2}(\eps_\C)$ and $\alpha_{2i}\Phi_{2i}(\eps_\C)$ decreases the weight by 1, so applying \(d+\alpha_{2i-1}\Phi_{2i-2}(\eps_\C)\) or \(d+\alpha_{2i}\Phi_{2i}(\eps_\C)\) preserves the weight. Similarly, multiplication by $\Phi_{2M}(\eps_\C)$ and the differential \(d+\Phi_{2M}(\eps_\C)\) both preserve the Hodge filtration.
	

	\item \textbf{The associated graded for the weight filtration is a Hodge complex of sheaves.} First, note that, by a similar argument as above, $\Psi_{2i-1}(\eps_\C)$ decreases the weight by $1$, since $\Psi_{2i-1}$ also decreases the weight by $1$. Since $\Phi_{2i}(\eps_\C)$ also decreases the weight by $1$ for all $i=0,\ldots, M$, applying $\Gr^W_{\lc}$ to \eqref{eq:MHCThickening} yields:
	\begin{equation}\label{eq:easyThickening}
		\begin{tikzcd}
			\Gr^W_{\lc}(R_{m}\otimes_\C \cK^\bullet_0,d)
			\arrow[d, "\Gr^W_{\lc}(\Id\otimes \alpha_1)"]
			&
			\Gr^W_{\lc}(R_{m}\otimes_\C \cK^\bullet_2,d)\arrow[d, "\Gr^W_{\lc}(\Id\otimes \alpha_2)"]
			\arrow[rr,"\Gr^W_{\lc}(\Id\otimes \alpha_3)"]
			& &
			\cdots
			\\
			\Gr^W_{\lc}(R_{m}\otimes_\C \cK^\bullet_1,d)\arrow[equals, r]
			&
			\Gr^W_{\lc}(R_{m}\otimes_\C \cK^\bullet_1,d)
		\end{tikzcd}
	\end{equation}
 Hence, $\Gr^W_{j}$ applied to \eqref{eq:MHCThickening} yields (up to some extra identity maps between the sheaf complexes) the same diagram as $\Gr^W_{j}$ applied to $R_m\otimes \cK^\bullet$ (without twisting the differential). The rest follows from Lemma~\ref{lem:MHStensorMHC}
 
 	\item\textbf{The maps $\wt \alpha$ form a filtered pseudo-morphism, which becomes a filtered pseudo-isomorphism after tensoring with $\C$ over $k$:} First, the maps $\Id\otimes \alpha_i$ are clearly morphisms of complexes (they preserve the differential). By Remark~\ref{rem:dgla-cdga}, $e^{\Psi_{2i+1}(\eps)}$ is an isomorphism of complexes. When passing to $\Gr^W_{\lc}$, we obtain \eqref{eq:easyThickening}, which we already showed is a pseudo-isomorphism. Since the filtrations $W_{\lc}$ are biregular, the result now follows by increasing induction and the five lemma. 
 	\end{itemize}
\end{proof}

\subsection{Properties of the thickening}

\begin{proposition}\label{prop:complexMultiplication}
	Suppose we have \((\cK^\bullet, V, \mathbf{\Phi}, \mathbf{\Psi})\) as in Assumption~\ref{ass:ingredients}. Via the embedding \(V^\vee\subset R_{\infty}\), multiplication induces a morphism of mixed Hodge complexes of sheaves for every \(m\in \Z\setminus\{0\}\):
	\[
	V^\vee \otimes \cK^\bullet(m,  V, \mathbf{\Phi}, \mathbf{\Psi})\to \cK^\bullet(m,  V, \mathbf{\Phi}, \mathbf{\Psi}),
	\]
	where 
	\begin{align*}
		&V^\vee \otimes \cK^\bullet(m,  V, \mathbf{\Phi},\mathbf{\Psi})\coloneqq\\
		& \left(\left(\left(V^\vee\otimes_k\otimes R_m\otimes_k \cK_k^\bullet, d+\Phi_0(\eps_k)\right),W_{\lc}\right),\left(\left(V_\C^\vee\otimes_\C\otimes R_m\otimes_\C \cK_\C^\bullet, d+\Phi_{2M}(\eps_\C)\right),W_{\lc}, F^{\lc}\right),\Id_{V^\vee}\otimes\wt\alpha\right),
	\end{align*}
	and the filtrations are tensor filtrations of $V^\vee$ and $\cK^\bullet(m,  V, \mathbf{\Phi},\mathbf{\Psi})$ as in Definition-Proposition~\ref{defprop:multilinear}.
\end{proposition}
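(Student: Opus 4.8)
The plan is to produce the morphism explicitly. Let $\mu\colon V^\vee\otimes_k R_m\to R_m$ denote the multiplication map coming from the inclusion $V^\vee\subset R_\infty$ and the $R_\infty$-module structure on $R_m$; by \eqref{eq:multMHS} this is a morphism of $k$-MHS, and its complexification $\mu_\C=\mu\otimes_k\C\colon V_\C^\vee\otimes_\C R_m\to R_m$ is a morphism of $\C$-MHS. I claim that
\[
\mu\otimes\Id_{\cK^\bullet}\colon V^\vee\otimes\cK^\bullet(m,V,\mathbf{\Phi},\mathbf{\Psi})\longrightarrow\cK^\bullet(m,V,\mathbf{\Phi},\mathbf{\Psi}),
\]
defined on the $k$-part by $\mu\otimes_k\Id_{\cK_k^\bullet}$, on the $\C$-part by $\mu_\C\otimes_\C\Id_{\cK_\C^\bullet}$, and on each complex of the chain representing $\wt\alpha$ by the evident complexified version of the same, is a morphism of mixed Hodge complexes of sheaves. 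Compatibility with the $k$-structure is then built in, and it remains to verify: (i) $\mu\otimes\Id_{\cK^\bullet}$ is a morphism of complexes on the $k$- and $\C$-parts; (ii) it makes the two chains $\Id_{V^\vee}\otimes\wt\alpha$ and $\wt\alpha$ into a commuting ladder; and (iii) it is compatible with the weight filtrations on both parts and the Hodge filtration on the $\C$-part.

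For (i) and (ii) the point is linearity over $R_\infty$. All the operators occurring in the differentials and transition maps of $\cK^\bullet(m,V,\mathbf{\Phi},\mathbf{\Psi})$ — namely $\Id_{R_m}\otimes d$, left multiplication by $\Phi_0(\eps_k)$, $\Phi_{2M}(\eps_\C)$, $\alpha_{2i-1}\Phi_{2i-2}(\eps_\C)$ and $\alpha_{2i}\Phi_{2i}(\eps_\C)$, the maps $\Id_{R_m}\otimes\alpha_i$, and the isomorphisms $e^{\Psi_{2i-1}(\eps_\C)}$ — are linear over $R_\infty$, where $R_\infty$ acts through $R_\infty\twoheadrightarrow R_m$; for the multiplication operators this is recorded in the remark following Definition~\ref{def:imepsi} (and $e^{\Psi_{2i-1}(\eps_\C)}$ is a finite sum of compositions of such by nilpotency of $\mathfrak a$ in $R_m$), while for $\Id_{R_m}\otimes d$ and $\Id_{R_m}\otimes\alpha_i$ it is immediate. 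The corresponding operators on $V^\vee\otimes\cK^\bullet(m,V,\mathbf{\Phi},\mathbf{\Psi})$ are obtained by tensoring on the left with $\Id_{V^\vee}$. Since $\mu$ is $R_\infty$-linear (indeed $V^\vee\subset R_\infty$), a one-line check shows that $\mu\otimes\Id_{\cK^\bullet}$ intertwines each operator $\Id_{V^\vee}\otimes T$ on the source with the operator $T$ on the target. Applying this to the differentials gives (i), and applying it to the $\Id_{R_m}\otimes\alpha_i$ and the $e^{\Psi_{2i-1}(\eps_\C)}$ gives (ii).

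For (iii) I would use associativity of the tensor filtrations of Definition-Proposition~\ref{defprop:multilinear}: on the source the weight filtration is $W_n=\sum_{a+b+c=n}W_aV^\vee\otimes_k W_bR_m\otimes_k W_c\cK_k^\bullet$ and on the target it is $W_n=\sum_{b+c=n}W_bR_m\otimes_k W_c\cK_k^\bullet$ (and likewise for $F^\lc$ on the $\C$-parts). Because $\mu$ and $\mu_\C$ are MHS morphisms, $\mu(W_aV^\vee\otimes_k W_bR_m)\subseteq W_{a+b}R_m$ and $\mu_\C(F^{p_1}V_\C^\vee\otimes_\C F^{p_2}R_m)\subseteq F^{p_1+p_2}R_m$; tensoring with $\Id_{\cK^\bullet}$ then shows $\mu\otimes\Id_{\cK^\bullet}$ sends $W_aV^\vee\otimes_k W_bR_m\otimes_k W_c\cK_k^\bullet$ into $W_{a+b}R_m\otimes_k W_c\cK_k^\bullet\subseteq W_{a+b+c}$, and analogously for $F^\lc$. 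Summing over the relevant indices gives (iii), and (i)--(iii) together yield the proposition.

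I do not anticipate a genuine obstacle: the statement is essentially that the thickening of Definition-Proposition~\ref{defprop:thickMHC} is linear over $R_\infty$ in the tensor factor carrying $R_m$, combined with the fact, already established as \eqref{eq:multMHS}, that $V^\vee\otimes R_m\to R_m$ is a MHS morphism. The only spot needing a moment's care is recognizing that the exponential factor $e^{\Psi_{2i-1}(\eps_\C)}$ appearing in $\wt\alpha$ is linear over $R_\infty$, which follows from the nilpotency of the maximal ideal $\mathfrak a$ of $R_m$ and hence causes no difficulty.
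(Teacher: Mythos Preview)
Your proof is correct and follows essentially the same approach as the paper: use that $V^\vee\otimes R_m\to R_m$ is a MHS morphism (equation~\eqref{eq:multMHS}) to get filtration compatibility, and use $R_\infty$-linearity/commutativity to see that $\mu\otimes\Id$ intertwines the differentials and the pseudo-morphisms (including the $e^{\Psi_{2i-1}(\eps_\C)}$ factors). The paper's version is terser and also notes up front, via Lemma~\ref{lem:MHStensorMHC}, that the source $V^\vee\otimes\cK^\bullet(m,V,\mathbf{\Phi},\mathbf{\Psi})$ is itself a mixed Hodge complex of sheaves, which you might add for completeness.
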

\begin{proof}
Note that $V^\vee\otimes\cK^\bullet(m,  V, \mathbf{\Phi},\mathbf{\Psi})$ is a mixed Hodge complex of sheaves by Lemma~\ref{lem:MHStensorMHC}. By \eqref{eq:multMHS}, multiplication induces a mixed Hodge structure morphism \(V^\vee\otimes R_m\to R_m\), so the multiplication morphism $V^\vee \otimes \cK^\bullet(m,  V, \mathbf{\Phi}, \mathbf{\Psi})\to \cK^\bullet(m,  V, \mathbf{\Phi}, \mathbf{\Psi})$ preserves the filtrations. To see that it is a morphism of mixed Hodge complexes of sheaves, we also need to see that it commutes with the pseudo-morphisms of both mixed Hodge complexes of sheaves. That is, it suffices to see that it commutes with \(\Id\otimes \alpha_i\), which is clear since it acts on the first factor, and with \(e^{\Psi_{2i-1}(\eps)}\), which follows from the commutativity of \(R_\infty\).
\end{proof}

\begin{proposition}\label{prop:projectionsMHC}
		Suppose we have \((\cK^\bullet, V, \mathbf{\Phi}, \mathbf{\Psi})\) as in Assumption~\ref{ass:ingredients}. Let $m',m\in \Z$ with $m'\ge m>0$. The projection morphism
		$
		R_{m'}\twoheadrightarrow R_m
		$
		 induces a morphism of mixed Hodge complexes of sheaves:
	\[
\cK^\bullet(m',  V, \mathbf{\Phi}, \mathbf{\Psi})\to \cK^\bullet(m,  V, \mathbf{\Phi}, \mathbf{\Psi}),
	\]
	and the dual $R_{-m}\hookrightarrow R_{-m'}$ of the projection morphism induces a morphism of mixed Hodge complexes of sheaves:
		\[
	\cK^\bullet(-m,  V, \mathbf{\Phi}, \mathbf{\Psi})\to \cK^\bullet(-m',  V, \mathbf{\Phi}, \mathbf{\Psi}).
	\]
\end{proposition}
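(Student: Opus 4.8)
The plan is to show that both maps in question are simultaneously morphisms of mixed Hodge structures and $R_\infty$-linear, and then to run the same formal argument as in the proof of Proposition~\ref{prop:complexMultiplication}. Under the identifications $R_{m'}\cong\bigoplus_{j=0}^{m'-1}\Sym^j V^\vee$ and $R_m\cong\bigoplus_{j=0}^{m-1}\Sym^j V^\vee$, the projection $p\colon R_{m'}\twoheadrightarrow R_m$ is the canonical projection killing the summands with $j\ge m$, so it is a morphism of MHS by Definition-Proposition~\ref{defprop:multilinear}; it is moreover an $R_\infty$-algebra homomorphism, hence $R_\infty$-linear. Dualizing over $k$, the inclusion $p^\vee\colon R_{-m}\hookrightarrow R_{-m'}$ is again a morphism of MHS (by the dual part of Definition-Proposition~\ref{defprop:multilinear}), and the identity $(r\cdot f)\circ p = r\cdot(f\circ p)$ for $r\in R_\infty$ and $f\in R_{-m}$, which follows from the $R_\infty$-linearity of $p$ and the definition of the dual module structure, shows that $p^\vee$ is $R_\infty$-linear too.

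Next I would tensor with the identity of $\cK^\bullet_k$ (over $k$) and of $\cK^\bullet_\C$ (over $\C$) and verify the conditions for a morphism of mixed Hodge complexes of sheaves, the source and target being multiplicative mixed Hodge complexes of sheaves by Definition-Proposition~\ref{defprop:thickMHC}. First, $p\otimes\Id$ (resp.\ $p^\vee\otimes\Id$) commutes with $\Id_{R}\otimes d$ trivially, and, being $R_\infty$-linear, it commutes with left multiplication by any global section of $R_\infty\otimes\cK^\bullet$, in particular by $\Phi_0(\eps_k)$ and $\Phi_{2M}(\eps_\C)$; hence it is a morphism of complexes of sheaves for the twisted differentials $d+\Phi_0(\eps_k)$ and $d+\Phi_{2M}(\eps_\C)$. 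Second, by the same $R_\infty$-linearity it commutes with each connecting map in the zigzag $\wt\alpha$ of Definition-Proposition~\ref{defprop:thickMHC}: with $\Id\otimes\alpha_i$ because those act on the $\cK$-factor, and with each $e^{\Psi_{2i-1}(\eps_\C)}$ because the latter is a finite sum of iterated left multiplications by the fixed section $\Psi_{2i-1}(\eps_\C)$ (finite since $\mathfrak a$ is nilpotent in $R_{\pm m'}$). Third, all filtrations on the thickened complexes are the tensor filtrations built from the MHS on $R_{\pm m}$ and the filtrations of $\cK^\bullet$, so, $p$ and $p^\vee$ being morphisms of MHS, $p\otimes\Id$ and $p^\vee\otimes\Id$ preserve $W_\lc$ on the $k$- and $\C$-parts and $F^\lc$ on the $\C$-part.

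I do not expect a serious obstacle: the statement is essentially the functoriality of the thickening construction in the truncation parameter, parallel to Proposition~\ref{prop:complexMultiplication}. The only point requiring care is the bookkeeping in the dual case — one must use precisely the $R_\infty$-module structure on $R_{-m}=\Hom_k(R_m,k)$ with respect to which $p^\vee$ is linear, the same one entering the definition of the twisted differential on $\cK^\bullet(-m,V,\mathbf\Phi,\mathbf\Psi)$ and making the exponentials $e^{\Psi_{2i-1}(\eps_\C)}$ well defined there (cf.\ the remark following Definition~\ref{def:imepsi}). Once these identifications are in place the verification is formal.
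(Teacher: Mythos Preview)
Your proposal is correct and takes essentially the same approach as the paper: the paper's proof is in fact omitted, stating only that it follows the same steps as Proposition~\ref{prop:complexMultiplication} using that the projection $R_{m'}\twoheadrightarrow R_m$ is a MHS morphism. You have simply written out those steps in full, including the extra bookkeeping for the dual case, which is entirely appropriate.
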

\begin{proof}
	The proof follows the same steps as the proof of  Proposition~\ref{prop:complexMultiplication}, this time using that the projection $R_{m'}\twoheadrightarrow R_m$ is a MHS morphism, so we omit it.
\end{proof}

\begin{proposition}\label{prop:thickeningFunctorialInV}
	Suppose we have two pieces of data as in Assumption~\ref{ass:ingredients} with a morphism connecting them:
	\[(\cK^\bullet,V,\mathbf{\Phi},\mathbf{\Psi})\longrightarrow(\cK^\bullet,\wt V,\mathbf{\wt \Phi},\mathbf{\wt\Psi}),
	\]
	in other words, there is a MHS morphism $\mu \colon V\to \wt V$, such that the maps $\Phi$'s and $\Psi$'s commute with these. Then, the morphisms between complexes of sheaves induced by $\mu$ and the identity in $\cK^\bullet$
	\[
	\cK^\bullet(m,\wt V,\mathbf{\wt \Phi},\mathbf{\wt\Psi})\to \cK^\bullet(m,V,\mathbf{\Phi},\mathbf{\Psi}),\quad 	\cK^\bullet(-m,V,\mathbf{\Phi},\mathbf{\Psi})\to \cK^\bullet(-m,\wt V,\mathbf{\wt \Phi},\mathbf{\wt\Psi}).
	\]
	are morphisms of mixed Hodge complexes of sheaves for all \(m\in \Z_{\geq 1}\)
\end{proposition}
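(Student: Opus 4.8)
The plan is to run the same argument as in the proofs of Propositions~\ref{prop:complexMultiplication} and~\ref{prop:projectionsMHC}: the desired maps of thickened complexes come from a ring map, and one only has to check that this map of sheaves is compatible with the twisted differentials, preserves the tensor filtrations, and commutes with the pseudo-morphisms $\wt\alpha$ on both sides. Concretely, let $\nu \coloneqq \mu^\vee\colon \wt V^\vee \to V^\vee$, which is a MHS morphism by Definition-Proposition~\ref{defprop:multilinear}. Applying $\Sym^\bullet$ and taking finite direct sums, $\nu$ induces morphisms of $k$-algebras $\sigma\colon R_\infty(\wt V)\to R_\infty(V)$ and, for every $m\ge 1$, $\sigma\colon R_m(\wt V)\to R_m(V)$, as well as the $k$-duals $\sigma^\vee\colon R_{-m}(V)\to R_{-m}(\wt V)$; all of these are MHS morphisms since $\Sym^j$, finite direct sums and $k$-duals of MHS morphisms are again MHS morphisms. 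The maps in the statement are $\sigma\otimes \Id_{\cK^\bullet}$ and $\sigma^\vee\otimes \Id_{\cK^\bullet}$ (over $k$ and over $\C$, using that $\mu$, and hence $\nu$, is defined over $k$). They are visibly morphisms of graded sheaves, commute with the untwisted differential $\Id\otimes d$, and preserve the tensor filtrations $W_\lc$ and $F^\lc$, because $\sigma$ and $\sigma^\vee$ do on $R_{\pm m}$ and $\Id$ does on $\cK^\bullet$.

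The only computation is that $\sigma\otimes\Id$ intertwines the twisted differentials, and carries the homotopies $e^{\wt\Psi_{2i-1}(\eps_\C)}$ to $e^{\Psi_{2i-1}(\eps_\C)}$. Since $\sigma$ is a ring homomorphism, it suffices to check that $\sigma$ sends $\wt\Phi_0(\eps_k^{\wt V})$ to $\Phi_0(\eps_k^V)$ (and likewise $\wt\Phi_{2M}(\eps_\C^{\wt V})\mapsto\Phi_{2M}(\eps_\C^V)$, $\wt\Psi_{2i-1}(\eps_\C^{\wt V})\mapsto\Psi_{2i-1}(\eps_\C^V)$, and similarly for the terms $\alpha_{2i-1}\wt\Phi_{2i-2}(\eps_\C^{\wt V})$, $\alpha_{2i}\wt\Phi_{2i}(\eps_\C^{\wt V})$ in the pseudo-morphism chain). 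Writing $\wt\Phi_0(\eps^{\wt V}) = (\Id\otimes\wt\Phi_0)(\eps^{\wt V})$ and using $\Phi_0 = \wt\Phi_0\circ\mu$, this reduces to the identity
\[
(\nu\otimes\Id_{\wt V})(\eps^{\wt V}) = (\Id_{V^\vee}\otimes\mu)(\eps^{V}) \quad\text{in } V^\vee\otimes\wt V \cong \Hom_k(V,\wt V),
\]
which holds because both sides are the element corresponding to $\mu$ itself (the canonical element of $W^\vee\otimes W$ is $\Id_W$, and $\nu\otimes\Id$ corresponds to precomposition with $\mu$ while $\Id\otimes\mu$ corresponds to postcomposition with $\mu$). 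Since $\wt\Psi_{2i-1}(\eps_\C)$ and its image take values in the nilpotent ideal $\mathfrak a R_{\pm m}\otimes\cK^0$, the compatibility of the multiplication operators passes to their exponentials. Finally $\sigma\otimes\Id$ commutes with the maps $\Id\otimes\alpha_i$ trivially, since these act on the $\cK$-factor. Hence $\sigma\otimes\Id$ commutes with $\wt\alpha$ on both sides, so it is a morphism of (multiplicative) mixed Hodge complexes of sheaves $\cK^\bullet(m,\wt V,\mathbf{\wt\Phi},\mathbf{\wt\Psi})\to\cK^\bullet(m,V,\mathbf{\Phi},\mathbf{\Psi})$.

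The negative case $\cK^\bullet(-m,V,\mathbf{\Phi},\mathbf{\Psi})\to\cK^\bullet(-m,\wt V,\mathbf{\wt\Phi},\mathbf{\wt\Psi})$ is identical with $\sigma^\vee$ in place of $\sigma$: $R_{-m}(V)$ is an $R_m(V)$-module via $\sigma^\vee$ being $R_\infty$-linear, and the same naturality of the canonical element shows $\sigma^\vee\otimes\Id$ intertwines all the relevant multiplication operators. I do not expect any real obstacle here; the only thing to be careful about is tracking the variances (contravariant in $V$ for the positive $m$, covariant for the negative $m$), which is dictated by whether one uses $\mu^\vee$ on $R_m$ or its dual on $R_{-m}$.
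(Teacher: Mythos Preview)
Your proof is correct and follows essentially the same route as the paper's. The paper carries out the key step---showing that $\mu^\vee_m\otimes\Id$ intertwines the twisted differentials---via an explicit basis computation, arriving at the identity $\sum_r \mu^\vee(\wt s_r)\otimes\wt\Phi_i(\wt s_r^\vee)=\sum_l s_l\otimes\Phi_i(s_l^\vee)$; your formulation via the naturality of the canonical element, $(\nu\otimes\Id_{\wt V})(\eps^{\wt V})=(\Id_{V^\vee}\otimes\mu)(\eps^{V})$ in $V^\vee\otimes\wt V$, is a cleaner repackaging of exactly the same fact, and the remaining checks (filtrations, pseudo-morphisms, the dual module identity $\wt s\cdot\sigma^\vee(a^\vee)=\sigma^\vee(\sigma(\wt s)\cdot a^\vee)$ for the negative case) match the paper's treatment.
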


\begin{proof}
	Let \(R_m\) and \(\wt R_m\) be constructed as in this section for the spaces \(V\) and \(\wt V\), respectively. The morphism \(\mu\colon V\to \wt V\) induces MHS morphisms $\mu^\vee: \wt V^\vee \to  V^\vee$ and  \((\mu^\vee)^{\otimes j}:\Sym^j \wt V^\vee \to \Sym^j V^\vee\). Together they define a ring morphism \(\mu^\vee_\infty:\wt R_\infty \to R_\infty\), \(\wt R_\infty\)-module morphisms \(\mu^\vee_m:\wt R_m\to R_m\) and their duals \(\mu_m:R_{-m}\to \wt R_{-m}\) for all $m\geq 1$. The maps $\mu^\vee_m$ and $\mu_m$ are MHS morphisms for all $m\geq 1$.
	
	Tensoring with the identity morphism of \(\cK^\bullet\), we obtain morphisms between the complexes, which automatically preserve all filtrations since \(\mu^\vee_m\) do as well.

Let us show that these morphisms commute with the differentials (that is, they are morphisms of complexes of sheaves).  We start by showing that \((\mu^\vee_m \otimes_\C \Id_{\cK^\bullet_i}) \circ (d+ \wt\Phi_i(\wt\eps_\C))= (d+\Phi_i(\eps_\C))\circ (\mu^\vee_m \otimes_\C \Id_{\cK^\bullet_i})\), where $\eps_\C$ and $\wt\eps_\C$ are constructed from $V$ and $\wt V$ as in Definition~\ref{def:epsi}, and the subindex $\C$ is changed for $k$ if the degree $i$ is $0$ (in which case $\cK_0^\bullet$ is changed for $\cK_k^\bullet$). Since \(d\) acts as the identity on the factor \(R_m\), it is clear that it commutes with \(\mu^\vee_m\). Therefore, it suffices to show that \(\Phi_i(\eps_\C)\circ \mu^\vee_m  = \mu^\vee_m \circ \wt \Phi_i(\wt\eps_\C)\colon \wt R_m\otimes_\C \cK^\bullet\to  R_m\otimes_\C \cK^\bullet\).
	Suppose we have a simple tensor \(a\otimes c\in \wt R_{m}\otimes_\C \cK^j_i\). Let \(\{\wt s_r^\vee\}\) be a basis of \(\wt V\), and let \(\{\wt s_r\}\) be its dual basis. Then,
	\begin{align*}
		\mu^\vee_m(\wt \Phi_i(\eps_\C)\cdot (a \otimes c))
		&= \mu^\vee_m \left(\sum_r \wt s_r a \otimes \wt \Phi_i(\wt s_r^\vee)\wedge c\right)\\
		&= \sum_r \mu^\vee_m (\wt s_r a) \otimes \wt \Phi_i(\wt s_r^\vee)\wedge c\\
		&= \sum_r \mu^\vee(\wt s_r) \mu^\vee_m(a)\otimes \wt \Phi_i(\wt s_r^\vee)\wedge c\\
		&= \left(\sum_r \mu^\vee(\wt s_r) \otimes \wt \Phi_i(\wt s_r^\vee) \right)\wedge (\mu^\vee_m(a) \otimes c).
	\end{align*}
	Note that if \(\{s_l^\vee\}\) (resp. \(\{\wt s_r^\vee\}\)) is a basis of \(V\) (resp, \(\wt V\)), then
	\[
	\sum_r \mu^\vee(\wt s_r) \otimes \wt \Phi_i(\wt s_r^\vee) =
	\sum_l s_l  \otimes \wt \Phi_i(\mu(s_l^\vee)) 
	\]
	which equals $\sum_l s_l  \otimes \Phi_i(s_l^\vee)$ by hypothesis.
Applying this to the previous string of equalities yields
	\begin{align*}
		\mu^\vee_m(\wt \Phi_i(\eps_\C)\cdot (a \otimes c))
		&= \left(\sum_l s_l  \otimes \Phi_i(s_l^\vee) \right)\wedge (\mu^\vee_m(a) \otimes c)\\
		&= \Phi_i(\eps_\C)\cdot (\mu^\vee_m(a) \otimes c).
	\end{align*}
	This shows that \(\mu^\vee_m\otimes_\C\Id_{\cK^\bullet_i}\) commutes with the differentials of the form $d+\Phi_i(\eps_\C)$ and $d+\wt\Phi_i(\wt\eps_\C)$. The proof follows the same steps for the other differentials appearing in the corresponding thickened mixed Hodge complexes of sheaves, namely those of the form $d+\alpha_j\Phi_i(\eps_\C)$. Hence, \(\mu^\vee_m\otimes_\C\Id_{\cK^\bullet_i}\) commutes with the differentials for all $m\geq 1$.
	
	The proof of the fact that $\wt\Phi_i(\wt\eps_\C)\circ\mu_m=\mu_m\circ \Phi_i(\eps_\C)$ follows the same steps, this time using that for all $a^\vee \in R_{-m}$, and $\wt s\in \wt R_m$, $\wt s\cdot\mu_m(a^\vee)=\mu_m(\mu^\vee_m(\wt s )\cdot a^\vee)$, so we omit it.	
This shows that \(\mu_m\otimes_\C\Id_{\cK^\bullet_i}\) commutes with the differentials of the form $d+\Phi_i(\eps_\C)$ and $d+\wt\Phi_i(\wt\eps_\C)$. As in the previous case for $\mu^\vee_m$, the proof follows the same steps for the other differentials appearing in the corresponding thickened mixed Hodge complexes of sheaves, so  \(\mu_m\otimes_\C\Id_{\cK^\bullet_i}\) commutes with the differentials for all $m\geq 1$.
	
	It remains to show that these morphisms between the complexes of sheaves commute with the pseudo-morphisms at every degree. It is clear that they commute with the morphisms of the form $\Id\otimes\alpha_i$, since these are the identity on the first factor. The commutation with the morphisms of the form $e^{\Psi_i(\eps_\C)}$ and $e^{\wt\Psi_i(\wt\eps_\C)}$ follows similar steps as the ones done for checking that these morphisms induced by $\mu$ between the complexes of sheaves commute with the differentials, so we omit them.
\end{proof}

\begin{proposition}\label{prop:thickenedQuiso}
	Let $m\in \Z\setminus\{0\}$. Let $V$ be a $k$-vector space, where $k=\Q,\R,\C$. Let $M \colon (\cK^\bullet,d)\to(\mathcal G^\bullet,d)$ be a quasi-isomorphism of complexes of sheaves over $k$ on a topological space $X$. Let $\Phi:V\to\Gamma(X,\cK^{1,\cl})$. Then,
	$$
	M_\#\coloneqq\Id_{R_m}\otimes_k M:(R_m\otimes_k \cK^\bullet, d+\Phi(\eps_k))\to (R_m\otimes_k \mathcal G^\bullet, d+(M\circ\Phi)(\eps_k))
	$$
	is a quasi-isomorphism.
\end{proposition}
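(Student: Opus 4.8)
The plan is to filter both the source and the target by the \(\mathfrak a\)-adic filtration coming from \(R_m\), reduce to the associated graded where the twisting term disappears, and there invoke the fact that tensoring a quasi-isomorphism of complexes of sheaves of \(k\)-vector spaces with a \(k\)-vector space is again a quasi-isomorphism.

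First I would record that \(R_m\) — whether \(m>0\), in which case \(R_m=R_\infty/\mathfrak a^m\), or \(m<0\), in which case \(R_m=\Hom_k(R_{|m|},k)\) — is a finitely generated \(R_\infty\)-module which is finite-dimensional over \(k\) and annihilated by \(\mathfrak a^{|m|}\). Hence the \(\mathfrak a\)-adic filtration \(R_m\supseteq \mathfrak a R_m\supseteq \mathfrak a^2 R_m\supseteq\cdots\supseteq \mathfrak a^{|m|}R_m=0\) is finite, with each graded piece \(\mathfrak a^i R_m/\mathfrak a^{i+1}R_m\) a finite-dimensional \(k\)-vector space. Tensoring with \(\cK^\bullet\) (resp.\ \(\mathcal G^\bullet\)) yields a finite filtration of \(R_m\otimes_k\cK^\bullet\) by the subsheaves \(\mathfrak a^i R_m\otimes_k\cK^\bullet\). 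This is a filtration by \emph{subcomplexes} for the twisted differential: the operator \(d\) acts only on the \(\cK^\bullet\)-factor, and the twist \(\Phi(\eps_k)\) is left multiplication by an element whose \(R_\infty\)-components lie in \(V^\vee=\Sym^1 V^\vee\subseteq\mathfrak a\) (see Definition~\ref{def:imepsi} and the remark following it), so it carries \(\mathfrak a^i R_m\otimes_k\cK^\bullet\) into \(\mathfrak a^{i+1}R_m\otimes_k\cK^\bullet\). The same applies to \((M\circ\Phi)(\eps_k)\) on \(R_m\otimes_k\mathcal G^\bullet\), and \(M_\#=\Id_{R_m}\otimes_k M\) is a morphism of filtered complexes of sheaves, since it is the identity on the \(R_m\)-factor and \(M\) intertwines the multiplicative structures used to define the twists.

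Next I would pass to the associated graded. Because the twisting terms \emph{strictly} raise the \(\mathfrak a\)-adic degree, the differential induced on \(\gr^i=(\mathfrak a^i R_m/\mathfrak a^{i+1}R_m)\otimes_k\cK^\bullet\) is simply \(\Id\otimes d\), and the induced map is \(\Id\otimes M\). For any \(k\)-vector space \(W\) one has \(\mathcal{H}^j(W\otimes_k\cK^\bullet)\cong W\otimes_k\mathcal{H}^j(\cK^\bullet)\), since tensoring over the field \(k\) is exact and cohomology sheaves are computed stalkwise; hence \(\Id_W\otimes M\) is a quasi-isomorphism whenever \(M\) is. Taking \(W=\mathfrak a^i R_m/\mathfrak a^{i+1}R_m\), every graded piece of \(M_\#\) is a quasi-isomorphism. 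Finally I would conclude by the standard finite-filtration argument: inducting on the length (\(\le|m|\)) of the filtration and comparing the long exact sequences of cohomology sheaves attached to \(0\to\mathfrak a^{i+1}R_m\otimes_k\cK^\bullet\to\mathfrak a^i R_m\otimes_k\cK^\bullet\to\gr^i\to 0\) (and likewise for \(\mathcal G^\bullet\)) via the five lemma, the base case being the top piece \(\gr^{|m|-1}=\mathfrak a^{|m|-1}R_m\otimes_k\cK^\bullet\) handled above. Equivalently, \(M_\#\) induces an isomorphism from \(E_1\) on between the convergent spectral sequences of the two filtered complexes of sheaves.

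I do not expect a serious obstacle here; the two points needing a little care are (i) verifying that the twist strictly increases the \(\mathfrak a\)-adic degree, which is exactly what makes the graded differential untwisted, and (ii) checking that \(M_\#\) really is a morphism of complexes of sheaves — i.e.\ that \(M\) carries multiplication by \(\Phi(v)\) on \(\cK^\bullet\) to multiplication by \(M(\Phi(v))\) on \(\mathcal G^\bullet\) — which is where the compatibility of \(M\) with the multiplicative structure enters.
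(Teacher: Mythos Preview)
Your proof is correct and follows essentially the same approach as the paper: filter \(R_m\otimes_k\cK^\bullet\) by a finite filtration coming from \(R_m\), observe that the twist \(\Phi(\eps_k)\) strictly raises filtration degree so the associated graded has untwisted differential \(\Id\otimes d\), and conclude by induction with the five lemma. The paper writes the filtration \(G^p=(\bigoplus_{p\le j\le m-1}\Sym^jV^\vee)\otimes_k\cK^\bullet\) for \(m>0\) (which is exactly your \(\mathfrak a^pR_m\otimes_k\cK^\bullet\)) and treats \(m<0\) separately via the dual filtration; your uniform use of the \(\mathfrak a\)-adic filtration for both signs is slightly cleaner but not materially different. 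Two minor remarks: the finite-dimensionality of the graded pieces is neither assumed in the statement nor needed---only that they are \(k\)-vector spaces---and your caveat (ii) about \(M\) respecting multiplication is exactly right; the paper simply asserts ``\(M_\#\) is a morphism of complexes'' and in every application \(M\) is a map of cdgas.
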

\begin{proof}
	Note that $M_\#$ is a morphism of complexes (it commutes with the differentials)
	
	Suppose that $m>0$.
	Consider the decreasing filtration $G^{\lc}$ by subcomplexes of $(R_m\otimes_k \cK^\bullet, d+\Phi(\eps_k))$ given by
	$$
	G^p R_m\otimes_k \cK^\bullet=\left(\oplus_{p\leq j\leq m-1} \Sym^j V^\vee\right)\otimes_k \cK^\bullet.
	$$
	Note that $G^0 R_m\otimes_k \cK^\bullet=R_m\otimes_k \cK^\bullet$ and $G^{m} R_m\otimes_k \cK^\bullet=0$. Similarly, we define the decreasing filtration $G^{\lc}$ of $R_m\otimes_k \mathcal G^\bullet, d+(M\circ\Phi(\eps_k))$. Note that $M_\#$ preserves the filtration $G^{\lc}$, and multiplication by $\Phi(\eps_k)$ or $(M\circ\Phi)(\eps_k)$ increases the filtration by $1$. In particular, for $p\geq 0$ we have the following commutative diagrams of short exact sequences:
	$$
	\begin{tikzcd}
		G^{p+1} (R_m\otimes_k \cK^\bullet, d+\Phi(\eps_k))\arrow[r, hook]\arrow[d, "M_\#"]&G^{p} (R_m\otimes_k \cK^\bullet, d+\Phi(\eps_k))\arrow[r,two heads]\arrow[d, "M_\#"]&\Gr_G^{p} (R_m\otimes_k \cK^\bullet, d)\arrow[d, "M_\#"]\\
		G^{p+1} (R_m\otimes_k \mathcal G^\bullet, d+(M\circ\Phi)(\eps_k))\arrow[r,hook]&G^{p} (R_m\otimes_k \mathcal G^\bullet, d+(M\circ\Phi)(\eps_k))\arrow[r,two heads]&\Gr_G^{p} (R_m\otimes_k \mathcal G^\bullet, d)\\
	\end{tikzcd}
	$$
	If $p=m-1$, the vertical arrow on the left is a morphism between two $0$ complexes, and the vertical arrow on the left is a quasi-isomorphism, so the central vertical arrow must also be a quasi-isomorphism. The rest of the proof now follows from decreasing induction and the five lemma, ending at $p=0$, where one can show that the central vertical arrow is a quasi-isomorphism.
	
	The result for $m<0$ follows similarly by defining a decreasing filtration on $R_m\otimes_k \cK^\bullet$ from the dual decreasing filtration on $R_m$ defined as in  Definition-Proposition~\ref{defprop:multilinear} from the one in $R_{-m}$, namely $G^p R_{-m}\coloneqq\{h:R_m\to \C\mid \oplus_{-p+1\leq j\leq m-1}\Sym^j V^\vee\subset \ker h\}$.
\end{proof}

\begin{proposition}\label{prop:qisoThickenings}
	Suppose that we have two pieces of data as in Assumption~\ref{ass:ingredients}:
	\[
	(\cK^\bullet, V, \mathbf{\Phi},\mathbf{\Psi}),
	(\wt\cK^\bullet, V, \mathbf{\wt\Phi},\mathbf{\wt\Psi})
	\]
	Furthermore, suppose that they are connected by a morphism of multiplicative mixed Hodge complexes of sheaves \(M \colon \cK^\bullet\to \wt \cK^\bullet\), that is compatible with the remaining data, in the sense that for every \(i\),
	\begin{align*}
		\wt \Phi_{2i} &= M_{2i} \circ \Phi_{2i};\\
		\wt \Psi_{2i-1} &= M_{2i-1} \circ \Psi_{2i-1}.
	\end{align*}
	Then, \(\Id_{R_m} \otimes M\) is a morphism of mixed Hodge complexes of sheaves between the two thickenings $\cK^\bullet(m,V,\mathbf{\Phi},\mathbf{\Psi})$ and $\wt\cK^\bullet (m,V,\mathbf{\wt\Phi},\mathbf{\wt\Psi})$. Moreover,
	\begin{itemize}
		\item if $M$ is a weak equivalence in the sense of \cite[Lemma-Definition 3.19]{peters2008mixed} (that is, a collection of quasi-isomorphisms), so is \(\Id_{R_m} \otimes M\), and
		\item if $M$ is a filtered quasi-isomorphism between the respective components of the mixed Hodge complexes of sheaves $\cK^\bullet$ and $\wt\cK^\bullet$ (and bi-filtered in the last), so is \(\Id_{R_m} \otimes M\).
	\end{itemize}
\end{proposition}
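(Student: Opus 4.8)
The plan is to verify directly that $\Id_{R_m}\otimes M$ satisfies the defining properties of a morphism of multiplicative mixed Hodge complexes of sheaves between the two thickenings --- both of which are already known to be mixed Hodge complexes of sheaves by Definition-Proposition~\ref{defprop:thickMHC} --- and then to deduce the quasi-isomorphism statements from Proposition~\ref{prop:thickenedQuiso} together with the observation that the twisting terms vanish on the weight-graded pieces.

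First I would check that $\Id_{R_m}\otimes M$ is a morphism of complexes of sheaves at every vertex of the zig-zag $\wt\alpha$ of~\eqref{eq:MHCThickening}. Each vertex carries a differential of the form $d+\Theta(\eps)$, where $\Theta$ is one of $\Phi_{2i}$, $\alpha_{2i-1}\circ\Phi_{2i-2}$ or $\alpha_{2i}\circ\Phi_{2i}$ on the $\cK^\bullet$ side, and $\wt\Theta=M_j\circ\Theta$ on the $\wt\cK^\bullet$ side (using that the $M_j$ commute with the $\alpha_j$, since $M$ is a morphism of mixed Hodge complexes, together with the hypothesis $\wt\Phi_{2i}=M_{2i}\circ\Phi_{2i}$). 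The verification that $\Id_{R_m}\otimes M_j$ intertwines left multiplication by $\Theta(\eps_\C)=\sum_r s_r\otimes\Theta(s_r^\vee)$ with left multiplication by $(M_j\circ\Theta)(\eps_\C)$ is the same multiplicative computation carried out in the proof of Proposition~\ref{prop:thickeningFunctorialInV}, with the map $\mu^\vee_m$ there replaced by $\Id_{R_m}$ and the two tensor factors swapped; it uses precisely that each $M_j$ is a cdga morphism. The identical computation applied to $\Psi_{2i-1}$ (which has degree $0$), after expanding the exponentials, shows that $\Id_{R_m}\otimes M_{2i-1}$ commutes with $e^{\Psi_{2i-1}(\eps_\C)}$ and $e^{\wt\Psi_{2i-1}(\eps_\C)}$, using $\wt\Psi_{2i-1}=M_{2i-1}\circ\Psi_{2i-1}$. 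Since $\Id_{R_m}\otimes M$ commutes with the maps $\Id\otimes\alpha_j$ as well, it is a morphism of the underlying diagrams. Finally, $\Id_{R_m}\otimes M$ preserves the weight and Hodge filtrations, which are the tensor filtrations built from the fixed MHS on $R_m$ and the filtrations of $\cK^\bullet$ preserved by $M$, and it is defined over $k$ on the real component because $M_0$ is; hence it is a morphism of multiplicative mixed Hodge complexes of sheaves.

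For the first ``moreover'' statement, fix a vertex of the zig-zag with differential $d+\Theta(\eps)$; here $\Theta$ takes values in $\Gamma(X,\cK^{1,\cl}_j)$, since $\Phi_{2i}$ does and $\alpha$ is a morphism of complexes, so the corresponding vertex of the target carries $d+(M_j\circ\Theta)(\eps)$ and Proposition~\ref{prop:thickenedQuiso} applies verbatim with $\Phi=\Theta$ and $M=M_j$. Thus $\Id_{R_m}\otimes M_j$ is a quasi-isomorphism whenever $M_j$ is, and ranging over all vertices shows that $\Id_{R_m}\otimes M$ is a weak equivalence. For the filtered (resp.\ bi-filtered) statement I would pass to the associated graded for the weight filtration: each of $\Theta(\eps)$ and $\Psi_{2i-1}(\eps)$ strictly lowers the weight (as in the proof of Definition-Proposition~\ref{defprop:thickMHC}), so $\Gr^W_n$ of each thickened complex is canonically $\bigoplus_{a+b=n}\Gr^W_a R_m\otimes\Gr^W_b\cK^\bullet_j$ with the untwisted differential $\Id\otimes d$, and $\Gr^W_n(\Id_{R_m}\otimes M_j)=\bigoplus_{a+b=n}\Id_{\Gr^W_a R_m}\otimes\Gr^W_b(M_j)$. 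Tensoring a quasi-isomorphism of complexes of sheaves of vector spaces by a finite-dimensional vector space preserves quasi-isomorphy, so this is a quasi-isomorphism and $\Id_{R_m}\otimes M$ is a filtered quasi-isomorphism. In the complex component the argument is the same with the extra Hodge filtration: once the differential has been untwisted by $\Gr^W$, the remaining filtration $F^{\lc}$ is the tensor filtration, so $\Gr_F^p\Gr^W_n$ decomposes as a direct sum of tensors $\Gr_F^{p_1}\Gr^W_a R_m\otimes\Gr_F^{p_2}\Gr^W_b\cK^\bullet_\C$ with differential $\Id\otimes d$, and the hypothesis that $M_{2M}$ is a bi-filtered quasi-isomorphism gives the conclusion. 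The case $m<0$ is handled by the same dualization device used in Proposition~\ref{prop:thickenedQuiso}.

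This proposition is essentially bookkeeping once Propositions~\ref{prop:thickeningFunctorialInV} and~\ref{prop:thickenedQuiso} and the weight-graded description of the thickening are available; the only point that warrants care is confirming that every twisting endomorphism genuinely disappears after passing to the associated graded for the weight filtration (in every tensor factor, and in either order of taking $\Gr^W$ and $\Gr^F$), which is exactly what reduces the filtered and bi-filtered quasi-isomorphism claims to the trivial statement about tensoring with a vector space.
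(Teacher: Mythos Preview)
Your proof is correct and follows essentially the same route as the paper's: verify commutation with the twisted differentials and with the exponential comparison maps using multiplicativity of $M$ and the compatibility hypotheses, invoke Proposition~\ref{prop:thickenedQuiso} for the weak equivalence, and pass to $\Gr^W$ (where the twisting vanishes) for the filtered statement.

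The only noticeable difference is in the bi-filtered claim. You first take $\Gr^W$ to untwist and then take $\Gr^p_F$, decomposing via the Hodge splitting of the pure pieces $\Gr^W_a R_m$; this reduces to tensoring a bi-filtered quasi-isomorphism with a vector space. The paper instead treats the Hodge filtration directly on the twisted complex: since $\Phi_{2M}(\eps_\C)\in F^0$, the twist survives on $\Gr^p_F$, and the paper handles this by reusing the inductive $G^{\lc}$-filtration argument of Proposition~\ref{prop:thickenedQuiso} on each $\Gr^p_F$ piece. Your approach is slightly cleaner because it avoids that induction, at the cost of implicitly using that a $W$-filtered quasi-isomorphism which is $F$-filtered on each $\Gr^W_n$ is $F$-filtered on the whole complex (a five-lemma induction on the biregular $W$). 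One small caveat: your parenthetical remark that the twisting disappears ``in either order of taking $\Gr^W$ and $\Gr^F$'' is not quite accurate, since $\Phi_{2M}(\eps_\C)$ lies in $F^0$ but in $W_{-1}$, so only $\Gr^W$ kills it; your actual argument, however, only needs and only uses the order $\Gr^F\Gr^W$.
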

\begin{proof}
	Let $\alpha$ and $\beta$ denote the pseudo-morphisms in $\cK^\bullet$ and $\wt\cK^\bullet$ respectively.
	
	First of all, \(\Id_{R_m}\otimes_\C M_{2i}\) commutes with the differentials \(d+\Phi_{2i}(\eps_\C)\) and \(d+\wt\Phi_{2i}(\eps_\C)\) because  \(M\) commutes with all \(\Phi\)'s, and it commutes with differentials of the form \(d+\alpha_j\circ \Phi_{2i}\) and \(d+\beta_j\wt\Phi_{2i}(\eps_\C)\) because \(M\) commutes with the \(\alpha\)'s as well. Next, \(\Id_{R_m}\otimes M\) commutes with all the maps in the pseudo-morphisms \eqref{eq:MHCThickening} (the $\alpha$'s and the $\beta$'s): It commutes with maps of the form \(\Id_{R_m}\otimes \alpha_i\) and \(\Id_{R_m}\otimes \beta_i\) because \(M\), being a morphism of mixed Hodge complexes of sheaves, must commute with the \(\alpha_i\)'s and \(\beta_i\)'s, and it commutes with maps of the form \(e^{\Psi_i(\eps)}\) because \(M\) is required to preserve the multiplicative structure and commute with the \(\Psi\)'s. Lastly, \(\Id_{R_m}\otimes M\) preserves all the filtrations because both \(\Id_{R_m}\) and \(M\) do. This concludes the proof of the fact that $\Id_{R_m}\otimes M$ is a morphism of mixed Hodge complexes of sheaves.
	
	The first point in the ``moreover'' part of the statement follows from Proposition~\ref{prop:thickenedQuiso}.
	
	Lastly, the proof of the second point for the weight filtration in the ``moreover'' part of the statement follows from the fact that the differentials become untwisted after passing to the graded pieces, and from the direct sum decomposition of the graded pieces in terms of graded pieces of the tensor appearing in the proof of Lemma~\ref{lem:MHStensorMHC}. For the Hodge filtration in the last component of both mixed Hodge complexes of sheaves, one can again use  the direct sum decomposition of the graded pieces in terms of graded pieces of the tensor, and use an inductive argument similar to the one in the proof of Proposition~\ref{prop:thickenedQuiso}, defining the filtration that $G^\lc$ induces on these graded pieces.
%
\end{proof}

\section{The thickening of the logarithmic Dolbeault complex}\label{sec:thickDolbeault}
In the previous section, we showed how to construct a thickened mixed Hodge complex of sheaves from a multiplicative mixed Hodge complex of sheaves together with the extra data $(V,\mathbf{\Phi},\mathbf{\Psi})$ of Assumption~\ref{ass:ingredients}. In this section, we apply this construction in the case where the multiplicative mixed Hodge complex of sheaves is the modified logarithmic Dolbeault mixed Hodge complex of sheaves from Definition-Proposition~\ref{defprop:modifiedNA} (based on Navarro Aznar's mixed Hodge complex of sheaves from Definition~\ref{def:NavarroAznar}) and $V$ is the first cohomology of a semiabelian variety $G$. The following result clarifies which ingredients we will use in order to construct a thickening of the logarithmic Dolbeault complex. In it, note that the mixed Hodge complex of Definition-Proposition~\ref{defprop:modifiedNA} is extended by an extra term (using the identity morphism) to fit Assumption~\ref{ass:ingredients}.

\begin{lemma}\label{lem:df/f}
	Let $U\xrightarrow{f} G$ be an algebraic morphism from a smooth variety to a semiabelian variety. Let
	$$
	0	\to G_T \xrightarrow{t}  G \xrightarrow{p_A} G_A\to 0
	$$ be the Chevalley decomposition of $G$. 
	
	Suppose that we have the ingredients $(X,Y,\Phi_{\R}^Y,\Phi_\C^Y,\Psi^Y)$, satisfying the following properties:	
	\begin{enumerate}
		\item $X$ is a good compactification of $U$ and $Y$ is an allowed compactification of $G$  (Definition~\ref{def:allowedComp}) such that $X$ and $Y$ are compatible with respect to $f$ (Definition~\ref{def:compatibleCompf}), i.e. $f$ extends to $\ov f:X\to Y$. 
		Let $E\coloneqq Y\setminus G$ and let $D\coloneqq X\setminus U$.
		\item Let $n\geq \max\{2,\dim_\R U\}$, and let $\cN^{\bullet}_{X,D,n}$ be the (multiplicative) mixed Hodge complex from \cite{navarroAznar} (see Definition-Proposition~\ref{defprop:modifiedNA}):
		\[
		\cN^{\bullet}_{X,D,n}\coloneqq \left((\cA^\bullet_{X,\R}(\log D),W_{\lc}^n) ,
		(\cA^\bullet_{X,\C}(\log D),W_{\lc}^n,F^{\lc}),\alpha\right),
		\]
		where $\alpha$ is the filtered pseudo-morphism such that $\alpha\otimes 1$ is the filtered pseudo-isomorphism
		\[
		(\cA^\bullet_{X,\C}(\log D),W_{\lc}^n)=(\cA^\bullet_{X,\R}(\log D)\otimes_{\R}\C,W_{\lc}^n)\xrightarrow{\Id}(\cA^\bullet_{X,\C}(\log D),W_{\lc}^n)\xleftarrow{\Id}(\cA^\bullet_{X,\C}(\log D),W_{\lc}^n).
		\]
		\item Let $H \coloneqq H^1(G;\R)$, together with its mixed Hodge structure.
		\item\label{itemPhi} $\Phi_{\R}^Y,\Phi_\C^Y$ are linear maps which are a section of the cohomology map, with the following domain and target:
		\begin{align*}
			\Phi_\R^Y\colon H &\to\Gamma(Y,\cA_{Y,\R}^{1,\cl}(\log E)),\\
			\Phi_\C^Y \colon H_\C\coloneqq H\otimes_{\R}\C &\to\Gamma(Y,\cA_{Y,\C}^{1,\cl}(\log E)).
		\end{align*}
		Here, $\cA_{Y,k}^{1,\cl}(\log E)$ denotes the closed $k$-valued forms in $\cA_{Y,k}^{1}(\log E)$. These maps satisfy the following three conditions:
		\begin{itemize}
			\item For \(k=\R,\C\), the image of \(\Phi_k^Y\) is contained in \(\Gamma(Y, \wt W_1\cA^{1,\cl}_{Y,k}(\log E))\), where $\wt W_{\lc}$ is as in Definition~\ref{def:NavarroAznar}.
			\item Both $\Phi_\R^Y$ and $\Phi_\C^Y$ send classes that are pulled back from $G_A$ to forms whose restriction to $G$ is in the image of $p_A^*:\Gamma(G_A,\cA_{G_A,k}^1)\to \Gamma(G,\cA_{G,k}^1)$ for $k=\R,\C$ respectively.
			\item $\Phi_\C^Y$ sends classes that are represented by holomorphic forms on $G$ to $(1,0)$-forms. 
		\end{itemize}
		\item\label{itemPsi} $\Psi^Y$ is a linear map (a homotopy)
		\[
		\Psi^Y \colon H_{\C}\to \Gamma(Y,\cA_{Y,\C}^{0}(\log E))
		\]
		such that
		\[
		d\circ \Psi^Y = \C\otimes \Phi_{\R}^Y-\Phi_{\C}^Y.
		\]
	\end{enumerate}
	Then, $(\cK^\bullet,V,\Phi_0,\Phi_2,\Psi_1)\coloneqq(\cN^{\bullet}_{X,D,n},H,\ov f^*\circ\Phi_{\R}^Y,\ov f^*\circ\Phi_{\C}^Y,\ov f^*\circ\Psi^Y)$ satisfy Assumption~\ref{ass:ingredients}.
\end{lemma}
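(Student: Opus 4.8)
The plan is to unwind Assumption~\ref{ass:ingredients} in this concrete situation and then verify its four requirements one by one; most are formal once Definition-Proposition~\ref{defprop:modifiedNA} is invoked, and the real content is the behaviour of $\ov f^*$ on the logarithmic Dolbeault complexes. First, unwinding: here $M=1$, the multiplicative mixed Hodge complex of sheaves is $\cK^\bullet=\cN^\bullet_{X,D,n}$, so $\cK^\bullet_k=\cA^\bullet_{X,\R}(\log D)$, $\cK^\bullet_0=\cK^\bullet_1=\cK^\bullet_2=\cA^\bullet_{X,\C}(\log D)$ and $\alpha_1=\alpha_2=\Id$; the MHS is $V=H=H^1(G;\R)$; and the proposed data are $\Phi_0=(\ov f^*\circ\Phi_\R^Y)\otimes_\R\C$, $\Phi_2=\ov f^*\circ\Phi_\C^Y$, $\Psi_1=\ov f^*\circ\Psi^Y$. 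Then Assumption~\ref{ass:ingredients} requires: (1) $\cN^\bullet_{X,D,n}$ is a multiplicative mixed Hodge complex of sheaves with biregular weight filtrations and bounded complexes, which is Definition-Proposition~\ref{defprop:modifiedNA}; (2) $H$ is an $\R$-MHS, which is a hypothesis; (3) $\Phi_0$ is defined over $\R$, both $\Phi_0$ and $\Phi_2$ land in the global closed $1$-forms and are filtered for $W_\lc[1]$ on $H$ and $W^n_\lc$ on the target (i.e.\ they send $W_1H$ into $W^n_0$ and all of $H$ into $W^n_1$), and $\Phi_2$ also preserves $F^\lc$; and (4) $\Psi_1$ lands in $\Gamma(X,\cA^0_{X,\C}(\log D))$, is filtered in the same sense, and satisfies $d\circ\Psi_1=\alpha_1\circ\Phi_0-\alpha_2\circ\Phi_2=\Phi_0-\Phi_2$. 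Since (1) and (2) are in place, the work is (3) and (4).

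The crux is to show that $\ov f^*$ induces morphisms of sheaves of cdgas $\cA^\bullet_{Y,k}(\log E)\to\cA^\bullet_{X,k}(\log D)$ for $k=\R,\C$, commuting with $d$ and carrying $\wt{W}_\lc$ into $\wt{W}_\lc$ and $F^\lc$ into $F^\lc$. Since $\ov f|_U=f$ and $f(U)\subseteq G=Y\setminus E$, we have $\ov f^{-1}(E)\subseteq X\setminus U=D$, so $\ov f$ is a morphism of log pairs $(X,D)\to(Y,E)$. In local coordinates $(z_i)$ on $Y$ with $E=\{z_1\cdots z_r=0\}$ and $(w_j)$ on $X$ with $D=\{w_1\cdots w_s=0\}$, one has $\ov f^*z_i=u_i\,w_1^{a_{i1}}\cdots w_s^{a_{is}}$ with $u_i$ a nonvanishing holomorphic function; hence $\ov f^*\bigl(\Re\tfrac{dz_i}{z_i}\bigr)=\Re\tfrac{du_i}{u_i}+\sum_j a_{ij}\Re\tfrac{dw_j}{w_j}$, $\ov f^*\log(z_i\ov{z_i})=\log(u_i\ov{u_i})+\sum_j a_{ij}\log(w_j\ov{w_j})$, and similarly for the imaginary parts and for $dz_i$ with $i>r$. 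As $\tfrac{du_i}{u_i}$ and $\log(u_i\ov{u_i})$ have no poles, these pullbacks lie in $\cA^\bullet_{X,k}(\log D)$; moreover the weight-$1$ generators of $\cA^\bullet_{Y,k}(\log E)$ go to elements of $\wt{W}_1$ and $\cA^\bullet_{Y,k}$ goes to $\cA^\bullet_{X,k}$, so $\ov f^*$ preserves $\wt{W}_\lc$, hence also $W^n_\lc$ in the weights we use since $n\geq\max\{2,\dim_\R U\}$. Finally $\ov f$ is holomorphic, so $\ov f^*$ preserves the $(p,q)$-bigrading and hence $F^\lc$; compatibility with $d$ and with wedge products is clear.

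Granting this, (3) follows. The maps $\Phi_0,\Phi_2$ land in closed $1$-forms with logarithmic poles along $D$, because $\Phi_\R^Y,\Phi_\C^Y$ land in closed forms by \eqref{itemPhi} and $\ov f^*$ preserves closedness and logarithmic poles, and $\Phi_0$ is defined over $\R$ since $\Phi_\R^Y$ is. The weight filtration on $H^1(G;\R)$ has weights $1$ and $2$, with $W_1H^1(G;\R)=p_A^*H^1(G_A;\R)$ (from the Chevalley sequence $0\to G_T\to G\to G_A\to 0$). The first bullet of \eqref{itemPhi} gives $\Phi_k^Y(H)\subseteq\Gamma(Y,\wt{W}_1\cA^{1,\cl}_{Y,k}(\log E))$, so $\Phi_{2i}(H)\subseteq W^n_1$; and for $\omega\in W_1H=p_A^*H^1(G_A)$, the second bullet of \eqref{itemPhi} says $\Phi_k^Y(\omega)$ restricts on $G$ to a form pulled back along $p_A$, which extends to a global smooth form on $Y$ because the allowed compactification $Y$ admits a morphism to $G_A$ extending $p_A$ (Definition~\ref{def:allowedComp}, Corollary~\ref{cor:compactification}) and $G_A$ is proper; as $\cA^1_{Y,k}(\log E)$ embeds into $j_*\cA^1_{G,k}$, the logarithmic form $\Phi_k^Y(\omega)$ equals that smooth form, whence $\Phi_k^Y(\omega)\in\wt{W}_0$ and $\Phi_{2i}(\omega)\in\cA^1_{X,k}=W^n_0$. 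For $F^\lc$: $F^1H^1(G;\C)$ consists of classes of holomorphic $1$-forms on $G$, which $\Phi_\C^Y$ sends to $(1,0)$-forms by the third bullet of \eqref{itemPhi}, and $\ov f^*$ preserves type $(1,0)$, so $\Phi_2(F^1H_\C)\subseteq\cA^{1,0}_{X,\C}(\log D)=F^1$; since $F^0$ is preserved trivially and $F^2H_\C=0$, $\Phi_2$ preserves $F^\lc$.

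Finally, (4): applying $\ov f^*$, which commutes with $d$, to $d\circ\Psi^Y=\C\otimes\Phi_\R^Y-\Phi_\C^Y$ from \eqref{itemPsi} gives $d\circ\Psi_1=\Phi_0-\Phi_2$, as needed. It remains to see that $\Psi_1$ is filtered, i.e.\ $\Psi_1(H)\subseteq\wt{W}_1\cA^0_{X,\C}(\log D)$ and $\Psi_1(W_1H)\subseteq\cA^0_{X,\C}$; since $\ov f^*$ preserves $\wt{W}_\lc$, it suffices to prove the analogous inclusions for $\Psi^Y$ on $Y$. By the homotopy equation together with the weight analysis of $\Phi_k^Y$ above, $d\Psi^Y(\omega)\in\wt{W}_1\cA^1_{Y,\C}(\log E)$ for all $\omega\in H$ and $d\Psi^Y(\omega)\in\cA^1_{Y,\C}$ for $\omega\in W_1H$. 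One checks that for $w\geq1$ the map $d\colon\Gr^{\wt{W}}_w\cA^0_{Y,\C}(\log E)\to\Gr^{\wt{W}}_w\cA^1_{Y,\C}(\log E)$ is injective: locally a section of $\cA^0_{Y,\C}(\log E)$ is a polynomial in the $\log(z_i\ov{z_i})$ over $\cA^0_{Y,\C}$, and in its differential the coefficients of the linearly independent $1$-forms $d\log(z_i\ov{z_i})=2\Re\tfrac{dz_i}{z_i}$ force the top-degree coefficients to vanish once the polynomial is closed modulo lower weight; induction on the polynomial degree then shows $d\,h\in\wt{W}_w\cA^1_{Y,\C}(\log E)$ implies $h\in\wt{W}_w\cA^0_{Y,\C}(\log E)$. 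Applying this with $w=1$ and $w=0$ to the relevant $\Psi^Y(\omega)$ completes (4). The only genuinely non-formal points are the stability of the analytic logarithmic Dolbeault complex under $\ov f^*$ in the second paragraph (precisely why Navarro Aznar's complex, rather than Deligne's holomorphic logarithmic complex, is used here) and this last injectivity of $d$ on $\Gr^{\wt{W}}$ in degree $0$; I expect the former to be the main obstacle.
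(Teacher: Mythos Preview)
Your proof is correct and follows the same overall structure as the paper's: verify that $\Phi_\R^Y,\Phi_\C^Y,\Psi^Y$ preserve the weight filtration (with shift), that $\Phi_\C^Y$ preserves $F^\lc$, and that $\ov f^*$ respects the logarithmic Dolbeault complex and its filtrations so everything transfers to $X$.

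The one noteworthy difference is how you handle the filtered condition on $\Psi^Y$. You argue the injectivity of $d\colon\Gr^{\wt W}_w\cA^0_{Y,\C}(\log E)\to\Gr^{\wt W}_w\cA^1_{Y,\C}(\log E)$ for $w\ge 1$ by a direct local computation with polynomials in the $\log(z_i\ov{z_i})$. The paper instead invokes Proposition~\ref{prop:adjunctionNA}, which says the identity $(\cA^\bullet_{Y,\C}(\log E),\tau_{\le\lc})\to(\cA^\bullet_{Y,\C}(\log E),\wt W_\lc)$ is a filtered quasi-isomorphism; this immediately gives $\cH^0(\Gr^{\wt W}_j\cA^\bullet_{Y,\C}(\log E))\cong\cH^0(\Gr^\tau_j\cA^\bullet_{Y,\C}(\log E))=0$ for $j>0$, which is exactly the needed injectivity. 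Your hands-on argument is valid, but the paper's route is cleaner and avoids having to check that the local generators are algebraically independent over $\cA^0_{Y,\C}$ (a point your sketch passes over). Conversely, you are more explicit than the paper about why $\ov f^*$ preserves $\cA^\bullet(\log)$ and its filtrations; the paper disposes of this in one sentence.
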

\begin{proof}
	We need to show that $\Phi_0,\Phi_2,\Psi_1$ satisfy the conditions of Assumption~\ref{ass:ingredients}. We do this in several steps. Note that, since $n\geq 2$, the filtrations of the logarithmic Dolbeault complex $W_j^n$ and $\wt{W}_j$ coincide for $j=0,1,2$, so we can use $\wt{W}_{\lc}$ in our arguments.

		\textbf{$\Phi_R^Y$ preserves the weight filtration,} as follows: $$\Phi_\R^Y:(H,W_{\lc}[1])\to \Gamma\left(Y,\left(\cA_{Y,\R}^{1,\cl}(\log E),W_{\lc}^n\right)\right).$$
		Since $G$ is smooth, $\Gr_i^W H=0$ when $i$ is not contained in $\{1,2\}$. Therefore, after the shift, the non trivial graded pieces correspond to indices contained in $\{0,1\}$. The weight $1$ is preserved by hypothesis.
		
		For weight $0$, note that
    \begin{align*}
      W_0(H^1(G,\R)[1])&=W_1H^1(G,\R) = H^1(G_A,\R),  &&\text{ and }
                       &
      \wt W_0\cA^1_{Y,\R}(\log E)&=\cA^1_{Y,\R}.
    \end{align*}
Since $Y$ is an allowed compactification of $G$, there exists a compactification $\ov G$ as in Corollary~\ref{cor:compactification} and an algebraic map $p:Y\to \ov G$ such that $p\circ j_Y=j_{\ov G}$, where $j_Z:G\hookrightarrow Z$ is the inclusion for $Z=Y,\ov G$. In particular, $p_A$ extends to a fibration $\ov{p_A}:\ov G\to G_A$. Let $a\in H^1(G_A,\R)$. By hypothesis, $\Phi^Y_\R(a)|_{G}= p_A^*\omega$ for some $\omega\in\Gamma(G_A,\cA^1_{G_A,\R})$. Note that $\Phi^Y_\R(a)$ and $p^*( \ov{p_A}^*\omega)$ have the same restriction to $G$, and since $G$ is dense in $Y$, they must coincide. In particular,  $\Phi^Y_\R(a)\in \Gamma(Y,\cA^1_{Y,\R})=\Gamma(Y,\wt W_0\cA^1_{Y,\R}(\log E))$.
%
%

		\textbf{$\Phi_\C^Y$ preserves the weight filtration:} $\Phi_\C^Y:(H_\C,W_{\lc}[1])\to \Gamma\left(Y,\left(\cA_{Y,\C}^{1,\cl}(\log E),\wt W_{\lc}\right)\right)$ also respects the weight filtration by the analogous argument over \(\C\).

    \textbf{$\Psi^Y$ preserves the weight filtration:} it maps $(H_\C,W_{\lc}[1])$ to $\Gamma\left(Y,\left(\cA_{Y,\C}^{0}(\log E),\wt W_{\lc}\right)\right)$. Since $\Phi^Y_\R$ and $\Phi^Y_\C$ respect the filtrations (up to a shift), the relationship between $\Psi^Y, \Phi^Y_\R$ and $\Phi^Y_\C$ implies that it suffices to show that, for $d:\cA_{Y,\C}^0(\log E)\to\cA_{Y,\C}^1(\log E)$, $d^{-1}\left(\wt W_j\cA_{Y,\C}^1(\log E)\right)=\wt W_j\cA_{Y,\C}^0(\log E)$ for all $j\ge 0$ (we only need to apply this fact for \(j \in \{0,1\}\)).
		
   To do this, we will show that for all \(j\ge 1\), \(d^{-1}\left(\wt W_{j-1}\cA^1_{Y,\C}(\log E)\right) \cap \wt W_{j}\cA^0_{Y,\C}(\log E) = \wt W_{j-1}\cA^0_{Y,\C}(\log E)\). We apply Proposition~\ref{prop:adjunctionNA}, which ensures that
    \[
    \cH^0\left(
      \gr^{\wt W}_j \cA^\bullet_{Y,\C}(\log E) 
    \right)
\cong 
\cH^0\left(
  \gr^\tau_j\cA^\bullet_{Y,\C}(\log E) 
\right) \overset{j > 0}= 0.
    \]
    Spelling out the definition of \(\cH^0\), this means that if \(\alpha\in \wt W_j\cA^0_{Y, \C}(\log E)\) is such that \(d\alpha\in \wt W_{j-1} \cA^1_{Y,\C}(\log E)\), then \(\alpha\in \wt W_{j-1}\cA^0_{Y,\C}(\log E)\), as desired. By induction on \(j\), we have that for any \(j>j'\ge 0\), if \(\alpha\in \wt W_j\cA^0_{Y,\C}(\log E)\) is such that \(d\alpha\in \wt W_{j'} \cA^1_{Y,\C}(\log E)\), then \(\alpha \in \wt W_{j'}\cA^0_{Y,\C}(\log E)\).

		\textbf{$\Phi_{\C}^Y$ respects the Hodge filtration} (without any shifts): The relevant pieces are $F^0=H^1(G,\C)$ and $F^1$. For $F^0$, we have that $H^1(G,\C)=F^0H^1(G,\C)$. Automatically, its image lands in $F^0\cA^1_{Y,\R}(\log E) = \cA^1_{Y,\R}(\log E)$. Next, by Deligne's theory of MHS, $F^1H^1(G,\C)$ is composed of the classes of holomorphic forms, and  $\Phi_{\C}$ maps these to $(1,0)$ forms by hypothesis, that is, to $F^1\cA^1_{Y,\R}(\log E)$.
		
		\textbf{$\ov f^*$ takes logarithmic forms to logarithmic forms and respects the filtrations} $\wt W_{\lc}$ and $F^{\lc}$, so in particular it also respects $W_0^n$ and $W_1^n$. Also, $\ov f^*$ commutes with the differential $d$. Hence, $\Phi_0,\Phi_2,\Psi_1$ satisfy the conditions of Assumption~\ref{ass:ingredients}.
\end{proof}

In order to apply Lemma~\ref{lem:df/f}, we need to make sure that such $\Phi_{\R}^Y,\Phi_{\C}^Y$ and $\Psi^Y$ satisfying the assumptions therein exist, which we achieve in Definition-Proposition~\ref{defPhiPsiY} and in Corollary~\ref{cor:Phidf/f}. Before that, we start by recalling some general facts about abelian Lie groups in order to fix notation (Lemma~\ref{lem:generatecoh}). Then, we will state the definitions of the maps $\Phi_{\R}^G,\Phi_{\C}^G$ and $\Psi^G$ in Definition-Proposition~\ref{defprop:PhiPsiG}, which are a first approximation to the definitions of $\Phi_{\R}^Y,\Phi_{\C}^Y$ and $\Psi^Y$. The images of the maps $\Phi_{\R}^G,\Phi_{\C}^G$ and $\Psi^G$ consist of analytic forms on $G$. We later extend these to $\Phi_{\R}^Y,\Phi_{\C}^Y$ and $\Psi^Y$ in Definition-Proposition~\ref{defPhiPsiY}.

\begin{lemma}\label{lem:generatecoh}
	Let $G$ be a complex semiabelian variety. Let \(\Lambda\) be the kernel of the exponential map \(TG\to G\). Let the Chevalley decomposition of $G$ be given by
	\begin{equation}\label{eq:chevalley}
		0	\to G_T \xrightarrow{t}  G \xrightarrow{p_A} G_A\to 0.
	\end{equation}
	Let $\Omega_X^\bullet$ denote the holomorphic de Rham complex of sheaves on $X$ for every smooth complex algebraic variety $X$, and if $X$ is a complex Lie group, let $\Omega_X^{1,\inv}$ (resp. $\cA_{X,k}^{1,\inv}$ for $k=\R,\C$) denote the sheaf of holomorphic (resp. analytic) invariant $1$-forms on $X$. Then,
	\begin{enumerate}
		\item\label{itemLambda} For \(k=\R,\C\), there are natural isomorphisms \(\Gamma(G,\cA_{G,k}^{1,\inv})\cong \Hom_\R(TG,k)\) and \(H^1(G,k)\cong \Hom_\Z(\Lambda, k)\). The map that sends a form to its cohomology class corresponds to the restriction to \(\Lambda\).
		\item\label{itemHolomorphic} There is a natural isomorphism \(\Gamma(G,\Omega^{1,\inv}_G)\cong \Hom_\C(TG,\C)\).
		\item\label{item0} The restriction $\Gamma(G,\Omega^{1, \inv}_{G})\to\Gamma(G_T,\Omega^{1, \inv}_{G_T})$ is surjective.
		\item\label{item1} The projection of invariant forms onto their cohomology classes $\Gamma(G,\cA^{1, \inv}_{G,\R})\to H^1(G,\R)$ is a surjection, and the same holds for $G_A$ and $G_T$. The statement is also true for $\C$-coefficients. Furthermore, in the case of $G_A$ this projection is an isomorphism (both with $\R$ and $\C$ coefficients).
		\item\label{item-1} $\Gamma(G,\Omega^{1,\inv}_{G})$ can be seen as a subspace of $H^1(G,\C)$ through the projection of forms onto their cohomology classes, which is an injective map. The same holds for $G_A$ and $G_T$. Furthermore, in the case of $G_T$ this injection is an isomorphism.
		\item\label{item2} $H^1({G}_A,\C)$ can be seen as a subspace of $H^1(G,\C)$ via $(p_A)^*:H^1({G}_A,\C)\to H^1(G,\C)$, which is injective.
		\item\label{itemInvariantHolo}  The cohomology class of every closed holomorphic 1-form is represented by an invariant holomorphic form.
		\item\label{item3} $\Gamma(G,\Omega^{1, \inv}_{G})$ and  $H^1({G}_A,\C)$  generate $H^1(G,\C)$ as a complex vector space.
	\end{enumerate}
\end{lemma}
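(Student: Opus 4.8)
The plan is to push everything through the description $G\cong\C^g/\Lambda$ of Remark~\ref{rem:universalCover}, where $TG\cong\C^g$, $\Lambda=\exp^{-1}(e)$ is a discrete subgroup of rank $r=\mathrm{rank}\,\piG$ which (again by Remark~\ref{rem:universalCover}) spans $TG$ over $\C$. First I would settle \ref{itemLambda} and \ref{itemHolomorphic}: an invariant form is determined by its value at $e$, so $\Gamma(G,\cA^{1,\inv}_{G,k})\cong\Hom_\R(TG,k)$, and a holomorphic (equivalently $(1,0)$) invariant form corresponds to a $\C$-linear functional, so $\Gamma(G,\Omega^{1,\inv}_G)\cong\Hom_\C(TG,\C)$; moreover $G$ is homotopy equivalent to the real torus $(\Lambda\otimes_\Z\R)/\Lambda\cong(S^1)^r$, whence $H^1(G,k)\cong\Hom_\Z(\Lambda,k)$, and the period pairing shows that the class of an invariant form is exactly the restriction of the corresponding functional along $\Lambda\hookrightarrow TG$ (the integrand is constant). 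Part \ref{item0} then follows because restriction of $\C$-linear functionals along the subspace $TG_T\subseteq TG$ induced by $t$ is surjective.

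For \ref{item1}, \ref{item-1} and \ref{item2} I would analyze the restriction-to-$\Lambda$ map. Since $\Lambda\otimes_\Z\R$ is an $\R$-subspace of $TG$, restriction of ($\R$- or $\C$-valued) $\R$-linear functionals onto it is surjective, which is \ref{item1}; and for $G_A$ the lattice is full, $\Lambda_A\otimes_\Z\R=TG_A$, so the restriction is an isomorphism. In the holomorphic case \ref{item-1}, a $\C$-linear functional vanishing on $\Lambda$ vanishes on its $\C$-span $TG$, so it is injective; for $G_T\cong(\C^*)^n$ both sides have $\C$-dimension $n$, so the injection is an isomorphism. For \ref{item2} I would use the Chevalley sequence~\eqref{eq:chevalley}: since $t$ is a closed immersion and $\exp_{G_T}$ is onto with $G_T$ connected, there is a short exact sequence of lattices $0\to\Lambda_T\to\Lambda\to\Lambda_A\to 0$, and applying $\Hom_\Z(-,\C)$ gives $0\to H^1(G_A,\C)\xrightarrow{p_A^*}H^1(G,\C)\to H^1(G_T,\C)\to 0$; in particular $p_A^*$ is injective.

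The crux is \ref{itemInvariantHolo}, which I would prove by averaging rather than in coordinates. Let $K=(\Lambda\otimes_\Z\R)/\Lambda\subseteq G$ be the maximal compact subgroup with normalized Haar measure, and for a closed holomorphic $1$-form $\omega$ put $\bar\omega=\int_K t_\kappa^{*}\omega\,d\kappa$. As each $t_\kappa$ is a holomorphic automorphism, $\bar\omega$ is closed, of type $(1,0)$ and $\bar\partial$-closed, hence holomorphic; it is $K$-invariant, so its lift to $TG$ is invariant under translation by $V\coloneqq\Lambda\otimes_\Z\R$. But a holomorphic function on $\C^g$ that is invariant under a real subspace $V$ with $V+iV=\C^g$ is constant—its directional derivatives along $V$, and by holomorphy along $iV$, all vanish—so $\bar\omega$ has constant coefficients and is $G$-invariant. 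Finally $[\bar\omega]=[\omega]$ because $K$ is connected, so each $t_\kappa$ is homotopic to $\Id$ and acts trivially on $H^1(G,\C)$, and passing to cohomology classes commutes with the integral over $K$. Part \ref{item3} then follows by a dimension count in $H^1(G,\C)$, whose dimension is $r=n+2a$ (with $n=\dim G_T$, $a=\dim G_A$) by the lattice sequence above: by \ref{itemHolomorphic}, \ref{item-1} and \ref{itemInvariantHolo} the holomorphic invariant classes form a subspace of dimension $g=n+a$; by \ref{item2} the subspace $p_A^{*}H^1(G_A,\C)$ has dimension $2a$; and their intersection—the classes of invariant holomorphic forms on $G$ pulled back from $G_A$, which are precisely pullbacks of holomorphic forms on $G_A$ (using \ref{item-1} for $G_T$ and $G_A$ to identify them)—has dimension $a$. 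Since $(n+a)+2a-a=n+2a$, these two subspaces generate $H^1(G,\C)$.

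The main obstacle is \ref{itemInvariantHolo}: the averaging construction, together with the elementary lemma that a holomorphic function invariant under a real subgroup that spans $TG$ over $\C$ is constant, is the only step that is not formal linear algebra or covering-space topology. The other point that needs care is using the lattice exact sequence $0\to\Lambda_T\to\Lambda\to\Lambda_A\to 0$ coherently—compatibly with $p_A^{*}$ and with restriction to a fiber $G_T\subseteq G$—since it underlies \ref{item2}, \ref{item3} and the ``isomorphism'' refinements in \ref{item1} and \ref{item-1}.
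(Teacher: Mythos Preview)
Your proposal is correct throughout. Parts \ref{itemLambda}--\ref{item2} match the paper's arguments essentially verbatim (functionals on $TG$, restriction to $\Lambda$, the lattice short exact sequence coming from the Chevalley decomposition). The substantive differences are in \ref{itemInvariantHolo} and \ref{item3}.

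For \ref{itemInvariantHolo}, the paper proceeds by reduction along the Chevalley decomposition: it first treats $G_T$ (where every class is already represented by an invariant holomorphic form by \ref{item-1}) and $G_A$ (by the Hodge decomposition and a dimension comparison), and then for general $G$ subtracts off an invariant form matching the restriction to $G_T$, so the remainder comes from $G_A$ via \eqref{eq:chevalleyCohomology}. Your averaging argument over the maximal compact $K=(\Lambda\otimes_\Z\R)/\Lambda$ is a genuinely different and cleaner route: it is uniform (no case split), and the key step---that a holomorphic function on $TG$ invariant under $V=\Lambda\otimes_\Z\R$ is constant because $V+iV=TG$---is exactly the right observation. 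The only place to be slightly more explicit is the claim $[\bar\omega]=[\omega]$: the standard proof is to use the Cartan homotopy formula along a smooth path $e\to\kappa$ in $K$ to produce primitives $\eta_\kappa$ for $t_\kappa^*\omega-\omega$ depending smoothly on $\kappa$, then integrate over $K$. This is routine, but worth a sentence.

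For \ref{item3}, the paper builds an explicit section of $t^*\colon H^1(G,\C)\to H^1(G_T,\C)$ through $\Gamma(G,\Omega^{1,\inv}_G)$ using \ref{item0} and \ref{item-1}. Your dimension count $(n+a)+2a-a=n+2a$ is an equally valid alternative; the identification of the intersection with $\Hom_\C(TG_A,\C)$ (via $\phi|_{\Lambda_T}=0\Rightarrow\phi|_{TG_T}=0$) is correct. Note that you only need \ref{itemHolomorphic} and \ref{item-1} to get $\dim\,\mathrm{image}=g$; the reference to \ref{itemInvariantHolo} there is superfluous.
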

\begin{proof}
	Note that all invariant forms appearing in the statement of this lemma are closed, since they pull back to constant forms on the corresponding universal cover (a complex vector space), and the differential commutes with the pullback. Hence, invariant forms do represent cohomology classes, and the statements in parts~\eqref{itemLambda}, \eqref{item1}, \eqref{item-1}  and \eqref{item3} make sense. 
	
	Since \(G_A\) and \(G_T\) are semiabelian varieties, every statement that is proved for \(G\) applies to them as well.
	
	\begin{enumerate}
		\item The isomorphism \(\Gamma(G,\cA_{G,k}^{1,\inv})\cong \Hom_\R(TG,k)\) comes from pulling back an invariant form through the exponential map $TG\to G$, which yields a constant form. Constant forms on a vector space are identified with its dual. For the second isomorphism, note that \(TG\) is the universal cover of \(G\), and therefore \(\Lambda\) is canonically \(\pi_1(G)\) and also \(H_1(G,\Z)\). Furthermore, since \(TG\) is a vector space, the pairing between a constant form  seen as an element of \(\Hom_\R(TG,k)\) and \(x\in TG\) is the same as the integral of the form on a path from \(0\) to \(x\). If \(x\in \Lambda\), this path is the pullback of a loop in \(\pi_1(G)\), and the statement follows from de Rham's theorem.
		\item The isomorphism $\Gamma(G,\Omega^{1, \inv}_{G})\cong \Hom_{\C}(T G,\C)$ is analogous to the real analytic setting.
		\item The morphism $p_A$ in \eqref{eq:chevalley} is a fibration with fiber $G_T$, so it induces a short exact sequence between tangent spaces at the identity, and hence the restriction $\Hom_{\C}(T G,\C)\to \Hom_{\C}(T G_T,\C)$ is a surjection. Now, use part \eqref{itemHolomorphic}.
		\item Since \(G = TG/\Lambda\), \(\Lambda\) is discrete, and in particular any $\Z$-basis is \(\R\)-linearly independent. The statement for \(G\) follows from part \eqref{itemLambda}. In the case of $G_A$, the fact that the projection is an isomorphism now follows from the fact that both spaces have the same real dimension, namely $2\dim_{\C} G_A$.
		\item From above, we have a natural isomorphism $\Gamma(G,\Omega^{1, \inv}_{G})\cong \Hom_{\C}(T G,\C)$, and $H^1(G,\C)$ is naturally identified with $\Hom_{\Z}(\Lambda,\C)$. Since $\Lambda$ generates $TG$ as a $\C$-vector space, this restriction is injective. In the case of $G_T$, $\Lambda$ is a $\C$-basis of $TG_T$, and the injection is an isomorphism.

		\item 
		Consider the long exact sequence of the fibration $p_A$ on homotopy groups. Since the universal covers of all the spaces involved are contractible and their fundamental groups are all abelian, we get a short exact sequence in homology
		$$
		0\to H_1(G_T,\Z)\xrightarrow{t_*}H_1(G,\Z)\xrightarrow{(p_A)_*}H_1(G_A,\Z)\to 0,
		$$
		and in particular, a short exact sequence in cohomology
		\begin{equation}\label{eq:chevalleyCohomology}
			0\to H_1(A,\C)\xrightarrow{(p_A)^*}H^1(G,\C)\xrightarrow{t^*} H^1(G_T,\C)\to 0.
		\end{equation}
	
		\item Let us consider first the cases where \(G\) is an algebraic torus and an abelian variety. If \(G\) is a torus, then every cohomology class is represented by an invariant holomorphic form, by part \eqref{item-1}. If \(G\) is an abelian variety, the Hodge decomposition tells us that the space of classes of holomorphic forms has complex dimension equal to \(\dim G\), so it suffices to compare dimensions.
		
		For a general \(G\), consider a closed form \(\alpha\in \Gamma(G,\Omega^1_G)\). By the torus case, its restriction to \(G_T\) is represented by an invariant holomorphic form \(\wt\alpha_T\) on \(G_T\) which, by \eqref{item0}, is the restriction of some \(\alpha_T\in \Gamma(G,\Omega_G^{1,\inv})\). Then, \(\alpha - \alpha_T\) is a holomorphic form that vanishes on \(G_T\). By the short exact sequence \eqref{eq:chevalleyCohomology}, its cohomology class comes from \(G_A\), and by the abelian variety case, it is represented by an invariant holomorphic form \(\alpha_A\). Then, in cohomology, \(\alpha = \alpha_T + (p_A)^*\alpha_A\), which is the class of an invariant holomorphic form, as desired.
		\item Using the short exact sequence \eqref{eq:chevalleyCohomology}, $H^1(G_A,\C)$ together with the image of any section of $t^*$ generate $H^1(G,\C)$. Combining parts~\eqref{item-1} and \eqref{item0}, such a section can be constructed from a section of \(\Gamma(G,\Omega^{1,\inv}_{G})\to \Gamma(G_T,\Omega^{1,\inv}_{G_T})\). 
	\end{enumerate}

	
\end{proof}

\begin{remark}\label{rem:Lattices}
	Consider the Chevalley decomposition \eqref{eq:chevalley} and \(\Lambda\) as in Lemma~\ref{lem:generatecoh}. Since \(G_T\) is an algebraic torus, \(\Lambda\cap TG_T\cong \pi_1(G_T)\) is freely generated by a \(\C\)-basis of \(TG_T\), and since \(G_A\) is an abelian variety, the image of \(\Lambda\) in \(TG_A\) is a full rank lattice in \(G_A\).
	
	We can choose a way of extending \(\Lambda\) to a full rank lattice in \(G\). Let \(\Lambda' \coloneqq i\cdot (\Lambda \cap TG_T)\). Then, \(\Lambda'\oplus (\Lambda \cap TG_T)\) is a full rank lattice in \(TG_T\), and \(\Lambda\oplus \Lambda'\) is a full rank lattice in \(TG\).
\end{remark}

\begin{defprop}[Definition of $\Phi_{\R}^G$, $\Phi_{\C}^G$ and $\Psi^G$]\label{defprop:PhiPsiG}
	Let $Y$ be an allowed compactification of a complex semiabelian variety $G$, let $j_Y:G\to Y$ be the inclusion and let $E\coloneqq Y\setminus G$. Let
	$$
	0	\to G_T \xrightarrow{t}  G \xrightarrow{p_A} G_A\to 0
	$$ be the Chevalley decomposition of $G$. 
	
	\begin{itemize}
		\item We define $\Phi_{\C}^G$ as the unique $\C$-linear map whose restrictions to $H^1(G_A,\C)$ and the cohomology classes of $\Gamma(G,\Omega^{1, \inv}_{G})$ are as follows:
		\begin{enumerate}
			\item  $$
			{(\Phi^G_{\C})|}_{H^1({G}_A,\C)}:H^1({G}_A,\C)\to \Gamma(G,\cA^{1, \inv}_{G,\C})
			$$
			is given by the composition of the isomorphism found in Lemma~\ref{lem:generatecoh}\eqref{item1} $H^1({G}_A;\C)\cong\Gamma({G}_A,\cA^{1,\inv}_{G_A,\C})$  and the pullback by $p_A$.
			\item $${(\Phi^G_{\C})|}_{\Gamma(G,\Omega^{1,\inv}_G)}:\Gamma(G,\Omega^{1,\inv}_G)\to \Gamma(G,\cA^{1, \inv}_{G,\C})$$ is the map given by the inclusion of sheaves.
		\end{enumerate}
		
		\item We define $\Phi_{\R}^G$ as the composition
		$$
		H^1(G,\R)\hookrightarrow H^1(G,\C)\xrightarrow{\Phi_{\C}^G}\Gamma(G,\cA^{1}_{G,\C})\xrightarrow{\Re} \Gamma(G,\cA^{1}_{G,\R}).
		$$
		where $\Re$ is the real part.
		\item We define $\Psi^G:H^1(G,\C)\to\Gamma(G,\cA^0_{G,\C})$ as the unique linear map satisfying that $d\Psi^G=\C\otimes\Phi_\R^G-\Phi_\C^G$ whose image lies in $\Hom_{\R\text{-Lie groups}}(G,\C)$.
	\end{itemize}
\end{defprop}
\begin{proof}
	To see that $\Phi^G_{\C}$ is well defined, we need to see that ${(\Phi^G_{\C})}|_{\Gamma(G,\Omega^{1,\inv}_G)}$ and ${(\Phi^G_{\C})}|_{H^1({G}_A,\C)}$ agree on $\Gamma(G,\Omega^{1,\inv}_G)\cap H^1({G}_A,\C)$. We are going to use the notation for $\Lambda$ and $\Lambda'$ from Remark~\ref{rem:Lattices}. We will give a global definition of $\Phi_{\C}^G$ and we will check that it agrees with the definition that we gave on each of the subspaces. The uniqueness follows from Lemma~\ref{lem:generatecoh}, part~\eqref{item3}.
	
	Consider the natural isomorphism
	$$
	\Hom_\Z(\Lambda,\C) \oplus \Hom_\Z(\Lambda',\C)\cong \Hom_{\R}(TG,\C)\cong \Gamma(G,\cA^{1,\inv}_{G,\C}),
	$$
	and let $(\alpha,\alpha')\in \Hom_\Z(\Lambda,\C) \oplus \Hom_\Z(\Lambda',\C)$. Seeing this inside of $\Gamma(G,\cA^{1,\inv}_{G,\C})$, we have that $t^*(\alpha,\alpha')$ is the restriction to $\Hom_\Z(\Lambda\cap T{G_T},\C) \oplus \Hom_\Z(\Lambda',\C)\cong \Gamma(G_T,\cA^{1,\inv}_{G_T,\C})$. The elements of $\Hom_\Z(\Lambda\cap T{G_T},\C) \oplus \Hom_\Z(\Lambda',\C)$ which correspond to elements of  $\Gamma(G_T,\Omega^{1\inv}_{G_T})\cong \Hom_\Z(\Lambda\cap T{G_T},\C)\cong \Hom_\C(TG_T,\C)$ are the ones satisfying that $\alpha' = -i\circ \alpha|_{\Lambda \cap TG_T}\circ i$. Consider the following chain of isomorphisms:
	\begin{equation}\label{eq:defPhi}
		\begin{split}
      H^1(G,\C)
      &\cong \Hom_{\Z}(\Lambda,\C)\cong \left\{
			\substack{	\displaystyle	(\alpha,\alpha') \in  \Hom_\Z(\Lambda,\C) \oplus \Hom_\Z(\Lambda',\C)\cong \Hom_{\R}(TG,\C)\cong \Gamma(G,\cA^{1,\inv}_{G,\C})
				\\ \displaystyle	\text{where }\alpha' = -i\circ \alpha|_{\Lambda \cap TG_T}\circ i}
			\right\}\\[2em]
			&\cong \{
			(\alpha,\alpha') \in \Gamma(G,\cA^{1,\inv}_{G,\C})\mid
			t^*(\alpha,\alpha') \in \Gamma(G_T,\Omega^{1,\inv}_{G_T})
			\}\subset \Gamma(G,\cA^{1,\inv}_{G,\C}).
		\end{split}
	\end{equation}
	We claim that the composition above coincides with the definition that we have given of $\Phi_{\C}^G$: We start by showing that both definitions agree on $H^1(G_A,\C)$. Let $\beta\in H^1(G_A,\C)\cong\Gamma(G_A,\cA^{1,\inv}_{G_A,\C})\cong \Hom_\R(TG_A,\C)\cong \Hom_\Z(\Lambda/(\Lambda\cap TG_T),\C)$, where the first of these isomorphisms is the one in part~\eqref{item1} of Lemma~\ref{lem:generatecoh}. Let $q$ be the quotient $q:TG\to TG_A$. By our definition, ${(\Phi^G_{\C})|}_{H^1({G}_A,\C)}(\beta)=\beta\circ q\in \Hom_{\R}(TG,\C)\cong \Gamma(G,\cA^{1,\inv}_{G,\C})$ for all $\beta\in \Hom_\R(TG_A,\C)$. Note that $\beta \circ q\circ \iota_{\Lambda}$ is just $\beta\in H^1(G_A,\C)$ seen inside of $H^1(G,\C)\cong \Hom_\Z(\Lambda,\C)$, where $\iota_{\Lambda}:\Lambda \to TG$ is the inclusion. The chain of isomorphisms in \eqref{eq:defPhi} sends $\beta\circ q\circ \iota_{\Lambda}$ to $(\beta\circ q\circ \iota_{\Lambda}, 0)$, which corresponds to $\beta\circ q$ under the isomorphism $\Hom_\R(TG,\C)\cong \Hom_\Z(\Lambda,\C) \oplus \Hom_\Z(\Lambda',\C)$.
	
	Let us now see that both definitions agree on $\Gamma(G,\Omega^{1,\inv}_G)\cong \Hom_\C(TG,\C)$. Let $\beta\in \Hom_\C(TG,\C)$.  By our definition, ${(\Phi^G_{\C})|}_{\Gamma(G,\Omega^{1,\inv}_G)}(\beta)$ equals $\beta$ itself, but seen inside of $\Hom_{\R}(TG,\C)$. In Lemma~\ref{lem:generatecoh}~\eqref{item1}, we see $\beta$ in $H^1(G,\C)\cong \Hom_\Z(\Lambda,\C)$ as $\beta\circ\iota_{\Lambda}$. The chain of isomorphisms \eqref{eq:defPhi} sends $\beta\circ\iota_{\Lambda}$ to $(\beta\circ\iota_{\Lambda},\beta\circ\iota_{\Lambda'})$, where $\iota_{\Lambda'}:\Lambda'\hookrightarrow TG$ is the inclusion. This equals $\beta$ itself. Hence, we have seen that $\Phi_\C^Y$ is well defined.
	
	Let us now construct $\Psi^G$. Suppose $\alpha\in H^1(G,\C)\cong \Hom_\Z(\Lambda, \C)$. We will see $\alpha$ as an element of $\Hom_\Z(\Lambda, \C)$. Then, by our construction, $(\C\otimes \Phi_\R) (\alpha)$ vanishes on $\Lambda'$, while $\Phi_\C(\alpha)|_{\Lambda'}=-i\circ \alpha |_{\Lambda \cap TG_T} \circ i$. They both agree on $\Lambda$, so their difference is the element $\beta\in \Hom_\Z(\Lambda\oplus \Lambda', \C)$ that vanishes on $\Lambda$ and agrees with $i\circ \alpha |_{\Lambda \cap TG_T} \circ i$ on $\Lambda'$. Going back through the isomorphism $\Hom_\Z(\Lambda\oplus \Lambda', \C)\cong \Hom_\R(TG, \C)$, $\beta$ corresponds to a linear map vanishing on the $\R$-span of $\Lambda$. The pullback of an invariant form to the universal cover $TG$ yields a constant $1$-form. Let us pull back the form $\beta=(\C\otimes \Phi_\R-\Phi_\C)(\alpha)$ to a form in $TG$.  Note that this pulled back 1-form on $TG$ is exact: a linear function on a vector space seen as an invariant 1-form is the differential of itself, seen as a function (in coordinates, $\sum a_idz_i$ is the differential of $\sum a_iz_i$). In other words, it is the differential of the linear function $h$ vanishing on the span of $\Lambda$ and agreeing with $i\circ \alpha |_{\Lambda \cap TG_T} \circ i$ on $\Lambda'$ (i.e. $\beta$ seen as a function). Lastly, note that $h:TG\to\C$ descends to $G$, since it is $\Lambda$-invariant (it vanishes on $\Lambda$ and it is $\R$-linear). This function can be defined to be $\Psi^G(\alpha)$ (it is uniquely defined up to constants amongst the functions $\Psi^G(\alpha)$ that satisfy that $d\Psi^G(\alpha)=\C\otimes\Phi_\R^G(\alpha)-\Phi_\C^G(\alpha)$). Note that we have defined $\Psi^G$ as a linear map, and furthermore, it is a homomorphism $H^1(G,\C)\to \Hom_{\R-\text{Lie groups}}(G,\C)$. In fact, since $\Psi^G(\alpha)$ is uniquely defined up to adding a constant function, our choice of $\Psi^G$ such that its image is in $\Hom_{\R-\text{Lie groups}}(G,\C)$ is unique.
\end{proof}

\begin{defprop}[Definition of $\Phi_{\C}^Y$, $\Phi_{\R}^Y$ and $\Psi^Y$]\label{defPhiPsiY}
	Let $Y$ be an allowed compactification of a complex semiabelian variety $G$, let $j_Y:G\to Y$ be the inclusion and let $E\coloneqq Y\setminus G$.
	
	The images of the maps $\Phi_{\C}^G$, $\Phi_{\R}^G$ and $\Psi^G$ consist of logarithmic forms in $\Gamma(Y,\cA^{1,\cl}_{Y,\C}(\log E))$,\linebreak $\Gamma(Y,\cA^{1,\cl}_{Y,\R}(\log E))$ and $\Gamma(Y,\cA^{0}_{Y,\R}(\log E))$ respectively, where
	$\Gamma(Y,\cA^l_{Y,k}(\log E))$ is seen as a subspace of $\Gamma(G,\cA^{l}_{Y,k})$ through
	$$
	\Gamma(Y,\cA^{l}_{Y,k}(\log E))\subset\Gamma(Y,(j_Y)_*\cA^{l}_{G,k})\cong \Gamma(G,\cA^{l}_{G,k}),
	$$
	for $k=\R,\C$ and $l=0,1$. Hence we can define
	\begin{align*}
		\Phi_{\C}^Y&:H^1(G,\C)\to\Gamma(Y,\wt W_1\cA^{1,\cl}_{Y,\C}(\log E))\subseteq \Gamma(Y,\cA^{1,\cl}_{Y,\C}(\log E)),\\
		\Phi_{\R}^Y&:H^1(G,\R)\to\Gamma(Y,\wt W_1\cA^{1,\cl}_{Y,\R}(\log E))\subseteq \Gamma(Y,\cA^{1,\cl}_{Y,\R}(\log E)),\text{ and}\\
		\Psi^Y&:H^1(G,\C)\to \Gamma(Y,\cA^{0}_{Y,\R}(\log E))
	\end{align*}
	as the maps $\Phi_{\C}^G$, $\Phi_{\R}^G$ and $\Psi^G$ of Definition-Proposition~\ref{defprop:PhiPsiG} respectively.
\end{defprop}

\begin{proof}
	Since the images of $\Phi_{\C}^G$, $\Phi_{\R}^G$ and $\Psi^G$ consist of invariant forms and those are closed, any form on $Y$ that extends them must also be closed.
	
	Let us check that the image of $\Phi^G_{\C}:H^1(G,\C)\to \Gamma(G,\cA^{1, \inv}_{G,\C})\subset \Gamma(G,\cA^{1}_{G,\C})$ lies in the space $\Gamma(Y,\wt W_1\cA^{1,\cl}_{Y,\C}(\log E))$. Since $Y$ is an allowed compactification of $G$, there exists a compactification $\ov G$ of $G$ as in Corollary~\ref{cor:compactification} and an algebraic map $p:Y\to\ov G$ such that $p\circ j_Y=j_{\ov G}$, where $j_{\ov G}:G \to \ov G$ is the inclusion. Let $E'=\ov G\setminus G$. First of all, the image of $\Phi_\C^G$ is contained in $\Gamma(\ov G,\wt W_1\cA_{\ov G,\C}(\log E')^{1,\cl})$, in fact, all invariant forms are logarithmic of weight $\wt W_{\lc}$ equal to $1$ (recall that we already know that invariant forms are closed). This can be verified over an open cover of $G_A$: over a small enough open set $U_A$ of $G_A$, $\ov G$ is isomorphic to $(\bP^1)^m\times U_A$, and an explicit basis of the space of invariant forms can be written down using local coordinates. By pulling back through $p$, we see that the elements in the image of $\Phi^G_{\C}$ all extend (necessarily uniquely) to elements of $\Gamma(Y,\wt W_1\cA^{1,\cl}_{Y,\C}(\log E))$. 
	
	The fact that the image of $\Phi^G_{\R}:H^1(G,\R)\to \Gamma(G,\cA^{1, \inv}_{G,\R})\subset \Gamma(G,\wt W_1\cA^{1}_{G,\R})$ lies in $\Gamma(Y,\cA^{1,\cl}_{Y,\R}(\log E))$ follows from the definition of $\Phi^G_{\R}$ as the real part of $\Phi^G_{\C}$ and from the previous paragraph.
	
	Let us show that the elements in the image of $\Psi^G$ extend to globally defined elements in the space $\Gamma(Y,\cA^0_{Y,\C}(\log E))$. It suffices to see that they extend to globally defined elements in $\Gamma(\ov G,\cA^0_{\ov G,\C}(\log E'))$, and then pull those back through $p:Y\to \ov G$. This can be verified over an open cover of $G_A$ as before: over a small enough open set $U_A$ of $G_A$, $G$ is isomorphic to $(\C^*)^n\times U_A$, and $\ov G$ is isomorphic to $(\bP^1)^m\times U_A$. Let $(z_1,\ldots,z_n)$ be (complex) coordinates of the $(\C^*)^n$ factor. One can check that the elements in the image of $\Psi^G$ (as defined explicitly in the proof of Definition-Proposition~\ref{defprop:PhiPsiG}) are the functions $(\C^*)^n\times U_A\to \C$ of the form $\sum_{i=1}^n a_i \log(|z_i|)$ for $a_1,\ldots,a_n\in\C$. Hence, these all lie in $\Gamma(\ov G,\cA^0_{\ov G,\C}(\log E'))$.
\end{proof}

\begin{corollary}\label{cor:Phidf/f}
	Let $Y$ be an allowed compactification of a complex semiabelian variety $G$, let $j_Y:G\to Y$ be the inclusion and let $E\coloneqq Y\setminus G$. Then, 
	the maps $\Phi_{\C}^Y$ and $\Phi_{\R}^Y$ satisfy the assumptions of part~\eqref{itemPhi} in Lemma~\ref{lem:df/f}.
\end{corollary}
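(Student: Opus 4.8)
The plan is to unwind the definitions of $\Phi_\C^Y$ and $\Phi_\R^Y$ from Definition-Proposition~\ref{defPhiPsiY} (which reduce them to the maps $\Phi_\C^G$, $\Phi_\R^G$ of Definition-Proposition~\ref{defprop:PhiPsiG}) and verify, one at a time, the conditions bundled into part~\eqref{itemPhi} of Lemma~\ref{lem:df/f}. The assertions that $\Phi_\C^Y$ and $\Phi_\R^Y$ take values in $\Gamma(Y,\cA^{1,\cl}_{Y,\C}(\log E))$, respectively $\Gamma(Y,\cA^{1,\cl}_{Y,\R}(\log E))$, and in fact in the subsheaf of weight $\wt W_1$, are precisely what Definition-Proposition~\ref{defPhiPsiY} establishes, so the first bullet is already in place. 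What remains to check is: (i) that $\Phi_\C^Y$ and $\Phi_\R^Y$ are sections of the projection of a closed form onto its cohomology class; (ii) that a class pulled back from $G_A$ is sent to a form whose restriction to $G$ lies in the image of $p_A^*\colon \Gamma(G_A,\cA^1_{G_A,k})\to\Gamma(G,\cA^1_{G,k})$; and (iii) that $\Phi_\C^Y$ sends a class represented by a closed holomorphic $1$-form on $G$ to a $(1,0)$-form.

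For (i) I would argue on the two subspaces $(p_A)^*H^1(G_A,\C)$ and the image of $\Gamma(G,\Omega^{1,\inv}_G)$ in $H^1(G,\C)$, which span $H^1(G,\C)$ by Lemma~\ref{lem:generatecoh}\eqref{item3}. On $(p_A)^*H^1(G_A,\C)$ (and $(p_A)^*$ is injective by Lemma~\ref{lem:generatecoh}\eqref{item2}) the map $\Phi_\C^G$ is by definition the isomorphism $H^1(G_A,\C)\cong\Gamma(G_A,\cA^{1,\inv}_{G_A,\C})$ of Lemma~\ref{lem:generatecoh}\eqref{item1} followed by $p_A^*$; since the cohomology class of an invariant form is recovered by restricting it to $\Lambda$ (Lemma~\ref{lem:generatecoh}\eqref{itemLambda}) and pullback of forms induces pullback on de Rham cohomology, the class of $\Phi_\C^G(a)$ is $(p_A)^*a$, i.e.\ $a$ itself seen in $H^1(G,\C)$. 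On the classes of $\Gamma(G,\Omega^{1,\inv}_G)$ the map $\Phi_\C^G$ is the inclusion of sheaves, and an invariant holomorphic form represents its own class (Lemma~\ref{lem:generatecoh}\eqref{item-1}), so once more the composition with the class map is the identity. As both the class map and $\Phi_\C^G$ are $\C$-linear, and $\Phi_\C^G$ is well defined by Definition-Proposition~\ref{defprop:PhiPsiG}, it follows that the class map is a left inverse of $\Phi_\C^G$ on all of $H^1(G,\C)$. For the real statement, $\Phi_\R^G=\Re\circ\Phi_\C^G$ on $H^1(G,\R)$, and $\Re$ commutes with $d$ and with restriction to loops, so $[\Re\omega]=\Re[\omega]$ for every closed form $\omega$; taking $\omega=\Phi_\C^G(a)$ with $a$ real gives $[\Phi_\R^G(a)]=\Re(a)=a$.

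For (ii), if $c=(p_A)^*a$ with $a\in H^1(G_A,k)$ then, by the construction in Definition-Proposition~\ref{defprop:PhiPsiG}, $\Phi_\C^Y(c)|_G=p_A^*\omega_a$ with $\omega_a\in\Gamma(G_A,\cA^{1,\inv}_{G_A,\C})$ the invariant form representing $a$, and $\Phi_\R^Y(c)|_G=p_A^*(\Re\omega_a)$ when $k=\R$; in both cases this lies in the image of $p_A^*\colon\Gamma(G_A,\cA^1_{G_A,k})\to\Gamma(G,\cA^1_{G,k})$. For (iii), Lemma~\ref{lem:generatecoh}\eqref{itemInvariantHolo} says a class represented by a closed holomorphic $1$-form on $G$ lies in the image of $\Gamma(G,\Omega^{1,\inv}_G)$, on which $\Phi_\C^G$ is the inclusion into $\Gamma(G,\cA^1_{G,\C})$; a holomorphic $1$-form has type $(1,0)$, and the explicit local extension to $Y$ described in Definition-Proposition~\ref{defPhiPsiY} (over a cover where $\ov G\cong(\bP^1)^m\times U_A$, invariant holomorphic forms being combinations of the $dz_i/z_i$ and of holomorphic forms pulled back from $U_A$, then pulled back through $p\colon Y\to\ov G$) is again of type $(1,0)$, hence an element of $\Gamma(Y,\cA^{1,0}_{Y,\C}(\log E))$. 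I expect no serious obstacle: the corollary is a bookkeeping consequence of the way $\Phi_\C^G$ and $\Phi_\R^G$ were engineered. The only delicate point is (i), namely keeping the various identifications of Lemma~\ref{lem:generatecoh} consistently aligned so that applying the class map to $\Phi_\C^G(c)$ really returns $c$; this is a matter of invoking those identifications in the right order rather than establishing anything new.
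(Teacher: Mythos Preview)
Your proof is correct and follows essentially the same approach as the paper's. The paper's argument is more terse: it simply cites Definition-Proposition~\ref{defPhiPsiY} for the weight bound, says the section property ``follows immediately'' from Definition-Proposition~\ref{defprop:PhiPsiG}, and for the $(1,0)$-type condition invokes Lemma~\ref{lem:generatecoh}\eqref{itemInvariantHolo} and the fact that holomorphic forms are of type $(1,0)$. Your version supplies the details the paper omits---in particular, you spell out why the class map is a left inverse of $\Phi_\C^G$ on each generating subspace, and you take the extra step of checking that the logarithmic extension to $Y$ preserves the $(1,0)$-type---but the skeleton of the argument is identical.
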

\begin{proof}
	Let
	$
	0	\to G_T \xrightarrow{t}  G \xrightarrow{p_A} G_A\to 0
	$ be the Chevalley decomposition of $G$. Let $k=\R,\C$. The first condition (the image of \(\Phi^Y_{k}\) is contained in weight 1) is part of Definition-Proposition~\ref{defPhiPsiY}. The fact that $\Phi_{\C}^Y$ and $\Phi_{\R}^Y$ are sections of the cohomology map follows immediately from the definition of $\Phi_{k}^G$ (Definition-Proposition~\ref{defprop:PhiPsiG}).
	
	The fact that $\Phi_{k}^Y$  maps forms which are pulled back from $H^1(G_A,\C)$ to forms whose restriction to $G$ is in the image of $p_A^*:\Gamma(G_A,\cA^1_{G_A,k})\to \Gamma(G,\cA^1_{G,k})$ also follows by  Definition-Proposition~\ref{defprop:PhiPsiG}.
	
	Lastly, by Lemma~\ref{lem:generatecoh} \eqref{itemInvariantHolo}, classes of holomorphic forms are represented by invariant holomorphic forms. By definition, $\Phi_{\C}^Y$ maps these to holomorphic forms, which in particular are $(1,0)$-forms.
\end{proof}

Applying Lemma~\ref{defprop:thickMHC} to the objects $(\cN^{\bullet}_{X,D,n},H^1(G,\R),\ov f^*\circ\Phi_{\R}^Y,\ov f^*\circ\Phi_{\C}^Y,\ov f^*\circ\Psi^Y)$, which satisfy the assumptions of Lemma~\ref{lem:df/f} by Corollary~\ref{cor:Phidf/f}, we get a thickened mixed Hodge complex of sheaves. We describe this mixed Hodge complex of sheaves explicitly in the following definition.

\begin{definition}[The thickened logarithmic Dolbeault mixed Hodge complex of sheaves]\label{def:thickening}
	Let $U$ be a smooth connected complex algebraic variety, let $G$ be a complex semiabelian variety, and let $f:U\to G$ be an algebraic morphism, which extends to $\ov f:X\to Y$, where $X,Y$ are compatible compactifications of $U,G$ with respect to $f$ as in Definition~\ref{def:compatibleCompf}. Let $\displaystyle R_{m}\coloneqq \frac{\prod_{j=0}^{\infty}\Sym^j H_1(G,k)}{\prod_{j=m}^{\infty}\Sym^j H_1(G,k)}$ and let $R_{-m}\coloneqq \Hom_k(R_{m},k)$ for all $m\geq 1$, and $k=\R,\C$.
	
	Let $m\in \Z\setminus \{0\}$, and let $n\geq\max\{2,\dim_\R U\}$. We denote by $(R_m\otimes\cN_{X,D,n}^\bullet,d+\ov f^*\circ\Phi^Y(\eps))$ the thickened mixed Hodge complex  with real part
	\[
	\left(\left(R_{m}\otimes_\R\cA^\bullet_{X,\R}(\log D),d+\ov f^*\circ\Phi_\R^Y(\eps_{\R})\right), W_{\lc}^n\right),
	\]
	complex part $\left(\left(R_{m}\otimes_\C\cA^\bullet_{X,\C}(\log D),d+\ov f^*\circ\Phi_\C^Y(\eps_{\C})\right), W_{\lc}^n, F^{\lc}\right)$, and a filtered isomorphism
	\begin{multline*}
		\alpha=e^{\ov f^*\circ\Psi^Y(\eps_{\C})}:
		\left(\left(R_{m}\otimes_\R\cA^\bullet_{X,\R}(\log D),d+\ov f^*\circ\Phi_\R^Y(\eps_{\R})\right)\otimes_{\R}\C,W_{\lc}^n\right)\xrightarrow{\sim}
		\\
		\left(\left(R_{m}\otimes_\C\cA^\bullet_{X,\C}(\log D),d+\ov f^*\circ\Phi_\C^Y(\eps_{\C})\right), W_{\lc}^n\right).
	\end{multline*}
Here,  $W_{\lc}^n$ denotes the tensor filtration of the weight filtration in $R_m$ and the filtration $W_{\lc}^n$ of $\cA_{X,k}^\bullet(\log D)$ from Definition-Proposition~\ref{defprop:modifiedNA}, and $F^{\lc}$ denotes the tensor filtration of the Hodge filtration in $R_m$ and the filtration $F^{\lc}$ from Definition-Proposition~\ref{defprop:modifiedNA}.
\end{definition}

\begin{remark}
	Note that technically, applying Definition-Proposition~\ref{defprop:thickMHC} to the objects $$(\cN^{\bullet}_{X,D,n},H^1(G,\R),\ov f^*\circ\Phi_{\R}^Y,\ov f^*\circ\Phi_{\C}^Y,\ov f^*\circ\Psi^Y)$$ yields a thickened mixed Hodge complex with four terms $(\cK_0^{\bullet},\cK_1^{\bullet}, \cK_2^{\bullet},\cK_3^\bullet)$, but since two of the maps between them are the identity ($\cK_0^{\bullet}=\cK_1^{\bullet}$, $\cK_2^{\bullet}=\cK_3^{\bullet}$), we have simplified the notation in the definition above.
\end{remark}

The maps defined in Definition-Proposition~\ref{defPhiPsiY} satisfy the following functoriality property.

\begin{corollary}\label{cor:PhiFunctorial}
	Suppose that we have a map of semiabelian varieties $g\colon G_1\to G_2$. Let $Y_2$ be an allowed compactification of $G_2$. Then, there exists an allowed compactification $Y_1$ of $G_1$ such that $g$ extends to $\ov{g}:Y_1\to Y_2$. Moreover, for every such allowed compactification $Y_1$,  the maps $(\Phi_\R^{Y_1},\Phi_\C^{Y_1},\Psi^{Y_1})$ and $(\Phi_\R^{Y_2},\Phi_\C^{Y_2},\Psi^{Y_2})$ are compatible in the sense that $\ov g^*\circ \Phi_\R^{Y_1} = \Phi_\R^{Y_2}\circ g^*$, $\ov g^*\circ \Phi_\C^{Y_1} = \Phi_\C^{Y_2}\circ g^*$ and $\ov g^*\circ \Psi^{Y_1} = \Psi^{Y_2}\circ g^*$.
\end{corollary}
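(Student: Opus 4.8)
I would first produce $Y_1$ exactly as in the proof of Lemma~\ref{lem:existenceComp}: fix a compactification $\ov{G_1}$ of $G_1$ of the form provided by Corollary~\ref{cor:compactification}, and let $Y_1$ be a good compactification of $G_1$ obtained by resolving the singularities (and, if necessary, the boundary) of the closure of the graph of $(j_{\ov{G_1}},\,j_{Y_2}\circ g)\colon G_1\to \ov{G_1}\times Y_2$. The two projections restrict on $G_1$ to $\mathrm{id}_{G_1}$ and to $g$, so they extend to algebraic maps $p\colon Y_1\to\ov{G_1}$ and $\ov g\colon Y_1\to Y_2$: the first satisfies $p\circ j_{Y_1}=j_{\ov{G_1}}$, hence witnesses that $Y_1$ is an allowed compactification (Definition~\ref{def:allowedComp}), and the second satisfies $\ov g\circ j_{Y_1}=j_{Y_2}\circ g$, hence extends $g$. (Equivalently, one may invoke Lemma~\ref{lem:existenceComp} directly.)

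\textbf{Reduction to the open parts.} For the compatibility assertions I would use that, by Definition-Proposition~\ref{defPhiPsiY}, each of $\Phi^{Y_i}_\R,\Phi^{Y_i}_\C,\Psi^{Y_i}$ is the unique extension of the invariant form, resp.\ function, $\Phi^{G_i}_\R,\Phi^{G_i}_\C,\Psi^{G_i}$ of Definition-Proposition~\ref{defprop:PhiPsiG} to a logarithmic form, resp.\ section of $\cA^0_{Y_i,\C}(\log E_i)$, on $Y_i$, and that such an object is determined by its restriction to the dense open $G_i\subseteq Y_i$ (Definition~\ref{def:logDolbeault}). Since $\ov g\circ j_{Y_1}=j_{Y_2}\circ g$, restricting to $G_1$ turns $\ov g^*$ into $g^*$; hence the three identities hold on $Y_1$ as soon as they hold after restriction to $G_1$, i.e.\ as soon as
\[
\Phi^{G_1}_\C\circ g^*=g^*\circ\Phi^{G_2}_\C,\qquad \Phi^{G_1}_\R\circ g^*=g^*\circ\Phi^{G_2}_\R,\qquad \Psi^{G_1}\circ g^*=g^*\circ\Psi^{G_2},
\]
where on the right $g^*$ now denotes pullback of invariant $1$-forms, resp.\ of functions, along $g\colon G_1\to G_2$.

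\textbf{Compatibility over $G$.} Let $\wt g\colon TG_1\to TG_2$ be the differential of $g$ at the identity. It is $\C$-linear, and being the lift of the group homomorphism $g$ through the exponential maps it sends $\Lambda_1\coloneqq\ker\exp_{G_1}$ into $\Lambda_2\coloneqq\ker\exp_{G_2}$. By Proposition~\ref{prop:funChevalley}, $g\big((G_1)_T\big)\subseteq (G_2)_T$, so $\wt g$ maps $T(G_1)_T$ into $T(G_2)_T$, hence $\Lambda_1\cap T(G_1)_T$ into $\Lambda_2\cap T(G_2)_T$, and by $\C$-linearity it maps the auxiliary lattice $\Lambda_1'=i\cdot\big(\Lambda_1\cap T(G_1)_T\big)$ of Remark~\ref{rem:Lattices} into $\Lambda_2'$, respecting the real decompositions $TG_i=\langle\Lambda_i\rangle_\R\oplus\langle\Lambda_i'\rangle_\R$. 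Under the identifications $H^1(G_i,\C)\cong\Hom_\Z(\Lambda_i,\C)$ of Lemma~\ref{lem:generatecoh}~\eqref{itemLambda} and $\Gamma(G_i,\cA^{1,\inv}_{G_i,\C})\cong\Hom_\R(TG_i,\C)$, the operator $g^*$ becomes precomposition with $\wt g$. Since the chain of isomorphisms \eqref{eq:defPhi} defining $\Phi^{G_i}_\C$ is built only out of the lattices $\Lambda_i,\Lambda_i'$, the complex structure on $TG_i$, and the constraint $\alpha'=-i\circ\alpha|_{\Lambda_i\cap T(G_i)_T}\circ i$, all of which are intertwined by $\wt g$, the relevant square commutes; concretely I would check that the two resulting $\R$-linear functionals $TG_1\to\C$ agree on the full-rank lattice $\Lambda_1\oplus\Lambda_1'$, where on $\Lambda_1$ both equal $\beta\mapsto\beta\circ\wt g$ and on $\Lambda_1'$ both equal $w'\mapsto -i\,\beta\big(\wt g(iw')\big)$. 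The identity for $\Phi_\R$ then follows by taking real parts, using that pullback along the smooth map $g$ commutes with $\Re$. Finally, $g^*\Psi^{G_2}(\alpha)=\Psi^{G_2}(\alpha)\circ g$ is again a homomorphism of real Lie groups, and
\[
d\big(g^*\Psi^{G_2}(\alpha)\big)=g^*\big(\C\otimes\Phi^{G_2}_\R(\alpha)-\Phi^{G_2}_\C(\alpha)\big)=\C\otimes\Phi^{G_1}_\R(g^*\alpha)-\Phi^{G_1}_\C(g^*\alpha)
\]
by the two identities just proved, so the uniqueness clause in Definition-Proposition~\ref{defprop:PhiPsiG} forces $g^*\Psi^{G_2}(\alpha)=\Psi^{G_1}(g^*\alpha)$.

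\textbf{Main difficulty.} The only step with genuine content is the bookkeeping of the third paragraph: unwinding \eqref{eq:defPhi} and verifying that $\wt g$ is equivariant for the auxiliary lattice $\Lambda_i'$ and for the $G_T$-holomorphicity constraint $\alpha'=-i\circ\alpha\circ i$ --- that is, that the recipe ``extend a $\Z$-functional on $\Lambda$ to an $\R$-functional on $TG$ subject to that constraint on $\Lambda'$'' is natural in $G$. Everything else reduces to functoriality of pullback, of $\exp$, and of the Chevalley decomposition (Proposition~\ref{prop:funChevalley}), together with the density of $G_i$ in $Y_i$.
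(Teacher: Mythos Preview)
Your proof is correct and follows essentially the same structure as the paper's: construct $Y_1$ by resolving the closure of the graph of $g$ in $\ov{G_1}\times Y_2$, reduce the compatibilities to the open parts, verify them for $\Phi^G_\C$, deduce $\Phi^G_\R$ by taking real parts, and use the uniqueness clause for $\Psi^G$. The one minor difference is that for $\Phi^G_\C$ the paper argues directly from the two-subspace definition in Definition-Proposition~\ref{defprop:PhiPsiG} (on $\Gamma(G,\Omega^{1,\inv}_G)$ the map is the inclusion of holomorphic invariant forms, and on $H^1(G_A,\C)$ it is pullback from $G_A$, both manifestly natural under a homomorphism preserving the Chevalley decomposition), whereas you unwind the lattice formula~\eqref{eq:defPhi} from its proof --- correct, but more bookkeeping than necessary.
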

\begin{proof}
	Let $\ov{G_1}$ be a compactification of $G_1$ as in Corollary~\ref{cor:compactification}. We can obtain $Y_1$ as a resolution of singularities of the closure of the graph of $g$ inside of $\ov{G_1}\times Y_2$.
	
	Recall that by Proposition~\ref{prop:funChevalley}, $g$ must preserve the Chevalley decomposition. By Lemma~\ref{lem:generatecoh}, $\Phi^{Y_i}_{\C}$ is completely determined by its restriction to $\Gamma(G_i,\Omega^{1, \inv}_{G_i})$ and $H^1({(G_i)}_A,\C)$, both seen as subspaces of $H^1(G_i,\C)$. With the definition of $\Phi^{G_i}_\C$ from Definition-Proposition~\ref{defprop:PhiPsiG}, it is straightforward to see that $g^*\circ \Phi_\C^{G_1} = \Phi_\C^{G_2}\circ g^*$. Hence, $\ov g^*\circ \Phi_\C^{Y_1} = \Phi_\C^{Y_2}\circ g^*$.
	In Definition-Proposition~\ref{defPhiPsiY}, $\Phi^{Y_i}_\R$ is defined as the real part of $\Phi^{Y_i}_\C$, so these are compatible as well. Finally, $\Psi^{Y_i}$ is determined up to constants by the condition that it is a homotopy between $\Phi^{Y_i}_\R$ and $\Phi^{Y_i}_\C$, so it is uniquely determined if one requires that its image is composed of homomorphisms of $\R$-Lie groups $G_i\to \C$, and compatible with $g$.
\end{proof}

\begin{example}[The case $G=\C^*$]\label{eg:TateRmMHC}
	Let $m\geq 1$, and let $n\geq\max\{2,\dim_\R U\}$. If $G=\C^*$, the $R_\infty$-linear isomorphism of MHS  $A_m:R_m(1-m)\to R_{-m}$ from Example~\ref{eg:TateRm} lifts to an isomorphism of mixed Hodge complexes of sheaves:
	$$
	A_m\otimes \Id :(R_m\otimes\cN^\bullet_{X,D,n}, d+\ov f^*\circ\Phi^Y(\eps))(1-m)\longrightarrow (R_{-m}\otimes\cN^\bullet_{X,D,n}, d+\ov f^*\circ\Phi^Y(\eps)),
	$$
	where $(1-m)$ denotes a Tate twist. Indeed, the commutativity with the differentials is immediate: $d$ leaves the first factor of the tensor product unchanged, and $\ov f^*\circ\Phi^Y(\eps)$ acts on the first factor of the tensor product by multiplication by elements of $R_\infty$, which commutes with $A_m$. The commutativity with the pseudo-morphism is also immediate because it leaves the first factor of the tensor product unchanged.
\end{example}

\section{Thickened logarithmic Dolbeault complexes and local systems}\label{sec:thickeningResolves}

Let $f:U\to G$ be an algebraic morphism from a smooth variety to a semiabelian variety. Let $\Phi_\R^G$ be as in Definition-Proposition~\ref{defprop:PhiPsiG}, let $m\in\Z\setminus\{0\}$ and let $R_m$ as in Definition~\ref{def:Rminfty} (with $\R$-coefficients throughout this section). Recall the definition of the twisted differential from Definition-Proposition~\ref{defprop:thickMHC}. This section is devoted to showing that $(R_m\otimes_\R \cA_{U,\R}^\bullet, d+f^*\circ\Phi_\R^G(\eps_{\R}))$ is a resolution of $R_m\otimes_R \oL$ (see Lemmas~\ref{lem:thickeningIsLocalSystem} and~\ref{lem:nuResolves}), explicitly defining the morphism that makes the former a resolution of the latter (namely the one defined in Construction~\ref{con:nu} below). Recall that the definition of $\cL$ and $\oL$ can be found in Definition~\ref{def:L}.

\begin{construction}[Definition of $(\Phi_\R^G)^\vee$]\label{con:Phidual}
	Recall that $\Phi_\R^G$ is a map from $H^1(G,\R)$ to $\Gamma(G,\cA^{1,\inv}_{G,\R})$ (extended in a unique way by $\Phi_{\R}^Y$ to logarithmic forms on $Y$), and recall that $\Gamma(G,\cA^{1,\inv}_{G,\R})\cong \Hom_\R(TG,\R)$ by Lemma~\ref{lem:generatecoh}\eqref{itemLambda}. Under this identification, we can consider its dual $(\Phi_\R^G)^\vee$ as a morphism
	\begin{equation}\label{eq:Phidual}
		(\Phi_\R^G)^\vee\colon TG\to H_1(G,\R).
	\end{equation}
Note that since $\Phi_\R^G$ is a section of the cohomology map, $(\Phi_\R^G)^\vee$ fixes $H_1(G,\R)\subset TG$. Furthermore, we will also use the notation $(\Phi_\R^G)^\vee$ to denote the map
	\begin{equation}\label{eq:Phidualinduced}
		(\Phi_\R^G)^\vee\colon TG\otimes_\R \cA^0_{U,\R}\to H_1(G,\R)\otimes_\R \cA^0_{U,\R}
	\end{equation}
	induced by  $(\Phi_\R^G)^\vee$ in \eqref{eq:Phidual}. 
\end{construction}

Recall from Remark~\ref{remk:sectionsOfL} that a local $\R$-basis of $\cL$ at any point of $U$ is given by lifts $\iota$ of $f$ to $TG$, i.e. maps $\iota\colon U\to TG$ such that $\exp\circ\iota=f$. The sheaf $\oL$ is a local system of rank 1 free $\R[\pi_1(G)]$-modules. Recall from Notation~\ref{not:iotaBar} that $\cL$ and $\oL$ are identified through the identity map $\cL\to\oL$ that maps $\iota$ to $\ov\iota$, which is an $R$-antilinear isomorphism.

Recall that $R_\infty = \prod_{j=0}^\infty \Sym^j H_1(G,\R)$ and $R\coloneqq \R[\pi_1(G)]$. Moreover, recall Notation~\ref{not:abelianization} and Definition~\ref{def:Rmodstructure}.

\begin{construction}[Definition of $e^{-(\Phi_\R^G)^\vee}$]\label{con:nu}
	Let $m\in\Z\setminus\{0\}$. 
	Let $\{\gamma_i\}$ be a basis of $\pi_1(G)$, so that $\{\h\gamma_i\}$ is a $\Z$-basis of $\Lambda=H_1(G,\Z)\subset TG$, and let $\{e_j\}$ be chosen so that $\{\h \gamma_i,e_j\}$ form an $\R$-basis of $TG\supset H_1(G,\Z)$.
	Since they form a basis, any lift $\iota\colon U\to TG$ may be written as $\iota=\sum \h\gamma_i\otimes g_i + e_j\otimes h_j$ for some $g_i,h_j\in \cA^0_{U,\R}$, so $$\ov \iota=\sum \h\gamma_i\otimes g_i + e_j\otimes h_j.$$ Hence, we can see $\ov\cL$ as a subsheaf of $TG\otimes_{\R}\cA_{U,\R}^0$, and restrict $(\Phi_\R^G)^\vee$ as in \eqref{eq:Phidualinduced} to $\ov\cL$.
	
	 Up to a sign, we postcompose $(\Phi_\R^G)^\vee$ as in \eqref{eq:Phidualinduced} with the exponential map, to obtain the following:
	 
    \begin{align*}
		{\displaystyle e^{-(\Phi_\R^G)^\vee}\colon}
		{\displaystyle R_m\otimes_R \oL\subset R_m\otimes_R( TG\otimes_\R \cA^0_{U,\R})}
		&
		\longrightarrow
		{\displaystyle R_m \otimes_\R \cA^0_{U,\R}=R_m\otimes_{R_{\infty}}(R_\infty \otimes_\R \cA^0_{U,\R})}
		\\
		\\
		{\displaystyle
			\alpha\otimes \ov\iota = 
			\alpha \left(
			\sum \h\gamma_i\otimes g_i + e_j\otimes h_j
			\right)
		}
		&
		\longmapsto
		\alpha e^{-(\Phi_\R^G)^\vee(\bar\iota)}\\
     &=  \alpha\sum_{k=0}^\infty \frac{1}{k!} \left(-\sum_{i} \log \gamma_i\otimes g_i - \sum_j (\Phi_\R^G)^\vee ( e_j)\otimes h_j\right)^k.
	\end{align*}
	Note that the product of $k$ many elements in $H_1(G,\R)\otimes \cA^0_{U,\R}$ is an element of $\Sym^k H_1(G,\R)\otimes_\R \cA^0_{U,\R}\subset R_{\infty}\otimes_\R \cA^0_{U,\R}$, so, since $R_m$ is an $R_\infty$-module, it makes sense to multiply $\alpha\in R_m$ by the elements in the first factor of the tensor product of $\frac{1}{k!} \left(-\sum_{i} \log \gamma_i\otimes g_i - \sum_j (\Phi_\R^G)^\vee ( e_j)\otimes h_j\right)^k$ for all $k$.
\end{construction}

\begin{proposition}\label{prop:nuR-linear}
	The map $e^{-(\Phi_\R^G)^\vee}$ defined in Construction~\ref{con:nu} is well-defined on the tensor product (over $R$), and is $R$-linear.
\end{proposition}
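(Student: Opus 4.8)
The plan is to reduce everything to a one‑line local computation and then just unwind the definitions. Since $\oL$ is a locally constant sheaf of free rank‑one $R$‑modules, with a local basis $\ov\iota$ on any connected open $V\subseteq U$ admitting a lift $\iota\colon V\to TG$ of $f$, every local section of $R_m\otimes_R\oL$ over such a $V$ is $\alpha\otimes\ov\iota$ for a unique locally constant $\alpha\in R_m$, and Construction~\ref{con:nu} sends it to $\alpha\,e^{-(\Phi_\R^G)^\vee(\ov\iota)}\in\Gamma\bigl(V,R_m\otimes_\R\cA^0_{U,\R}\bigr)$. So it is enough to check that this formula is unchanged when the chosen lift $\iota$ is replaced by any other lift $\iota'$ over the same connected open set — equivalently, that it respects the relation $\alpha\delta\otimes\ov\iota=\alpha\otimes(\delta\cdot\ov\iota)$ for $\delta\in\piG$. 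That one identity simultaneously shows the local definitions glue to a morphism of sheaves, that the defining relations of $\otimes_R$ are respected (since $\piG$ generates $R$ as an $\R$‑algebra and the map is visibly $\R$‑linear in $\alpha$), and — reading it with $\alpha$ arbitrary — $R$‑linearity.

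For this I would use two ingredients. First, any two lifts of $f$ over a connected open differ by a constant in $\Lambda=\exp^{-1}(e)\cong\piG$: if $\iota'=\iota+\h\delta$ with $\delta\in\piG$, then $\iota'=\delta\circ\iota$, so by Notation~\ref{not:iotaBar} one has $\ov{\iota'}=\delta^{-1}\cdot\ov\iota$, hence $\alpha\otimes\ov\iota=(\alpha\delta)\otimes\ov{\iota'}$ in $R_m\otimes_R\oL$, where $\delta$ acts on $R_m$ as multiplication by $e^{\h\delta}$ by Definition~\ref{def:Rmodstructure}. Second, $(\Phi_\R^G)^\vee$ restricts to the identity on $H_1(G,\R)\subset TG$ (Construction~\ref{con:Phidual}), and $\h\delta\in H_1(G,\Z)\subset H_1(G,\R)$, so $(\Phi_\R^G)^\vee(\ov{\iota'})=(\Phi_\R^G)^\vee(\ov\iota)+\h\delta$.

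Putting these together, I would then work inside the commutative ring $R_m\otimes_\R\cA^0_{U,\R}=R_m\otimes_{R_\infty}(R_\infty\otimes_\R\cA^0_{U,\R})$, in which the elements $\h\delta$ and $(\Phi_\R^G)^\vee(\ov\iota)$ both lie in the nilpotent ideal generated by $H_1(G,\R)$ and commute, so that $e^{-(\Phi_\R^G)^\vee(\ov\iota)-\h\delta}=e^{-\h\delta}\,e^{-(\Phi_\R^G)^\vee(\ov\iota)}$. Then
\[
(\alpha\delta)\, e^{-(\Phi_\R^G)^\vee(\ov{\iota'})}=\alpha\, e^{\h\delta}\,e^{-\h\delta}\,e^{-(\Phi_\R^G)^\vee(\ov\iota)}=\alpha\, e^{-(\Phi_\R^G)^\vee(\ov\iota)},
\]
which gives well‑definedness; the same string of equalities, written as $e^{-(\Phi_\R^G)^\vee}\bigl(\delta\cdot(\alpha\otimes\ov\iota)\bigr)=e^{\h\delta}\cdot\bigl(\alpha\, e^{-(\Phi_\R^G)^\vee(\ov\iota)}\bigr)=\delta\cdot e^{-(\Phi_\R^G)^\vee}(\alpha\otimes\ov\iota)$, gives $\piG$‑equivariance (the $R$‑action on the target being multiplication by $e^{\h\delta}$ on the $R_m$‑factor), and $\R$‑linearity in $\alpha$ upgrades this to full $R$‑linearity.

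I do not expect a genuine obstacle here: the content is routine, and the only thing that needs care is keeping the several module structures consistent — the deck action on $\oL$, the exponential $R$‑module structure on $R_m$ from Definition~\ref{def:Rmodstructure}, the identification $R_m\otimes_\R\cA^0_{U,\R}\cong R_m\otimes_{R_\infty}(R_\infty\otimes_\R\cA^0_{U,\R})$, and the fact that $(\Phi_\R^G)^\vee$ is the identity on $H_1(G,\R)$ — together with the sign in the convention $\gamma\cdot\ov\iota=\ov{\gamma^{-1}\circ\iota}$, which is exactly what makes the factors $e^{\h\delta}$ and $e^{-\h\delta}$ cancel.
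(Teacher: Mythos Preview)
Your proof is correct and follows essentially the same approach as the paper's: both reduce the claim to checking the single relation $\alpha\gamma\otimes\ov\iota=\alpha\otimes(\gamma\cdot\ov\iota)$ for $\gamma\in\piG$, and both verify it via the fact that $(\Phi_\R^G)^\vee$ fixes $H_1(G,\R)$, so the translation of $\ov\iota$ by $\pm\log\gamma$ produces a factor $e^{\pm\log\gamma}$ that cancels against the $R$-action on $R_m$. The paper writes out the exponential in coordinates (equation~\eqref{eq:translation}), while you phrase it as comparing two lifts $\iota$ and $\iota'=\iota+\log\delta$, but the computation is identical.
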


\begin{proof}
	Let us show that the above formula is well-defined on the tensor product (over $R$).  The same reasoning will show us that $e^{-(\Phi^G_\R)^\vee}$ is $R$-linear.
	
	Recall that $R$ acts on $\ov\cL$ by letting $\gamma_0\in \pi_1(G)$ act by translation by $\gamma_0^{-1}$, that is, $\gamma_0\cdot \ov\iota=\ov{\gamma_0^{-1}\cdot \iota}$, which  corresponds with postcomposing $\ov\iota$ with translation by $-\h \gamma_0=\h\gamma_0^{-1}$ (namely the element in $H_1(G,\Z)\subset TG$ corresponding to $\gamma_0\in\pi_1(G)$). Furthermore, $R$ is embedded in $R_\infty$ by the ring $\gamma\mapsto e^{\log\gamma}$ of Definition~\ref{def:Rmodstructure}. Hence, we need to show that for any $\gamma\in R$, the image of $e^{\log \gamma}\alpha\otimes \ov\iota$ equals the image of $\alpha\otimes \gamma \cdot\ov\iota$.  It is enough to check this for $\gamma_0\in \pi_1(G)$, since $R$ is generated by $\pi_1(G)$. With the above notations:
	\begin{align}\label{eq:translation}
		\begin{split}
		\alpha	e^{-(\Phi_\R^G)^\vee (\gamma_0\cdot\ov\iota)} &=
		\alpha e^{\displaystyle -(\Phi_\R^G)^\vee \left(
			-\log\gamma_0\otimes 1 + \sum \h\gamma_i\otimes g_i + e_j\otimes h_j
			\right)}
		\\
		&=\alpha
		\exp\left(\log \gamma_0 \otimes 1- \sum_i \log \gamma_i\otimes g_i - \sum_j (\Phi_\R^G)^\vee( e_j ) \otimes h_j\right)
		\\
		&=\alpha
		e^{\log\gamma_0}e^{-(\Phi_\R^G)^\vee  \ov\iota}.
	\end{split}
	\end{align}
\end{proof}

\begin{proposition}\label{prop:sectionDualBasis}
	Let $\{\log \gamma_i,  e_j\}$ be an $\R$-basis of $TG$, where $\{\log \gamma_i\}$ is the $\Z$-basis of $H_1(G,\Z)$ corresponding to a basis $\{\gamma_i\}$ of $\pi_1(G)$. Let $\{\log\gamma_i^\vee,e_j^\vee\}$ be its dual basis. Suppose a locally defined $\ov\iota\in \ov\cL$ is given by $\ov\iota =\sum_i \log \gamma_i\otimes g_i + \sum_j e_j\otimes h_j\in TG\otimes_{\R}\cA^0_{U,\R}$. Then,   
	\begin{itemize}
		\item All the 1-forms \(dg_i\) and \(dh_j\) are the pullback of invariant 1-forms on \(G\), namely:
		\[
		dg_i = f^*\log \gamma_i^\vee,\quad dh_j = f^* e_j^\vee,
		\]
		where $\log\gamma_i^\vee,e_j^\vee:TG\to\R$ are seen in $\Gamma(G,\cA^{1,\inv}_{G,\R})$ through the isomorphism described in Lemma~\ref{lem:generatecoh}, part (\ref{itemLambda}).
		\item For every \(j\), the function $h_j$ is the composition of \(f\) with the (globally defined) unique differentiable homomorphism $G\to \R$ mapping $\exp(e_j)$ to $1$ and the rest of the elements of $\{\exp(e_l)\}$ to $0$.
	\end{itemize}
\end{proposition}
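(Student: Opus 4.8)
The statement is local on $U$, so fix a simply-connected open set $V\subseteq U$ over which a lift $\iota\colon V\to TG$ of $f$ is defined, and write $\iota=\sum_i\log\gamma_i\otimes g_i+\sum_j e_j\otimes h_j$ with $g_i,h_j\in\Gamma(V,\cA^0_{U,\R})$; the corresponding section of $\ov\cL$ is $\ov\iota$ with the same coordinate functions. The plan is first to reduce both bullet points to a single computation: differentiating the defining identity $\exp\circ\iota=f$. The key observation is that the pairing between a constant (invariant) $1$-form on $TG$ and a tangent vector is just evaluation of the corresponding linear functional, so the coordinate functions $g_i,h_j\colon V\to\R$ are literally the components of $\iota$ in the fixed basis $\{\log\gamma_i,e_j\}$ of $TG$, i.e. $g_i=\log\gamma_i^\vee\circ\iota$ and $h_j=e_j^\vee\circ\iota$, where $\log\gamma_i^\vee,e_j^\vee\colon TG\to\R$ is the dual basis.

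For the first bullet, I would argue as follows. Since $\log\gamma_i^\vee$ is a linear functional on the vector space $TG$, viewed as an invariant $1$-form it is the differential of itself viewed as a function (in coordinates, $\sum a_\ell dz_\ell=d(\sum a_\ell z_\ell)$); this is exactly the identification of Lemma~\ref{lem:generatecoh}\eqref{itemLambda}. Therefore $d g_i=d(\log\gamma_i^\vee\circ\iota)=\iota^*\big(d\log\gamma_i^\vee\big)=\iota^*(\log\gamma_i^\vee)$, where on the right $\log\gamma_i^\vee$ now denotes the invariant $1$-form on $TG$. But invariant $1$-forms on $TG$ are pulled back from $G$ via $\exp$: if $\omega_i\in\Gamma(G,\cA^{1,\inv}_{G,\R})$ is the invariant form with $\exp^*\omega_i=\log\gamma_i^\vee$, then $\iota^*(\log\gamma_i^\vee)=\iota^*\exp^*\omega_i=(\exp\circ\iota)^*\omega_i=f^*\omega_i$. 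Under the isomorphism $\Gamma(G,\cA^{1,\inv}_{G,\R})\cong\Hom_\R(TG,\R)$ the form $\omega_i$ corresponds precisely to the functional $\log\gamma_i^\vee$, so in the notation of the statement $dg_i=f^*\log\gamma_i^\vee$. The identical argument with $e_j^\vee$ in place of $\log\gamma_i^\vee$ gives $dh_j=f^*e_j^\vee$.

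For the second bullet, the point is that $e_j^\vee\colon TG\to\R$, being $\R$-linear and vanishing on all lattice generators $\log\gamma_i$ of $\Lambda=\pi_1(G)$ (by the dual-basis property), is $\Lambda$-invariant, hence descends to a well-defined smooth group homomorphism $\psi_j\colon G=TG/\Lambda\to\R$; this is the same descent mechanism used for $\Psi^G$ in the proof of Definition-Proposition~\ref{defprop:PhiPsiG}. By construction $\psi_j\circ\exp=e_j^\vee$, which sends $e_\ell\mapsto\delta_{j\ell}$, so $\psi_j(\exp e_j)=1$ and $\psi_j(\exp e_\ell)=0$ for $\ell\neq j$; since a homomorphism out of $G$ is determined by its values on the $\exp(e_\ell)$ together with the (automatic) triviality on $\exp(\log\gamma_i)=1$, $\psi_j$ is the unique such homomorphism. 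Finally $h_j=e_j^\vee\circ\iota=\psi_j\circ\exp\circ\iota=\psi_j\circ f$, which is exactly the claimed factorization. No step here is a genuine obstacle; the only thing to be careful about is bookkeeping the two identifications of $\Gamma(G,\cA^{1,\inv}_{G,\R})$ with $\Hom_\R(TG,\R)$ (one via $\exp^*$, one via restriction to $\Lambda$, which agree by Lemma~\ref{lem:generatecoh}\eqref{itemLambda}) so that "$\log\gamma_i^\vee$ seen in $\Gamma(G,\cA^{1,\inv}_{G,\R})$" means what the statement says it means.
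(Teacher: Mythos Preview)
Your proof is correct and follows essentially the same approach as the paper's: both identify the coordinate functions as $g_i=\log\gamma_i^\vee\circ\iota$, $h_j=e_j^\vee\circ\iota$, differentiate using the identification of invariant forms with linear functionals on $TG$, and use $\exp\circ\iota=f$ to rewrite the result as a pullback by $f$; for the second bullet both observe that $e_j^\vee$ vanishes on $\Lambda$ and hence descends to a homomorphism $G\to\R$. The only cosmetic difference is that the paper introduces locally defined intermediate functions $a_i,b_j\colon G\to\R$ with $a_i\circ\exp=(\log\gamma_i)^\vee$, whereas you work directly on $TG$ and then descend, but the content is identical.
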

\begin{proof}
	
	Let us start with the first statement. Let $V$ be the open set in $U$ such that $\ov\iota$ is a map from $V$ to $TG$. By definition of the dual basis, for all $x\in V$ we have that
	\[
	g_i(x) = \langle (\log \gamma_i)^\vee
	, \ov\iota (x)\rangle;\quad
	h_j(x) = \langle (e_j)^\vee , \ov\iota (x)\rangle.
	\]
	In any small neighborhood of $\exp(\ov\iota(x))$ in $G$, we can define $a_i, b_j:G\to \R$ such that $a_i\circ\exp =(\log \gamma_i)^\vee$ and $b_j\circ\exp=e_j^\vee$. Hence, locally we have that $g_i=f^*(a_i)$, and $h_j=f^*(b_j)$ in a neighborhood of $x$. Thus, $dg_i=f^*(da_i)$, and $dh_j=f^*(db_j)$. Note that $\exp^* da_i=d((\log \gamma_i)^\vee)$ is a constant 1-form on $TG$. The identification $\Hom_\R(TG,\R)\cong \cA^{1,\inv}_{G,\R}$ from Lemma~\ref{lem:generatecoh}\eqref{itemLambda} implies that $da_i=(\log \gamma_i)^\vee$ (seen as an element of $\cA^{1,\inv}_{G,\R}$) and similarly, $db_j=e_j^\vee$. This concludes the proof of the first statement.
	
	For the second statement, we just need to see that $b_j$ is defined globally, and that it coincides with the homomorphism $G\to\R$ described. Note that $b_j$ is defined globally because $e_j^\vee$ is invariant by the action of $\log\gamma_i$. Since $b_j\circ \exp=e_j^\vee$ and $\exp$ is a surjective homomorphism, $b_j$ is a group homomorphism which takes $\exp(e_j)$ to $1$ and the image by $\exp$ of the rest of the elements of the basis $\{\log\gamma_i,e_l\}$ to $0$.
\end{proof}
\begin{lemma}\label{lem:thickeningIsLocalSystem}
	For any $m\in \Z\setminus\{0\}$, the complex $(R_m \otimes_\R \cA_{U,\R}^\bullet,d+f^*\circ\Phi_\R^G(\eps_{\R}))$ has non-zero cohomology only in degree $0$. The kernel of the differential in degree $0$ is a local system of $R_\infty$-modules whose stalks are isomorphic to $R_m$.
\end{lemma}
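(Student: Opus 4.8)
The plan is to reduce everything to a local computation on $U$. Both assertions — vanishing of the cohomology sheaves in degrees $\neq 0$, and local constancy of the degree-$0$ kernel together with the identification of its stalks — can be tested on a basis of simply connected open sets $V\subseteq U$, over each of which $f|_V$ lifts to some $\iota\colon V\to TG$ with $\exp\circ\iota=f$; write $\ov\iota\in\Gamma(V,TG\otimes_\R\cA^0_{U,\R})$ for the corresponding local section of $\oL$, as in Construction~\ref{con:nu}. Write $\phi\coloneqq f^\ast\circ\Phi_\R^G(\eps_\R)$ for the twisting term, viewed as left multiplication by a section of $R_\infty\otimes_\R\cA^1_{U,\R}$ whose $R_\infty$-component lies in $\Sym^1 H_1(G,\R)\subseteq\mathfrak a$. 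Since $\phi$ is closed it satisfies the Maurer--Cartan equation (Remark~\ref{rem:dgla-cdga}), so $d+\phi$ is a differential.

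The heart of the argument is to untwist $d+\phi$ over $V$ by a gauge transformation. First I would choose a basis $\{\gamma_i\}$ of $\piG$, complete $\{\h\gamma_i\}$ to an $\R$-basis $\{\h\gamma_i,e_j\}$ of $TG$, write $\ov\iota=\sum_i\h\gamma_i\otimes g_i+\sum_j e_j\otimes h_j$, and set
\[
\rho\ \coloneqq\ \big((\Phi_\R^G)^\vee\otimes\Id\big)(\ov\iota)\ =\ \sum_i\h\gamma_i\otimes g_i+\sum_j(\Phi_\R^G)^\vee(e_j)\otimes h_j\ \in\ \Gamma\big(V,\Sym^1 H_1(G,\R)\otimes_\R\cA^0_{U,\R}\big),
\]
so that $-\rho$ is the exponent appearing in Construction~\ref{con:nu} (using that $(\Phi_\R^G)^\vee$ fixes $H_1(G,\R)\subseteq TG$, Construction~\ref{con:Phidual}). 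The key local identity is $d\rho=\phi|_V$. Indeed $d\rho=\big((\Phi_\R^G)^\vee\otimes\Id\big)(d\ov\iota)$, and by Proposition~\ref{prop:sectionDualBasis} one has $dg_i=f^\ast\h\gamma_i^\vee$ and $dh_j=f^\ast e_j^\vee$, so $d\ov\iota$ is the pullback by $f$ of the canonical element of $TG\otimes\Gamma(G,\cA^{1,\inv}_{G,\R})$; applying $(\Phi_\R^G)^\vee$ to the $TG$-factor and using the identification $\Gamma(G,\cA^{1,\inv}_{G,\R})\cong\Hom_\R(TG,\R)$ of Lemma~\ref{lem:generatecoh}\eqref{itemLambda} — which matches the pairing $H_1(G,\R)\times H^1(G,\R)$ with restriction of invariant forms to $\Lambda$ — turns this canonical element into $(\Id\otimes\Phi_\R^G)(\eps_\R)$; pulling back by $f$ gives $\phi|_V$. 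I expect this bookkeeping, matching the two canonical elements through the various dualities, to be the only genuinely delicate point; everything else is formal.

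Granting $d\rho=\phi|_V$, I would conclude as follows. Since $\rho\in\mathfrak a\otimes_\R\cA^0_{U,\R}$ and $\mathfrak a$ acts nilpotently on $R_m$ (as $\mathfrak a^{|m|}=0$ there, and for $m<0$ on the dual module $R_{-|m|}$), the endomorphism $e^{\rho}=\sum_{k\geq 0}\rho^k/k!$ is a finite sum, and it is $R_\infty$-linear because $R_\infty$ is commutative. Applying Remark~\ref{rem:dgla-cdga} to the $\R$-cdga $\cA^\bullet_{U,\R}|_V$ acting on itself, with $A=R_m$, $\phi|_V$ and $\rho$, we obtain an $R_\infty$-linear isomorphism of complexes of sheaves on $V$
\[
e^{\rho}\colon\big(R_m\otimes_\R\cA^\bullet_{U,\R}|_V,\ d+\phi\big)\ \xrightarrow{\ \sim\ }\ \big(R_m\otimes_\R\cA^\bullet_{U,\R}|_V,\ d+\phi-d\rho\big)\ =\ \big(R_m\otimes_\R\cA^\bullet_{U,\R}|_V,\ d\big).
\]
Now $(\cA^\bullet_{U,\R},d)$ resolves $\underline\R_U$ (the real analytic de Rham complex; cf. the proof of Corollary~\ref{cor:comparisonMHC}), and since $R_m$ is a finite-dimensional $\R$-vector space, flatness shows $(R_m\otimes_\R\cA^\bullet_{U,\R},d)$ is a resolution of the constant sheaf $\underline{(R_m)}_U$ carrying the constant $R_\infty$-module structure (via $R\hookrightarrow R_\infty$, Definition~\ref{def:Rmodstructure}). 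Transporting this through the local isomorphisms $e^{\rho}$, I conclude that $(R_m\otimes_\R\cA^\bullet_{U,\R},d+\phi)$ has cohomology sheaves concentrated in degree $0$, and that its degree-$0$ kernel is, locally on each $V$, isomorphic as a sheaf of $R_\infty$-modules to $\underline{(R_m)}_V$; hence it is a local system of $R_\infty$-modules with stalks isomorphic to $R_m$. Gluing these local trivializations into the global identification with $R_m\otimes_R\oL$ is then the content of the subsequent Lemma~\ref{lem:nuResolves}, which I would treat separately.
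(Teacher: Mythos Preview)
Your proof is correct and follows the same strategy as the paper: locally untwist the differential via a gauge transformation $e^\rho$ with $d\rho=\phi$, reducing to the fact that $(\cA^\bullet_{U,\R},d)$ resolves $\underline\R_U$. The only difference is that the paper obtains $\rho$ abstractly (closed $1$-forms on a simply connected $V$ are exact, so some primitive $h$ exists and one takes $\rho=h(\eps_\R)$), whereas you construct the explicit primitive $\rho=((\Phi_\R^G)^\vee\otimes\Id)(\ov\iota)$ tied to Construction~\ref{con:nu}; the paper defers exactly that computation of $d\rho=\phi$ to the proof of Lemma~\ref{lem:nuResolves}, so you have effectively front-loaded part of that argument here.
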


\begin{proof}
	We will show that locally there is an isomorphism between $(R_m \otimes_\R \cA_{U,\R}^\bullet, d+ f^*\circ\Phi_\R^G(\eps_\R))$ and $(R_m\otimes_\R \cA_{U,\R}^\bullet, d)$. Let us consider a simply connected open set $V$ of $U$. Over such an open set, all closed 1-forms are exact, and in particular the restriction to $V$ of the image of $f^*\circ \Phi_{\R}^G\colon H^1(G,\R)\to \Gamma(U,\cA^1_{U,\R})$ consists of exact forms. Let $h\colon H^1(G,\R)\to \Gamma(V,\cA^0_{V,\R})$ be a linear map such that $d\circ h = (f^*\circ \Phi_\R^G)|_{V}$. 
	Applying Lemma~\ref{lem:dglas}, multiplication by $e^{h(\eps_\R)}$ is an isomorphism:
	\[
	(R_m \otimes_\R \cA^\bullet_{V,\R} ,d+f^*\circ\Phi_\R^G(\eps_{\R}))\cong (R_m \otimes_\R \cA^\bullet_{V,\R},d + f^*\circ\Phi_\R^G(\eps_{\R})-(d\circ h)(\eps_{\R}) + [h(\eps_\R),f^*\circ\Phi_\R^G(\eps_\R)]).
	\]
	Note that $h(\eps_\R)$ and $f^*\circ\Phi_\R^G(\eps_{\R})$ commute because they are elements of a cdga, so the differential on the right hand side above is simply $d$. Note that $(\cA^\bullet_{V,\R} ,d)$ is a complex of acyclic sheaves (with respect to the global sections functor) which resolves the trivial local system $\ul\R_V$ (see \cite[p. 127]{ks}, for example). This shows that $(R_m \otimes_\R \cA^\bullet_{V,\R} ,d+f^*\circ\Phi_\R^G(\eps_{\R}))$ is isomorphic to the resolution of a trivial local system with stalk $R_m$, which is exact in all places except for degree $0$, as desired. 
\end{proof}

\begin{lemma}\label{lem:nuResolves}
	The morphism $e^{-(\Phi_\R^G)^\vee}$ defined as in Construction~\ref{con:nu} is an isomorphism onto the kernel of
	$$
	d + f^*\circ\Phi_\R^G(\eps_{\R}):R_m\otimes_{\R}\cA^0_{U,\R}\to R_m\otimes_{\R}\cA^1_{U,\R}
	$$ 
\end{lemma}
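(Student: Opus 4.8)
The plan is to verify three things, in this order: that $e^{-(\Phi_\R^G)^\vee}$ takes values inside $\ker\bigl(d+f^*\circ\Phi_\R^G(\eps_\R)\bigr)$; that it is injective on every stalk; and that source and target are local systems of $R_\infty$-modules with stalks of the same finite $\R$-dimension, so that injectivity upgrades to an isomorphism. Indeed, $R_m\otimes_R\oL$ is a local system with stalk $R_m$ because $\oL$ is a local system of free rank-$1$ $R$-modules, and $\ker\bigl(d+f^*\circ\Phi_\R^G(\eps_\R)\bigr)$ (the kernel in degree $0$) is a local system of $R_\infty$-modules with stalks $\cong R_m$ by Lemma~\ref{lem:thickeningIsLocalSystem}; since $e^{-(\Phi_\R^G)^\vee}$ is a well-defined morphism of sheaves by Proposition~\ref{prop:nuR-linear}, once it is shown to land in the kernel and to be injective on stalks, it is an isomorphism of sheaves onto that kernel, as an injective linear map between $\R$-vector spaces of equal finite dimension is bijective.

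For the first point I would argue locally. Fix a contractible open $V\subseteq U$ and a lift $\iota\colon V\to TG$, and write $\ov\iota=\sum_i\h\gamma_i\otimes g_i+\sum_j e_j\otimes h_j$ in the $\R$-basis $\{\h\gamma_i,e_j\}$ of $TG$ of Construction~\ref{con:nu}. Since $(\Phi_\R^G)^\vee$ is the transpose of a section of the cohomology map, it restricts to the identity on $H_1(G,\R)=\langle\h\gamma_i\rangle_\R\subset TG$, so $(\Phi_\R^G)^\vee(\ov\iota)=\sum_i\h\gamma_i\otimes g_i+\sum_j(\Phi_\R^G)^\vee(e_j)\otimes h_j$, where $(\Phi_\R^G)^\vee(e_j)=\sum_i c_{ij}\h\gamma_i$ and $c_{ij}=\bigl(\Phi_\R^G(\h\gamma_i^\vee)\bigr)(e_j)$; equivalently $\Phi_\R^G(\h\gamma_i^\vee)=\h\gamma_i^\vee+\sum_j c_{ij}e_j^\vee$ in $\Hom_\R(TG,\R)\cong\Gamma(G,\cA^{1,\inv}_{G,\R})$. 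Put $Y\coloneqq-(\Phi_\R^G)^\vee(\ov\iota)\in\Gamma\bigl(V,H_1(G,\R)\otimes_\R\cA^0_{U,\R}\bigr)$, so that $e^{-(\Phi_\R^G)^\vee}(\alpha\otimes\ov\iota)=\alpha\,e^{Y}$. Working in the cdga $R_\infty\otimes_\R\cA^\bullet_{U,\R}$ and using that $Y$ has degree $0$, one has $d(e^Y)=e^Y\,dY$; and by Proposition~\ref{prop:sectionDualBasis}, $dg_i=f^*\h\gamma_i^\vee$ and $dh_j=f^*e_j^\vee$, whence
\[
dY=-\sum_i\h\gamma_i\otimes\Bigl(f^*\h\gamma_i^\vee+\textstyle\sum_j c_{ij}\,f^*e_j^\vee\Bigr)=-\sum_i\h\gamma_i\otimes f^*\Phi_\R^G(\h\gamma_i^\vee)=-\,f^*\circ\Phi_\R^G(\eps_\R).
\]
Therefore $\bigl(d+f^*\circ\Phi_\R^G(\eps_\R)\bigr)(\alpha\,e^Y)=\alpha\,e^Y\,dY+f^*\circ\Phi_\R^G(\eps_\R)\cdot\alpha\,e^Y=0$ for every constant $\alpha\in R_m$, which is exactly the assertion that $e^{-(\Phi_\R^G)^\vee}$ maps into the kernel.

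For injectivity on stalks, note that with $Y$ as above one has $e^Y=1+n$ with $n\in\Gamma(V,\mathfrak a R_m\otimes_\R\cA^0_{U,\R})$, and since $\mathfrak a R_m$ is a nilpotent ideal of the Artinian local ring $R_m$, the section $n$ is nilpotent in the ring $\Gamma(V,R_m\otimes_\R\cA^0_{U,\R})$, so $e^Y$ is a unit there. Hence $\alpha\,e^Y=0$ with $\alpha\in R_m$ constant forces $\alpha=0$, i.e.\ $e^{-(\Phi_\R^G)^\vee}$ is injective on the stalk at each point of $U$; combined with the equality of stalk ranks recorded above, this finishes the proof.

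The main obstacle is the bookkeeping in the second step: one must keep the identifications $\Gamma(G,\cA^{1,\inv}_{G,\R})\cong\Hom_\R(TG,\R)$, $H^1(G,\R)\cong\Hom_\R(H_1(G,\R),\R)$ and their duals straight in order to see that the exact $1$-form $dY$ on $V$ is \emph{literally} $-f^*\circ\Phi_\R^G(\eps_\R)$, not merely cohomologous to it. The point is to match the ``diagonal plus off-diagonal'' description $\Phi_\R^G(\h\gamma_i^\vee)=\h\gamma_i^\vee+\sum_j c_{ij}e_j^\vee$ against the fact that $(\Phi_\R^G)^\vee$ fixes $H_1(G,\R)$ and against the formulas $dg_i=f^*\h\gamma_i^\vee$, $dh_j=f^*e_j^\vee$ of Proposition~\ref{prop:sectionDualBasis}; once this is set up, the remaining computation of $d(e^Y)$ is a formal manipulation in a cdga, and the conclusion is a dimension count.
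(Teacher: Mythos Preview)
Your argument that the image of $e^{-(\Phi_\R^G)^\vee}$ lands in the kernel is the same computation as in the paper, carried out in slightly different notation. For the isomorphism step, the paper instead argues case by case: for $m>0$ it reduces surjectivity on stalks modulo the maximal ideal to the case $m=1$ via Nakayama, and for $m<0$ it shows injectivity by noting that the socle $R_{-1}\subset R_m$ meets every nonzero submodule. Your route---observe that $e^Y$ is a unit and conclude injectivity directly, then match dimensions---is more uniform and arguably cleaner.

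There is one phrasing issue to fix: for $m<0$ the object $R_m=\Hom_\R(R_{|m|},\R)$ is an $R_\infty$-module, not a ring, so your sentence ``$\mathfrak a R_m$ is a nilpotent ideal of the Artinian local ring $R_m$'' is only correct as written when $m>0$. The underlying idea still works for both signs: the element $e^Y$ lies in $R_\infty\otimes_\R\cA^0_{U,\R}$ (indeed already in $R_{|m|}\otimes_\R\cA^0_{U,\R}$), where it is a unit with inverse $e^{-Y}$ since $Y$ has image in the nilpotent ideal $\mathfrak a$. Multiplication by $e^Y$ is therefore an automorphism of the $R_\infty$-module $R_m\otimes_\R\cA^0_{U,\R}$ for every $m\in\Z\setminus\{0\}$, and injectivity follows. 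Rewriting the sentence in these terms closes the gap.
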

\begin{proof}
	
	Let us start by proving that $d\circ e^{-(\Phi_\R^G)^\vee}=-(f^*\circ\Phi_\R^G(\eps_\R))\circ e^{-(\Phi_\R^G)^\vee}$, which will show that the image of $e^{-(\Phi_\R^G)^\vee}$ is contained in the kernel of $d+f^*\circ\Phi_\R^G(\eps_\R)$. Let $\{ e_i\}$ be an $\R$-basis of $TG$ and let $\{e_i^\vee\}$ be its dual basis. Let $\ov\iota =  \sum  e_i\otimes h_i$ be a local generator of $\oL$. We must compute $d(e^{-(\Phi_\R^G)^\vee}( \ov\iota))$, i.e.
	\begin{align*}
		d(e^{-(\Phi_\R^G)^\vee}( \ov\iota)) &=d
		\left(
		e^{-(\Phi_\R^G)^\vee  (\ov\iota)}
		\right)
		\\
		&=
		d\left(
		\sum_{k=0}^\infty \frac{1}{k!} \left(- \sum_i (\Phi_\R^G)^\vee ( e_i)\otimes h_i\right)^k
		\right)
		\\
		&=
		\sum_{k=1}^\infty \frac{1}{(k-1)!} \left(- \sum_i (\Phi_\R^G)^\vee ( e_i)\otimes h_i\right)^{k -1} \left(- \sum_i (\Phi_\R^G)^\vee ( e_i)\otimes dh_i\right)
		\\
		&=
		\left(- \sum_i (\Phi_\R^G)^\vee (e_i)\otimes dh_i\right)
		\cdot e^{-(\Phi_\R^G)^\vee}(\ov\iota )
	\end{align*}
	Using Proposition~\ref{prop:sectionDualBasis}, $dh_i = f^*( e_i^\vee)$. Finally, note that if $\{s_j\}$ is an $\R$-basis of $H_1(G,\R)$, and $\{s_j^\vee\}$ is its dual basis, then
\begin{align*}
	- \sum_i (\Phi_\R^G)^\vee (e_i)\otimes f^*( e_i^\vee) &=  - \Id\otimes f^*\left( \sum_i (\Phi_\R^G)^\vee (e_i)\otimes e_i^\vee\right)\\
	&=- \Id\otimes f^*\left(\sum_j s_j \otimes \Phi_\R^G(s_j^\vee)\right) = -f^*\circ\Phi_\R^G(\eps_{\R}).	
\end{align*}
	So indeed the image of $e^{-(\Phi_\R^G)^\vee}$ is contained in the desired kernel.
	
	Using Lemma~\ref{lem:thickeningIsLocalSystem}, we know that the kernel of $d$ is a local system, of the same real dimension as $R_m$. To show $e^{-(\Phi_\R^G)^\vee}$ is an isomorphism onto the kernel, we only need to prove that $e^{-(\Phi_\R^G)^\vee}$ is either injective or surjective on stalks, since we know the dimensions agree.
	
	The stalks of $\ker d+f^*\circ\Phi^G_\R(\eps_\R)\subset R_m\otimes_\R\cA^0_{U,\R}$ are finitely generated $R_{|m|}$-modules. If $m>0$,  Nakayama's Lemma implies that one can show that $e^{-(\Phi_\R^G)^\vee}$ is surjective on stalks by taking the quotient by the maximal ideal of $R_{m}$, reducing to the case $m=1$ (which is clear, the complex is just $(\cA^\bullet_{U,\R} ,d)$). If $m< 0$, we can show that $e^{-(\Phi_\R^G)^\vee}$ is injective by noting that $R_{-1}\subset R_m$ is contained in every non-zero sub $R_{-m}$-module of $R_m$. Therefore, to show that $e^{-(\Phi_\R^G)^\vee}$ is injective, it is enough to show that, identifying the stalk with $R_m$, the kernel of $e^{-(\Phi_\R^G)^\vee}$ intersects $R_{-1}$ trivially, which is again clear.
\end{proof}

\begin{remark}\label{rem:commutativityAm}
	Let $m\geq 1$ and suppose that $G=\C^*$. The isomorphism $A_m:R_m\to R_{-m}$ from Example~\ref{eg:TateRm} extends to an $R_\infty$-linear isomorphism
	$$
	A_m\otimes_\R\Id_{\oL}:R_m\otimes_R\oL\to R_{-m}\otimes_R\oL.
	$$
	Moreover, it is immediate from the definition of $e^{-(\Phi_\R^{\C^*})^\vee}$ that $$\left(A_m\otimes_\R \Id_{\cA^0_{U,\R}}\right)\circ e^{-(\Phi_\R^{\C^*})^\vee}:R_m\otimes_R\oL\to R_{-m }\otimes_\R \cA^0_{U,\R}$$
	coincides with
	$$
	e^{-(\Phi_\R^{\C^*})^\vee}\circ \left(A_m\otimes_\R\Id_{\oL}\right).
	$$
\end{remark}

\section{Mixed Hodge structures}\label{sec:MHS}

\subsection{The MHS on \texorpdfstring{$H^j(U,R_m\otimes_R\ov\cL)$}{Hj(U, Rm ⊗ L)}}

	Let $f:U\to G$ be an algebraic morphism between a smooth complex connected algebraic variety $U$ and a semiabelian variety $G$. Let $X$ and $Y$ be compatible compactifications of $U$ and $G$ with respect to $f$ as in Definition~\ref{def:compatibleCompf}, let $j:U\to X$ be the inclusion, and let $D\coloneqq X\setminus U$. Note that
	$$
	(R_m\otimes_\R \cA_{U,\R}^\bullet, d+f^*\circ\Phi_\R^G(\eps_\R))=j^{-1}\left(R_m\otimes_\R\cA_{X,\R}^\bullet(\log D), d+\ov f^*\circ\Phi_\R^Y(\eps_\R)\right),$$
	where $\ov f:X\to Y$ extends $f$ and $\Phi_\R^Y$ is as in Definition-Proposition~\ref{defPhiPsiY}. Hence, $$Rj_*(R_m\otimes_R \oL)\cong Rj_*j^{-1}\left(R_m\otimes_\R\cA_{X,\R}^\bullet(\log D),d+\ov{f}^*\circ\Phi_\R^Y(\eps_\R)\right).
	$$
	In this case, the adjunction $\Id\to Rj_*j^{-1}$ applied to the complex of sheaves
	\[
	\left(R_m\otimes_\R\cA_{X,\R}^\bullet(\log D), d+\ov f^*\circ\Phi_{\R}(\eps_{\R})\right)
\]
	is the real part of the thickened logarithmic Dolbeault mixed Hodge complex of sheaves $(R_m\otimes \cN_{X,D,n}^\bullet, d+\ov f^*\circ\Phi^Y(\eps))$ from Definition~\ref{def:thickening}. It is an isomorphism in the derived category by Proposition~\ref{prop:adjunctionNA} and Proposition~\ref{prop:thickenedQuiso} (see Remark~\ref{rmk:acyclicity}).
	
	In Section~\ref{sec:thickeningResolves} we saw that the morphism $$e^{-(\Phi_\R^G)^\vee}\colon R_m\otimes_R\ov\cL\to \left(R_m\otimes_\R\cA^\bullet_{U,\R}, d+f^*\circ\Phi_\R^G(\eps_\R)\right)$$ is a quasi-isomorphism. The first goal of this section is to show that the mixed Hodge complex of sheaves $(R_m\otimes \cN_{X,D,n}^\bullet, d+\ov f^*\circ\Phi^Y(\eps))$ endows $H^i(U,R_m\otimes_R \oL)$ with an $\R$-MHS for all $i$, and to describe the map via which these MHS are induced.

\begin{definition}[MHS on $H^*(U,R_m\otimes_R \ov\cL)$]\label{def:endowedMHS}
	Let $f:U\to G$ be an algebraic morphism between a smooth complex connected algebraic variety $U$ and a semiabelian variety $G$. Let $Y$ be an allowed compactification of $G$, and let $X$ be a good compactification of $U$ such that $f$ extends to $\ov f:X\to Y$. Let $D=X\setminus U$, let $m>0$ and let $n\geq\max\{2,\dim_\R U\}$.
	\begin{itemize}
		\item Suppose that $m<0$. The thickened logarithmic Dolbeault mixed Hodge complex of sheaves $(R_m\otimes\cN^\bullet_{X,D,n}, d+\ov f^*\circ\Phi^Y(\eps))$ from Definition~\ref{def:thickening} endows
		$H^*(U,R_m\otimes_R \ov\cL)$ with a mixed Hodge structure via this sequence of isomorphisms in the derived category.
		\begin{equation}\label{eq:endow}
			\begin{tikzcd}[column sep = 2em]
				Rj_*(R_m\otimes_R \ov\cL)\arrow[r,"Rj_*e^{-(\Phi_{\R}^G)^\vee}"] & Rj_*(R_m\otimes_\R\cA^\bullet_{U,\R}, d+ f^*\circ\Phi_\R^G(\eps_{\R}))\arrow[d,equals]\\
				(R_m\otimes_\R\cA^\bullet_{X,\R}(\log D), d+\ov f^*\circ\Phi_\R^Y(\eps_{\R}))
				\arrow[r, shift left = 2.2ex, phantom, "\scriptsize\text{ adjunction }"]		
				\arrow[r]
				 & Rj_*j^{-1}(R_m\otimes_\R\cA^\bullet_{X,\R}(\log D), d+\ov f^*\circ\Phi_\R^Y(\eps_{\R}))
			\end{tikzcd}
		\end{equation}
		\item Suppose that $m>0$. Let $e=\dim_\C H_1(G,\C)$. The \textbf{Tate twisted} thickened logarithmic Dolbeault mixed Hodge complex of sheaves $(R_m\otimes\cN^\bullet_{X,D,n}, d+\ov f^*\circ\Phi^Y(\eps))(e)$ endows
		$H^*(U,R_m\otimes_R \ov\cL)$ with a mixed Hodge structure via the same sequence of isomorphisms as in \eqref{eq:easyThickening}, namely
			$$
			\begin{tikzcd}[column sep=5em]
				Rj_*(R_m\otimes_R \ov\cL)\arrow[r,"Rj_*e^{-(\Phi_{\R}^G)^\vee}"] & Rj_*(R_m\otimes_\R\cA^\bullet_{U,\R}, d+ f^*\circ\Phi_\R^G(\eps_{\R}))\arrow[d,equals]\\
				(R_m\otimes_\R\cA^\bullet_{X,\R}(\log D), d+\ov f^*\circ\Phi_\R^Y(\eps_{\R}))(e)\arrow[r, "\text{ adjunction }"] & Rj_*j^{-1}(R_m\otimes_\R\cA^\bullet_{X,\R}(\log D), d+\ov f^*\circ\Phi_\R^Y(\eps_{\R}))
			\end{tikzcd}
			$$
	\end{itemize}
\end{definition} 

\begin{remark}
	The Tate twist when $m$ is positive but not when $m$ is negative might seem arbitrary in the previous definition, but it is not. Indeed, the case $m$ negative will be used to endow quotients of the homology Alexander modules with MHSs, and those MHSs will be functorial (see Section~\ref{sec:functoriality}) without the need for any twists. However, the case when $m$ is positive is related to the MHS on the torsion part of the cohomology Alexander modules defined in \cite{mhsalexander} in the case when $G=\C^*$ (see Remark~\ref{rem:comparisonC^*}), where the twist was needed to enjoy good functoriality properties (see \cite[Theorem 6.1]{mhsalexander}). In any case, the focus of this paper is the case where $m$ is negative.
\end{remark}

\begin{example}[The case $G=\C^*$]
	If $G=\C^*$, and $m\geq 1$, the MHS on $H^*(U,R_m\otimes_R\oL)$ and $H^*(U,R_{-m}\otimes_R\oL)$ from Definition~\ref{def:endowedMHS} are related as follows:  Let $A_m:R_m\to R_{-m}$ be the $R_\infty$-linear identification from Example~\ref{eg:TateRm}. The isomorphism $A_m\otimes\Id_{\oL}:R_m\otimes_R\oL\to R_{-m}\otimes_R\oL$ from Remark~\ref{rem:commutativityAm} lifts (by the commutativity with $e^{-(\Phi_\R^{\C^*})^\vee}$ explained therein) to the isomorphism of mixed Hodge complexes of sheaves $$A_m\otimes\Id:(R_m\otimes\cN^\bullet_{X,D,n}, d+\ov f^*\circ\Phi^Y(\eps))(1-m)\longrightarrow (R_{-m}\otimes\cN^\bullet_{X,D,n}, d+\ov f^*\circ\Phi^Y(\eps))$$ from Example~\ref{eg:TateRmMHC}. By Remarks~\ref{rem:TateCohomology} and~\ref{transvstate}, $A_m$ induces the following isomorphism of MHS
	$$
	H^j(A_m\otimes\Id_{\oL}):H^j(U,R_m\otimes_R\oL)(2-m)\to H^j(U,R_{-m}\otimes_R\oL),
	$$
	where $(2-m)$ denotes the $(2-m)$-th Tate twist.
\end{example}

In Section~\ref{ss:independence} we will see that the previous definition is independent of the choice of $n$ and the choice of compatible compactifications. Before  that, let us show some properties of the MHS from Definition~\ref{def:endowedMHS} while assuming the independence of those choices.

\begin{remark}[The pro-MHS on $H^*(U,R_\infty\otimes_R \ov\cL)$]\label{rem:endowedproMHS}
	Let $m'>m>0$. In that case, the projection morphism $p_{m',m}\colon R_{m'}\to R_m$ is an $R_\infty$-linear mixed Hodge structure morphism, and it induces a projection $p_{m',m}\otimes \Id\colon R_{m'}\otimes_R \oL\to R_{m}\otimes_R \oL$. This morphism extends via $e^{-(\Phi_\R^G)^\vee}$ and the morphisms in Definition~\ref{def:endowedMHS} to a morphism of mixed Hodge complexes of sheaves (that is, a morphism between the corresponding complexes of sheaves respecting the filtrations and the pseudo-morphism):
	$$
	p_{m',m}\otimes\Id\colon (R_{m'}\otimes\cN^\bullet_{X,D,n}, d+\ov f^*\circ\Phi^Y(\eps))\to (R_m\otimes\cN^\bullet_{X,D,n}, d+\ov f^*\circ\Phi^Y(\eps)).
	$$
	In particular, $p_{m',m}$ induces a MHS morphism
	$$
	H^*(U,R_{m'}\otimes_R \oL)\to H^*(U,R_{m}\otimes_R \oL).
	$$
 By Proposition~\ref{prop:limvsH}, taking the inverse limit for $m>0$, one obtains a pro-MHS on $H^*(U,R_\infty\otimes_R \ov\cL)$.
\end{remark}

\begin{remark}[The pro-MHS on $R_\infty\otimes_R H_*(U,\cL)$]\label{rem:endowedproMHShomology}
		Let $m'>m>0$. In that case, the dual  $p_{m',m}^\vee\colon R_{-m}\hookrightarrow R_{-m'}$ of the projection morphism $p_{m',m}$ from Remark~\ref{rem:endowedproMHS} is also a mixed Hodge structure morphism which is $R_\infty$-linear, and it induces an inclusion $p_{m',m}^\vee\otimes \Id\colon R_{-m}\otimes_R \oL\to R_{-m'}\otimes_R \oL$. Note that, by Remark~\ref{rem:homologyVsCohomology}, dualizing this inclusion (over $\R$) yields the projection
		$$
		p_{m',m}\otimes\Id\colon R_{m'}\otimes_R \cL\to R_{m}\otimes_R \cL.
		$$
		The morphism $p_{m',m}^\vee\otimes \Id$ extends via $e^{-(\Phi_\R^G)^\vee}$ and the morphisms in Definition~\ref{def:endowedMHS} to a morphism of mixed Hodge complexes of sheaves:
	$$
	p_{m',m}^\vee\otimes\Id\colon (R_{-m}\otimes\cN^\bullet_{X,D,n}, d+\ov f^*\circ\Phi^Y(\eps))\to (R_{-m'}\otimes\cN^\bullet_{X,D,n}, d+\ov f^*\circ\Phi^Y(\eps)).
	$$
	In particular, $p_{m',m}^\vee$ induces a MHS morphism
	$$
	H^*(U,R_{-m}\otimes_R \oL)\to H^*(U,R_{-m'}\otimes_R \oL).
	$$
	If $\Hom_\R(H^*(U,R_{-m}\otimes_R \ov\cL,\R)$ is endowed with the dual MHS, these morphisms endow their limit \linebreak $\varprojlim_{m}\Hom_\R(H^*(U,R_{-m}\otimes_R \ov\cL,\R)$ with a pro-MHS. By Corollary~\ref{cor:completionHomology2}, this endows $R_\infty\otimes_R H_*(U,\cL)$ with a pro-MHS. In fact, using the isomorphism $\Hom_\R(H^*(U,R_{-m}\otimes_R \oL),\R)\cong H_*(U,R_{m}\otimes_R \cL)$ from  Remark~\ref{rem:homologyVsCohomology}, we see that the dual of the MHS morphism induced by $p_{m',m}^\vee$ in cohomology is the morphism induced in homology by $p_{m',m}\otimes\Id:R_{m'}\otimes_R\cL\to R_{m}\otimes_R\cL$. With this interpretation, the pro-MHS on $R_\infty\otimes_R H_*(U,\cL)$ is given by the isomorphism $R_\infty\otimes_R H_*(U,\cL)\cong \varprojlim_{m} H_*(U,R_{m}\otimes_R\cL)$ and the morphisms induced in homology by the projections $p_{m',m}\otimes\Id:R_{m'}\otimes_R\cL\to R_{m}\otimes_R\cL$.
\end{remark}

\begin{remark}\label{rem:multiplicationCohomology} Let $m\in\Z\setminus\{0\}$. 
	The action of \(H_1(G,\R)\subset  R_\infty\) on $R_m$ induces a multiplication morphism
	\begin{equation}\label{eq:multiLocal} H_1(G,\R)\otimes_\R(R_m\otimes_R\ov\cL)\to  R_m\otimes_R\ov\cL.
	\end{equation}
	Since the morphism $e^{-(\Phi_\R^G)^\vee}:R_m\otimes_R\ov\cL\to \ker(d+f^*\circ\Phi_\R^G(\eps_\R))\subset R_m\otimes_\R\cA^0_{U,\R}$ from Construction~\ref{con:nu} is $R_\infty$-linear, the multiplication morphism from \eqref{eq:multiLocal} extends to a morphism of mixed Hodge complexes of sheaves
	$$
	H_1(G,\R)\otimes (R_m\otimes\cN_{X,D,n}^\bullet,d+\ov f^*\circ\Phi^Y(\eps))\to (R_m\otimes\cN_{X,D,n}^\bullet,d+\ov f^*\circ\Phi^Y(\eps))
	$$
	for any $n\geq\max\{2,\dim_\R U\}$ by Proposition~\ref{prop:complexMultiplication}. Therefore, the multiplication morphism \eqref{eq:multiLocal} induces a MHS morphism for every \(m\in \Z\setminus\{0\}\):
	\[
	H_1(G,\R)\otimes H^*(U,R_m\otimes_R \ov\cL)\to H^*(U,R_m\otimes_R \ov\cL).
	\]
	By Proposition~\ref{prop:limvsH} and Remark~\ref{rem:endowedproMHS}, taking the inverse limit for $m>0$, one obtains a pro-MHS morphism replacing \(m\) by \(\infty\).
\end{remark}

\begin{remark}\label{rem:multiplicationHomology}
	Let $m>0$. Since the $\R$-dual of multiplication by elements of $H_1(G,\R)$ in an $R_\infty$-module is multiplication  by elements of $H_1(G,\R)$, the fact that the multiplication map
	$$
	H_1(G,\R)\otimes_\R H^*(U,R_{-m}\otimes_R \ov\cL)\to H^*(U,R_{-m}\otimes_R \ov\cL)
	$$
	from Remark~\ref{rem:multiplicationCohomology} is a MHS morphism implies that the multiplication map
	$$
	H_1(G,\R)\otimes_\R\Hom_\R(H^*(U,R_{-m}\otimes_R \ov\cL),\R)\to \Hom_\R(H^*(U,R_{-m}\otimes_R \ov\cL),\R)
	$$
	is also a MHS morphism, where $\Hom_\R(H^*(U,R_{-m}\otimes_R \ov\cL),\R)$ is endowed with the dual MHS. This can be easily checked using the definition of the tensor and dual MHSs from Definition-Proposition~\ref{defprop:multilinear}. By Remark~\ref{rem:endowedproMHShomology}, taking the inverse limit for $m>0$ one obtains a pro-MHS morphism
	$$
		H_1(G,\R)\otimes_\R \left(R_\infty\otimes_R H_*(U,\cL)\right)\to R_\infty\otimes_R H_*(U,\cL).
	$$
\end{remark}

\begin{remark}[Multiplication by elements of $H_1(G,\R)$ if $G\cong(\C^*)^n$]\label{rem:multiplicationTorus}
	Suppose that $G\cong(\C^*)^n$, in which case $H_1(G,\R)$ is pure of type $(-1,-1)$. Let $a\in H_1(G,\R)$ be a non-zero element. Since the span of $a$ is a sub-MHS of $H_1(G,\R)$, Remark~\ref{rem:multiplicationCohomology} implies that multiplication by $a$ is a MHS morphism
	$$
	a:H^*(U,R_m\otimes_R\ov\cL)\to H^*(U,R_m\otimes_R\ov\cL)(-1)
	$$
	for every $m\in\Z\setminus\{0\}$, where $(-1)$ denotes the Tate twist. By Proposition~\ref{prop:limvsH} and Remark~\ref{rem:endowedproMHS}, taking the inverse limit for $m>0$, one obtains a pro-MHS morphism replacing \(m\) by \(\infty\). Similarly, Remarks~\ref{rem:endowedproMHShomology} and~\ref{rem:multiplicationHomology} imply that
	$$
	a:R_\infty\otimes_RH^*(U,\cL)\to R_\infty\otimes_RH^*(U,\cL)(-1)
	$$
	is a pro-MHS morphism.
\end{remark}

\subsection{Independence of the choices}\label{ss:independence}

Note that there are some choices involved in Definition~\ref{def:endowedMHS}, namely the choice of compactifications, the number $n\geq\max\{2,\dim_\R U\}$ and the choice of the maps $\Phi_{\R}^G$, $\Phi_{\C}^G$ and $\Psi^G$. Note that the choice of the maps $\Phi_{\R}^G$, $\Phi_{\C}^G$ and $\Psi^G$ was canonical, so we will not attempt to modify those. However, it is important that the MHS in Definition~\ref{def:endowedMHS} does not depend on compactifications or on $n$. This section shows this.

\begin{lemma}[Independence of $n$]\label{lem:indepn}
	Under the same notation as in Definition~\ref{def:endowedMHS}, the MHS on $H^*(U,R_m\otimes_R\ov\cL)$ endowed by the mixed Hodge complex of sheaves $(R_m\otimes\cN^\bullet_{X,D,n}, d+\ov f^*\circ\Phi^Y(\eps))$ does not depend on the choice of $n\geq\max\{2,\dim_\R U\}$.
\end{lemma}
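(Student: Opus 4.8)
The plan is to reduce the comparison of MHSs for two different values $n_1 \le n_2$ to the fact that the defining mixed Hodge complex of sheaves $\cN^\bullet_{X,D,n}$ changes only in a way that induces a (bi-)filtered quasi-isomorphism, which then propagates through the thickening construction. Concretely, first I would observe that the identity morphism of complexes of sheaves underlying $\cN^\bullet_{X,D,n_1} \to \cN^\bullet_{X,D,n_2}$ is a morphism of multiplicative mixed Hodge complexes of sheaves: the underlying complexes $\cA^\bullet_{X,\R}(\log D)$ and $\cA^\bullet_{X,\C}(\log D)$ are literally the same, only the weight filtrations $W^{n_1}_\lc$ and $W^{n_2}_\lc$ differ, and by Definition-Proposition~\ref{defprop:modifiedNA} the identity induces (bi-)filtered quasi-isomorphisms between the respective real and complex components (both $\cN^\bullet_{X,D,n_i}$ are filtered quasi-isomorphic to $\wt{\cN^\bullet_{X,D}}$). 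This identity morphism is obviously compatible with the extra data in the sense required by Proposition~\ref{prop:qisoThickenings}, because the maps $\ov f^* \circ \Phi^Y_\R$, $\ov f^* \circ \Phi^Y_\C$, $\ov f^* \circ \Psi^Y$ are the same regardless of $n$ (they only depend on $X$, $Y$, $\ov f$ and the canonical maps from Definition-Proposition~\ref{defPhiPsiY}), and the identity trivially commutes with them and preserves the multiplicative structure.

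Second, I would invoke Proposition~\ref{prop:qisoThickenings} with $M = \Id$: it yields that $\Id_{R_m} \otimes M$ is a morphism of mixed Hodge complexes of sheaves $(R_m \otimes \cN^\bullet_{X,D,n_1}, d + \ov f^* \circ \Phi^Y(\eps)) \to (R_m \otimes \cN^\bullet_{X,D,n_2}, d + \ov f^* \circ \Phi^Y(\eps))$, and moreover, since $M$ is a filtered (resp.\ bi-filtered in the last component) quasi-isomorphism, so is $\Id_{R_m} \otimes M$. In particular, $\Id_{R_m}\otimes M$ is a weak equivalence of mixed Hodge complexes of sheaves. By the general principle that a weak equivalence of mixed Hodge complexes of sheaves induces an isomorphism of the mixed Hodge structures it induces on hypercohomology (this is part of the theory of mixed Hodge complexes of sheaves, e.g.\ \cite[Theorem 3.18]{peters2008mixed}), the MHS on $\mathbb H^*$ of the two thickened complexes agree.

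Third, I need to make sure this abstract agreement is compatible with the specific chain of isomorphisms in the derived category used in Definition~\ref{def:endowedMHS} to transport the structure to $H^*(U, R_m \otimes_R \ov\cL)$. This is where one must check that the morphism $\Id_{R_m}\otimes M$ is compatible with the adjunction $\Id \to Rj_*j^{-1}$ and with $Rj_*e^{-(\Phi_\R^G)^\vee}$: but both of these are induced by constructions (pullback $j^{-1}$, the Godement resolution in $Rj_*$, and the map $e^{-(\Phi_\R^G)^\vee}$ of Construction~\ref{con:nu}) that are natural in the complex of sheaves and do not see the weight filtration, so the relevant square of derived-category morphisms commutes by naturality. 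Hence the identifications $H^*(U, R_m \otimes_R \ov\cL) \cong \mathbb H^*(X, (R_m \otimes \cN^\bullet_{X,D,n_i}, d+\ov f^*\circ\Phi^Y(\eps)))$ for $i=1,2$ are compatible with $\mathbb H^*(\Id_{R_m}\otimes M)$, and since the latter is a MHS isomorphism, the two MHSs on $H^*(U,R_m\otimes_R\ov\cL)$ coincide. For $m>0$ the same argument applies after the Tate twist $(e)$, since $\mathbb H^*(\Id_{R_m}\otimes M)$ remains a MHS isomorphism after twisting both sides by $(e)$ (Remark~\ref{rem:TateCohomology}).

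I do not expect a serious obstacle here; the content is entirely in correctly quoting Proposition~\ref{prop:qisoThickenings} and Definition-Proposition~\ref{defprop:modifiedNA}. The one point that requires a little care — and which I would state explicitly rather than grind through — is the naturality check in the third paragraph: that the maps appearing in \eqref{eq:endow} are natural transformations in the underlying complex of sheaves (and therefore commute with $\Id_{R_m}\otimes M$), so that the abstract isomorphism of MHSs on hypercohomology is the \emph{identity} on the underlying vector space $H^*(U, R_m \otimes_R \ov\cL)$ and not merely some isomorphism. This is the step that genuinely uses that $M$ is the identity on underlying complexes, not just a quasi-isomorphism.
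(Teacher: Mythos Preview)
Your approach is essentially the same as the paper's, and the third paragraph (checking that the resulting hypercohomology isomorphism is the identity on the underlying vector space $H^*(U,R_m\otimes_R\ov\cL)$) is a point the paper's proof leaves implicit. However, there is one concrete slip: the direction of the filtered morphism is reversed. For $n_1 \le n_2$, the filtrations satisfy $W^{n_2}_i \subseteq W^{n_1}_i$ for all $i$ (when $n_1 < i \le n_2$ one has $W^{n_2}_i = \wt W_i$ while $W^{n_1}_i$ is everything), so the identity is a filtered morphism $\cN^\bullet_{X,D,n_2} \to \cN^\bullet_{X,D,n_1}$, not $\cN^\bullet_{X,D,n_1} \to \cN^\bullet_{X,D,n_2}$ as you wrote. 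The paper accordingly takes $n' \ge n$ and considers the identity as a morphism from the $n'$-thickening to the $n$-thickening.

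Once the direction is fixed, everything you wrote goes through. In fact the paper's proof is shorter: it observes directly that the identity is a morphism of mixed Hodge complexes of sheaves which is (trivially) a quasi-isomorphism on underlying complexes, hence a weak equivalence in the sense of \cite[Lemma-Definition 3.19]{peters2008mixed}, and therefore induces an isomorphism of MHS on hypercohomology. Invoking Proposition~\ref{prop:qisoThickenings} to get a (bi-)filtered quasi-isomorphism is correct but more than is needed here.
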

\begin{proof}
	Let $n'\geq n \geq\max\{2,\dim_\R U\}$. The identity map induces a morphism of complexes of sheaves (as in \cite[Definition 3.16]{peters2008mixed})
	$$
	(R_m\otimes\cN^\bullet_{X,D, n'}, d+\ov f^*\circ\Phi^Y(\eps))\to (R_m\otimes\cN^\bullet_{X,D,n}, d+\ov f^*\circ\Phi^Y(\eps)).
	$$
	Since the identity is a quasi-isomorphism, this is what is called a weak equivalence, which induces isomorphisms of MHS in hypercohomology (see \cite[Lemma-Definition 3.19]{peters2008mixed}).
\end{proof}

\begin{lemma}[Independence of the compactification of $U$, fixing the compactification of $G$]\label{lem:indepCompactU}
	Let $Y$ be an allowed compactification of $G$. Let $X_1$ and $X_2$ be two good compactifications of $U$ such that $f:U\to G$ extends to algebraic maps $\ov f_1:X_1\to Y$ and $\ov f_2:X_2\to Y$. Let $D_i$ be the simple normal crossings divisor $X_i\setminus U$ for $i=1,2$. Then, the MHS on $H^*(U,R_m\otimes_R\ov\cL)$ induced by $(R_m\otimes\cN_{X_1,D_1,n}^\bullet, d+(\ov f_1)^*\circ \Phi^Y(\eps))$ coincides with the MHS induced by $(R_m\otimes\cN_{X_2,D_2,n}^\bullet, d+(\ov f_2)^*\circ \Phi^Y(\eps))$ for all $m\in \Z\setminus\{0\}$. 
\end{lemma}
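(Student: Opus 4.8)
The plan is to reduce to the case where one compactification dominates the other, and then to use the fact that a dominating map of compactifications induces a weak equivalence of thickened mixed Hodge complexes of sheaves. First I would invoke the standard argument (as in the proof of Lemma~\ref{lem:existenceComp}): let $X_3$ be a resolution of singularities of the closure of the graph of the identity map of $U$ inside $X_1\times X_2$, chosen so that $X_3$ is a good compactification of $U$. By construction there are algebraic maps $p_i\colon X_3\to X_i$ for $i=1,2$ extending $\mathrm{Id}_U$, and hence $f$ extends to $\ov f_3=\ov f_i\circ p_i\colon X_3\to Y$ (well-defined because $\ov f_1\circ p_1$ and $\ov f_2\circ p_2$ agree on the dense open $U$). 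So it suffices to treat the case where $X_2$ dominates $X_1$ via a map $p\colon X_2\to X_1$ with $p|_U=\mathrm{Id}_U$ and $\ov f_1\circ p=\ov f_2$.

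In that case, pullback of logarithmic forms $p^*\colon \cA^\bullet_{X_1,k}(\log D_1)\to \cA^\bullet_{X_2,k}(\log D_2)$ is a morphism of sheaves of cdgas on $X_1$ (after applying $Rp_*$, or rather working with $p^{-1}$), it preserves the weight and Hodge filtrations of Navarro Aznar's complex, and it is a bi-filtered quasi-isomorphism because the inclusion of the Navarro Aznar complex into $Rj_*$ of the de Rham complex on $U$ is a bi-filtered quasi-isomorphism at both levels $X_1$ and $X_2$ (Corollary~\ref{cor:comparisonMHC}) and the two $U$-level complexes agree. Hence $p^*$ gives a morphism $M\colon \cN^\bullet_{X_1,D_1,n}\to Rp_*\cN^\bullet_{X_2,D_2,n}$ of multiplicative mixed Hodge complexes of sheaves which is a bi-filtered quasi-isomorphism. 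Moreover $M$ is compatible with the thickening data: since $\ov f_2^*=p^*\circ\ov f_1^*$, we have $\ov f_2^*\circ\Phi_\R^Y=p^*\circ\ov f_1^*\circ\Phi_\R^Y$ and similarly for $\Phi_\C^Y$ and $\Psi^Y$, which is exactly the compatibility hypothesis of Proposition~\ref{prop:qisoThickenings}. (The passage to $Rp_*$, i.e. replacing $X_2$-level complexes by their pushforward to $X_1$, uses that the sheaves $\cA^\bullet_{X_2,\C}(\log D_2)$ are $p_*$-acyclic, cf. Remark~\ref{rmk:acyclicity}; one works with the Godement resolution as in Definition~\ref{def:directim}.)

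Applying Proposition~\ref{prop:qisoThickenings}, $\mathrm{Id}_{R_m}\otimes M$ is a morphism of mixed Hodge complexes of sheaves between the two thickenings, and since $M$ is a bi-filtered quasi-isomorphism, so is $\mathrm{Id}_{R_m}\otimes M$; in particular it is a weak equivalence in the sense of \cite[Lemma-Definition 3.19]{peters2008mixed}, hence induces an isomorphism of mixed Hodge structures on hypercohomology. Finally I would check that this isomorphism is the \emph{identity} on $H^*(U,R_m\otimes_R\ov\cL)$ under the canonical identifications of Definition~\ref{def:endowedMHS}: this amounts to observing that the diagram relating $Rj_*(R_m\otimes_R\ov\cL)$, the quasi-isomorphism $e^{-(\Phi_\R^G)^\vee}$, and the adjunction maps commutes on the nose, because $e^{-(\Phi_\R^G)^\vee}$ is defined intrinsically on $U$ (independently of the compactification) and $p^*$ restricts to the identity on $U$; more precisely, the composite of $p^*$ with the adjunction for $X_2$ equals the adjunction for $X_1$, since both compute the adjunction $\mathrm{Id}\to Rj_*j^{-1}$ applied to the same complex of sheaves on $U$. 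The same argument applies verbatim after the Tate twist in the case $m>0$. The only subtle point — and the main thing to be careful about rather than a genuine obstacle — is bookkeeping the $p_*$-acyclicity and the Godement resolutions so that ``morphism of mixed Hodge complexes of sheaves on $X_1$'' is literally the correct statement; everything else is a direct application of Proposition~\ref{prop:qisoThickenings} and Lemma~\ref{lem:indepn}-style weak-equivalence reasoning.
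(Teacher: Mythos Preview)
Your proposal is correct and follows essentially the same route as the paper: reduce via a common refinement $Z$ (your $X_3$) to the case of a dominating map $p\colon Z\to X_1$, then use that $p^*$ on Navarro Aznar's log complex is a filtration-preserving cdga morphism compatible with the thickening data $(\ov f)^*\circ\Phi^Y$ and $(\ov f)^*\circ\Psi^Y$, extend it to the thickenings via Proposition~\ref{prop:qisoThickenings}, pass to $Rp_*$ using the Godement resolution (Definition~\ref{def:directim}), and finally check that on $U$ the induced map is the identity so that the resulting MHS morphism on hypercohomology is the identity.

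Two minor remarks. First, your appeal to Remark~\ref{rmk:acyclicity} for $p_*$-acyclicity of the log sheaves is not quite the right reference (that remark concerns $j_*$- and $\Gamma$-acyclicity of $\cA^\bullet_{U,\C}$, not of the log complex under $p_*$); but since you also invoke the Godement resolution, the argument goes through regardless, and this is exactly what the paper does. Second, your claim that $M$ is a \emph{bi-filtered} quasi-isomorphism is true but not directly supplied by Corollary~\ref{cor:comparisonMHC}, and in any case it is redundant: once you know the morphism of mixed Hodge complexes induces the \emph{identity} on $H^*(U,R_m\otimes_R\ov\cL)$ (as you verify at the end), the induced MHS morphism is automatically an isomorphism, so the two MHSs coincide. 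The paper uses precisely this shortcut and never asserts the bi-filtered quasi-isomorphism.
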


\begin{proof}
	Let $Z$ be a good compactification of $U$ obtained as a resolution of singularities of the closure of the diagonal $U\to U\times U$ in $X_1\times X_2$. Then, there exist algebraic maps $\pi^i:Z\to X_i$ for $i=1,2$. Let $D=Z\setminus U$. It is enough to show that  the MHS on $H^*(U,R_m\otimes_R\ov\cL)$ induced by $(R_m\otimes\cN_{X_1,D_1,n}^\bullet, d+(\ov f_1)^*\circ \Phi^Y(\eps))$ coincides with the MHS induced by $(R_m\otimes\cN_{Z,D,n}^\bullet, d+(\ov f_1\circ\pi^1)^*\circ \Phi^Y(\eps))$.
	
	The proof follows the same steps as \cite[Theorem 5.21]{mhsalexander}, so we omit some details. 
	The pullback of forms through $\pi^1$ induces a morphism
	$$
	\cN_{X_1,D_1,n}^\bullet
	\to (\pi^1)_*\cN_{Z,D,n}^\bullet
	$$
	 which respects the filtrations. Although $(\pi^1)_*\cN_{Z,D,n}^\bullet$ is not a mixed Hodge complex of sheaves, the proof of Proposition~\ref{prop:qisoThickenings} implies that the morphism $\cN^\bullet_{X_1,D_1,n}\to (\pi^1)_*\cN_{Z,D,n}^\bullet$ extends to the thickenings by $(\ov f_1)^*\circ \Phi^Y(\eps)$ and $(\ov f_1\circ\pi^1)^*\circ\Phi^Y(\varepsilon)$, respecting the filtrations. Composing with $(\pi^1)_*$ of the canonical map from the mixed Hodge complex of sheaves $(R_m\otimes\cN_{Z,D,n}^\bullet,d+(\ov f_1\circ\pi^1)^*\circ \Phi(\eps))$ into its Godement resolution, we obtain a morphism of mixed Hodge complexes of sheaves
	$$
	(R_m\otimes\cN_{X_1,D_1,n}^\bullet, d+(\ov f_1)^*\circ \Phi(\eps))\to R(\pi^1)_*(R_m\otimes\cN_{Z,D,n}^\bullet, d+(\ov f_1\circ\pi^1)^*\circ \Phi(\eps)),
	$$
	where the latter is a mixed Hodge complex of sheaves (see Definition~\ref{def:directim}). If we restrict to $U$, the map between these mixed Hodge complexes of sheaves is just the map from the analytic forms on $U$ (real or complex) to its Godement resolution. Hence, this map induces the identity between the cohomology of $R_m\otimes_R\ov\cL$ itself, which concludes the proof. 
\end{proof}

\begin{lemma}[Independence of the compactification of $G$, fixing the compactification of $U$]\label{lem:indepCompactG}
	Let $Y_i$ be an allowed compactification of $G$ for $i=1,2$. Suppose that $X$ is a good compactification of $U$ such that $f:U\to G$ extends to algebraic maps $\ov f_1:X\to Y_1$ and $\ov f_2:X\to Y_2$. Let $D$ be the simple normal crossings divisor $X\setminus U$. Then, the MHS on $H^*(U,R_m\otimes_R\ov\cL)$ induced by $(R_m\otimes\cN_{X,D,n}^\bullet, d+(\ov f_1)^*\circ \Phi^{Y_1}(\eps))$ coincides with the MHS induced by $(R_m\otimes\cN_{X,D,n}^\bullet, d+(\ov f_2)^*\circ \Phi^{Y_2}(\eps))$. 
\end{lemma}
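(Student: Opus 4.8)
The plan is to show that the two thickened mixed Hodge complexes of sheaves occurring in the statement are in fact \emph{equal}, not merely quasi-isomorphic, and that the chain of derived-category isomorphisms used in Definition~\ref{def:endowedMHS} to transport the mixed Hodge structure to $H^*(U,R_m\otimes_R\oL)$ is likewise the same for $i=1,2$. So the MHSs will coincide on the nose, with no intermediate weak equivalence or derived direct image needed.

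First I would record the key point: for every $\omega\in H^1(G,k)$ (with $k=\R,\C$) the logarithmic form $\Phi^{Y_i}_k(\omega)\in\Gamma(Y_i,\cA^{1,\cl}_{Y_i,k}(\log E_i))$ has, by its very construction in Definition-Proposition~\ref{defPhiPsiY}, restriction to $G$ equal to $\Phi^G_k(\omega)$, which does not depend on $i$; likewise $\Psi^{Y_i}(\omega)|_G=\Psi^G(\omega)$ is independent of $i$. Since $\ov f_i$ extends $f$, we have $\ov f_i^{-1}(E_i)\subseteq X\setminus U=D$, so, exactly as in the proof of Lemma~\ref{lem:df/f}, $\ov f_i^*$ carries logarithmic forms on $(Y_i,E_i)$ to logarithmic forms on $(X,D)$; thus $\ov f_i^*\Phi^{Y_i}_k(\omega)\in\Gamma(X,\cA^{1,\cl}_{X,k}(\log D))$ and $\ov f_i^*\Psi^{Y_i}(\omega)\in\Gamma(X,\cA^0_{X,\C}(\log D))$. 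Because pullback commutes with restriction and $\ov f_i|_U=f$, the restriction of $\ov f_i^*\Phi^{Y_i}_k(\omega)$ to $U$ equals $f^*\Phi^G_k(\omega)$, independently of $i$, and similarly $(\ov f_i^*\Psi^{Y_i}(\omega))|_U=f^*\Psi^G(\omega)$. Finally, by Definition~\ref{def:logDolbeault} the complex $\cA^\bullet_{X,k}(\log D)$ is a subsheaf of $j_*\cA^\bullet_{U,k}$, so the restriction map $\Gamma(X,\cA^\bullet_{X,k}(\log D))\to\Gamma(U,\cA^\bullet_{U,k})$ is injective ($U$ being dense in $X$). Combining these facts yields $\ov f_1^*\circ\Phi^{Y_1}_k=\ov f_2^*\circ\Phi^{Y_2}_k$ for $k=\R,\C$ and $\ov f_1^*\circ\Psi^{Y_1}=\ov f_2^*\circ\Psi^{Y_2}$.

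Hence the twisted differentials $d+(\ov f_i)^*\Phi^{Y_i}_\R(\eps_\R)$ and $d+(\ov f_i)^*\Phi^{Y_i}_\C(\eps_\C)$, together with the filtered pseudo-isomorphism $e^{(\ov f_i)^*\Psi^{Y_i}(\eps_\C)}$ (and all the intermediate maps of \eqref{eq:MHCThickening}), agree for $i=1,2$, so $(R_m\otimes\cN^\bullet_{X,D,n}, d+(\ov f_1)^*\circ\Phi^{Y_1}(\eps))$ and $(R_m\otimes\cN^\bullet_{X,D,n}, d+(\ov f_2)^*\circ\Phi^{Y_2}(\eps))$ are literally the same mixed Hodge complex of sheaves (and when $m>0$ the Tate twist of Definition~\ref{def:endowedMHS} depends only on $e=\dim_\C H_1(G,\C)$, not on the compactification of $G$). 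It then suffices to observe that the maps in \eqref{eq:endow} used to endow $H^*(U,R_m\otimes_R\oL)$ with its MHS are the same in both cases: $Rj_*e^{-(\Phi^G_\R)^\vee}$ depends only on the canonical morphism $\Phi^G_\R$ of Definition-Proposition~\ref{defprop:PhiPsiG}, which makes no reference to a compactification of $G$, while the adjunction $\Id\to Rj_*j^{-1}$ depends only on the complex of sheaves, which we have just shown is independent of $i$. Therefore the two mixed Hodge structures on $H^*(U,R_m\otimes_R\oL)$ are identical.

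The only real work is in the second paragraph, and it is the place where one must take care: one has to verify that the auxiliary compactification $Y_i$ enters the whole construction only through the logarithmic forms $\Phi^{Y_i}(\omega),\Psi^{Y_i}(\omega)$ on $Y_i$, and that after pulling back along $\ov f_i$ and restricting to the dense open $U$ all dependence on $Y_i$ disappears. I do not expect any genuine obstacle here: unlike the proof of Lemma~\ref{lem:indepCompactU}, no dominating third compactification, no Godement resolution, and no derived direct image are required, precisely because the good compactification $X$ of $U$ --- which is what controls the complexes of sheaves --- is held fixed. If one preferred to stay within the functoriality framework, one could instead apply Corollary~\ref{cor:PhiFunctorial} with $g=\mathrm{Id}_G$ after passing to a common allowed compactification of $G$ dominating $Y_1$ and $Y_2$, absorbing any blow-up of $X$ needed to lift the extended maps via Lemma~\ref{lem:indepCompactU}; but the direct comparison above is the shorter route.
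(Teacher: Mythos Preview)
Your proof is correct and in fact takes a cleaner route than the paper's own argument. Both proofs rest on the same key observation---two global sections of $\cA^\bullet_{X,k}(\log D)$ that agree on $U$ must coincide, because $\cA^\bullet_{X,k}(\log D)$ is by definition a subsheaf of $j_*\cA^\bullet_{U,k}$---but you apply it directly on $X$ to conclude $(\ov f_1)^*\Phi^{Y_1}_k=(\ov f_2)^*\Phi^{Y_2}_k$ and $(\ov f_1)^*\Psi^{Y_1}=(\ov f_2)^*\Psi^{Y_2}$, whence the two thickened complexes are literally identical. The paper instead first constructs an allowed compactification $Y$ of $G$ dominating $Y_1$ and $Y_2$, then a good compactification $Z$ of $U$ mapping to both $X$ and $Y$; it uses Lemma~\ref{lem:indepCompactU} to pass from $(X,Y_i)$ to $(Z,Y_i)$, and only then invokes the same density argument (phrased as ``both are extensions of $f^*\circ\Phi^G_k$'') to identify $(Z,Y_i)$ with $(Z,Y)$. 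Your direct comparison makes the detour through $Y$, $Z$, and Lemma~\ref{lem:indepCompactU} unnecessary; you even anticipate this alternative at the end of your write-up. The paper's route has the virtue of fitting a uniform ``pass to a common roof'' template, but your argument is shorter and exposes more clearly that the compactification of $G$ never actually enters the data on $X$.
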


\begin{proof}
	First, we find an allowed compactification $Y$ of $G$ such that there exist algebraic maps $\pi^i:Y\to Y_1$, as in the first sentence of the proof of Lemma~\ref{lem:indepCompactU}. Now, take $Z$ to be a good compactification of $U$ obtained by doing a resolution of singularities of the closure of the graph of $f:U\to G$ inside of $X\times Y$. Let $\widehat D\coloneqq Z\setminus U$. We get an algebraic map $p:Z\to X$, and $f$ extends to $\ov f: Z\to Y$. Hence, it suffices to show that the MHS on $H^*(U,R_m\otimes_R\ov\cL)$ induced by $(R_m\otimes\cN_{X,D,n}^\bullet, d+(\ov f_1)^*\circ \Phi^{Y_1}(\eps))$ coincides with the MHS induced by $(R_m\otimes\cN_{Z,\widehat D,n}^\bullet, d+\ov f^*\circ \Phi^Y(\eps))$.
	
	Note that $(\pi^1\circ\ov f)^*\circ \Phi^{Y_1}_k=(\ov f)^*\circ \Phi^{Y}_k$ for $k=\R,\C$ (both are extensions of $f^*\circ\Phi^G_k$). By Lemma~\ref{lem:indepCompactU}, the mixed Hodge complex of sheaves $(R_m\otimes\cN_{Z,\widehat D,n}^\bullet, d+(\pi^1\circ\ov f)^*\circ \Phi^{Y_1}(\eps))$ induces the same MHS on $H^*(U,R_m\otimes_R\ov\cL)$ as $(R_m\otimes\cN_{X, D,n}^\bullet, d+(\ov f_1)^*\circ \Phi^{Y_1}(\eps))$.
\end{proof}

\begin{theorem}[Independence of the compactifications of $G$ and $U$]
	Let $Y_i$ be an allowed compactification of $G$, and let $X_i$ be a good compactification of $U$ such that $f$ extends to $\ov {f_i}:X_i\to Y_i$, for $i=1,2$. Let $D_i=X_i\setminus U$. Then, the MHS on $H^*(U,R_m\otimes_R\ov\cL)$ endowed by $(R_m\otimes\cN_{X, D,n}^\bullet, d+(\ov {f_i})^*\circ \Phi^{Y_i}(\eps))$ is the same for $i=1,2$.
\end{theorem}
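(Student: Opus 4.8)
The plan is to reduce everything to the two lemmas just proved, Lemma~\ref{lem:indepCompactU} and Lemma~\ref{lem:indepCompactG}, by passing to a common refinement of the two given pairs of compactifications. Fix once and for all an integer $n\geq\max\{2,\dim_\R U\}$ (which is legitimate since $\dim_\R U$ does not depend on any choice, and in any case Lemma~\ref{lem:indepn} shows the construction is independent of $n$), so that it suffices to compare the two MHSs for this single $n$.

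First I would build an allowed compactification $Y$ of $G$ dominating both $Y_1$ and $Y_2$: take $Y$ to be a resolution of singularities, with $Y\setminus G$ a simple normal crossings divisor, of the closure of the diagonally embedded $G\hookrightarrow Y_1\times Y_2$. The two projections restrict to algebraic maps $q_i\colon Y\to Y_i$ extending $\mathrm{Id}_G$, and since $Y_1$ is an allowed compactification (so $Y_1\to\ov G$ for some $\ov G$ as in Corollary~\ref{cor:compactification}), the composition $Y\to Y_1\to\ov G$ exhibits $Y$ as an allowed compactification. Next I would build a good compactification $Z$ of $U$, with $\widehat D\coloneqq Z\setminus U$ simple normal crossings, as a resolution of singularities of the closure of the image of $U$ under $(\mathrm{id},\mathrm{id},f)\colon U\hookrightarrow X_1\times X_2\times Y$. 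The three projections yield algebraic maps $p_i\colon Z\to X_i$ extending $\mathrm{Id}_U$ and $\ov f\colon Z\to Y$ extending $f$, and since $q_i\circ\ov f$ and $\ov{f_i}\circ p_i$ agree on the dense open subset $U$ they agree on $Z$. In particular $f$ extends to $Z\to Y_i$ via $\ov{f_i}\circ p_i=q_i\circ\ov f$ and to $Z\to Y$ via $\ov f$.

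Now I would chain the two lemmas. By Lemma~\ref{lem:indepCompactU}, applied with $Y_1$ fixed and the good compactifications $X_1$ and $Z$ of $U$ together with the extensions $\ov{f_1}$ and $\ov{f_1}\circ p_1$, the MHS induced by $(R_m\otimes\cN_{X_1,D_1,n}^\bullet, d+(\ov{f_1})^*\circ\Phi^{Y_1}(\eps))$ equals the one induced by $(R_m\otimes\cN_{Z,\widehat D,n}^\bullet, d+(\ov{f_1}\circ p_1)^*\circ\Phi^{Y_1}(\eps))$. By Lemma~\ref{lem:indepCompactG}, applied with $Z$ fixed and the allowed compactifications $Y_1$ and $Y$ of $G$, the latter MHS equals the one induced by $(R_m\otimes\cN_{Z,\widehat D,n}^\bullet, d+\ov f^*\circ\Phi^{Y}(\eps))$. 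Running the identical argument with the index $2$ in place of $1$ shows that the MHS induced by $(R_m\otimes\cN_{X_2,D_2,n}^\bullet, d+(\ov{f_2})^*\circ\Phi^{Y_2}(\eps))$ also equals the one induced by $(R_m\otimes\cN_{Z,\widehat D,n}^\bullet, d+\ov f^*\circ\Phi^{Y}(\eps))$, whence the two MHSs coincide. The same chain of equalities applies verbatim in the case $m>0$, since the Tate twist in Definition~\ref{def:endowedMHS} is applied uniformly.

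The only genuinely technical ingredient is the construction of the refinement diagram $p_i\colon Z\to X_i$, $q_i\colon Y\to Y_i$, $\ov f\colon Z\to Y$ with commuting squares; this is the same standard argument (closures of graphs plus Hironaka's resolution of singularities) that underlies Lemma~\ref{lem:existenceComp}, and the commutativity is automatic because every map in the diagram restricts to one of the compatible maps $\mathrm{id}_U$, $\mathrm{id}_G$, $f$ on the dense loci $U$, $G$. I do not expect any real obstacle beyond this bookkeeping, as the substance of the independence statement is already carried by Lemmas~\ref{lem:indepCompactU} and~\ref{lem:indepCompactG}.
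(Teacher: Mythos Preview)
Your proposal is correct and follows essentially the same approach as the paper: the paper's proof simply says the result follows from Lemmas~\ref{lem:indepCompactU} and~\ref{lem:indepCompactG} by constructing suitable common refinements via resolution of singularities of closures of diagonals, which is exactly what you spell out in detail.
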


\begin{proof}
	This follows from Lemmas~\ref{lem:indepCompactU} and \ref{lem:indepCompactG} by finding suitable compactifications lying above the ones given, using the same methods for doing so as in the proof of these two lemmas (resolution of singularities of the closure of the diagonal of $U$ or of $G$).
\end{proof}

\subsection{The MHS on quotients of Alexander modules}\label{sec:MHSAlexander}
In this section, we obtain other MHSs from the MHS on $H^*(U,R_m\otimes_R\ov\cL)$ given in  Definition~\ref{def:endowedMHS}.

\begin{corollary}
	Let \(\mathfrak a\) be the maximal ideal of \(R_\infty\). For every \(m\in \Z\setminus\{0\}\) and every \(m'\in \Z\ge 0\), \(\mathfrak a^{m'}H^*(U,R_m\otimes_R \ov\cL)\) is a sub-MHS of \(H^*(U,R_m\otimes_R \ov\cL)\), and similarly replacing \(m\) by \(\infty\) and ``MHS'' by ``pro-MHS''. Therefore, the quotients
	\[
	\displaystyle\frac{ H^*(U,R_m\otimes_R \ov\cL)}{ \mathfrak a^{m'}H^*(U,R_m\otimes_R \ov\cL)}
	\]
	are quotient MHSs as well.
\end{corollary}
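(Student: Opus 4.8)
The plan is to reduce the statement to the already-established fact (Remark~\ref{rem:multiplicationCohomology}, and its pro-version) that the action of $H_1(G,\R)$ on $H^*(U,R_m\otimes_R\ov\cL)$ is a morphism of MHS. Recall that $\mathfrak a = \prod_{j\geq 1}\Sym^j H_1(G,\R)$ is generated as an ideal of $R_\infty$ by $H_1(G,\R) = \Sym^1 H_1(G,\R)$, and that $R_\infty$ acts on $H^*(U,R_m\otimes_R\ov\cL)$ through the $R_\infty$-module structure of $R_m$. Hence $\mathfrak a^{m'} H^*(U,R_m\otimes_R\ov\cL)$ is precisely the image of the $m'$-fold multiplication map
\[
\mu^{(m')}\colon H_1(G,\R)^{\otimes m'}\otimes_\R H^*(U,R_m\otimes_R\ov\cL)\longrightarrow H^*(U,R_m\otimes_R\ov\cL),
\]
obtained by composing the multiplication morphism of Remark~\ref{rem:multiplicationCohomology} with itself $m'$ times.

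First I would observe that $\mu^{(m')}$ is a morphism of MHS: it is a composition of the morphism $H_1(G,\R)\otimes_\R H^*(U,R_m\otimes_R\ov\cL)\to H^*(U,R_m\otimes_R\ov\cL)$ from Remark~\ref{rem:multiplicationCohomology}, tensored with identities on further copies of $H_1(G,\R)$ (and $H_1(G,\R)\otimes_\R(-)$ preserves MHS morphisms by Definition-Proposition~\ref{defprop:multilinear}). Since the source $H_1(G,\R)^{\otimes m'}\otimes_\R H^*(U,R_m\otimes_R\ov\cL)$ carries the tensor MHS by Definition-Proposition~\ref{defprop:multilinear}, $\mu^{(m')}$ is a morphism of MHS. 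The image of a morphism of MHS is a sub-MHS of the target (this is the basic fact that morphisms of MHS are strict for both filtrations), so $\mathfrak a^{m'}H^*(U,R_m\otimes_R\ov\cL) = \im\mu^{(m')}$ is a sub-MHS of $H^*(U,R_m\otimes_R\ov\cL)$. Consequently the quotient $H^*(U,R_m\otimes_R\ov\cL)/\mathfrak a^{m'}H^*(U,R_m\otimes_R\ov\cL)$ inherits a quotient MHS, again because quotients by sub-MHS exist in the abelian category of MHS.

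For the pro-version, I would note that the multiplication morphisms of Remark~\ref{rem:multiplicationCohomology} were shown there to be compatible with the projection maps $H^*(U,R_{m'}\otimes_R\ov\cL)\to H^*(U,R_m\otimes_R\ov\cL)$, so they assemble into a pro-MHS morphism $H_1(G,\R)\otimes_\R\bigl(R_\infty\otimes_R H^*(U,\ov\cL)\bigr)\to R_\infty\otimes_R H^*(U,\ov\cL)$ (using the identification $R_\infty\otimes_R H^*(U,\ov\cL)\cong\varprojlim_m H^*(U,R_m\otimes_R\ov\cL)$ of Remark~\ref{rem:endowedproMHS}). Iterating $m'$ times and taking images levelwise, one gets a pro-sub-MHS; here one uses Remark~\ref{rem:proMHScategory}, which guarantees that images in the category of pro-MHS are computed on the underlying vector spaces, so that the levelwise images $\mathfrak a^{m'}H^*(U,R_m\otimes_R\ov\cL)$ do form a pro-sub-MHS whose underlying pro-object is $\mathfrak a^{m'}\bigl(R_\infty\otimes_R H^*(U,\ov\cL)\bigr)$. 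The quotient is then a pro-MHS by the same remark.

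I do not expect any serious obstacle here; the only point requiring mild care is bookkeeping the tensor MHS structure on $H_1(G,\R)^{\otimes m'}\otimes_\R H^*(U,R_m\otimes_R\ov\cL)$ so that the $m'$-fold multiplication really is a single MHS morphism rather than merely a map that is ``MHS in each variable,'' but this is exactly what Definition-Proposition~\ref{defprop:multilinear} (the tensor MHS and the fact that the multiplication $V^\vee\otimes R_m\to R_m$ is a MHS morphism, specialized to $V^\vee$-factors) handles. The pro-MHS statement is then a formal consequence via Remark~\ref{rem:proMHScategory}, with no additional analytic input.
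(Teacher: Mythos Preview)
Your proposal is correct and follows essentially the same approach as the paper: identify \(\mathfrak a^{m'}H^*(U,R_m\otimes_R\ov\cL)\) as the image of the \(m'\)-fold iterate of the multiplication map from Remark~\ref{rem:multiplicationCohomology}, which is a morphism of MHS, and invoke Remark~\ref{rem:proMHScategory} for the pro-version. The paper's proof is more terse but relies on exactly the same observations.
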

\begin{proof}
	Note that \(\mathfrak a^{m'}H^*(U,R_m\otimes_R \ov\cL)\) is the image of the map in Remark~\ref{rem:multiplicationCohomology} composed with itself \(m'\) times:
	\[
	(H_1(G,\R))^{\otimes m'}\otimes H^*(U,R_m\otimes_R \ov\cL)\to
	(H_1(G,\R))^{\otimes (m'-1)}\otimes H^*(U,R_m\otimes_R \ov\cL)\to \cdots
	\]
	The proof for $m=\infty$ follows from  Remark~\ref{rem:multiplicationCohomology} and Remark~\ref{rem:proMHScategory}.
\end{proof}

\begin{corollary}\label{cor:quotientI}
	The MHS in Definition~\ref{def:endowedMHS} induces the following two sequences of MHS for \(m\in\Z_{\ge 1}\):
	\[
	R_m\otimes_R H^i(U,\oL);\quad R_m\otimes_R H_i(U,\cL).
	\]
	The latter MHS induces through the $R$-module isomorphism $H_i(U^f,\R)\cong H_i(U,\cL)$ from Remark~\ref{rem:coverVsL} a MHS on  $R_m\otimes_R H_i(U^f,\R)$.
	
	Moreover, the quotient maps induced by $R_{m'}\twoheadrightarrow R_m$ for all $m'\geq m$ between these are MHS morphisms. 
\end{corollary}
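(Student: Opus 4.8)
The strategy is to reinterpret the claimed MHSs as the hypercohomology MHSs of suitable mixed Hodge complexes of sheaves (or limits thereof), for which all the required structure has already been established in Definition~\ref{def:endowedMHS}, Remark~\ref{rem:endowedproMHS}, Remark~\ref{rem:endowedproMHShomology} and Corollary~\ref{cor:completionHomology}. First I would treat the cohomological statement. By Corollary~\ref{cor:completionHomology}, the natural maps induce an isomorphism $R_\infty\otimes_R H^i(U,\oL)\xrightarrow{\cong}\varprojlim_{m'} H^{i}(U,R_{m'}\otimes_R\oL)$ of $R_\infty$-modules. Tensoring this with $R_m$ over $R_\infty$ (which is exact since, being isomorphic to an inverse limit of the $R/\fm^{m'}$, $R_\infty$ is a flat $R$-algebra, and $R_m$ is a quotient of $R_\infty$ by a power of the maximal ideal, so the relevant short exact sequences are preserved up to the Artin–Rees/Mittag-Leffler arguments already invoked in Proposition~\ref{prop:limvsH}) identifies $R_m\otimes_R H^i(U,\oL)$ with a subquotient of the pro-MHS $\varprojlim_{m'}H^i(U,R_{m'}\otimes_R\oL)$; more precisely, by Remark~\ref{rem:RinftyVSInverseLimit} one has $R_m\otimes_{R_\infty}\varprojlim_{m'}H^i(U,R_{m'}\otimes_R\oL)\cong H^i(U,R_m\otimes_R\oL)$ whenever the tower stabilizes modulo $\fm^m$, which is exactly the content of Proposition~\ref{prop:limvsH} applied to a free resolution $C^\bullet$ of $\oL$. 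Hence $R_m\otimes_R H^i(U,\oL)$ acquires the quotient MHS of $H^i(U,R_{m'}\otimes_R\oL)$ for $m'\gg m$, and this is independent of $m'$ by Remark~\ref{rem:endowedproMHS}, which shows the projections $H^i(U,R_{m'}\otimes_R\oL)\to H^i(U,R_m\otimes_R\oL)$ are MHS morphisms.

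For the homological statement I would dualize. By Remark~\ref{rem:homologyVsCohomology} there are natural isomorphisms $H_i(U,R_m\otimes_R\cL)\cong\Hom_\R(H^i(U,R_{-m}\otimes_R\oL),\R)$, and by Definition~\ref{def:endowedMHS} (case $m<0$) the target $H^i(U,R_{-m}\otimes_R\oL)$ carries a canonical MHS, hence so does its $\R$-dual by Definition-Proposition~\ref{defprop:multilinear}; transport this MHS to $H_i(U,R_m\otimes_R\cL)$, and thence to $R_m\otimes_R H_i(U,\cL)$ via Corollary~\ref{cor:completionHomology2}, which gives $R_\infty\otimes_R H_i(U,\cL)\xrightarrow{\cong}\varprojlim_{m'}\Hom_\R(H^i(U,R_{-m'}\otimes_R\oL),\R)$, and tensoring down with $R_m$ over $R_\infty$ exactly as above. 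The $R$-module isomorphism $H_i(U^f,\R)\cong H_i(U,\cL)$ of Remark~\ref{rem:coverVsL} then transfers this MHS to $R_m\otimes_R H_i(U^f,\R)$; since that isomorphism is $R$-linear, it is compatible with tensoring by $R_m$ over $R$, so no further checking is needed there.

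Finally, the compatibility of the quotient maps $R_{m'}\twoheadrightarrow R_m$ with the MHSs: in cohomology this is immediate from Remark~\ref{rem:endowedproMHS}, since the map $R_{m'}\otimes_R H^i(U,\oL)\to R_m\otimes_R H^i(U,\oL)$ is induced by the MHS morphism of mixed Hodge complexes of sheaves $p_{m',m}\otimes\Id$ after applying the identification with $\varprojlim$ and tensoring; in homology it follows by dualizing, using Remark~\ref{rem:endowedproMHShomology}, where it is observed that the dual of the cohomological projection morphism induced by $p^\vee_{m',m}$ is precisely the homological projection $p_{m',m}\otimes\Id\colon R_{m'}\otimes_R\cL\to R_m\otimes_R\cL$, which is a MHS morphism because the $\R$-dual of a MHS morphism is one. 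I expect the only genuinely delicate point to be the bookkeeping needed to see that ``tensoring the pro-object $\varprojlim_{m'}H^\bullet(U,R_{m'}\otimes_R\oL)$ down by $R_m$ over $R_\infty$'' really recovers $H^\bullet(U,R_m\otimes_R\oL)$ with its MHS of Definition~\ref{def:endowedMHS}, rather than some a priori different structure; this is where Proposition~\ref{prop:limvsH}, the flatness of $R_\infty$ over $R$ (Corollary~\ref{cor:completionHomology}), and the fact that the tower $\{H^\bullet(U,R_{m'}\otimes_R\oL)\}$ stabilizes modulo $\fm^m$ for $m'\gg m$ all have to be combined carefully, but none of these ingredients is new.
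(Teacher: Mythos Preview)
Your overall architecture is right: pass to the pro-MHS on $R_\infty\otimes_R H^i(U,\oL)$ (resp.\ on $R_\infty\otimes_R H_i(U,\cL)$) via Corollaries~\ref{cor:completionHomology} and~\ref{cor:completionHomology2}, and then extract the desired quotient. But the step you use to extract the quotient contains a genuine error.

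You claim that
\[
R_m\otimes_{R_\infty}\varprojlim_{m'}H^i(U,R_{m'}\otimes_R\oL)\;\cong\; H^i(U,R_m\otimes_R\oL),
\]
i.e.\ that $R_m\otimes_R H^i(U,\oL)\cong H^i(U,R_m\otimes_R\oL)$. This is false in general: tensoring cohomology with $R_m$ does not commute with taking cohomology of the tensored complex (there are $\Tor$ obstructions, and $R_m$ is \emph{not} flat over $R_\infty$, contrary to what you assert). Even the weaker statement you gesture at, that the image of $\Xi_{\infty,m}\colon R_\infty\otimes_R H^i(U,\oL)\to H^i(U,R_m\otimes_R\oL)$ has kernel exactly $\fm^m(R_\infty\otimes_R H^i(U,\oL))$, is not what Proposition~\ref{prop:limvsH} gives you: the kernel there is $N\cap \fm^m M$ with $M=C^i/dC^{i-1}$, which Artin--Rees only controls up to a shift. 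So you cannot transport the MHS from $H^i(U,R_m\otimes_R\oL)$ to $R_m\otimes_R H^i(U,\oL)$ this way, and consequently your argument for the ``moreover'' part (which also goes through this identification) breaks down too.

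What the paper does instead is avoid this comparison entirely. The missing ingredient is Remark~\ref{rem:multiplicationCohomology} (and its homological counterpart Remark~\ref{rem:multiplicationHomology}): multiplication
\[
H_1(G,\R)\otimes_\R\bigl(R_\infty\otimes_R H^i(U,\oL)\bigr)\longrightarrow R_\infty\otimes_R H^i(U,\oL)
\]
is a pro-MHS morphism. Iterating $m$ times, its image $\mathfrak a^m\bigl(R_\infty\otimes_R H^i(U,\oL)\bigr)$ is a sub-pro-MHS, so the cokernel $R_m\otimes_R H^i(U,\oL)$ is a quotient pro-MHS; being finite-dimensional, it is an honest MHS. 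With this definition, the quotient maps $R_{m'}\otimes_R H^i\twoheadrightarrow R_m\otimes_R H^i$ are induced by the identity on $R_\infty\otimes_R H^i$, so they are automatically MHS morphisms. Your proposal never invokes the multiplication map, and that is precisely the idea that makes the argument go through.
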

\begin{proof}
	By Remarks~\ref{rem:RmVSRmodm}, \ref{rem:RinftyVSInverseLimit} and~\ref{rem:homologyVsCohomology}, and  Corollaries~\ref{cor:completionHomology} and~\ref{cor:completionHomology2}, we have the following $R_\infty$-module isomorphisms:
	\begin{align*}
		R_\infty \otimes_R H^i(U,\oL) &\cong \varprojlim_m H^i(U,R_m\otimes_R \oL);\\
		R_\infty \otimes_R H_i(U,\cL) &\cong \varprojlim_m H_i(U,R_m\otimes_R \cL) \cong \varprojlim_m \Hom_\R(H^i(U,R_{-m}\otimes_R\oL),\R).
	\end{align*}
	These isomorphisms endow the right hand side spaces with pro-MHS by Remarks~\ref{rem:endowedproMHS} and~\ref{rem:endowedproMHShomology}.  Furthermore, by Remarks~\ref{rem:multiplicationCohomology} and~\ref{rem:multiplicationHomology}, the multiplication maps
	\begin{align*}
		H_1(G,\R)\otimes_\R (R_\infty\otimes_R H^i(U,\ov\cL))&\to R_\infty\otimes_R H^i(U,\ov\cL)\\
		H_1(G,\R)\otimes_\R(R_\infty\otimes_R H_i(U,\cL))&\to R_\infty\otimes_R H_i(U,\cL)
	\end{align*}
	are pro-MHS morphisms. By Remark~\ref{rem:proMHScategory}, the images and cokernels of the composition of pro-MHS morphisms are pro-MHSs as well. In particular, $R_m\otimes_R H^i(U,\oL)$ and $R_m\otimes_R H_i(U,\cL)$ are pro-MHSs, but they are also finite dimensional vector spaces, so they must be MHSs.
	
	For the ``moreover'' part of the statement, note that the quotient maps
	\begin{align*}
		R_{m'}\otimes_R H^i(U,\ov\cL)&\to R_{m}\otimes_R H^i(U,\ov\cL)\\
			R_{m'}\otimes_R H_i(U,\cL)&\to R_{m}\otimes_R H_i(U,\cL)
	\end{align*}
induced by $R_{m'}\twoheadrightarrow R_m$ for all $m'\geq m$ are induced in the quotients by the identity morphisms in $R_{\infty}\otimes_R H^i(U,\ov\cL)$ and $R_{\infty}\otimes_R H_i(U,\cL)$ respectively, so they are MHS morphisms.
\end{proof}

\begin{definition}[MHS on quotients of the Alexander modules by powers of the augmentation ideal]\label{def:MHSalexander}
	Let $m\geq 1$, and let $\mathfrak{m}$ be the augmentation ideal of $R=k[\pi_1(G)]$.
	\begin{itemize}
		\item The $R$-module isomorphism $R/\fm^m\cong R_m$ from Remark~\ref{rem:RmVSRmodm} induces  isomorphisms
		\begin{align*}
			\frac{H^i(U,\oL)}{\mathfrak{m}^mH^i(U,\oL)}&\cong R/\fm^m\otimes_R H^i(U,\oL)\cong R_m\otimes_R H^i(U,\oL),\\\frac{H_i(U,\cL)}{\mathfrak{m}^mH_i(U,\cL)}&\cong R/\fm^m\otimes_R H_i(U,\cL)\cong R_m\otimes_R H_i(U,\cL),\\
		\end{align*}
		The right hand sides of these isomorphisms are MHS by Corollary~\ref{cor:quotientI}, which we use to define MHS on $\frac{H^i(U,\oL)}{\mathfrak{m}^mH^i(U,\oL)}$ and $\frac{H_i(U,\cL)}{\mathfrak{m}^mH_i(U,\cL)}$ for all $i\geq 0$ and for all $m\geq 1$.
		\item The $R$-module isomorphism $H_i(U^f,\R)\cong H_i(U,\cL)$ from Remark~\ref{rem:coverVsL} (where $R$ acts on $H_i(U^f,\R)$ by deck transformations) endows $\frac{H_i(U^f,\R)}{\fm^m H_i(U^f,\R)}$ with a MHS. 
	\end{itemize}
\end{definition}

\begin{remark}\label{rem:projMHS}
	Since the isomorphisms $R/\fm^{m'}\cong R_{m'}$ and $R/\fm^m\cong R_m$ from Remark~\ref{rem:RmVSRmodm} form a commutative diagram with the projections $R/\fm^{m'}\twoheadrightarrow R/\fm^{m}$ and $R_{m'}\twoheadrightarrow R_m$ for all $m'\geq m\geq 1$, the projection morphisms
	$$
		\frac{H^i(U,\oL)}{\mathfrak{m}^{m'}H^i(U,\oL)}\to \frac{H^i(U,\oL)}{\mathfrak{m}^{m}H^i(U,\oL)},\quad \frac{H_i(U,\cL)}{\mathfrak{m}^{m'}H_i(U,\cL)}\to \frac{H_i(U,\cL)}{\mathfrak{m}^{m}H_i(U,\cL)}
	$$
are MHS morphisms.
\end{remark}

Recall that $\pi_1(G)$ acts on $H_i(U,\cL)\cong H_i(U^f,\R)$ by deck transformations. The following result states that the nilpotent logarithm of deck transformations respects the MHS on quotients of Alexander modules.

\begin{corollary}\label{cor:multiplication}
	For all $\gamma\in\pi_1(G)$, let $\log\gamma\in H_1(G,\Z)$ be the element corresponding to $\gamma$ via the abelianization map. Let $\mathfrak{m}$ be the augmentation ideal of $R\coloneqq \R[\pi_1(G)]$, and let $m\geq 1$.
	
	Then, the multiplication map defined as the only $\R$-linear map satisfying that
	$$
	\begin{array}{ccc}
		H_1(G,\R)\otimes_\R \frac{H^i(U,\oL)}{\mathfrak{m}^mH^i(U,\oL)} & \longrightarrow & \frac{H^i(U,\oL)}{\mathfrak{m}^mH^i(U,\oL)}\\
		\log \gamma \otimes v & \longmapsto & \log(\gamma)\cdot v
	\end{array}
	$$
	for all $\gamma\in \pi_1(G)$ and all $v\in \frac{H^i(U,\oL)}{\mathfrak{m}^mH^i(U,\oL)}$  is a MHS morphism, where $\log(\gamma)\cdot v$ denotes the multiplication by $\log(\gamma)\coloneqq\log(1+(\gamma-1))$, seen as a power series in $\gamma-1\in\mathfrak{m}$.
	
	Moreover, if $G\cong(\C^*)^n$ for some $n\geq 1$, then for all $\gamma\in\pi_1(G)$, multiplication by $\log(\gamma)$ is a MHS morphism from $\frac{H^i(U,\oL)}{\mathfrak{m}^mH^i(U,\oL)}$ to its $(-1)$-st Tate twist.
	
	Furthermore, the same results hold if we replace $H^i(U,\oL)$ by $H_i(U,\cL)$ or $H_i(U^f,\R)$ everywhere.
\end{corollary}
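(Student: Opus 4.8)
The plan is to deduce this corollary formally from the multiplication morphisms already constructed, namely Remark~\ref{rem:multiplicationCohomology} in cohomology and Remark~\ref{rem:multiplicationHomology} in homology, together with the identity $\log\circ\exp=\Id$ of formal power series. First I would record that, by Remark~\ref{rem:multiplicationCohomology} and Corollary~\ref{cor:quotientI}, the action of $H_1(G,\R)\subset R_\infty$ yields a pro-MHS morphism
\[
H_1(G,\R)\otimes_\R\bigl(R_\infty\otimes_R H^i(U,\oL)\bigr)\longrightarrow R_\infty\otimes_R H^i(U,\oL).
\]
Since $H_1(G,\R)\subset\mathfrak a$, this map sends $H_1(G,\R)\otimes_\R\mathfrak a^m\bigl(R_\infty\otimes_R H^i(U,\oL)\bigr)$ into $\mathfrak a^m\bigl(R_\infty\otimes_R H^i(U,\oL)\bigr)$, so it descends to a morphism $H_1(G,\R)\otimes_\R\bigl(R_m\otimes_R H^i(U,\oL)\bigr)\to R_m\otimes_R H^i(U,\oL)$; this is a morphism of quotient pro-MHS, hence of honest MHS since the spaces are finite-dimensional (Remark~\ref{rem:proMHScategory}).

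The next step is to identify this descended map with the one in the statement via the isomorphisms $\frac{H^i(U,\oL)}{\fm^m H^i(U,\oL)}\cong R/\fm^m\otimes_R H^i(U,\oL)\cong R_m\otimes_R H^i(U,\oL)$ of Definition~\ref{def:MHSalexander}. The key (and only) computation is that, under the ring isomorphism $R/\fm^m\cong R_m$ of Remark~\ref{rem:RmVSRmodm}, which is induced by $\gamma\mapsto e^{\log\gamma}$, the element $\log(\gamma)=\log(1+(\gamma-1))\in R/\fm^m$ corresponds to $\log\gamma\in H_1(G,\R)\subset R_m$. Indeed, the isomorphism carries $\gamma-1$ to $e^{\log\gamma}-1$, which lies in the maximal ideal $\mathfrak a$ (nilpotent in $R_m$), hence it carries $\log(1+(\gamma-1))$ to $\log\bigl(1+(e^{\log\gamma}-1)\bigr)=\log\bigl(e^{\log\gamma}\bigr)=\log\gamma$, the last step because $\log\circ\exp$ is the identity power series. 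As $R/\fm^m\cong R_m$ is a ring isomorphism, tensoring with $H^i(U,\oL)$ over $R$ shows that multiplication by $\log(\gamma)$ on $\frac{H^i(U,\oL)}{\fm^m H^i(U,\oL)}$ agrees with the action of $\log\gamma\in H_1(G,\R)$ on $R_m\otimes_R H^i(U,\oL)$. Therefore the $\R$-linear map of the statement is exactly $\Id_{H_1(G,\R)}$ composed with the descended multiplication morphism of the previous paragraph; this proves simultaneously that it is well defined — the assignment on simple tensors extends $\R$-linearly because $\{\log\gamma:\gamma\in\pi_1(G)\}$ spans $H_1(G,\R)$, and the extension is forced to be this morphism — and that it is an MHS morphism.

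For the ``moreover'' statement I would invoke Remark~\ref{rem:multiplicationTorus}: when $G\cong(\C^*)^n$ and $\gamma\neq 1$, the line $\R\cdot\log\gamma\subseteq H_1(G,\R)$ is a sub-Hodge structure pure of type $(-1,-1)$, so multiplication by $\log\gamma$ is a pro-MHS morphism $R_\infty\otimes_R H^i(U,\oL)\to\bigl(R_\infty\otimes_R H^i(U,\oL)\bigr)(-1)$, which descends as before to an MHS morphism $\frac{H^i(U,\oL)}{\fm^m H^i(U,\oL)}\to\frac{H^i(U,\oL)}{\fm^m H^i(U,\oL)}(-1)$; for $\gamma=1$ the map $\log(\gamma)\cdot(-)$ is zero and the claim is trivial. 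Finally, the homology assertions follow by running the same argument with Remark~\ref{rem:multiplicationHomology} replacing Remark~\ref{rem:multiplicationCohomology}, using the $R$-module isomorphism $H_i(U^f,\R)\cong H_i(U,\cL)$ of Remark~\ref{rem:coverVsL} (under which the deck action of $\pi_1(G)$ is the module structure), since the power-series computation above is insensitive to this substitution.

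I do not anticipate a real obstacle: once Remarks~\ref{rem:multiplicationCohomology}, \ref{rem:multiplicationHomology} and~\ref{rem:multiplicationTorus} are available, the statement is purely formal. The only step requiring some care is bookkeeping — keeping straight which module structure ($R$, $R_\infty$, or the one transported onto $R/\fm^m$) acts at each stage, and checking that the ring isomorphism $R/\fm^m\cong R_m$ intertwines multiplication by the power series $\log(\gamma)$ with multiplication by the degree-one element $\log\gamma$, which is precisely the identity $\log\circ\exp=\Id$.
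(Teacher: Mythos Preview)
Your proposal is correct and follows essentially the same approach as the paper: both arguments descend the pro-MHS multiplication morphism from $R_\infty\otimes_R H^i(U,\oL)$ (via Remarks~\ref{rem:multiplicationCohomology}, \ref{rem:multiplicationHomology}, \ref{rem:multiplicationTorus} and Corollary~\ref{cor:quotientI}) to the finite quotient, and then use the key observation that the ring isomorphism $R/\fm^m\cong R_m$ of Remark~\ref{rem:RmVSRmodm} carries the power series $\log(\gamma)$ to the element $\log\gamma\in H_1(G,\R)$. Your write-up simply spells out in more detail what the paper states tersely.
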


\begin{proof}
	Note that the multiplication morphisms
	$$
	H_1(G,\R)\otimes_\R (R_m\otimes_R H^i(U,\oL))\to H^i(U,\oL),\quad H_1(G,\R)\otimes_\R (R_m\otimes_R H_i(U,\cL))\to H_i(U,\cL)
	$$
	are MHS morphisms because they are induced by the multiplication morphisms on $R_\infty\otimes_R H^i(U,\oL)$ and $R_\infty\otimes_R H_i(U,\oL)$ respectively, which are pro-MHS morphism by the proof of Corollary~\ref{cor:quotientI}. Also note that the isomorphism $R/\fm^m\cong R_m$ from Remark~\ref{rem:RmVSRmodm} takes $\log(\gamma)\in R/\fm^m$ to $\log \gamma\in R_m$. The result now follows from Remarks~\ref{rem:multiplicationCohomology} and~\ref{rem:multiplicationTorus}, and from the way the MHS of Definition~\ref{def:MHSalexander} are constructed. Note that the dual MHS of a $j$-th Tate twist corresponds to the $(-j)$-th Tate twist of the dual MHS.
\end{proof}

\section{Functoriality}\label{sec:functoriality}

In this section we prove the following theorem, which is stated in terms of the homology of covers instead of the homology of local systems (recall Remark~\ref{rem:coverVsL}) due to the geometric meaning of the morphisms to which it applies.

\begin{theorem}[Functoriality]\label{thm:functorial}
	Let $U_1,U_2$ be smooth connected complex algebraic varieties, and let $G_1,G_2$ be semiabelian varieties. Consider a commutative diagram of algebraic morphisms
	\begin{equation}\label{eq:baseFunctoriality}
	\begin{tikzcd}[row sep=2em]
		U_1
		\arrow[d,"f_1"]
		\arrow[r,"g"]
		& U_2
		\arrow[d,"f_2"]\\
		G_1
		\arrow[r,"\rho"]
		&
		G_2
	\end{tikzcd}
	\end{equation}
	where $\rho$ is a group homomorphism. Let
	\begin{equation}\label{eq:topFunctoriality}
	\begin{tikzcd}[row sep=2em]
		U_1^{f_1}
		\arrow[d,"\widetilde{f_1}"]
		\arrow[r,"\wt g"]
		& U_2^{f_2}
		\arrow[d,"\wt f_2"]\\
		TG_1
		\arrow[r,"\wt \rho"]
		&
		TG_2
	\end{tikzcd}
	\end{equation}
	be a commutative diagram which is the unique lift of \eqref{eq:baseFunctoriality}  satisfying that $\wt\rho$ is an additive group homomorphism, and such that $\wt f_1$ and $\wt f_2$ are defined from the pullback diagrams as in~\eqref{eq:Uf}.
	
	For \(i=1,2\), let \(R^i= \R[\pi_1(G_i)]\) and let $\fm_i$ be the augmentation ideal of $R^i$.  For \(m\in \Z_{\ge 1}\), let
	$
	R^i_m = \frac{\prod_{j=0}^\infty\Sym^j H_1(G_i,\R)}{\prod_{j=m}^\infty\Sym^j H_1(G_i,\R)}
	$. Then, the following statements hold for the morphisms induced in homology by $\wt g:U_1^{f_1}\to U_1^{f_2}$:
	
	\begin{enumerate}
		\item 
		$$
		\wt g_{*,m}\colon R_m^1\otimes_{R^1} H_j(U_1^{f_1},\R)\to R_m^2\otimes_{R^2} H_j(U_2^{f_2},\R)
		$$
		is a MHS morphism for all $j\geq 0$ and for all $m\geq 1$, where the domain and the target have the MHS from Corollary~\ref{cor:quotientI}.
		\item Equivalently,
		$$
		\wt g_{*,m}\colon\frac{H_j(U_1^{f_1},\R)}{\fm_1^m H_j(U_1^{f_1},\R)}\to \frac{H_j(U_2^{f_2},\R)}{\fm_2^m H_j(U_2^{f_2},\R)}
		$$
		is a MHS morphism for all $j\geq 0$ and for all $m\geq 1$, where the domain and the target have the MHS from Definition~\ref{def:MHSalexander}.
	\end{enumerate} 

	
\end{theorem}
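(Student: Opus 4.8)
The plan is to reduce the statement to a question about morphisms of thickened mixed Hodge complexes of sheaves, where all the work has effectively already been done by the machinery of Sections~\ref{sec:thickening}--\ref{sec:thickeningResolves}. The equivalence of the two formulations in the statement is immediate from the $R$-module isomorphisms $R/\fm_i^m \cong R_m^i$ of Remark~\ref{rem:RmVSRmodm} and the isomorphisms $H_j(U_i^{f_i},\R)\cong H_j(U_i,\cL_i)$ of Remark~\ref{rem:coverVsL}, together with the definition of the MHS in Definition~\ref{def:MHSalexander}; so I would prove the first formulation. Dualizing over $\R$ via Remark~\ref{rem:homologyVsCohomology} (and the fact that the dual of a MHS morphism is a MHS morphism), it suffices to show that the pullback map $\wt g^*$ induces a MHS morphism $R_{-m}^2\otimes_{R^2} H^j(U_2,\ov\cL_2)\to R_{-m}^1\otimes_{R^1} H^j(U_1,\ov\cL_1)$, i.e. a morphism $H^j(U_2, R_{-m}^2\otimes_{R^2}\ov\cL_2)\to H^j(U_1, R_{-m}^1\otimes_{R^1}\ov\cL_1)$ (using Remark~\ref{rem:homologyVsCohomology} to identify these cohomology groups).

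The key step is to realize this cohomology map at the level of mixed Hodge complexes of sheaves. First I would invoke Lemma~\ref{lem:existenceComp} to choose compatible compactifications $X_1, X_2, Y_1, Y_2$ with respect to the commutative square $\rho\circ f_1 = f_2\circ g$, so that $g, \rho$ extend to $\ov g\colon X_1\to X_2$ and $\ov\rho\colon Y_1\to Y_2$. Since $\rho$ is a group homomorphism of semiabelian varieties, Corollary~\ref{cor:PhiFunctorial} applies and gives the compatibility $\ov\rho^*\circ\Phi^{Y_1}_k = \Phi^{Y_2}_k\circ\rho^*$ (and similarly for $\Psi$), where $\rho^*\colon H^1(G_2,\R)\to H^1(G_1,\R)$ is the induced MHS morphism on first cohomology; dually $\rho_*\colon H_1(G_1,\R)\to H_1(G_2,\R)$ is a MHS morphism, which induces the ring map and $R^i_\infty$-module maps $\mu^\vee_m\colon R^1_{-m}\to R^2_{-m}$... wait, more precisely it induces $R_{-m}^2\to R_{-m}^1$ in the direction matching the pullback. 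The point is that the pair $(\Phi^{Y_1},\Psi^{Y_1})$ and $(\Phi^{Y_2},\Psi^{Y_2})$, pulled back through $\ov f_1$ and $\ov f_2$ respectively, are compatible with the MHS morphism $\rho_*$ in exactly the sense required by Proposition~\ref{prop:thickeningFunctorialInV}. Combining this with the pullback morphism of multiplicative mixed Hodge complexes of sheaves $\ov g^*\colon \cN^\bullet_{X_2,D_2,n}\to R\ov g_*\cN^\bullet_{X_1,D_1,n}$ (via Godement resolutions and Definition~\ref{def:directim}, as in the proofs of Lemmas~\ref{lem:indepCompactU} and~\ref{lem:indepCompactG}), and using that $\ov g^*$ commutes with the pullbacks of the $\Phi$'s and $\Psi$'s because $\ov\rho\circ\ov f_1 = \ov f_2\circ\ov g$ on $U_1$ and $U_1$ is dense, one obtains a morphism of mixed Hodge complexes of sheaves
\[
(R_{-m}^2\otimes\cN^\bullet_{X_2,D_2,n}, d+\ov f_2^*\circ\Phi^{Y_2}(\eps))\longrightarrow R\ov g_*\,(R_{-m}^1\otimes\cN^\bullet_{X_1,D_1,n}, d+\ov f_1^*\circ\Phi^{Y_1}(\eps)).
\]
Passing to hypercohomology and checking (by restricting to $U_1$, where this reduces to the ordinary pullback on analytic forms twisted by $e^{-(\Phi_\R^{G_i})^\vee}$, which intertwines $\wt g$) that the induced map on $H^j$ is precisely the pullback $\wt g^*$ on $H^j(U_i, R_{-m}^i\otimes_{R^i}\ov\cL_i)$, gives the desired MHS morphism; dualizing back recovers the statement.

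The main obstacle I anticipate is the bookkeeping needed to verify that the pullback morphism $\ov g^*$ of mixed Hodge complexes of sheaves is genuinely compatible with \emph{all} the extra data $(\Phi,\Psi)$ simultaneously after the change of the parameter space from $H_1(G_1,\R)$ to $H_1(G_2,\R)$ — that is, interlocking Proposition~\ref{prop:thickeningFunctorialInV} (which handles a change of $V$ for a fixed $\cK^\bullet$) with Proposition~\ref{prop:qisoThickenings} (which handles a morphism of the underlying $\cK^\bullet$ for a fixed $V$), since here both change at once. Concretely, one needs the identity $\ov g^*(\ov f_2^*\Phi^{Y_2}(\eps_2)) = \ov f_1^*\Phi^{Y_1}(\rho^*(-))$ expressed in terms of the canonical elements $\eps_1\in (H^1(G_1))^\vee\otimes H^1(G_1)$ and $\eps_2$, which is the content of the computation in the proof of Proposition~\ref{prop:thickeningFunctorialInV} with $\mu = \rho_*$; the remaining subtlety is that the complex $R\ov g_*\cN^\bullet_{X_1,D_1,n}$ is not literally a thickening but the derived pushforward of one, so one argues as in Lemma~\ref{lem:indepCompactG} that the relevant map into the Godement resolution is still a morphism of mixed Hodge complexes of sheaves. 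Once these compatibilities are in place, the identification of the hypercohomology map with $\wt g_*$ (resp.\ $\wt g^*$) is routine because on $U_i$ everything becomes the standard pullback of differential forms.
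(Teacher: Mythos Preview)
Your overall strategy---realize the induced map at the level of thickened mixed Hodge complexes of sheaves---is the right one, and the paper follows it too. However, your reduction step contains a genuine gap.

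You write that by dualizing via Remark~\ref{rem:homologyVsCohomology} ``it suffices to show that the pullback map $\wt g^*$ induces a MHS morphism $R_{-m}^2\otimes_{R^2} H^j(U_2,\ov\cL_2)\to R_{-m}^1\otimes_{R^1} H^j(U_1,\ov\cL_1)$, i.e.\ a morphism $H^j(U_2, R_{-m}^2\otimes_{R^2}\ov\cL_2)\to H^j(U_1, R_{-m}^1\otimes_{R^1}\ov\cL_1)$.'' These two objects are \emph{not} the same, and Remark~\ref{rem:homologyVsCohomology} does not identify them. More importantly, the MHS on $R_m^i\otimes_{R^i}H_j(U_i^{f_i},\R)$ is \emph{not} the dual of a single cohomology group: it is obtained as a cokernel of a multiplication map on the pro-MHS $R_\infty^i\otimes_{R^i}H_j(U_i,\cL_i)$, which in turn is an inverse limit of duals of $H^j(U_i,R_{-m'}^i\otimes_{R^i}\ov\cL_i)$ over all $m'$. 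So even once you know that the sheaf-level map induces MHS morphisms on each $H^j(U_i,R_{-m'}^i\otimes_{R^i}\ov\cL_i)$, you must still verify that (a) these maps are the ones induced by $\wt g$ through the duality of Remark~\ref{rem:homologyVsCohomology}, (b) they are compatible with the transition maps in the inverse limit, and (c) they commute with the multiplication maps $H_1(G_i,\R)\otimes(-)\to(-)$ so as to descend to the cokernels. None of this is addressed in your proposal.

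The paper handles these issues by factoring the square~\eqref{eq:baseFunctoriality} into the two cases $\rho=\Id$ and $g=\Id$ (Remark~\ref{rem:outlineFunctoriality}), and for each case devoting a lemma (Lemmas~\ref{lem:homologySheafMapEasy} and~\ref{lem:homologySheafMap}) to tracing $\wt g_*$ explicitly through all four steps of the MHS construction down to a concrete morphism of local systems. Only then does it lift that morphism to the mixed Hodge complexes (Theorems~\ref{thm:funcEasy} and~\ref{thm:functGroupHom}). The case $g=\Id$ in particular requires the explicit computation at the end of the proof of Theorem~\ref{thm:functGroupHom}, showing that $e^{-(\Phi_\R^{G_i})^\vee}$ intertwines the sheaf map of Lemma~\ref{lem:homologySheafMap} with $\rho_m\otimes\Id$; this is exactly the step you describe as ``intertwines $\wt g$'' and treat as routine, but it uses the hypothesis that $\rho$ is a group homomorphism and is the technical heart of the argument. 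Your anticipated obstacle of combining Propositions~\ref{prop:thickeningFunctorialInV} and~\ref{prop:qisoThickenings} is real, and the paper's factorization into two cases is precisely the device that avoids having to do both at once.
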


Before we prove Theorem~\ref{thm:functorial}, let us interpret its statement in more detail: the commutative diagram \eqref{eq:baseFunctoriality} induces a commutative cube
\begin{equation}\label{eq:cubefunctoriality}
	\begin{tikzcd}[row sep=2.5em]
		U_1^{f_1} \arrow[rr,"\wt g"] \arrow[dr,swap,"\widetilde{f_1}"] \arrow[dd,swap,"\pi^1"] &&
		U_2^{f_2} \arrow[dd,swap,"\pi^2" near start] \arrow[dr,"\widetilde{f_2}"] \\
		& T G_1 \arrow[rr,crossing over,"\widetilde{\rho}" near start] &&
		T G_2 \arrow[dd,"\exp"] \\
		U_1 \arrow[rr,"g" near end] \arrow[dr,swap,"f_1"] && U \arrow[dr,swap,"f_2"] \\
		& G_1 \arrow[rr,"\rho"] \arrow[uu,<-,crossing over,"\exp" near end]&& G_2
	\end{tikzcd}
\end{equation}
as follows: the left and right sides of the cube are pullback diagrams, $\wt\rho$ is the unique lift of $\rho$ to the universal covers that is a group homomorphism, and $\wt g$ is determined uniquely by $g$ and $\wt\rho$. The top of this cube is the commutative diagram~\eqref{eq:topFunctoriality}.

Also note that the morphism
$$
\wt g_*: H_j(U_1^{f_1},\R)\to H_j(U_2^{f_2},\R)
$$
induced in homology by $\wt g$ for all $j\geq 0$ satisfies that $\wt g_*(\gamma\cdot -)=\rho_*(\gamma)\cdot\wt g_*(-)$ for all $\gamma\in\pi_1(G_1)$, which justifies that the maps $\wt{g}_{*,m}$ are well defined for all $j\geq 0$ and all $m\geq 1$.

\subsection{Proof of Theorem~\ref{thm:functorial}}

\begin{remark}\label{rem:outlineFunctoriality}
	In the setting of Theorem~\ref{thm:functorial}, the commutative diagram \eqref{eq:baseFunctoriality} factors as
		\begin{equation}\label{eq:baseFunctoriality2}
		\begin{tikzcd}[row sep=2em]
			U_1
			\arrow[d,"f_1"]
			\arrow[r,"\Id"]
			&
				U_1
			\arrow[d,"\rho\circ f_1"]\arrow[r, "g"]
			& 	U_2
			\arrow[d,"f_2"]\\
			G_1
			\arrow[r,"\rho"]
			&
			G_2\arrow[r, "\Id"]
			&
			G_2,
		\end{tikzcd}
	\end{equation}
 so the map \(U_1^{f_1}\to U_2^{f_2}\) factors through \(U_1^{\rho\circ f_1}\). Therefore, it is enough to consider the cases where \(\rho=\Id\) and \(g = \Id\), which we will do in Theorems~\ref{thm:funcEasy} and~\ref{thm:functGroupHom} respectively.
\end{remark}

Let $\cL_1\coloneqq f_1^{-1}\exp_!\ul{\R}_{TG_1}$, and let $\cL_2\coloneqq f_2^{-1}\exp_!\ul{\R}_{TG_2}$. Let $\ov{\cL_1}$ (resp. $\ov{\cL_2}$) be $\cL_1$ (resp. $\cL_2$) with the conjugate $R^1$ (resp. $R_2$)-module structure, as in Definition~\ref{def:L}. Before we prove Theorem~\ref{thm:functorial}, we need to recall how the MHS on $R_m^i\otimes_{R^i} H_j(U_i^{f_i},\R)$ was defined (Definition~\ref{def:MHSalexander}) using mixed Hodge complexes of sheaves, since the proof will need to realize the morphism $\wt g_*$ as a morphism at the level of the corresponding complexes of sheaves.  The MHS on $R_m^i\otimes_{R^i} H_j(U_i^{f_i},\R)$ is induced from the MHS on $H^j(U_i,R_{-m'}^i\otimes_{R^i}\ov{\cL_i})$ for all $m'\geq 1$ (from Definition~\ref{def:endowedMHS}) as follows:
\begin{enumerate}
	\item\label{step1} The isomorphism $H_j(U_i, R_{m'}^i\otimes_{R^i} \cL_i)\cong \Hom_\R(H^j(U_i, R_{-m'}^i\otimes_{R^i} \ov{\cL_i}),\R) $ from Remark~\ref{rem:homologyVsCohomology} endows $H_j(U_i, R_{m'}^i\otimes_{R^i} \cL_i)$ with the dual MHS of $H^j(U_i, R_{-m'}^i\otimes_{R^i} \ov{\cL_i})$ for all $m'\geq 1$.
	\item\label{step2} The isomorphism $R_\infty^i\otimes_{R^i}H_j(U_i,\cL_i)\cong \varprojlim_{m'} H_j(U_i,R_{m'}^i\otimes_{R^i}\cL_i)$ from Corollary~\ref{cor:completionHomology2} endows $R_\infty^i\otimes_{R^i}H_j(U_i,\cL_i)$ with a pro-MHS, where the morphisms in the inverse limits are the ones induced by the projections $R_{m''}^i\twoheadrightarrow R_{m'}^i$ for all $m''>m'$  (Remark~\ref{rem:endowedproMHShomology}).
	\item\label{step3} $R_m^i\otimes_{R^i}H_j(U_i,\cL_i)$ is endowed with a MHS in Corollary~\ref{cor:quotientI} as the cokernels of the multiplication map
	$$
	\underbrace{H_1(G_1,\R)\otimes_\R\ldots\otimes_\R H_1(G_1,\R)}_m\otimes_\R \left(R_\infty^{i}\otimes_{R^i}H_j(U_i,\cL_i)\right)\to \left(R_\infty^{i}\otimes_{R^i}H_j(U_i,\cL_i)\right),
	$$
	which is a morphism of pro-MHS (Remark~\ref{rem:multiplicationHomology}).
	\item\label{step4} $R_m^i\otimes_{R^i}H_j(U_i^{f_i},\R)$ is endowed with the MHS from $R_m^i\otimes_{R^i}H_j(U_i,\cL_i)$ through the natural isomorphism $H_j(U_i^{f_i},\R)\cong H_j(U_i,\cL_i)$ from Remark~\ref{rem:coverVsL}, which comes from an isomorphism at the level of chain complexes (Corollary~\ref{cor:quotientI}).
\end{enumerate}

The following two lemmas address the question of how to realize the morphism $\wt g_*$ as a morphism between complexes of sheaves.

\begin{lemma}\label{lem:homologySheafMapEasy}
	Suppose that $\rho=\Id$ in the setting from Theorem~\ref{thm:functorial}. Let us denote $G\coloneqq G_1=G_2$, $R=\R[\pi_1(G)]$ and $R_m\coloneqq R_m^1=R_m^2$ for all $m\geq 1$. Then, $\ov{\cL_1}=g^{-1}\ov{\cL_2}$, and the map $\wt g_*: H_j(U_1^{f_1},\R)\to H_j(U_2^{f_2},\R)$ is induced through steps \eqref{step1}--\eqref{step4} above for all $j\geq 0$ by the adjunction $\Id\to Rg_*g^{-1}$ applied to the sheaves $R_{-m'}\otimes\ov{\cL_2}$ for all $m'>1$.
\end{lemma}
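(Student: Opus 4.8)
The plan is to reduce everything to functoriality of the adjunction morphism $\mathrm{Id}\to Rg_*g^{-1}$ and of the various natural identifications used to build the mixed Hodge structures. The key initial observation is that when $\rho=\mathrm{Id}$, the cube \eqref{eq:cubefunctoriality} simplifies: the right and left faces are pullback squares over the \emph{same} exponential map $\exp\colon TG\to G$, and $\widetilde\rho=\mathrm{Id}$, so $U_1^{f_1}=U_1\times_G TG = U_1\times_{U_2}(U_2\times_G TG)=U_1\times_{U_2}U_2^{f_2}$. This immediately gives $\widetilde g^{-1}$-compatibility at the level of local systems: since $\cL_i=f_i^{-1}\exp_!\underline{\R}_{TG}$ and $f_1=f_2\circ g$, functoriality of pullback yields $\cL_1=g^{-1}\cL_2$ as sheaves of $R$-modules, and hence $\ov{\cL_1}=g^{-1}\ov{\cL_2}$ (the conjugate $R$-module structure is preserved because $g^{-1}$ commutes with the deck action). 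Tensoring with $R_{-m'}$ over $R$ gives $R_{-m'}\otimes_R\ov{\cL_1}=g^{-1}(R_{-m'}\otimes_R\ov{\cL_2})$ for all $m'\geq 1$.

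Next I would track the map $\widetilde g_*$ through the four steps recalled before the lemma. The adjunction morphism $\mathrm{Id}\to Rg_*g^{-1}$ applied to $R_{-m'}\otimes_R\ov{\cL_2}$ induces, upon taking hypercohomology on $U_2$, the pullback map $g^*\colon H^j(U_2,R_{-m'}\otimes_R\ov{\cL_2})\to H^j(U_1, g^{-1}(R_{-m'}\otimes_R\ov{\cL_2}))=H^j(U_1,R_{-m'}\otimes_R\ov{\cL_1})$. Dualizing over $\R$ gives exactly the map on $H_j(U_i,R_{m'}\otimes_R\cL_i)$ induced by $\widetilde g$ via the chain-level description, so step \eqref{step1} is compatible: this is the standard fact that the isomorphism of Remark~\ref{rem:homologyVsCohomology} (tensor-hom adjunction composed with Proposition~\ref{prop:dimca-homology}) is natural in maps of the underlying space, which one checks at the chain-complex level. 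Step \eqref{step2} is compatible because $g^*$ commutes with the projections $R_{m''}\twoheadrightarrow R_{m'}$ (these act on the coefficient module, not the space), so $\widetilde g_*$ is compatible with the inverse-limit pro-MHS structures; step \eqref{step3} is compatible because $g^*$ is $R_\infty$-linear, hence commutes with the multiplication-by-$H_1(G,\R)$ maps, so it descends to cokernels; and step \eqref{step4} is compatible because the isomorphism $H_j(U_i^{f_i},\R)\cong H_j(U_i,\cL_i)$ of Remark~\ref{rem:coverVsL} is induced by an isomorphism of chain complexes functorial in the cover.

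Thus the concrete content of the lemma is: (i) the sheaf identification $\ov{\cL_1}=g^{-1}\ov{\cL_2}$ and its $R_{-m'}$-twisted versions, and (ii) that the adjunction morphism, after passing to hypercohomology, dualizing, taking inverse limits and cokernels, yields precisely $\widetilde g_{*,m}$. Part (i) is the routine pullback computation above. For part (ii), the key point to verify carefully is that the map $\widetilde g_*$ on homology of covers really does agree, under $H_j(U_i^{f_i},\R)\cong H_j(U_i,\cL_i)$, with the map induced by $g^{-1}$ on local system homology; this is because $U_1^{f_1}\to U_2^{f_2}$ is precisely the map of covers classified by the pullback of coefficient systems, so on the chain complexes $C_\bullet(\widetilde U_i)\otimes_{\Z[\pi_1 U_i]}(\text{stalk})$ it is the obvious base-change map. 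I expect the main obstacle — really the only non-formal step — to be bookkeeping the naturality of the composite isomorphism in Remark~\ref{rem:homologyVsCohomology} with respect to the continuous map $g$, since it is built from several adjunctions and one must confirm each is natural in the space; once that is in hand the rest is a diagram chase through steps \eqref{step1}--\eqref{step4}.
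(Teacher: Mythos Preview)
Your proposal is correct and follows essentially the same route as the paper's proof: identify $\ov{\cL_1}=g^{-1}\ov{\cL_2}$ by functoriality of pullback, then check that the adjunction map on cohomology dualizes to $\wt g_*$ and is compatible with steps \eqref{step1}--\eqref{step4}. The paper carries out explicitly the ``bookkeeping'' you flag as the main obstacle: it writes down the singular chain complexes $S_\bullet(U_i,\cL_i)$ and the explicit chain map $\widehat g$ realizing $\wt g_*$, takes $\R$-duals to get $(\widehat g_{m'})^\vee$ on singular cochains, sheafifies these to obtain fine resolutions of $R_{-m'}\otimes_R\ov{\cL_i}$, and then verifies a concrete commutative diagram showing that $(\widehat g_{m'})^\vee$ agrees with the adjunction $\Id\to Rg_*g^{-1}$ applied to $R_{-m'}\otimes_R\ov{\cL_2}$; your abstract naturality claims are exactly what this diagram encodes.
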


\begin{proof}
Following \cite[p.60]{maxbook}, let $$S_j(U_i,\cL_i)\coloneqq\left\{\sum_{\sigma}l_\sigma \sigma \text{ (finite linear combination)}\left| \begin{array}{c}\sigma:\Delta^j\to U_i \text{ is a singular }j\text{-simplex, and}\\ l_\sigma\in\Gamma(\Delta^{j},\sigma^{-1}\cL_i)\end{array}\right.\right\}.$$
Through the usual differential of singular homology and restrictions of $l_\sigma$ to the faces of $\sigma$, we obtain $S_\bullet(U_i,\cL_i)$, the singular chain complex that computes $H_j(U_i,\cL_i)$ for all $j$. The morphism in homology induced by $\wt g_*$ through the isomorphism in Step~\eqref{step4} above comes from the following map of chain complexes.
$$
\f{\widehat{g}}{S_j(U_1,\cL_1)}{S_j(U_2,\cL_2)}{\sum_\sigma l_\sigma \sigma}{\sum_\sigma l_\sigma g\circ\sigma} 
$$
Since $\cL_1=g^{-1}\cL_2$, we have that $\Gamma(\Delta^{j},\sigma^{-1}\cL_1)=\Gamma(\Delta^{j},(g\circ\sigma)^{-1}\cL_2)$, so this definition makes sense. A similar definition can be given for a map between the chain complexes corresponding to truncated local systems, namely
$$
\begin{array}{ccc}
	\f{\widehat{g}_{m'}}{	S_j(U_1,R_{m'}\otimes_R\cL_1)}{S_j(U_2,R_{m'}\otimes_R\cL_2)}{\sum_\sigma (a\otimes l_\sigma) \sigma}{\sum_\sigma (a\otimes l_\sigma) g\circ\sigma} 
\end{array}
$$
where $a\in R_{m'}$ and $l_\sigma\in\Gamma(\Delta^j,\sigma^{-1}\cL_1)$. Note that $\Gamma(\Delta^j,\sigma^{-1}R_{m'}\otimes_R\cL_1)=R_{m'}\otimes_R \Gamma(\Delta^j,\sigma^{-1}\cL_1)$, so this is well defined. Analogously, we may define $\widehat{g}_\infty$. If we pass to the inverse limit, these morphisms $S_\bullet (U_1,R_{m'}\otimes_R\cL_1)\to S_\bullet (U_2,R_{m'}\otimes_R\cL_2)$ for all $m'$ induce through the isomorphism in Step~\eqref{step2} the same morphism in homology as $\widehat{g}_\infty$ (note that $R_\infty$ is a flat $R$-module). The multiplication map in Step~\eqref{step3} above can be lifted to a morphism of chain complexes
$$
\underbrace{H_1(G_1,\R)\otimes_\R\ldots\otimes_\R H_1(G_1,\R)}_{m}\otimes_\R \left(S_\bullet(U_i,R_\infty\otimes_R\cL_i)\right)\to S_\bullet(U_i,R_\infty\otimes_R\cL_i),
$$
inducing a multiplication map
$$
\underbrace{H_1(G_1,\R)\otimes_\R\ldots\otimes_\R H_1(G_1,\R)}_{m}\otimes_\R H_j(U_i,R_\infty\otimes_R\cL_i)\to H_j(U_i,R_\infty\otimes_R\cL_i)
$$
for $i=1,2$, and for all $j\geq 0$. For a fixed $j$, the morphism that $\widehat g_\infty$ induces in the cokernel of these multiplication maps coincides with $R_m\otimes H_j(\wt g)$. All that is left to see is that the morphisms induced by $\widehat g_{m'}$ in homology agree with the dual of the morphisms induced in cohomology by adjunction $\Id\to Rg_*g^{-1}$ applied to the sheaf $R_{-m}\otimes_R\ov\cL_2$ through the isomorphism in Step~\eqref{step1} above.

The isomorphism in Step~\eqref{step1} can be realized at the level of chains as follows: Let $S^\bullet (U_i,R_{-m'}\otimes_R\ov{\cL_i})$ the complex obtained by taking the $\R$-dual of $S_\bullet (U_i,R_{m'}\otimes_R \cL_i)$. The dual of $\widehat g_{m'}$ is
$$
\f{(\widehat g_{m'})^\vee}{S^j (U_2,R_{-m'}\otimes_R\ov{\cL_2})}{S^j (U_1,R_{-m'}\otimes_R\ov{\cL_1})}{H}{H\circ \widehat g_{m'}.}
$$

Note that $S^j(-,R_{-m'}\otimes_R\ov{\cL_i})$ is a presheaf. Let $\wt S^j(-,R_{-m'}\otimes_R\ov{\cL_i})$ be its sheafification. We will use facts stated in \cite[p.360, section F]{spanier}. $ S^\bullet(-,R_{-m'}\otimes_R\ov{\cL_i})$ is a complex of fine presheaves which is a resolution (in the category of presheaves on $U_i$) of $R_{-m'}\otimes_R\ov{\cL_i}$, where the resolution map $R_{-m}\otimes_R\ov{\cL_i}\to S^0(-,R_{-m'}\otimes_R\ov{\cL_i})$ is given locally on $V_i\subset U_i$ by
$$b\mapsto \left(\begin{array}{ccc}
	S_0(V_i,R_m\otimes_R\cL_i)&\longrightarrow& \R\\
	\sum_{x\in V_i}l_{\sigma_x} \sigma_x&\longmapsto& \sum_\sigma b_x(l_{\sigma_x}),
	\end{array}\right)$$
where $\sigma_x$ is the map from $\Delta^0$ to $x\in V_i$,  $l_{\sigma_x}$ is in the stalk of $R_m\otimes_R\cL_i$ at the point $x$, and we are using that $R_m\otimes_R\cL_i$ and $R_{-m}\otimes_R\ov{\cL_i}$ are $\R$-dual local systems. Hence,
 $\wt S^\bullet(-,R_{-m'}\otimes_R\ov{\cL_i})$ is a resolution of $R_{-m'}\otimes_R\ov{\cL_i}$ (in the category of sheaves on $U_i$) of fine sheaves. In particular $\wt S^\bullet(-,R_{-m'}\otimes_R\ov{\cL_i})$ is a complex of acyclic sheaves with respect to pushforwards, so it can be used to compute $H^j(U_i,R_{-m'}\otimes_R\ov{\cL_i})$. Moreover, the sheafification morphism induces an isomorphism
$$
H^j(S^j(U_i,R_{-m'}\otimes_R\ov{\cL_i}))\xrightarrow{\cong} H^j(U_i,R_{-m'}\otimes_R\ov{\cL_i})
$$
for all $j$. It suffices to show that the morphism induced by $(\widehat g_{m'})^\vee$ in cohomology coincides through this isomorphism with the map $R_{-m'}\otimes_R\ov{\cL_2}\to Rg_*(R_{-m'}\otimes_R\ov{\cL_1})$ induced by the adjunction $\Id\to Rg_*g^{-1}$ in sheaf cohomology. Consider the morphism
$$
\f{a_{m'}}{R_{-m}\otimes_R\ov{\cL_2}}{g_*(R_{-m}\otimes_R\ov{\cL_1})}{\alpha\otimes\iota}{\alpha\otimes\iota\circ g},
$$
where $\iota:V\to TG$ satisfies that $\exp\circ\iota=f_2$. Note that $(\widehat g_{m'})^\vee$ can be easily extended to a morphism of complexes of pre-sheaves $(\widehat g_{m'})^\vee:S^\bullet(-,R_{-m}\otimes_R\ov{\cL_2})\to g_*S^\bullet(-,R_{-m}\otimes_R\ov{\cL_2})$. The result follows from the commutativity of this diagram and the fact that the complexes of (pre)sheaves that appear are fine.
$$
\begin{tikzcd}
	  \ & R_{-m'}\otimes_R\ov\cL_2\arrow[d,"a_{m'}"]\arrow[ldd,swap, "\text{resolution}"]\arrow[ddd,bend left=100,"\text{adjunction} \Id\to Rg_*g^{-1}"]\\
	  & g_*\left(R_{-m'}\otimes_R\ov\cL_1\right)\arrow[d, "g_* \text{ of resolution}"]\\
	S^\bullet(-,R_{-m'}\otimes_R\ov{\cL_2})\arrow[r, "(\widehat g_{m'})^\vee"]\arrow[d, "\text{sheafification}"]&g_*S^\bullet(-,R_{-m'}\otimes_R\ov{\cL_1})\arrow[d, "g_*\text{ of sheafification}"]\\
	\wt S^\bullet(-,R_{-m'}\otimes_R\ov{\cL_2})&g_*\wt S^\bullet(-,R_{-m'}\otimes_R\ov{\cL_1})& \  \\
\end{tikzcd}
$$
\end{proof}

	\begin{lemma}\label{lem:homologySheafMap}
	Suppose that $g=\Id$ in the setting of Theorem~\ref{thm:functorial}, and let $U\coloneqq U_1=U_2$. Let \(\rho_*\) denote the induced map in homology, which generates a map \(R^1_{m'}\to R^2_{m'}\) for all \(m'\ge 1\). After applying the isomorphisms in Steps~\eqref{step1}--\eqref{step4} above, the map $\wt g_{*,m}$ in Theorem~\ref{thm:functorial} is induced by the following maps of sheaves for all $m'\geq 1$:
	\[
	\begin{array}{rcl}
		R^2_{-m'}\otimes_{R^2} \ov{\cL_2} &\longrightarrow & R^1_{-m'}\otimes_{R^1} \ov{\cL_1} \\
		\phi\otimes \ov{\wt\rho\circ \iota_0} & \mapsto & \phi \circ \rho_* \otimes \ov{\iota_0}.
	\end{array}
	\]
	Here \(\phi\) is any element of \(R^2_{-m'} =\Hom_{\R}(R_{m'}^2,\R) \) and \(\iota_0\) is a local generator of \(\cL_1\), i.e. a local lift \(U\to T G_1\) satisfying that $\exp\circ\iota_0=f_1$.	
\end{lemma}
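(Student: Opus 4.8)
\textbf{Proof strategy for Lemma~\ref{lem:homologySheafMap}.}
The plan is to mimic the proof of Lemma~\ref{lem:homologySheafMapEasy}, realizing $\wt g_*$ at the level of singular chains with coefficients in the relevant local systems and then identifying the dual map with the stated morphism of sheaves. First I would observe that when $g=\Id$, the map $\wt g\colon U^{f_1}\to U^{f_2}$ of covering spaces is, on fibers, the composition $\exp^{-1}(f_1(x))\to \exp^{-1}(f_2(x))$ induced by $\wt\rho$; equivalently, at the level of the local systems $\cL_1 = f_1^{-1}\exp_!\ul\R_{TG_1}$ and $\cL_2 = f_2^{-1}\exp_!\ul\R_{TG_2}$, sending a local lift $\iota_0\colon U\to TG_1$ of $f_1$ to the local lift $\wt\rho\circ\iota_0\colon U\to TG_2$ of $f_2 = \rho\circ f_1$. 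This gives a map of sheaves $\cL_1\to \cL_2$, covariant in the $\cL$'s, which is $R^1$-linear where $R^1$ acts on $\cL_2$ through $\rho_*\colon R^1\to R^2$. Tensoring with $R^i_{m'}$ over $R^i$ and using that $\rho_*$ is compatible with the inclusions $R^i\hookrightarrow R^i_\infty$ (Definition~\ref{def:Rmodstructure}, via $\gamma\mapsto e^{\log\gamma}$, and the induced $R^1_{m'}\to R^2_{m'}$) yields a map $R^1_{m'}\otimes_{R^1}\cL_1\to R^2_{m'}\otimes_{R^2}\cL_2$.

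Next I would run exactly the same chain-level argument as in Lemma~\ref{lem:homologySheafMapEasy}: define $\widehat{g}_{m'}\colon S_\bullet(U, R^1_{m'}\otimes_{R^1}\cL_1)\to S_\bullet(U, R^2_{m'}\otimes_{R^2}\cL_2)$ by applying the above sheaf map to the coefficients $l_\sigma$ of each simplex, check it is a chain map, pass to the inverse limit to get $\widehat g_\infty$, and check compatibility with the multiplication maps by $H_1(G_i,\R)$ used in Steps~\eqref{step2}--\eqref{step4} (here one uses that $\rho_*\colon H_1(G_1,\R)\to H_1(G_2,\R)$ intertwines the two multiplication actions, which is where the extra functoriality in the target enters). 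The induced map on cokernels is then $\wt g_{*,m}$. Finally, dualizing over $\R$ and sheafifying the complex of presheaves $S^\bullet(-, R^i_{-m'}\otimes_{R^i}\ov{\cL_i})$ as in loc.\ cit., I would identify the dual of $\widehat g_{m'}$ with the map of sheaves $a_{m'}\colon R^2_{-m'}\otimes_{R^2}\ov{\cL_2}\to R^1_{-m'}\otimes_{R^1}\ov{\cL_1}$; unwinding the pairing between $\cL_i$ and $\ov{\cL_i}$ from Remark~\ref{rem:ConjAndDual} (with the conjugate module structure of Notation~\ref{not:iotaBar}), and the tensor--hom adjunction of Remark~\ref{rem:homologyVsCohomology}, shows $a_{m'}$ sends $\phi\otimes\ov{\wt\rho\circ\iota_0}$ to $(\phi\circ\rho_*)\otimes\ov{\iota_0}$, which is precisely the stated formula. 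The same commutative square as in Lemma~\ref{lem:homologySheafMapEasy} relating the resolution, $\widehat g_{m'}^\vee$, and the sheafification, now with $a_{m'}$ in place of the adjunction unit, finishes the identification on cohomology.

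The main obstacle I anticipate is purely bookkeeping rather than conceptual: one must be careful that the antilinear identifications $\cL_i\cong\ov{\cL_i}$ of Notation~\ref{not:iotaBar} are handled consistently on both sides, since $\rho_*$ is linear on $\pi_1$ but the module structures on $\ov{\cL_1}$ and $\ov{\cL_2}$ are twisted by inversion; tracking a generator $\iota_0$ versus $\ov{\iota_0}$ and verifying that $\wt\rho\circ\iota_0$ indeed corresponds under $\cL_2\cong\ov{\cL_2}$ to $\ov{\wt\rho\circ\iota_0}$ with the correct twist is the delicate point. A secondary, minor check is that $\wt\rho$ is well-defined on local lifts independent of the chosen branch, which follows because $\wt\rho$ is the unique \emph{additive} lift of $\rho$ and hence commutes with the deck actions via $\rho_*$; this is already implicit in the construction of the cube~\eqref{eq:cubefunctoriality}. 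Once these compatibilities are in place, every step is a routine transcription of the corresponding step in the proof of Lemma~\ref{lem:homologySheafMapEasy}.
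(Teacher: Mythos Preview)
Your proposal is correct and follows essentially the same route as the paper's proof: identify $\wt g_*$ with the sheaf map $\cL_1\to\cL_2$ sending $\iota_0\mapsto\wt\rho\circ\iota_0$, descend to the truncations $R^i_{m'}\otimes_{R^i}\cL_i$, verify compatibility with the multiplication maps by $H_1(G_i,\R)$ via $\rho_*$, and then dualize to obtain the stated formula. The only stylistic difference is that the paper carries out the dualization purely algebraically by tracing through the chain of isomorphisms of Remark~\ref{rem:homologyVsCohomology} (tensor--hom adjunction and the pairing of Remark~\ref{rem:ConjAndDual}) in both directions, rather than reusing the presheaf/sheafification machinery from Lemma~\ref{lem:homologySheafMapEasy}; since here $g=\Id$ and both local systems live on the same $U$, the adjunction step is unnecessary and the paper's direct algebraic computation is slightly shorter, but your approach works equally well.
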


\begin{proof}
	Seeing $U^{f_i}$ as subsets of $U\times TG_i$, we have that $\wt g(u,z)=(u,\wt\rho(z))$, so the map $\wt g_*:H_j(U^{f_1},\R)\to H_j(U^{f_2},\R)$ coincides through the identifications in Step~\eqref{step4} with the map that the morphism
	\begin{equation}\label{eq:morphism}
	\begin{array}{ccc}
		 \cL_1& \longrightarrow &  \cL_2\\
		\iota &\longmapsto&\rho\circ\iota.
	\end{array}
	\end{equation}
	induces in homology, where the morphism above is described in terms of local sections (local lifts of $f_i$ to $U\to TG_i$). Notice that the morphism \eqref{eq:morphism} descends to the truncated local systems as
	\begin{equation}\label{eq:morphismTruncated}
		\begin{array}{ccc}
			R_{m'}^1\otimes_{R^1} \cL_1& \longrightarrow & R_{m'}^2\otimes_{R^2} \cL_2\\
			a\otimes\iota_0 &\longmapsto&\rho_*(a)\otimes \wt\rho\circ\iota_0
		\end{array}
	\end{equation}
  for all $m'\geq 0$. By Step \eqref{step2}, the inverse limit of the maps in homology induced by these morphisms coincides with the map in homology induced by
	\begin{equation}\label{eq:morphismInfinity}
		\begin{array}{ccc}
			R_\infty^1\otimes_{R^1} \cL_1& \longrightarrow & R_\infty^2\otimes_{R^2} \cL_2\\
			a\otimes\iota_0 &\longmapsto&\rho_*(a)\otimes \wt\rho\circ\iota_0,
		\end{array}
	\end{equation}
which makes the following diagram commute, where the horizontal arrows are multiplication.
$$
\begin{tikzcd}
	H_1(G_1,\R)\otimes_\R \left(R_\infty\otimes H_j(U,\cL_1)\right)\arrow[r]\arrow[d, "\rho_*\otimes\left(\rho_*\otimes H_j(\eqref{eq:morphism})\right)"] & R_\infty\otimes H_j(U,\cL_1)\arrow[d, "\rho_*\otimes H_j(\eqref{eq:morphism})=H_j(\eqref{eq:morphismInfinity})=\varprojlim_{m'} H_j(\eqref{eq:morphismTruncated})"]\\
	H_1(G_2,\R)\otimes_\R \left(R_\infty\otimes H_j(U,\cL_2)\right)\arrow[r] & R_\infty\otimes H_j(U,\cL_2).\\
\end{tikzcd}
$$
The equality $\rho_*\otimes H_j(\eqref{eq:morphism})=H_j(\eqref{eq:morphismInfinity})$ follows because $R_\infty^i$ is a flat $R^i$-module. In light of Step~\eqref{step3}, the commutativity of the diagram above tells us that we just need to identify what the morphism $R_{-m'}^2\otimes_{R^2}\ov{\cL_2}\to R_{-m'}^1\otimes_{R^1}\ov{\cL_1}$ corresponding to \eqref{eq:morphismTruncated} is through the chain of isomorphisms at the level of sheaves from Remark~\ref{rem:groupStructure} (see Step~\eqref{step1}), and check that it agrees with the one described in the statement of this Lemma.

Let $m'\geq 1$, let \(\phi\in R_{-m'}^2 = \Hom_\R(R_{m'}^2,\R)\), and let \(\iota_0\) be a local section of \(\cL_1\). Note that \(\wt\rho\circ \iota_0\) generates \(\cL_2\) locally over $R^2$, so any element of \(R^2_{-m'} \otimes_{R^2} \oL_2\) can be written as \(\phi\otimes \ov{\wt\rho\circ \iota_0}\) for some \(\phi\in R^2_{-m'}\). For two lifts $\iota,\iota'\colon U\to G_i$, let us use $\langle \ov{\iota'},\iota\rangle\in \pi_1(G_i)\subset R^i$ the pairing between $\ov{\cL_i}$ and $\cL_i$ from Remark~\ref{rem:ConjAndDual}, which is defined by $\langle \ov\iota,\iota\rangle = 1$. We have the following chain of isomorphisms from Remark~\ref{rem:homologyVsCohomology}: 
	\begin{multline*}
  \begin{array}{ccccccc}
		R_{-m'}^2\otimes_{R^2} \ov\cL_2 
		& \cong &
		R_{-m'}^2\otimes \Homm_{R^2}(\cL_2,R^2)
		&\cong &
		\Homm_{R^2}(\cL_2,\Homm_\R(R_{m'}^2,\R))
		\\[0.5em]
		\phi\otimes \ov{\wt\rho\circ\iota_0}
		& \leftrightarrow &
		\phi \otimes (\iota \mapsto \langle \ov{\wt\rho\circ\iota_0},\iota\rangle)
		&\leftrightarrow &
		\iota\mapsto (b\mapsto \phi( \langle \ov{\wt\rho\circ\iota_0},\iota\rangle\cdot b))
	\end{array}
  \\
\begin{array}{ccccccc}
		\cong &
		\Homm_\R(R_{m'}^2\otimes_{R^2} \cL_2,\R)
		\\[0.5em]
\leftrightarrow &
		(b\otimes \iota) \mapsto \phi(  \langle \ov{\wt\rho\circ\iota_0},\iota\rangle\cdot b).
	\end{array}
	\end{multline*}
	Now, we have the following morphism
	\begin{equation}\label{eq:morphismDuals}
	\begin{array}{ccc}
		\Homm_\R(R_{m'}^2\otimes_{R^2} \cL_2, \R)& \longrightarrow & \Homm_\R(R_{m'}^1\otimes_{R^1} \cL_1, \R)\\
		H &\longmapsto&\left(a\otimes\iota_0\to H(\rho_*(a)\otimes \wt \rho\circ\iota_0)\right),
	\end{array}
	\end{equation}
which is the $\R$-dual of \eqref{eq:morphismTruncated}. Hence,
	 the composition of the chain of isomorphisms above with \eqref{eq:morphismDuals} takes $\phi\otimes\ov{\wt\rho\circ\iota_0}$ to $\left(a\otimes\iota_0\mapsto \phi(\rho_*(a))\right)$.
 Going backwards through the chain of isomorphisms of Remark~\ref{rem:homologyVsCohomology}, we have:
	\begin{multline*}
	\begin{array}{ccccccc}
		\Homm_\R(R_{m'}^1\otimes_{R^1} \cL_1,\R)
		& 
    \cong &
		\Homm_{R^1}(\cL_1,\Homm_\R(R_{m'}^1,\R))
		&
    \cong & R_{-m'}^1\otimes \Homm_{R^1}(\cL_1,R^1) 
		\\[0.5em]
		(a\otimes \iota_0)\mapsto \phi( \rho_*(a))
		& \leftrightarrow &
		\iota_0 \mapsto \phi\circ\rho_*& \leftrightarrow & \phi\circ\rho_*\otimes\left(\iota_0\mapsto\langle \ov \iota_0,\iota_0\rangle\right)
	\end{array}
  \\
  \begin{array}{ccccccc}
     \cong & R_{-m'}^1\otimes \ov\cL_1 
		\\[0.5em]
    \leftrightarrow & \phi\circ\rho_*\otimes \ov{\iota_0}
	\end{array}
	\end{multline*}
	In conclusion, the map of sheaves is the one we claimed.
\end{proof}

\begin{remark}\label{rem:translation}
	Suppose that $\rho$ is an algebraic morphism, but not a group homomorphism. In that case, one cannot pick a canonical lift $\wt\rho$ to the universal covers, so there are many choices of $\wt\rho$ (each of which determines a choice of $\wt g$) that make the cube~\eqref{eq:cubefunctoriality} commutative. However, notice that the hypothesis that $\rho$ is a group homomorphism was not used in the proof of Lemma~\ref{lem:homologySheafMap}. Hence, if $g=\Id$, the map 
	$$
	\wt g_{*,m}:R^1_m\otimes_{R^1}H_j(U^{f_1},\R)\to R^2_m\otimes_{R^2}H_j(U^{f_2},\R)
	$$
	induced by the choice of $\wt g$ determined by the choice of $\wt\rho$ is induced by the morphisms $R^2_{-m'}\otimes_{R^2} \ov{\cL_2}\to R^1_{-m'}\otimes_{R^1} \ov{\cL_1}$ through Steps~\eqref{step1}--\eqref{step4}. Note that if $\rho$ is a homeomorphism (for example, a translation in a semiabelian variety $G$) and $g=\Id$, then $\wt g_{*,m}$ is an isomorphism.
\end{remark}

%

\begin{theorem}[Functoriality, $\rho=\Id$]\label{thm:funcEasy}
	Theorem~\ref{thm:functorial} holds if $\rho=\Id$.
\end{theorem}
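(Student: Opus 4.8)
The plan is to realize $\wt g_{*,m}$ as a morphism induced by a morphism of mixed Hodge complexes of sheaves, following closely the proof of Lemma~\ref{lem:indepCompactU} but with a genuine (non-birational) morphism $\ov g$ in place of $\pi^1$. Write $G=G_1=G_2$, $R=\R[\pi_1(G)]$, and recall that, since $\rho=\Id$, we have $\cL_1=g^{-1}\cL_2$ and $\ov{\cL_1}=g^{-1}\ov{\cL_2}$. By Lemma~\ref{lem:homologySheafMapEasy}, for every $m'$ the map $\wt g_{*,m}$ is obtained, via Steps~\eqref{step1}--\eqref{step4} of Section~\ref{sec:functoriality}, from the morphism
\[
H^j(U_2,R_{-m'}\otimes_R\ov{\cL_2})\longrightarrow H^j(U_1,R_{-m'}\otimes_R\ov{\cL_1})
\]
induced by the adjunction $\Id\to Rg_*g^{-1}$ applied to $R_{-m'}\otimes_R\ov{\cL_2}$. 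Steps~\eqref{step1}--\eqref{step4} (taking $\R$-duals, passing to the inverse limit over $m'$, taking cokernels of the multiplication maps of Remark~\ref{rem:multiplicationHomology}, and the chain-level isomorphism $H_j(U^f,\R)\cong H_j(U,\cL)$ of Remark~\ref{rem:coverVsL}) carry morphisms of MHS to morphisms of MHS, and the adjunction maps for varying $m'$ commute both with the projections $R_{-m'}\hookrightarrow R_{-m''}$ and with multiplication by $H_1(G,\R)$. Hence it is enough to show that the displayed map is a morphism of the MHS of Definition~\ref{def:endowedMHS} for all $m'$ and all $j\geq 0$; the statement for $\frac{H_j(U_i^{f_i},\R)}{\fm_i^m H_j(U_i^{f_i},\R)}$ then follows from Definition~\ref{def:MHSalexander}.

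I would first fix, by Lemma~\ref{lem:existenceComp} applied to the square~\eqref{eq:baseFunctoriality}, an allowed compactification $Y$ of $G$ and good compactifications $X_1,X_2$ of $U_1,U_2$ with $\ov g\colon X_1\to X_2$ extending $g$, $\ov{f_i}\colon X_i\to Y$ extending $f_i$, and $\ov{f_2}\circ\ov g=\ov{f_1}$; by Section~\ref{ss:independence} these compactifications compute the correct MHS. Put $D_i=X_i\setminus U_i$ and fix $n\geq\max\{2,\dim_\R U_1,\dim_\R U_2\}$. Since $\ov g(U_1)\subseteq U_2$ forces $\ov g^{-1}(D_2)\subseteq D_1$, pullback $\ov g^{*}$ takes logarithmic forms to logarithmic forms, preserves $W_\lc^n$ and $F^\lc$, commutes with $d$, commutes with left multiplication by elements of $R_\infty$ (which act on the first tensor factor), and satisfies $\ov g^{*}\circ\ov{f_2}^{*}=\ov{f_1}^{*}$. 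Thus $\ov g^{*}$ yields a morphism of complexes of sheaves $\cN^\bullet_{X_2,D_2,n}\to\ov g_*\cN^\bullet_{X_1,D_1,n}$ compatible with all the structure which, exactly as in the proof of Lemma~\ref{lem:indepCompactU} (using the argument of Proposition~\ref{prop:qisoThickenings}), extends to the thickenings by $\ov{f_2}^{*}\circ\Phi^Y(\eps)$ and $\ov{f_1}^{*}\circ\Phi^Y(\eps)$. Composing with $\ov g_*$ of the canonical map of $(R_{-m'}\otimes\cN^\bullet_{X_1,D_1,n},d+\ov{f_1}^{*}\circ\Phi^Y(\eps))$ into its Godement resolution gives a morphism of mixed Hodge complexes of sheaves
\[
(R_{-m'}\otimes\cN^\bullet_{X_2,D_2,n},\,d+\ov{f_2}^{*}\circ\Phi^Y(\eps))\longrightarrow R\ov g_*(R_{-m'}\otimes\cN^\bullet_{X_1,D_1,n},\,d+\ov{f_1}^{*}\circ\Phi^Y(\eps)),
\]
the target being a mixed Hodge complex of sheaves by Definition~\ref{def:directim}. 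Taking hypercohomology $\bH^{*}(X_2,-)$, and using $\bH^{*}(X_2,R\ov g_*(\dots))=\bH^{*}(X_1,(\dots))\cong H^{*}(U_1,R_{-m'}\otimes_R\ov{\cL_1})$ (Definition~\ref{def:endowedMHS}), produces a morphism of MHS $H^j(U_2,R_{-m'}\otimes_R\ov{\cL_2})\to H^j(U_1,R_{-m'}\otimes_R\ov{\cL_1})$.

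It then remains to identify this morphism with the one induced by the adjunction $\Id\to Rg_*g^{-1}$, and this is the step I expect to be the main obstacle. As in the proof of Lemma~\ref{lem:indepCompactU}, I would restrict the displayed morphism of mixed Hodge complexes of sheaves to $U_2$: there it becomes the map from $(R_{-m'}\otimes_\R\cA^\bullet_{U_2,\R},\,d+f_2^{*}\circ\Phi_\R^G(\eps_\R))$ to $g_*$ of the corresponding complex on $U_1$ followed by the Godement resolution. Combined with the observation that $g^{-1}$ of the quasi-isomorphism $e^{-(\Phi_\R^G)^\vee}\colon R_{-m'}\otimes_R\ov{\cL_2}\to(R_{-m'}\otimes_\R\cA^\bullet_{U_2,\R},\ldots)$ is the corresponding quasi-isomorphism on $U_1$ (here one uses $f_2\circ g=f_1$, so that a lift $\iota$ of $f_2$ pulls back to the lift $\iota\circ g$ of $f_1$), this shows that the diagram assembled from \eqref{eq:endow} for $U_1$ and $U_2$ together with the adjunction $\Id\to Rg_*g^{-1}$ commutes in the derived category, so the two maps on cohomology coincide. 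The remaining verifications — that $\ov g^{*}$ is compatible with the filtrations, the (twisted) differentials and the $R_\infty$-module structure, and that the extension to the thickenings is legitimate — are routine and run exactly as in Proposition~\ref{prop:qisoThickenings} and Lemma~\ref{lem:indepCompactU}.
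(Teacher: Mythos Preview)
Your proposal is correct and follows essentially the same route as the paper's proof: reduce via Lemma~\ref{lem:homologySheafMapEasy} to showing that the adjunction $\Id\to Rg_*g^{-1}$ on $R_{-m'}\otimes_R\ov{\cL_2}$ is a MHS morphism in cohomology, then realize this map as $\ov g^{*}$ on the thickened Dolbeault complexes (extended via the argument of Proposition~\ref{prop:qisoThickenings}) composed with the Godement resolution to obtain a morphism of mixed Hodge complexes of sheaves as in Definition~\ref{def:directim}. The only cosmetic difference is that the paper allows two distinct compactifications $Y_1,Y_2$ of $G$ connected by $\ov\Id$, whereas you take a single $Y$; by Section~\ref{ss:independence} either choice is fine.
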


\begin{proof}
	Let us denote $G\coloneqq G_1=G_2$, $R=\R[\pi_1(G)]$ and $R_m\coloneqq R_m^1=R_m^2$ for all $m\geq 1$. By Lemma~\ref{lem:homologySheafMapEasy} it suffices to show that the adjunction morphism $\Id\to Rg_*g^{-1}$ applied to the sheaves $R_{-m'}\otimes\ov{\cL_2}$ induces MHS isomorphisms in cohomology for all $m'\geq 1$.
	
	 Let $Y_1, Y_2$ be compactifications of $G$ and $X_1, X_2$ be compactifications of $U_1$ and $U_2$ such that
	$$
	\begin{tikzcd}
		X_1\arrow[r, "\ov{g}"]\arrow[d,"\ov{f_1}"]& X_2\arrow[d,"\ov{f_2}"]\\
		Y_1\arrow[r, "\ov{\Id}"]& Y_2
	\end{tikzcd}
	$$ forms a compatible compactification with respect to the commutative diagram \eqref{eq:baseFunctoriality}, where $\rho=\Id$. Let $j_i:U_i\to X_i$ be the inclusion for $i=1,2$. We have that $R (j_2)_*$ of the adjunction morphism yields $R (j_2)_*(R_{-m'}\otimes_R\ov{\cL_2})\to R(\ov g)^*R (j_1)_*(R_{-m'}\otimes_R\ov{\cL_1})$. At the level of the thickened logarithmic Dolbeault complexes which are quasi-isomorphic to $R (j_i)_*(R_{-m'}\otimes_R\ov{\cL_i})$ for $i=1,2$, this is the composition of
	$$\left(R_{-m'}\otimes_\R\cA^\bullet_{X_2,\R}(\log D_2), d+(\ov{f_2})^*\circ\Phi_\R^{Y_2}(\varepsilon_\R)\right)\to (\ov{g})_*\left(R_{-m'}\otimes_\R\cA^\bullet_{X_1,\R}(\log D_1), d+(\ov{f_1})^*\circ\Phi_\R^{Y_1}(\varepsilon_\R)\right),
	$$
	given by the pullback of forms through $\ov g$ (which is a morphism of complexes of sheaves by the proof of Proposition~\ref{prop:qisoThickenings} and the fact that $\Phi_\R^{Y_1}=\ov\Id^*\circ \Phi_\R^{Y_2}$) with $(\ov g)_*$ of the inclusion of $$\left(R_{-m'}\otimes_\R\cA^\bullet_{X_1,\R}(\log D_1), d+(\ov{f_1})^*\circ\Phi_\R^{Y_1}(\varepsilon_\R)\right)$$ into its Godement resolution. Since $\ov g$ is algebraic, the first of these morphisms respects the weight filtrations $\wt W_{\lc}$. Hence, picking $n\geq\max\{2,\dim_\R U_1, \dim_\R U_2\}$, both of these morphisms will respect the weight filtrations $W_{\lc}^n$, which are biregular. Recall the definition of the derived direct image of a mixed Hodge complex of sheaves (Definition~\ref{def:directim}). Using Proposition~\ref{prop:qisoThickenings}, we see that composition above extends to a morphism  of mixed Hodge complexes of sheaves
	$$
	\left(R_{-m'}\otimes\cN^\bullet_{X_2,D_2,n}, d+\ov{f_2}^*\circ\Phi^{Y_2}(\varepsilon)\right)\to R(\ov{g})_* \left(R_{-m'}\otimes\cN^\bullet_{X_1,D_1,n}, d+\ov{f_1}^*\circ\Phi^{Y_1}(\varepsilon)\right),
	$$
	where the morphism between the complex part is also given by the pullback by $\ov{g}$. Indeed, pullback by $\ov{g}$ respects both the weight and Hodge filtrations there, and it is straightforward to check that $R(\ov g)_*\left(e^{\ov{f_1}^*\circ\Psi^{Y_1}(\varepsilon_\C)}\right)$ composed with the real part of the morphism (tensored by $\otimes_\R\C$) coincides with the composition of the complex part of this morphism and $e^{\ov{f_2}^*\circ\Psi^{Y_2}(\varepsilon_\C)}$. This proves that the morphism induced by adjunction $\Id\to Rg_*g^{-1}$ applied to $R_{-m}\otimes_R\ov{\cL^2}$ yields MHS morphisms in cohomology
	$$
	H^j(U_2,R_{-m'}\otimes_R\ov{\cL^2})\to H^j(U_1, R_{-m'}\otimes_R\ov{\cL^1})
	$$
	for all $m'\geq 1$ and for all $j\geq 1$.

\end{proof}

\begin{theorem}[Functoriality, $g=\Id$]\label{thm:functGroupHom}
	Theorem~\ref{thm:functorial} holds if $g=\Id$.
\end{theorem}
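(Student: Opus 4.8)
The plan is to mimic the structure of the proof of Theorem~\ref{thm:funcEasy}, but working ``in the $V$-direction'', i.e. using the functoriality of thickenings in the mixed Hodge structure $V$ rather than the functoriality of the base mixed Hodge complex of sheaves. By Remark~\ref{rem:outlineFunctoriality} we are already in the case $g=\Id$, so write $U:=U_1=U_2$, and let $\mu:=\rho^*\colon H^1(G_2,\R)\to H^1(G_1,\R)$, which is a morphism of MHS by functoriality of Deligne's MHS, and $\rho_*\colon H_1(G_i,\R)$ its dual. By Lemma~\ref{lem:homologySheafMap}, after the identifications in Steps~(1)--(4) preceding that lemma (and in particular after dualizing over $\R$ via Remark~\ref{rem:homologyVsCohomology}), it suffices to prove that for every $m'\ge 1$ and every $j\ge 0$ the map of sheaves
\[
\psi_{m'}\colon R^2_{-m'}\otimes_{R^2}\ov{\cL_2}\longrightarrow R^1_{-m'}\otimes_{R^1}\ov{\cL_1},\qquad \phi\otimes\ov{\wt\rho\circ\iota_0}\longmapsto (\phi\circ\rho_*)\otimes\ov{\iota_0},
\]
induces morphisms of MHS $H^j(U,R^2_{-m'}\otimes_{R^2}\ov{\cL_2})\to H^j(U,R^1_{-m'}\otimes_{R^1}\ov{\cL_1})$ for the structures of Definition~\ref{def:endowedMHS}, compatibly with the inclusions $R^i_{-m'}\hookrightarrow R^i_{-m''}$ for $m''\ge m'$.

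First I would fix compatible compactifications. Choose an allowed compactification $Y_2$ of $G_2$; by Corollary~\ref{cor:PhiFunctorial} there is an allowed compactification $Y_1$ of $G_1$ and an extension $\ov\rho\colon Y_1\to Y_2$ of $\rho$ with $\ov\rho^*\circ\Phi^{Y_2}_\R=\Phi^{Y_1}_\R\circ\rho^*$, $\ov\rho^*\circ\Phi^{Y_2}_\C=\Phi^{Y_1}_\C\circ\rho^*$ and $\ov\rho^*\circ\Psi^{Y_2}=\Psi^{Y_1}\circ\rho^*$; by Lemma~\ref{lem:existenceComp} choose a good compactification $X$ of $U$ with an extension $\ov{f_1}\colon X\to Y_1$ of $f_1$, so that $\ov{f_2}:=\ov\rho\circ\ov{f_1}$ extends $f_2$. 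Put $D=X\setminus U$ and fix $n\ge\max\{2,\dim_\R U\}$. By Lemma~\ref{lem:df/f} and Corollary~\ref{cor:Phidf/f}, both triples $(\cN^\bullet_{X,D,n},\,H^1(G_2,\R),\,\ov{f_2}^*\circ\Phi^{Y_2},\,\ov{f_2}^*\circ\Psi^{Y_2})$ and $(\cN^\bullet_{X,D,n},\,H^1(G_1,\R),\,\ov{f_1}^*\circ\Phi^{Y_1},\,\ov{f_1}^*\circ\Psi^{Y_1})$ satisfy Assumption~\ref{ass:ingredients}, and $\mu$ connects them in the sense of Proposition~\ref{prop:thickeningFunctorialInV}: indeed $\ov{f_1}^*\circ\Phi^{Y_1}_\R\circ\rho^*=\ov{f_1}^*\circ\ov\rho^*\circ\Phi^{Y_2}_\R=\ov{f_2}^*\circ\Phi^{Y_2}_\R$, and likewise for $\Phi_\C$ and $\Psi$. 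Proposition~\ref{prop:thickeningFunctorialInV} then produces, for each $m'\ge 1$, a morphism of mixed Hodge complexes of sheaves
\[
(R^2_{-m'}\otimes\cN^\bullet_{X,D,n},\,d+\ov{f_2}^*\circ\Phi^{Y_2}(\eps))\longrightarrow (R^1_{-m'}\otimes\cN^\bullet_{X,D,n},\,d+\ov{f_1}^*\circ\Phi^{Y_1}(\eps)),
\]
whose real part is $\mu_{m'}\otimes\Id$, and these are compatible with the inclusions $R^i_{-m'}\hookrightarrow R^i_{-m''}$ because all the $\mu_{m'}$ are induced by the single ring map $\mu^\vee_\infty$.

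The key step — and the one that needs genuine care — is to check that, through the quasi-isomorphisms $e^{-(\Phi^{G_i}_\R)^\vee}$ of Construction~\ref{con:nu} and the adjunction used in Definition~\ref{def:endowedMHS}, the morphism of complexes just produced realizes $\psi_{m'}$; equivalently, that $(\mu_{m'}\otimes\Id_{\cA^0_{U,\R}})\circ e^{-(\Phi^{G_2}_\R)^\vee}=e^{-(\Phi^{G_1}_\R)^\vee}\circ\psi_{m'}$. This is a direct computation resting on two facts. Dualizing the compatibility of Corollary~\ref{cor:PhiFunctorial} gives $(\Phi^{G_2}_\R)^\vee\circ\wt\rho=\rho_*\circ(\Phi^{G_1}_\R)^\vee$ on $TG_1$ (both $(\Phi^{G_i}_\R)^\vee$ fixing $H_1(G_i,\R)$ because $\Phi^{G_i}_\R$ is a section of the cohomology map), so for a local lift $\iota_0$ of $f_1$ one gets $(\Phi^{G_2}_\R)^\vee(\wt\rho\circ\iota_0)=(\mu^\vee_\infty\otimes\Id)\bigl((\Phi^{G_1}_\R)^\vee(\iota_0)\bigr)$; and one has the module identity $\mu_{m'}(\phi\cdot\mu^\vee_\infty(x))=\mu_{m'}(\phi)\cdot x$ for $\phi\in R^2_{-m'}$ and $x\in R^1_\infty$, which follows from $\mu^\vee_\infty$ being a ring homomorphism together with $\mu_{m'}$ being the $\R$-transpose of $\mu^\vee_{m'}$. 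Combining them, $e^{-(\Phi^{G_2}_\R)^\vee}(\phi\otimes\ov{\wt\rho\circ\iota_0})=\phi\cdot(\mu^\vee_\infty\otimes\Id)\bigl(e^{-(\Phi^{G_1}_\R)^\vee(\iota_0)}\bigr)$ is sent by $\mu_{m'}\otimes\Id$ to $(\phi\circ\rho_*)\cdot e^{-(\Phi^{G_1}_\R)^\vee(\iota_0)}=e^{-(\Phi^{G_1}_\R)^\vee}\bigl(\psi_{m'}(\phi\otimes\ov{\wt\rho\circ\iota_0})\bigr)$, as wanted. Hence the cohomology maps induced by $\psi_{m'}$ are morphisms of MHS, compatibly with projections.

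It remains to run through Steps~(1)--(4): dualizing over $\R$ (Definition-Proposition~\ref{defprop:multilinear}) turns these into MHS morphisms $H_j(U,R^1_{m'}\otimes_{R^1}\cL_1)\to H_j(U,R^2_{m'}\otimes_{R^2}\cL_2)$; passing to the inverse limit over $m'$ (Corollary~\ref{cor:completionHomology2}) gives a pro-MHS morphism $R^1_\infty\otimes_{R^1}H_j(U,\cL_1)\to R^2_\infty\otimes_{R^2}H_j(U,\cL_2)$; and since $\wt g_*$ intertwines the deck actions through the MHS morphism $\rho_*\colon H_1(G_1,\R)\to H_1(G_2,\R)$, this map is compatible with the multiplication morphisms, so by Remark~\ref{rem:proMHScategory} it descends to a MHS morphism on the cokernels, i.e. on $R^i_m\otimes_{R^i}H_j(U,\cL_i)\cong R^i_m\otimes_{R^i}H_j(U_i^{f_i},\R)$ (Corollary~\ref{cor:quotientI}). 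This proves part~(1), and part~(2) follows by transporting along the isomorphisms $R^i_m\cong R^i/\fm_i^m$ of Remark~\ref{rem:RmVSRmodm}, exactly as in Definition~\ref{def:MHSalexander}. Everything outside the reconciliation in the third paragraph is a routine application of the machinery of Sections~\ref{sec:thickening} and~\ref{sec:thickDolbeault}.
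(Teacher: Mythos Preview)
Your proof is correct and follows essentially the same approach as the paper: both reduce via Lemma~\ref{lem:homologySheafMap} to showing that the sheaf map $\psi_{m'}$ lifts through the resolutions $e^{-(\Phi^{G_i}_\R)^\vee}$ to the morphism of mixed Hodge complexes $\mu_{m'}\otimes\Id$ furnished by Proposition~\ref{prop:thickeningFunctorialInV}, and both verify the required commuting square by the same two ingredients (the dualized compatibility $(\Phi^{G_2}_\R)^\vee\circ\wt\rho=\rho_*\circ(\Phi^{G_1}_\R)^\vee$ from Corollary~\ref{cor:PhiFunctorial}, and the module identity $\mu_{m'}(\phi\cdot\mu^\vee_\infty(x))=\mu_{m'}(\phi)\cdot x$, which the paper writes as $(\rho_*\beta\cdot\alpha)\circ\rho_*=\beta\cdot(\alpha\circ\rho_*)$). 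Your final paragraph spelling out the passage through Steps~(1)--(4) and the compatibility with the inverse system is a welcome elaboration that the paper leaves implicit.
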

\begin{proof}
	Let us denote $U\coloneqq U_1=U_2$. By Lemma~\ref{lem:homologySheafMap} it suffices to show that the maps of sheaves $R^2_{-m'}\otimes_{R^2}\ov{\cL_2}\to R^1_{-m'}\otimes_{R^1}\ov{\cL_1}$ given by $\phi\otimes\ov{\wt\rho\circ\iota_0}\mapsto \phi\circ\rho_*\otimes\ov{\iota_0}$ induces MHS isomorphisms in cohomology for all $m'\geq 1$, where $\phi\in R^2_{-m'}$ and $\iota_0$ is a local generator of $\cL_1$.
	
	Let $Y_1, Y_2$ be compactifications of $G_1$ and $G_2$, and let $X$ be a compactification of $U$ such that
	$$
	\begin{tikzcd}
		X\arrow[r, "\Id"]\arrow[d,"\ov{f_1}"]& X\arrow[d,"\ov{f_2}"]\\
		Y_1\arrow[r, "\ov{\rho}"]& Y_2
	\end{tikzcd}
	$$ forms a compatible compactification with respect to the commutative diagram \eqref{eq:baseFunctoriality}, where $g=\Id$. Let $j:U\to X$ be the inclusion, and let $D=X\setminus U$. 
	Now, we consider the thickened complexes from Definition~\ref{def:thickening}.
	\[
	\left(R_{-m}^i\otimes\cN^\bullet_{X,D,n}, d+\ov f_i^*\circ\Phi^{Y_i}(\eps)\right),
	\]
	where $n\geq \max\{2,\dim_\R U\}$.
	
Let $\rho_m:R^2_{-m}\to R^1_{-m}$ be the dual of the morphism $R^1_m\to R^2_m$ induced by $\rho_*:H_1(G_1,\R)\to H_1(G_2,\R)$. By Proposition~\ref{prop:thickeningFunctorialInV}, since $\rho^*\colon H^1(G_2,\R)\to H^1(G_1,\R)$ is a MHS morphism, \(\rho_m\otimes \Id\) induces a morphism of mixed Hodge complexes (taking into account Corollary~\ref{cor:PhiFunctorial}, which ensures that the \(\Phi\)'s and \(\Psi\)'s we have defined commute with \(\rho^*\)).
	
	The resulting morphism of mixed Hodge complexes of sheaves induces a morphism of MHSs, as desired. It remains to show that it agrees with the one in Lemma~\ref{lem:homologySheafMap}. First, note that it results, up to natural quasi-isomorphism (see Definition~\ref{def:endowedMHS}) from applying $Rj_*$ to
	\[
	R^2_{-m} \otimes_\R \cA^{\bullet}_{U,\R}
	\xrightarrow{\rho_m\otimes \Id}
	R^1_{-m} \otimes_\R \cA^{\bullet}_{U,\R}
	\]
so we just need to show that the following diagram commutes:
	\[
	\begin{tikzcd}
		R^2_{-m}\otimes_{R^2} \ov{\cL_2}
		\arrow[r,"e^{-(\Phi_\R^{G_2})^\vee}"]
		\arrow[d,"\phi\otimes \ov{\wt\rho\circ \iota_0}\mapsto \phi\circ \rho_*\otimes \ov{\iota_0}"']
		&
		R^2_{-m} \otimes_\R\cA^\bullet_{U, \R}
		\arrow[d,"\rho_m\otimes \Id"]
		\\
		R^1_{-m}\otimes_{R^1} \ov{\cL_1} 
		\arrow[r,"e^{-(\Phi_\R^{G_1})^\vee}"]
		&
		R^1_{-m} \otimes_\R\cA^\bullet_{U, \R}
	\end{tikzcd}
	\]
	
	The proof is done via direct computation. A generator of \(\oL_2\) can be given as \(\ov{\wt\rho\circ \iota}\), where \(\iota\) is a generator of \(\cL_1\). Let us write \(\iota = \sum_j e_j\otimes h_j\), where \(h_j\) are analytic functions on \(U\) and \(\{ e_j\}\) is an $\R$-basis of \(TG_1\). Applying \(\wt\rho\) results in the element \(\wt\rho\circ\iota = \sum_j \wt\rho(e_j)\otimes h_j\). If \(\alpha\in R^2_{-m}\), then
	\begin{align*}
		&(\rho_m\otimes \Id)(e^{-(\Phi_\R^{G_2})^\vee})\left(
		\alpha\otimes\ov{\wt\rho\circ\iota}
		\right)
		\\
		&=
		(\rho_m\otimes \Id)\left( \alpha \cdot \exp\left(
		-\sum_j (\Phi_\R^{G_2})^\vee(\wt \rho( e_j))\otimes h_j
		\right)
		\right)      \\
		&=
		(\rho_m\otimes \Id)\left( \alpha \cdot \exp\left(
		-\sum_j \rho_*(\Phi_\R^{G_1})^\vee( e_j)\otimes h_j
		\right)
		\right)
		&\left(\begin{array}{c} \text{Corollary~\ref{cor:PhiFunctorial}, taking duals, using}\\\text{that $\rho$ is a homomorphism}\end{array}
			\right)
	\end{align*}
	Going through the other path, the generator $\alpha \otimes \sum_j  \wt\rho(e_j)\otimes h_j$ is mapped to $\alpha\circ\rho_* \otimes \sum_j e_j\otimes h_j$ through the vertical arrow. Therefore, we have:
	\begin{align*}
		e^{-(\Phi_\R^{G_1})^\vee}\left(\alpha\circ\rho_* \otimes \sum_j e_j\otimes h_j\right)
		&=
		\rho_m(\alpha) \cdot \exp \left(
		- \sum_j (\Phi_\R^{G_1})^\vee(e_j)\otimes h_j
		\right).
	\end{align*}
	To show that the above two expressions coincide, we just need to show that for any $\beta\in H_1(G_1,\R)$ and any $\alpha\in \Hom(\Sym^j H_1(G_2,\R), \R)$,
	\[
	(\rho_*\beta\cdot \alpha)\circ\rho_* =  \beta \cdot(\alpha\circ\rho_*).
	\]
	The proof is a computation:  we use the fact that the product
	\[
	H_1(G_i,\R)\otimes (\Sym^j H_1(G_i,\R))^\vee\to (\Sym^{j-1} H_1(G_i,\R))^\vee
	\]
	is the dual of \(H^1(G_i,\R)\otimes \Sym^{j-1} H_1(G_i,\R) \to \Sym^j H_1(G_i,\R)\). For any \(\gamma \in \Sym^{j-1} H_1(G_1,\R)\),
	\begin{align*}
		\langle (\rho_*\beta\cdot \alpha)\circ\rho_*,\gamma\rangle
		=
		\langle \rho_*\beta\cdot \alpha ,\rho_*\gamma\rangle 
		=
		\langle \alpha ,\rho_*\beta \cdot\rho_*\gamma\rangle
		=
		\langle \alpha ,\rho_*(\beta\cdot \gamma)\rangle =\langle \alpha\circ\rho_*,\beta\cdot \gamma\rangle = \langle \beta\cdot \alpha\circ\rho_*,\gamma\rangle.
	\end{align*}
\end{proof}

\subsection{A more general statement}

Suppose that $\rho:G_1\to G_2$ in the statement of Theorem~\ref{thm:functorial} is an algebraic morphism but not a group homomorphism. In that case, Remark~\ref{rem:groupStructure} says that there exists a group homomorphism $\rho_1:G_1\to G_2$ and a translation $\rho_2:G_2\to G_2$ such that $\rho=\rho_2\circ\rho_1$, and the commutative diagram \eqref{eq:baseFunctoriality} can be decomposed as
$$
\begin{tikzcd}[row sep=2em]
	U_1
	\arrow[d,"f_1"]
	\arrow[r,"g"]
	&
	U_2
	\arrow[d,"\rho_2^{-1}\circ f_2"]\arrow[r, "\Id"]
	& 	U_2
	\arrow[d,"f_2"]\\
	G_1
	\arrow[r,"\rho_1"]
	&
	G_2\arrow[r, "\rho_2"]
	&
	G_2.
\end{tikzcd}
$$
Theorem~\ref{thm:functorial} says that the commutative square on the left induces a MHS homomorphism between the quotients of Alexander modules of $(U_1,f_1)$ and $(U_2,\rho_2^{-1}\circ f_2)$ by powers of the respective augmentation ideals. Hence, in this more general setting, it suffices to understand what happens for the commutative square on the right. This is done in the following result. The main issue is that, while any group homomorphism between semiabelian varieties lifts to a unique group homomorphism between its universal covers, algebraic morphisms between semiabelian varieties don't have a canonical lift to their universal covers in general. To avoid this dependence on the base points, we will compose it with another map in (co)homology in a way that the composition does not depend on the choice of base points used to construct the lift.

\begin{theorem}\label{thm:dependenceTranslation}
	Consider the commutative diagram
	$$
	\begin{tikzcd}
		U\arrow[r,"\Id"]\arrow[d, "f"]& U\arrow[d, "\rho\circ f"]\\
		G\arrow[r, "\rho"] & G
	\end{tikzcd}
$$
	where $\rho:G\to G$ is a translation, that is, multiplication by an element $x\in G$. Let $y\in TG$ such that $\exp(y)=x$. Let $\cL_1\coloneqq f^{-1}\exp_!\ul{\R}_{TG}$ and $\cL_2\coloneqq (\rho\circ f)^{-1}\exp_!\ul{\R}_{TG}$. Then, the following hold:
	\begin{itemize}
		\item If $\wt\rho:TG\to TG$ is addition by $y$, then $\exp\circ\wt\rho=\rho\circ\exp$.
		\item 	Let $\zeta_m^y:R_{-m}\otimes_R\ov{\cL_2}\to R_{-m}\otimes_R\ov{\cL_1}$ be the morphism from Remark~\ref{rem:translation} given by $\phi\otimes\ov{\wt\rho\circ\iota}\mapsto \phi\circ \rho_*\otimes\ov{\iota}$. The composition $e^{-(\Phi_\R^G)^\vee(y)}\circ \zeta_m^y$ induces through Steps \eqref{step1}--\eqref{step4} the morphism
		$$
		e^{-(\Phi_\R^G)^\vee(y)}\circ \wt{\Id}_{*,m}: R_m\otimes_R H_j(U^f,\R)\to R_m\otimes_R H_j(U^{\rho\circ f},\R),
		$$
		where $e^{-(\Phi_\R^G)^\vee(y)}$ denotes the multiplication by $e^{-(\Phi_\R^G)^\vee(y)}\in R_\infty$, and $\wt{\Id}_{*,m}$ is the map induced by the lift $\wt{\Id}:U^f\to U^{\rho\circ f}$ of $\Id$ which is determined by $\wt{\rho}$ as in Theorem~\ref{thm:functorial}.
		\item The morphism $e^{-(\Phi_\R^G)^\vee(y)}\circ \zeta_m^y$ is independent of the choice of $y\in TG$ such that $\exp(y)=x$.
		\item $e^{-(\Phi_\R^G)^\vee(y)}\circ \wt{\Id}_{*,m}$ is an isomorphism of MHS.
	\end{itemize}

 In particular, $\frac{H_j(U^f,\R)}{\fm^mH_j(U^f,\R)}$ and  $\frac{H_j(U^{\rho\circ f},\R)}{\fm^mH_j(U^{\rho\circ f},\R)}$ are canonically isomorphic, although not through the map $\wt\Id$ in general.
\end{theorem}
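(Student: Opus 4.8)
The plan is to reduce the general algebraic morphism $\rho:G_1\to G_2$ to the two cases already treated in Theorems~\ref{thm:funcEasy} and~\ref{thm:functGroupHom}, by factoring it through a translation. By Remark~\ref{rem:groupStructure}, write $\rho=\rho_2\circ\rho_1$ where $\rho_1:G_1\to G_2$ is a group homomorphism and $\rho_2:G_2\to G_2$ is translation by some $x\in G_2$, so that the commutative square \eqref{eq:baseFunctoriality} decomposes as the horizontal concatenation of the square with arrows $(g,\rho_1)$ and the square with arrows $(\Id_{U_2},\rho_2)$, with middle vertical map $\rho_2^{-1}\circ f_2$. Theorem~\ref{thm:functorial} (via Theorems~\ref{thm:funcEasy} and~\ref{thm:functGroupHom}, which jointly cover the case of arbitrary group homomorphisms by Remark~\ref{rem:outlineFunctoriality}) handles the left square, so the whole content of the present statement is the analysis of the translation square, which is exactly the content of the first two bullet points.

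First I would verify the first bullet: if $\wt\rho:TG\to TG$ is addition by a chosen $y\in\exp^{-1}(x)$, then $\exp(\wt\rho(z))=\exp(z+y)=\exp(z)\exp(y)=\exp(z)\cdot x=\rho(\exp(z))$, since $\exp$ is a group homomorphism (Remark~\ref{rem:universalCover}); this $\wt\rho$ together with the induced lift $\wt{\Id}:U^f\to U^{\rho\circ f}$ fits into the cube \eqref{eq:cubefunctoriality} with $g=\Id$. Next, for the second bullet, I would invoke Lemma~\ref{lem:homologySheafMap} and Remark~\ref{rem:translation}: since that lemma and remark never used that $\rho$ is a group homomorphism, the map $\wt{\Id}_{*,m}$ (for the chosen lift) is induced through Steps~\eqref{step1}--\eqref{step4} by the sheaf morphisms $\zeta_m^y:R_{-m}\otimes_R\ov{\cL_2}\to R_{-m}\otimes_R\ov{\cL_1}$, $\phi\otimes\ov{\wt\rho\circ\iota}\mapsto\phi\circ\rho_*\otimes\ov\iota$. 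Here $\rho_*=\Id$ on $H_1(G,\R)$ since $\rho$ is a translation, hence homotopic to the identity; so $\zeta_m^y$ is literally the $R_\infty$-linear change-of-generator map $\ov{\wt\rho\circ\iota}\mapsto\ov\iota$. Composing with multiplication by $e^{-(\Phi_\R^G)^\vee(y)}\in R_\infty$ (which is a unit, since it lies in $1+\mathfrak a$) gives the stated map, and I would trace through Construction~\ref{con:nu}: applying $e^{-(\Phi_\R^G)^\vee}$ to $\ov{\wt\rho\circ\iota}=\ov{\iota+ y}$ versus to $\ov\iota$ differs precisely by the factor $e^{-(\Phi_\R^G)^\vee(y)}$, using additivity of $(\Phi_\R^G)^\vee$ and the fact that $y$ is a constant (its contribution has zero differential); this is the routine computation underlying bullet two.

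For the third bullet, independence of $y$: any two choices $y,y'$ of preimages of $x$ differ by an element $\lambda\in\Lambda=H_1(G,\Z)\subset TG$. Changing $y$ to $y+\lambda$ changes $\wt\rho$ by translation by $\lambda$, hence changes $\zeta_m^y$ by precomposition with the deck transformation $\gamma_\lambda\in\pi_1(G)$ acting on $\ov{\cL_2}$ — concretely $\zeta_m^{y+\lambda}=\zeta_m^y\circ(\gamma_\lambda\cdot-)$ up to the identification, which on the $R_\infty$ side is multiplication by $e^{\log\gamma_\lambda}$ (Definition~\ref{def:Rmodstructure}). Meanwhile $(\Phi_\R^G)^\vee(y+\lambda)=(\Phi_\R^G)^\vee(y)+(\Phi_\R^G)^\vee(\lambda)=(\Phi_\R^G)^\vee(y)+\log\gamma_\lambda$, because $(\Phi_\R^G)^\vee$ fixes $H_1(G,\R)\subset TG$ (Construction~\ref{con:Phidual}, since $\Phi_\R^G$ is a section of the cohomology map). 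Hence the factor $e^{-(\Phi_\R^G)^\vee(y+\lambda)}=e^{-\log\gamma_\lambda}e^{-(\Phi_\R^G)^\vee(y)}$ exactly cancels the extra $e^{\log\gamma_\lambda}$ coming from the change in $\zeta_m$, so $e^{-(\Phi_\R^G)^\vee(y)}\circ\zeta_m^y$ is independent of the choice. I expect this cancellation to be the main obstacle, in the sense that it is the one place where the specific normalization of $\Phi_\R^G$ as a section of the cohomology map is essential and must be invoked carefully; everything else is bookkeeping.

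Finally, for the fourth bullet and the concluding ``In particular'' statement: $e^{-(\Phi_\R^G)^\vee(y)}$ is multiplication by a unit of $R_\infty$, hence induces an isomorphism on each $R_m\otimes_R H_j(U,\cL)$; it is a MHS morphism because, although multiplication by the individual terms $\log\gamma$ is only a MHS morphism up to a Tate twist, the specific element $(\Phi_\R^G)^\vee(y)$ — more precisely, I would argue via Remark~\ref{rem:multiplicationCohomology} that the whole operation $e^{-(\Phi_\R^G)^\vee(y)}$ is built from the multiplication pro-MHS morphism $H_1(G,\R)\otimes(R_\infty\otimes_R H_j)\to R_\infty\otimes_R H_j$ applied iteratively — wait, the cleanest route is instead: $e^{-(\Phi_\R^G)^\vee(y)}\circ\zeta_m^y$ arises from the morphism of thickened mixed Hodge complexes of sheaves underlying Theorem~\ref{thm:functGroupHom} specialized to this setting (with $\rho^*=\Id$ a MHS morphism), composed with the independence just established; since both $\zeta_m^y$ lifts to a mixed Hodge complex morphism (as in the proof of Theorem~\ref{thm:functGroupHom}) and the multiplication by the unit $e^{-(\Phi_\R^G)^\vee(y)}$ is realized at the level of the thickened Dolbeault complex of sheaves (it acts on the $R_m$ factor by an $R_\infty$-linear map preserving $W_\lc$ and $F^\lc$, being $e^{-\cdot}$ of a weight-$0$, $F^0$ element, cf. the argument that $\eps_\C\in W_0\cap F^0$ in the proof of Definition-Proposition~\ref{defprop:thickMHC}), the composite is a morphism of MHS, and being a linear isomorphism it is a MHS isomorphism. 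Combined with the reduction in the first paragraph and Theorem~\ref{thm:functorial} for the group-homomorphism part, this yields that $\frac{H_j(U^f,\R)}{\fm^mH_j(U^f,\R)}$ and $\frac{H_j(U^{\rho\circ f},\R)}{\fm^mH_j(U^{\rho\circ f},\R)}$ are isomorphic as MHS, with the isomorphism given by $e^{-(\Phi_\R^G)^\vee(y)}\circ\wt\Id_{*,m}$ rather than by $\wt\Id_{*,m}$ itself.
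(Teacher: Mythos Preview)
Your handling of the first three bullets is correct and matches the paper's approach. The gap is in the fourth bullet.

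You try to show that $\zeta_m^y$ and multiplication by $e^{-(\Phi_\R^G)^\vee(y)}$ each \emph{separately} lift to morphisms of mixed Hodge complexes, but neither claim holds. For the first: the commuting square from the proof of Theorem~\ref{thm:functGroupHom} requires $(\Phi_\R^G)^\vee\circ\wt\rho=\rho_*\circ(\Phi_\R^G)^\vee$, and for the translation $\wt\rho(z)=z+y$ with $\rho_*=\Id$ this would force $(\Phi_\R^G)^\vee(y)=0$, which is false in general. So $\zeta_m^y$ does not lift to $\rho_m\otimes\Id=\Id$; it lifts to multiplication by $e^{+(\Phi_\R^G)^\vee(y)}$. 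For the second: your assertion that $(\Phi_\R^G)^\vee(y)$ is ``a weight-$0$, $F^0$ element'' is wrong. It lies in $H_1(G,\R)$, and when $G$ has a nontrivial abelian-variety part one has $F^0H_1(G,\C)\subsetneq H_1(G,\C)$, so a real class generically has a nonzero component outside $F^0$; multiplication by it then fails to preserve $F^\lc$ on $R_{-m}$. The analogy with $\eps_\C\in W_0\cap F^0(V_\C^\vee\otimes V_\C)$ does not apply: that element lives in a space tensored with its dual, not in $H_1$ alone.

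The paper's argument sidesteps this by observing---and you essentially computed it in your second paragraph without using it here---that the \emph{composition} $e^{-(\Phi_\R^G)^\vee(y)}\circ\zeta_m^y$ lifts through $e^{-(\Phi_\R^G)^\vee}$ to the \emph{identity} on $R_{-m}\otimes_\R\cA_{U,\R}^\bullet$. Since a translation pulls back invariant $1$-forms to themselves, the thickened complexes for $f$ and $\rho\circ f$ carry the same twisted differential, and the identity is trivially a morphism of mixed Hodge complexes once compatible compactifications are chosen. That is the missing step: do not split the composition; recognize that it lifts to $\Id$.
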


\begin{proof}
	The proof of the first point is immediate. The second point is a consequence of Remark~\ref{rem:translation} and the fact that the $\R$-dual of multiplication by an element of $R_\infty$ is also multiplication by an element of $R_\infty$.
	
	For the third point, notice that, since $G$ is path connected, $\rho$ is homotopic to the identity in $G$. In particular, $\zeta_m^y(-\phi\otimes\ov{\wt\rho\circ\iota})=\phi\otimes\ov\iota$. Notice also that for all $y,y'\in TG$ such that $\exp(y)=\exp(y')$ one has that $y-y'\in H_1(G,\Z)\subset TG$. In particular, $y-y'$ is fixed by $(\Phi_\R^G)^\vee$, so
	$$
	(\Phi_\R^G)^\vee(y)-(\Phi_\R^G)^\vee(y')=(\Phi_\R^G)^\vee(y-y')=y-y'\in H_1(G,\Z).
	$$
	Hence,
	$$
		e^{-(\Phi_\R^G)^\vee(y)}\circ\zeta_m^y(\phi\otimes\ov{\iota+y})=e^{-(\Phi_\R^G)^\vee(y)}\circ(\phi\otimes\ov{\iota})=\left(e^{-(\Phi_\R^G)^\vee(y)}\cdot\phi\right)\otimes\ov{\iota},
	$$
	and, if we let $\gamma\in \pi_1(G)$ be the element corresponding to $y-y'\in H_1(G,\Z)$ (i.e. $\log\gamma=y-y'$ using the notation of Remark~\ref{rem:RmVSRmodm}), then 
	\begin{align*}
		e^{-(\Phi_\R^G)^\vee}\circ\zeta_m^{y'}(\phi\otimes\ov{\iota+y})&=e^{-(\Phi_\R^G)^\vee(y')}\circ(\phi\otimes\ov{\iota+y-y'})=(e^{-(\Phi_\R^G)^\vee(y')}\cdot\phi)\otimes\gamma^{-1}\ov{\iota}\\
		&=e^{-(y-y')}(e^{-(\Phi_\R^G)^\vee(y')}\cdot\phi)\otimes\ov{\iota}=(e^{-(\Phi_\R^G)^\vee(y)}\cdot\phi)\otimes\ov{\iota},
	\end{align*}
which concludes the proof of the third point.

For the fourth point a similar computation to that of \eqref{eq:translation} yields that the morphism $\zeta_m^y:R_{-m}\otimes_R\ov{\cL_2}\to R_{-m}\otimes_R\ov{\cL_1}$ lifts through the morphisms $e^{-(\Phi_\R^G)^\vee}$ from Construction~\ref{con:nu} to a morphism $R_{-m}\otimes_\R \cA_{U,\R}^0\to R_{-m}\otimes_\R \cA_{U,\R}^0$ given by multiplication by $e^{(\Phi_\R^G)^\vee(y)}\in R_\infty$ in the first factor. This is done using that $\rho_*:R_m\to R_m$ is the identity. Hence, 	$e^{-(\Phi_\R^G)^\vee(y)}\circ\zeta_m^y$  lifts through the morphisms $e^{-(\Phi_\R^G)^\vee}$ from Construction~\ref{con:nu} to the identity morphism in $R_{-m}\otimes_\R \cA_{U,\R}^0$, which in turn lifts to the identity morphism in $R_{-m}\otimes_\R \cA_{U,\R}^\bullet$. Finding compatible compactifications of $U$ and $G$ with respect to the commutative diagram in the statement of this lemma, it is clear that the identity morphism can be realized at the level of mixed Hodge complexes of sheaves from Definition-Proposition~\ref{defprop:modifiedNA}, and hence it induces a morphism of MHS in hypercohomology. In particular, $e^{-(\Phi_\R^G)^\vee(y)}\circ \wt{\Id}_{*,m}$ is a morphism of MHS which is an isomorphism of vector spaces, so it is an isomorphism of MHS.
\end{proof}

The following example conveys that the MHS defined in this paper have the potential of distinguishing (up to algebraic isomorphism) algebraic varieties whose cohomology groups have isomorphic MHS. This could be interesting in the case of affine hypersurface complements (Example~\ref{exam:hypersurfaces}).

\begin{example}\label{eg:algebraicInvariant}
	Let $U$ be a smooth connected complex algebraic variety. By Remark~\ref{remk:existencequasialbanese}, its Albanese map $\alpha_U:U\to G$ is completely determined up to translation in the target.  By Remark~\ref{remk:universalAbelianCover}, $U^{\alpha_U}$ is the  universal (torsion-free) abelian cover of $U$, which is a topological invariant (i.e. it does not depend on $\alpha_U$, just on $U$). Let $\fm$ be the augmentation ideal of $\R[\pi_1(G)]$. By Theorem~\ref{thm:dependenceTranslation}, the isomorphism class of the mixed Hodge structure on $\frac{H_j(U^{\alpha_U},\R)}{\fm^m H_j(U^{\alpha_U},\R)}$ is an algebraic invariant of $(U,m)$, that is, it depends on the algebraic structure of $U$ and on the value of $m\geq 1$, but not on the choice of Albanese map $\alpha_U$. 
\end{example}

\subsection{Compatibility with Deligne's MHS}

We end this section by showing the compatibility of the MHS defined in this paper with Deligne's MHS, as a consequence of functoriality.

\begin{corollary}\label{cor:compatibleDeligne}
	Let $U$ be a smooth connected complex algebraic variety, let $G$ be a semiabelian variety, and let $f:U\to G$ be an algebraic morphism. Let $\pi:U^f\to U$ be the pullback of $\exp:TG\to G$ by $f$. Let $R=\R[\pi_1(G)]$, and let $\fm$ be its augmentation ideal. Then, the map that $\pi$ induces in homology factors through the MHS morphism
	$$
	\frac{H_j(U^f,\R)}{\fm^m H_j(U^f,\R)}\to H_j(U,\R)
	$$
	for all $j\geq 0$ and all $m\geq 1$, where $H_j(U,\R)$ is endowed with Deligne's MHS.
\end{corollary}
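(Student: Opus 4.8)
The plan is to obtain this as the special case of the functoriality Theorem~\ref{thm:functorial} in which the target semiabelian variety $G_2$ is a point. I would apply Theorem~\ref{thm:functorial} to the square
$$
\begin{tikzcd}[row sep=2em]
	U \arrow[d,"f"] \arrow[r,"\Id"] & U \arrow[d,"f_2"]\\
	G \arrow[r,"\rho"] & G_2
\end{tikzcd}
$$
with $G_1=G$, $G_2=\{e\}$ the trivial semiabelian variety, $f_1=f$, $g=\Id_U$, and $f_2,\rho$ the (constant, hence group-homomorphism) maps to $G_2$. Theorem~\ref{thm:functorial} then produces, for every $j\ge 0$ and $m\ge 1$, a MHS morphism $\wt g_{*,m}\colon \frac{H_j(U^{f},\R)}{\fm^m H_j(U^{f},\R)}\to \frac{H_j(U^{f_2},\R)}{\fm_2^m H_j(U^{f_2},\R)}$, where $\fm_2$ is the augmentation ideal of $R^2=\R[\pi_1(G_2)]$.

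First I would identify the target and the map. Since $TG_2=0$ and $\exp\colon 0\to G_2$, the pullback $U^{f_2}=\{(u,z)\in U\times 0\mid f_2(u)=\exp z\}$ is canonically $U$, with $\pi^2$ the identity and deck group $\pi_1(G_2)=0$; hence $R^2=\R$, $\fm_2=0$, and $\frac{H_j(U^{f_2},\R)}{\fm_2^m H_j(U^{f_2},\R)}=H_j(U,\R)$. The lift $\wt g$ of $\Id_U$ appearing in the cube~\eqref{eq:cubefunctoriality} satisfies $\pi^2\circ\wt g=\Id_U\circ\pi^1=\pi$, so under the identification $U^{f_2}\cong U$ we have $\wt g=\pi$, and thus $\wt g_{*,m}$ is a map $\frac{H_j(U^{f},\R)}{\fm^m H_j(U^{f},\R)}\to H_j(U,\R)$ whose precomposition with the natural surjection $H_j(U^{f},\R)\twoheadrightarrow \frac{H_j(U^{f},\R)}{\fm^m H_j(U^{f},\R)}$ equals $\pi_*$. (That $\pi_*$ factors through this quotient at all is automatic, since every deck transformation $\gamma$ satisfies $\pi\circ\gamma=\pi$, so $\pi_*$ annihilates $\fm H_j(U^f,\R)\supseteq\fm^m H_j(U^f,\R)$.)

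The only step needing genuine care is checking that the MHS that Definition~\ref{def:MHSalexander} places on $\frac{H_j(U^{f_2},\R)}{\fm_2^m H_j(U^{f_2},\R)}=H_j(U,\R)$ is Deligne's. Here $V=H^1(G_2,\R)=0$, so $\Sym^jV^\vee=0$ for $j\ge1$ and the ring $R_\infty$ of Section~\ref{sec:thickening} built from $V$ equals $\R$; consequently $R^2_{\pm m}=\R$, the canonical element $\eps$ vanishes, and the thickened mixed Hodge complex of sheaves $(R_{-m}\otimes\cN^{\bullet}_{X,D,n},d+\ov{f_2}^{*}\circ\Phi^{Y_2}(\eps))$ of Definition~\ref{def:thickening} is literally the modified Navarro Aznar complex $\cN^{\bullet}_{X,D,n}$ with untwisted differential, while $R_{-m}\otimes_{R^2}\ov{\cL_2}=\ul\R_U$. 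By Remark~\ref{rem:sameMHS} this induces Deligne's MHS on $H^*(U,\R)$; dualizing via Remark~\ref{rem:homologyVsCohomology}, and observing that in Corollary~\ref{cor:completionHomology2}, Corollary~\ref{cor:quotientI} and Definition~\ref{def:endowedMHS} the relevant inverse systems are constant, the multiplication by $H_1(G_2,\R)=0$ is zero (so no quotient is taken), and the Tate twist is by $\dim_\C H_1(G_2,\C)=0$, one gets that the resulting MHS on $H_j(U,\R)$ is the dual of Deligne's on $H^j(U,\R)$, i.e.\ Deligne's MHS on homology by Remark~\ref{rem:DeligneHomology}; this is the MHS on $H_j(U^{f_2},\R)$ via Remark~\ref{rem:coverVsL}. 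Putting the two paragraphs together, $\wt g_{*,m}$ is precisely the claimed MHS morphism through which $\pi_*$ factors. The main obstacle is exactly this unwinding of the degenerate case $V=0$; the rest is a direct specialization of the already-established Theorem~\ref{thm:functorial}.
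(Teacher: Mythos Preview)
Your proof is correct and follows essentially the same approach as the paper: apply Theorem~\ref{thm:functorial} with $G_2$ a point, identify $U^{f_2}$ with $U$ and $\wt g$ with $\pi$, and then check that the thickened complex for the trivial target degenerates to the Navarro Aznar complex so that the induced MHS is Deligne's. You spell out the degeneration (vanishing of $\eps$, constancy of the inverse system, trivial Tate twist) in slightly more detail than the paper does, but the argument is the same.
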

\begin{proof}
	The statement follows from applying Theorem~\ref{thm:functorial} in the case where $U_1=U_2=U$, $G_1=G$, $G_2$ is a point, $f_1=f$ and $g$ is the identity. Indeed, in this case $R^2=\R=R^2_m$, $U^{f_2}=U$ and $\wt g=\pi$, so in particular the augmentation ideal $\fm_2$ of $R^2$ is $(0)$. The thickened logarithmic Dolbeault mixed Hodge complex of sheaves $(R^2_m\otimes\cN^\bullet_{X,D,n}, d+\ov{f_2}^*\circ\Phi^Y)$ from Definition~\ref{def:thickening} constructed form $(U,f_2,m)$ coincides with the mixed Hodge complex of sheaves $(\cN^\bullet_{X,D,n}, d)$ from Definition-Proposition~\ref{defprop:modifiedNA}, so by Remark~\ref{rem:sameMHS}, the pro-MHS on $R^2_\infty\otimes_{R^2} H_j(U^{f_2},\R)=H_j(U^{f_2},\R)$ from Corollary~\ref{cor:quotientI} coincides with Deligne's MHS on $H_j(U,\R)$. Hence, the MHS on $H_j(U^{f_2},\R)\cong\frac{H_j(U^{f_2},\R)}{\fm_2^mH_j(U^{f_2},\R)}$ from Definition~\ref{def:MHSalexander} coincides with Deligne's MHS on $H_j(U,\R)$.
\end{proof}

\section{Completion with respect to other ideals}\label{sec:otherideals}
Let $U$ be a smooth connected complex algebraic variety, let $G$ be a complex semiabelian variety and let $f:U\to G$ be an algebraic map. Let $H$ be a finite index subgroup of $\pi_1(G)$, and let $\mathfrak{m}_H$ be the augmentation ideal of $R^H\coloneqq \R[H]\subset \R[\pi_1(G)]=R$. Note that $\mathfrak{m}_HR$ is the ideal of $R$ given by
$$
\mathfrak{m}_HR=\left(\gamma-1\mid \gamma\in H\right).
$$
The goal of this section is to endow  
$\frac{H_i(U^f,k)}{(\mathfrak{m}_H)^m H_i(U^f,k)}$ with a canonical MHS for all $i\geq 0$ and for all $m\geq 1$.
Here, $(\mathfrak{m}_H)^m H_i(U^f,k)$, which in principle is an  $R^H$-submodule of $H_i(U^f,k)$, can also be seen as an $R$-submodule by identifying it with $(\mathfrak{m}_HR)^m H_i(U^f,k)$.

For this, we start by passing to the finite cover induced by $H$ as follows: let $p_H:G_H\to G$ be the covering space corresponding to $H$, where $G_H$ is a semiabelian variety and $p_H$ is morphism of algebraic groups (see Remark~\ref{rem:homologyVsCohomologyFiniteIndex}). Note that this determines $p_H:G_H\to G$ up to unique isomorphism of semiabelian varieties on the domain.

The pair $(f,p_H)$ determines the following pullback diagram:

\begin{equation}\label{eq:finitecover}
	\begin{tikzcd}
		U_H\subset U\times G_H  \arrow[r,"f_H"] \arrow[d,"\pi_H"]\arrow[dr,phantom,very near start, "\lrcorner"]&
		G_H \arrow[d,"p_H"] \\
		U\arrow[r,"f"] &
		G.
	\end{tikzcd}
\end{equation}
Note that $\pi_H:U_H\to U$ is a finite cover of $U$, with deck transformation group $H$ (the same  as $p_H:G_H\to G$). Note also that $p_H$ induces a unique isomorphism of vector spaces $\wt{p_H}:TG_H\to TG$, such that $\exp\circ \wt{p_H}=p_H\circ\exp$.

We define the following map, which is easily seen to be an isomorphism of complex analytic varieties, where $U_H^{f_H}$ is constructed from the pullback diagram of $(f_H,\exp)$ as $U^f$ is constructed from the pullback diagram of $(f,\exp)$:
$$
\f{\theta_H}{U^f}{U_H^{f_H}}{(u,z)}{\left(\left(u,\exp({\wt{p_H}}^{-1}(z))\right), \wt{p_H}^{-1}(z)\right)}
$$
It fits into the following commutative cube:

\begin{equation}\label{eq:commcubeotherideals}  
	\begin{tikzcd}[row sep=2.5em]
		U_H^{f_H} \arrow[rr,"\theta_H^{-1}"] \arrow[dr,swap,"\widetilde{f_H}"] \arrow[dd,swap,"\pi'"] &&
		U^{f} \arrow[dd,swap,"\pi" near start] \arrow[dr,"\widetilde{f}"] \\
		& T G_H \arrow[rr,crossing over,"\widetilde{p_H}" near start] &&
		T G \arrow[dd,"\exp"] \\
		U_H \arrow[rr,"\pi_H" near end] \arrow[dr,swap,"f_H"] && U \arrow[dr,swap,"f"] \\
		& G_H \arrow[rr,"p_H"] \arrow[uu,<-,crossing over,"\exp" near end]&& G
	\end{tikzcd}
\end{equation}
Here, the bottom face of the cube is the pullback diagram above, and the left and right faces are also pullback diagrams.

A straightforward computation shows that $\pi_H\circ\pi'\circ \theta_H=\pi$, so $\pi'\circ \theta_H:U^f\to U_H$ is a covering space and $\theta_H$ is an isomorphism of covering spaces over $U_H$. Hence, $\theta_H$ induces an isomorphism $(\theta_H)*:H_j(U^{f},k)\to H_j(U_H^{f_H},k)$ of $\R[H]$-modules for all $j\geq 0$. Note that $\gamma\in H\leq \pi_1(G)$ acts on $H_j(U^{f},k)$ by $(p_H)_*(\gamma)$, where the latter is seen as a deck transformation of $\pi:U^f\to U$. In particular, $\theta_H$ induces isomorphisms of $\R[H]=\R[\pi_1(G_H)]$-modules
$$
\frac{H_j(U^{f},k)}{(\mathfrak{m}_H)^m H_j(U^{f},k)}\cong \frac{H_j(U_H^{f_H},k)}{(\mathfrak{m}_H)^m H_j(U_H^{f_H},k)}.
$$

\begin{proposition}\label{prop:otherideals}
	Let $H$ be a finite index subgroup of $\pi_1(G)$. Let $\mathfrak{m}$ be the augmentation ideal of $R=\R[\pi_1(G)]$, and let $\mathfrak{m}_H$ be the  augmentation ideal of $R^H=\R[H]\subset R$. Let $f:U\to G$ be an algebraic map, where $U$ is a smooth connected complex algebraic variety, and let $\pi:U^f\to G$ be the corresponding abelian cover corresponding to $f$ as in \eqref{eq:Uf}. Then,
	$$
	\frac{H_j(U^f,\R)}{(\mathfrak{m}_H)^m H_j(U^f,\R)}
	$$
	has a canonical MHS for all $j\geq 0$ and for all $m\geq 1$ such that the natural projection morphism
	$$
	\frac{H_j(U^f,\R)}{(\mathfrak{m}_H)^m H_j(U^f,\R)}\twoheadrightarrow \frac{H_j(U^f, \R)}{\mathfrak{m}^m H_j(U^f,\R)}
	$$
	is a morphism of mixed Hodge structures.
\end{proposition}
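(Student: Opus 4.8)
The plan is to reduce the statement to Definition~\ref{def:MHSalexander} by passing to the intermediate finite cover $p_H\colon G_H\to G$ and transporting structures along the isomorphism $\theta_H\colon U^f\xrightarrow{\sim}U_H^{f_H}$ of covering spaces over $U_H$ constructed above, and then to deduce the compatibility with the projection from the functoriality Theorem~\ref{thm:functorial}.

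First I would define the MHS. The covering $p_H$ is a morphism of algebraic groups determined up to unique isomorphism of semiabelian varieties, so the pullback square \eqref{eq:finitecover}, the morphism $f_H\colon U_H\to G_H$, the space $U_H^{f_H}$ and the isomorphism $\theta_H$ are all canonical. Since $\mathfrak m_H$ is simultaneously the augmentation ideal of $R^H=\R[H]$ and of $\R[\pi_1(G_H)]$, Definition~\ref{def:MHSalexander} applied to $(U_H,f_H)$ endows $\frac{H_j(U_H^{f_H},\R)}{(\mathfrak m_H)^m H_j(U_H^{f_H},\R)}$ with a canonical MHS; I would then define the MHS on $\frac{H_j(U^f,\R)}{(\mathfrak m_H)^m H_j(U^f,\R)}$ to be the unique one for which the induced $\R[H]$-module isomorphism $(\theta_H)_{*,m}$ (descending from $H_j(U^f,\R)\xrightarrow{\sim}H_j(U_H^{f_H},\R)$, since $\theta_H$ is an isomorphism of covers over $U_H$) becomes a MHS isomorphism. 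Canonicity is immediate from the previous sentence.

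Next I would prove the compatibility statement. Since $\mathfrak m_H R=(\gamma-1\mid\gamma\in H)\subseteq\mathfrak m$ we have $(\mathfrak m_H)^m H_j(U^f,\R)\subseteq\mathfrak m^m H_j(U^f,\R)$, so the projection is defined. I would apply Theorem~\ref{thm:functorial} to the commutative square
$$
\begin{tikzcd}
U_H\arrow[r,"\pi_H"]\arrow[d,"f_H"] & U\arrow[d,"f"]\\
G_H\arrow[r,"p_H"] & G,
\end{tikzcd}
$$
in which $p_H$ is a group homomorphism and whose lift to the universal covers determined by $\widetilde{p_H}$ is exactly $\theta_H^{-1}\colon U_H^{f_H}\to U^f$ — this is read off from the commutative cube \eqref{eq:commcubeotherideals} together with the uniqueness of such lifts. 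The theorem then gives that
$$
(\theta_H^{-1})_{*,m}\colon\frac{H_j(U_H^{f_H},\R)}{(\mathfrak m_H)^m H_j(U_H^{f_H},\R)}\longrightarrow\frac{H_j(U^f,\R)}{\mathfrak m^m H_j(U^f,\R)}
$$
is a MHS morphism for all $j\ge 0$ and $m\ge 1$. Because $\theta_H^{-1}\circ\theta_H=\mathrm{id}_{U^f}$, the composite $(\theta_H^{-1})_{*,m}\circ(\theta_H)_{*,m}$ is induced in homology by $\mathrm{id}_{U^f}$, i.e.\ it is precisely the natural projection of the statement; since $(\theta_H)_{*,m}$ is a MHS isomorphism by the definition of the source MHS, the projection is a MHS morphism, as desired.

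The steps are all short, and the only place that genuinely requires care is the identification of $\theta_H^{-1}$ with the functoriality lift $\widetilde g$ of Theorem~\ref{thm:functorial}, which rests on the commutativity of cube \eqref{eq:commcubeotherideals} and the uniqueness of the lift of a group homomorphism to the universal covers. (Were $\rho$ only an algebraic morphism one could not conclude directly, but here $p_H$ is genuinely a group homomorphism, so Theorem~\ref{thm:functorial} applies as stated rather than through the twisted version of Theorem~\ref{thm:dependenceTranslation}.)
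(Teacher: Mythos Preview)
Your proposal is correct and follows essentially the same approach as the paper: transport the MHS from $(U_H,f_H)$ via $\theta_H$ using Definition~\ref{def:MHSalexander}, and deduce that the projection is a MHS morphism by applying Theorem~\ref{thm:functorial} to the square~\eqref{eq:finitecover}. One small point: where you write that canonicity is ``immediate'' because $p_H$ is determined up to unique isomorphism of semiabelian varieties, the paper explicitly invokes Theorem~\ref{thm:functorial} again --- the uniqueness of the isomorphism between two choices of $G_H$ does not by itself guarantee that the induced identification of the quotients is a MHS isomorphism; that is precisely a functoriality statement, and Theorem~\ref{thm:functorial} (applied to the unique isomorphism $G_H\xrightarrow{\sim}G_H'$) is what supplies it.
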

\begin{proof}
	Using the notation in the discussion above, the isomorphism induced by $\theta_H$ can be used to endow $\frac{H_j(U^f,\R)}{(\mathfrak{m}_H)^m H_j(U^f,\R)}$ with a MHS from the canonical MHS of $\frac{H_j(U_H^{f_H},\R)}{(\mathfrak{m}_H)^m H_j(U_H^{f_H},\R)}$ from Definition~\ref{def:MHSalexander}. For this MHS on $\frac{H_j(U^f,\R)}{(\mathfrak{m}_H)^m H_j(U^f,\R)}$ to be canonical it must not depend on the choice of $G_H$, which is determined up to unique isomorphism of semiabelian varieties, but this follows from functoriality (Theorem~\ref{thm:functorial}). The statement about the projection map follows from functoriality (Theorem~\ref{thm:functorial}) applied to the commutative diagram~\eqref{eq:finitecover}. 
\end{proof}

\begin{proposition}\label{prop:proOtherIdeals}
	Let $\pi_1(G)=K_0\geq K_1\geq K_2\ldots$ a sequence such that $K_i$ is a finite index subgroup of $K_{i-1}$ for all $i\geq 1$. Then, the following is a diagram of MHS morphisms, where all the maps involved are the natural projections and the MHS are the ones from Proposition~\ref{prop:otherideals}.
	\begin{equation}\label{eq:cubeotherideals}
		\begin{tikzcd}
			\ddots &	 \vdots\arrow[d,two heads] & \vdots  \arrow[d,two heads]&  \vdots \arrow[d,two heads]\\		
			\ldots\arrow[r,two heads]&	\frac{H_j(U^f,\R)}{(\mathfrak{m}_{K_2})^3H_j(U^f,\R)}\arrow[r,two heads]\arrow[d,two heads] & \frac{H_j(U^f,\R)}{(\mathfrak{m}_{K_2})^2H_j(U^f,\R)}\arrow[r,two heads]\arrow[d,two heads]& \frac{H_j(U^f,\R)}{\mathfrak{m}_{K_2}H_j(U^f,\R)}\arrow[d,two heads]\\
			\ldots\arrow[r,two heads] &	\frac{H_j(U^f,\R)}{(\mathfrak{m}_{K_1})^3H_j(U^f,\R)}\arrow[r,two heads]\arrow[d,two heads] & \frac{H_j(U^f,\R)}{(\mathfrak{m}_{K_1})^2H_j(U^f,\R)}\arrow[r,two heads]\arrow[d,two heads]& \frac{H_j(U^f,\R)}{\mathfrak{m}_{K_1}H_j(U^f,\R)}\arrow[d,two heads]\\
			\ldots\arrow[r,two heads] &	\frac{H_j(U^f,\R)}{\mathfrak{m}^3H_j(U^f,\R)}\arrow[r,two heads] & \frac{H_j(U^f,\R)}{\mathfrak{m}^2H_j(U^f,\R)}\arrow[r,two heads]& \frac{H_j(U^f,\R)}{\mathfrak{m}H_j(U^f,\R)}
		\end{tikzcd}
	\end{equation}
\end{proposition}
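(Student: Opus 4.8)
The plan is to reduce everything to the functoriality statements already established, namely Theorem~\ref{thm:functorial} together with Proposition~\ref{prop:otherideals} and Remark~\ref{rem:projMHS}. The diagram~\eqref{eq:cubeotherideals} consists of two types of arrows: the horizontal ones (increasing the power of a fixed augmentation ideal $\mathfrak m_{K_i}$, with the convention $\mathfrak m_{K_0}=\mathfrak m$) and the vertical ones (keeping the power $m$ fixed but passing from $\mathfrak m_{K_{i-1}}$ to $\mathfrak m_{K_i}$, i.e. to a finite-index subgroup). So the first step is simply to observe that it suffices to prove that each of these two families of arrows consists of MHS morphisms, since the diagram commutes at the level of underlying vector spaces (all maps are the obvious projections, and $(\mathfrak m_{K_i})^{m}H_j(U^f,\R)\subseteq (\mathfrak m_{K_{i-1}})^{m}H_j(U^f,\R)$ because $K_i\le K_{i-1}$, so all compositions agree).

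For the horizontal arrows, I would invoke Proposition~\ref{prop:otherideals} directly: for each fixed finite-index subgroup $K_i\le\pi_1(G)$ and each $m'\ge m\ge 1$, the projection
$\frac{H_j(U^f,\R)}{(\mathfrak m_{K_i})^{m'}H_j(U^f,\R)}\twoheadrightarrow \frac{H_j(U^f,\R)}{(\mathfrak m_{K_i})^{m}H_j(U^f,\R)}$
is a MHS morphism. Indeed, by the construction in the proof of Proposition~\ref{prop:otherideals}, this map is identified via the isomorphisms $(\theta_{K_i})_*$ with the projection $\frac{H_j(U_{K_i}^{f_{K_i}},\R)}{(\mathfrak m_{K_i})^{m'}}\twoheadrightarrow \frac{H_j(U_{K_i}^{f_{K_i}},\R)}{(\mathfrak m_{K_i})^{m}}$ on the finite cover $U_{K_i}$, which is a MHS morphism by Remark~\ref{rem:projMHS} applied to $(U_{K_i},f_{K_i})$. (Here one uses that $\mathfrak m_{K_i}$ is precisely the augmentation ideal of $\R[\pi_1(G_{K_i})]$.)

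For the vertical arrows, the point is to apply Theorem~\ref{thm:functorial} to the commutative square
\[
\begin{tikzcd}
U\arrow[r,"\Id"]\arrow[d,"f"] & U\arrow[d,"f"]\\
G\arrow[r,"\Id"] & G,
\end{tikzcd}
\]
but with the two augmentation ideals of the statement taken relative to the nested subgroups $K_i\le K_{i-1}$. Concretely, Theorem~\ref{thm:functorial} (in the generality recorded in Theorem~\ref{thm:functorialintro} part~\eqref{part4}/Proposition~\ref{prop:otherideals}, applied with $G_1=G_2=G$, $\rho=\Id$, and with the subgroups $K_1'\le K_2'$ there taken to be $K_i\le K_{i-1}$) gives that the natural projection
$\frac{H_j(U^f,\R)}{(\mathfrak m_{K_i})^{m}H_j(U^f,\R)}\twoheadrightarrow \frac{H_j(U^f,\R)}{(\mathfrak m_{K_{i-1}})^{m}H_j(U^f,\R)}$
is a MHS morphism; this is exactly what Proposition~\ref{prop:proOtherIdeals} with two adjacent rows asserts, so the general case follows by composing. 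Alternatively, one can argue directly as in the proof of Proposition~\ref{prop:otherideals}: transport both quotients to the finite cover $U_{K_i}$ via $(\theta_{K_i})_*$, and then the vertical map becomes the projection associated to the (further finite-index) inclusion corresponding to $K_{i-1}$ inside $\pi_1(G_{K_i})$ followed by the identification $\theta$, which is handled by Theorem~\ref{thm:functorial} applied to the finite covering map $U_{K_i}^{f_{K_i}}\to U_{K_{i-1}}^{f_{K_{i-1}}}$ — this is the same kind of diagram as~\eqref{eq:finitecover}.

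The only genuinely nontrivial point — and the one I would be most careful about — is the compatibility of the two families of identifications $(\theta_{K_i})_*$ with the horizontal and vertical maps simultaneously, i.e. checking that the square of covering-space isomorphisms
\[
\begin{tikzcd}
U^f \arrow[r,"\theta_{K_i}"]\arrow[d,"\theta_{K_{i-1}}"] & U_{K_i}^{f_{K_i}}\arrow[d]\\
U_{K_{i-1}}^{f_{K_{i-1}}}\arrow[r] & {}
\end{tikzcd}
\]
commutes up to the natural covering maps, so that the MHSs produced on the various quotients of $H_j(U^f,\R)$ are mutually compatible and the diagram~\eqref{eq:cubeotherideals} is a diagram of \emph{the} canonical MHSs from Proposition~\ref{prop:otherideals}, not just of abstractly isomorphic ones. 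This is a formal check using the universal property of the pullbacks in~\eqref{eq:commcubeotherideals} and the transitivity of the tower of finite covers $G_{K_i}\to G_{K_{i-1}}\to G$, together with functoriality (Theorem~\ref{thm:functorial}) to guarantee the identifications are MHS-compatible; everything else is routine. Once this coherence is in place, the whole diagram commutes and consists of MHS morphisms, which is the assertion.
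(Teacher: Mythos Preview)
Your proposal is essentially correct and follows the paper's approach. The paper's proof is terser: it constructs a coherent chain of covering spaces $\ldots\to G_{K_2}\to G_{K_1}\to G$, which handles in one stroke the compatibility issue you flag in your last paragraph; with that chain fixed, the horizontal arrows are MHS morphisms by Remark~\ref{rem:projMHS} (applied on each finite cover $U_{K_i}$) and the vertical arrows by Proposition~\ref{prop:otherideals} (applied with base $G_{K_{i-1}}$ and subgroup $K_i\le K_{i-1}=\pi_1(G_{K_{i-1}})$), exactly your approach~(b).

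One caution: your approach~(a), invoking Theorem~\ref{thm:functorialintro} in its full generality with arbitrary finite-index subgroups, is logically circular at this point in the paper. The general form of that theorem is only established \emph{after} Section~\ref{sec:otherideals} (see the outline in Section~1.2), so it cannot be used to prove Proposition~\ref{prop:proOtherIdeals}. Stick with approach~(b), which is self-contained and is what the paper does.
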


\begin{proof}
	Construct a chain of covering spaces $$\ldots\to G_{K_2}\to G_{K_1} \to G$$
	and use those semiabelian varieties to endow the different modules in the diagram with canonical MHS as in the proof of Proposition~\ref{prop:otherideals}. The vertical projections are morphisms of MHS by Proposition~\ref{prop:otherideals}. The horizontal projections are morphism of MHS by Remark~\ref{rem:projMHS}.
\end{proof}

\begin{example}
	Suppose that $G=\C^*$. In this case, $R=\R[\pi_1(G)]$ is isomorphic to $\R[t^{\pm 1}]$, where $t$ is a generator of $\pi_1(G)$. The map $p:\C^*\to\C^*$ that sends $z$ to $z^N$ is a finite cover.  In this setting, Proposition~\ref{prop:proOtherIdeals} applied to the chain of subgroups $\pi_1(G)=\langle t\rangle\geq \langle t^N\rangle\geq \langle t^{2N}\rangle\geq \langle t^{4N}\rangle$ yields the commutative diagram \eqref{eq:cubeotherideals}, where $(\fm_{K_l})^m=(t^{2^{l-1}N}-1)^m$ for all $l\geq 1$ and all $m\geq 1$, and $(\fm)^m=(t-1)^m$.
%
%
\end{example}

\section{Eigenspace decomposition}\label{sec:eigenspace}
In this section, $U$, $G$, $f$, $G_H$, $p_H$, $U_H$, $f_H$, $\pi$, $\pi_H$, $\pi'$, $\theta_H$, and the augmentation ideals $\mathfrak m\subset R=\R[\pi_1(G)]$, and $\mathfrak m_H\subset R^H=\R[H]$ will be as in Section~\ref{sec:otherideals}. Let $\mathcal L$ (resp. $\mathcal L_H$) be the local system of $\R[\pi_1(G)]$-modules given by $(f)^{-1}\exp_!\underline k_{TG}$ (resp. $(f_H)^{-1}\exp_!\underline k_{TG_H}$), and let $\overline{\mathcal L}$ (resp. $\overline{\mathcal L_H}$) be the corresponding local system endowed with its conjugate $\R[\pi_1(G)]$-module (resp. $\R[H]$-module) structure.

The rings $R^H_{\infty}$ and $R^H_{m}$ for $m>0$ will be defined from $G_H$, that is, $R^H_{\infty}\coloneqq \prod_{j=0}^\infty\Sym^j H_1(G_H,\R)$, $R^H_{m}\coloneqq \frac{R^H_{\infty}}{\prod_{j=m}^\infty\Sym^j H_1(G_H,\R)}$. Similarly, for all $m>0$, $R^H_{-m}\coloneqq \Hom_\R(R^H_{m},\R)$. The same construction can be carried out for $\C$ coefficients, and, abusing notation, will be denoted equally, as in Section~\ref{sec:thickening}.

The goal of this section is to prove the following theorem, which provides a generalization of Theorem~\cite[Theorem 1.3]{EvaMoises}.
\begin{theorem}\label{thm:ss}
	Let $\gamma\in\pi_1(G)$, which acts on $H_j(U^f,\R)$ by a deck transformation of $\pi:U^f\to U$. Let $H$ be a finite index subgroup of $\pi_1(G)$, and let $\mathfrak{m}_H$ be the augmentation ideal of $\R[H]$. Let $\gamma=\gamma_{ss}  \gamma_{u}$ be the Jordan-Chevalley decomposition of $\gamma$ acting on $\frac{H_j(U^f,\R)}{(\mathfrak m_H)^m H_j(U^f,\R)}$ as the product of a semisimple (i.e. diagonalizable) operator and a unipotent (i.e. $\gamma_u-\Id$ is nilpotent) operator that commute with each other. Then,
	\[
	\gamma_{ss}  \colon \frac{H_j(U^f,\R)}{(\mathfrak m_H)^m H_j(U^f,\R)}\to \frac{H_j(U^f,\R)}{(\mathfrak m_H)^m H_j(U^f,\R)}
	\]
	is a MHS isomorphism for all $j\geq 0$ and all $m\geq 1$.
\end{theorem}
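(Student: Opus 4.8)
The plan is to reduce the statement to the corresponding decomposition of the \emph{cohomology} groups $H^j(U_H, R^H_{-m}\otimes_{R^H}\overline{\mathcal L_H})$, to which the mixed Hodge complex machinery applies directly. More precisely, using the isomorphism induced by $\theta_H$ and the duality isomorphism of Remark~\ref{rem:homologyVsCohomology}, it suffices to prove that the operator dual to $\gamma_{ss}$ is a MHS isomorphism on $H^j(U_H, R^H_{-m}\otimes_{R^H}\overline{\mathcal L_H})$, or equivalently (since duals of MHS morphisms are MHS morphisms) that $\gamma_{ss}$ itself, acting on $H^j(U_H, R^H_{-m}\otimes_{R^H}\overline{\mathcal L_H})$, respects the MHS. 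So from now on I would work over the finite cover, rename $U_H\rightsquigarrow U$, $G_H\rightsquigarrow G$, $R^H\rightsquigarrow R$, and forget the finite-index subgroup: I want to show that for $\gamma\in\pi_1(G)$ acting as a deck transformation, the semisimple part of $\gamma$ on $H^j(U, R_{-m}\otimes_R\overline{\mathcal L})$ is a MHS morphism.

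The key computational input is the explicit formula from Theorem~\ref{thm:mainintro}\eqref{part3} / Corollary~\ref{cor:multiplication}: multiplication by $\log\gamma\in H_1(G,\R)$ is a MHS morphism (into a Tate twist when $G$ is a torus, but for the argument below only the untwisted statement over $\R$ is needed, since $\gamma_{ss}$ will end up preserving weights and Hodge filtration on the nose). The action of $\gamma$ on $R_{-m}\otimes_R\overline{\mathcal L}$, and hence on cohomology, is via the embedding $R\hookrightarrow R_\infty$, $\gamma\mapsto e^{\log\gamma}$; so on the $R_m$- (resp. $R_{-m}$-) module $H^j$, the operator $\gamma$ equals $e^{N}$ where $N$ is multiplication by $\log\gamma$, a nilpotent operator (it increases the $\Sym$-grading, and $\mathfrak a^m=0$ on $R_{\pm m}$). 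Thus $\gamma=e^N$ is already unipotent, its Jordan--Chevalley decomposition has $\gamma_{ss}=\Id$ and $\gamma_u=e^N$. This makes the cohomological statement trivial — $\gamma_{ss}=\Id$ is visibly a MHS morphism. The real content is therefore to reconcile this with the \emph{homological} statement, where $\gamma$ acts on $\frac{H_j(U^f,\R)}{(\mathfrak m_H)^mH_j(U^f,\R)}$ and need not be unipotent.

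Here is where I would be careful. The quotient $\frac{H_j(U^f,\R)}{\mathfrak m^m H_j(U^f,\R)}$ is identified (Definition~\ref{def:MHSalexander}, via Remark~\ref{rem:RmVSRmodm}) with $R_m\otimes_R H_j(U,\mathcal L)$, and the $\gamma$-action transported across this identification is multiplication by $e^{\log\gamma}\in R_\infty$, hence unipotent; the same holds after passing to the finite cover and replacing $\mathfrak m$ by $\mathfrak m_H$, since the relevant identification is with $R^H_m\otimes_{R^H}H_j(U_H,\mathcal L_H)$. So in fact $\gamma$ acting on $\frac{H_j(U^f,\R)}{(\mathfrak m_H)^mH_j(U^f,\R)}$ is \emph{always unipotent}: its Jordan--Chevalley decomposition has $\gamma_{ss}=\Id$. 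The theorem then follows immediately, because $\Id$ is trivially a MHS isomorphism. Thus the proof is: (i) transport the $\gamma$-action through the isomorphisms $\frac{H_j(U^f,\R)}{(\mathfrak m_H)^mH_j(U^f,\R)}\cong\frac{H_j(U_H^{f_H},\R)}{(\mathfrak m_H)^mH_j(U_H^{f_H},\R)}\cong R^H_m\otimes_{R^H}H_j(U_H,\mathcal L_H)$ of Section~\ref{sec:otherideals} and Definition~\ref{def:MHSalexander}; (ii) observe that under the last identification $\gamma$ acts by multiplication by $e^{\log((p_H)_*\gamma)}$ — wait, one must check $\gamma\in H$ so that it acts via an element of $R^H_\infty$ built from $H_1(G_H,\R)$; if $\gamma\notin H$ then $\gamma$ permutes a basis of $\mathcal L_H$ over $R^H$ and one instead writes $\gamma=\gamma_0\cdot\gamma'$ with $\gamma'$ acting by a permutation matrix $P$ of finite order (the ``geometric'' semisimple part) and a unipotent factor, so that $\gamma_{ss}$ corresponds to $P$. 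The genuinely new point, then, is precisely the case $\gamma\notin H$.

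So let me restate the plan honestly. The main obstacle is the case $\gamma\notin H$. In that case, write $\pi_1(G)/H=\{H\delta_1,\dots,H\delta_n\}$ and use the local $R^H$-basis $\{\delta_i\cdot\iota\}$ of $\mathcal L$ as in Remark~\ref{rem:ConjAndDualFiniteIndex}; the deck transformation $\gamma$ acts on this basis by composing a permutation $\sigma$ of the cosets with multiplication by elements of $\pi_1(G)\subset R^H$... actually, more cleanly: via $\theta_H$, $\gamma$ is realized as a deck transformation of $\pi'\circ\theta_H: U^f\to U_H$ followed by a deck transformation of $\pi_H: U_H\to U$; the former is an element of $\pi_1(G_H)=H$, acting unipotently as above, and the latter is a genuine automorphism of $U_H$ (an element of $\pi_1(G)/H$, acting on $U_H$ as a covering transformation) of finite order $\ell$. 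An automorphism of finite order of an algebraic variety acts by MHS automorphisms on all its cohomology (functoriality of Deligne's MHS — here functoriality of the MHS of Theorem~\ref{thm:mainintro}, which is Theorem~\ref{thm:functorial}/Corollary~\ref{cor:compatibleDeligne} adapted to $g$ an automorphism of $U_H$ covering an automorphism of $G_H$). Its semisimple part (being the inverse of its unipotent part times the whole operator, all polynomials in a finite-order MHS automorphism) is then also a MHS automorphism; and since a finite-order operator equals its own semisimple part, the decomposition $\gamma=\gamma_{ss}\gamma_u$ has $\gamma_{ss}$ equal to the product of the finite-order ``covering'' automorphism with the identity, which is a MHS isomorphism, while $\gamma_u$ absorbs the unipotent $e^{\log\gamma_0}$ contribution. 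I would spell out that the two factors commute (the $H$-deck-transformation commutes with everything in sight because $H_1(G_H,\R)$-multiplication is equivariant under covering automorphisms, by Theorem~\ref{thm:functorial} applied to $g$ a covering automorphism), so this is genuinely \emph{the} Jordan--Chevalley decomposition, and then the finitely many routine verifications (commutation, that the product is semisimple resp. unipotent, that passing to $\gamma_{ss}$ commutes with the duality and $\theta_H$ isomorphisms so that the cohomological and homological statements match up) are exactly the ones I would not grind through here. The expected hard part is thus purely bookkeeping: making precise that ``$\gamma$ on $\frac{H_j(U^f,\R)}{(\mathfrak m_H)^m H_j(U^f,\R)}$'' factors as (finite-order covering automorphism of $U_H$) $\times$ ($H$-deck transformation acting unipotently) in a way compatible with the MHS, after which the Jordan--Chevalley bookkeeping and an appeal to functoriality of the MHS finish the proof.
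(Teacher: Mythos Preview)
Your final strategy matches the paper's: pass to $U_H$, and decompose the $\gamma$-action on the thickened de Rham complex as (pullback by the deck transformation $\cT_\gamma\colon U_H\to U_H$, which has finite order) times (multiplication by an exponential in $R_\infty^H$, which is unipotent); then argue the geometric factor is a MHS automorphism. This is exactly Lemmas~\ref{lem:t-liftstodR}--\ref{lem:hss}.

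There is, however, a real gap where you invoke Theorem~\ref{thm:functorial} ``adapted to $g$ an automorphism of $U_H$ covering an automorphism of $G_H$''. The covering automorphism $\cT_\gamma\colon G_H\to G_H$ is a \emph{translation}, not a group homomorphism, so Theorem~\ref{thm:functorial} does not apply; and Theorem~\ref{thm:dependenceTranslation}, which does handle translations, introduces exactly the unipotent exponential factor you are trying to separate off, so it does not show that $\cT_\gamma^*$ alone is a MHS morphism. The paper closes this gap in Lemma~\ref{lem:gammasscohomology} by a direct check that $\cT_\gamma^*$ preserves the twisted differential $d+(\ov{f_H})^*\circ\Phi^Y(\eps)$ and the pseudo-morphism $e^{(\ov{f_H})^*\circ\Psi^Y(\eps)}$. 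The key point is that $\Phi^{G_H}_k$ and $\Psi^{G_H}$ factor through $p_H^*$ (Corollary~\ref{cor:PhiFunctorial}), and since $\cT_\gamma$ is a deck transformation of $p_H$ one has $\cT_\gamma^*\circ f_H^*\circ\Phi^{G_H}_k=f_H^*\circ\Phi^{G_H}_k$; hence $\cT_\gamma^*$ extends to a morphism of mixed Hodge complexes. This is the substance of the argument, not bookkeeping. A smaller imprecision: your factoring of $\gamma$ as ``(element of $H$) $\times$ (deck transformation of $\pi_H$)'' at the group level is not how the decomposition works; Lemma~\ref{lem:t-liftstodR} gives $\wt\cM_\gamma = e^{\wt{p_H}^{-1}(\log\gamma)}\cdot(\cT_\gamma)^*$ at the level of the de Rham complex, where $\wt{p_H}^{-1}(\log\gamma)\in H_1(G_H,\R)$ need not come from any element of $H$.
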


Since $\gamma$ acts quasi-unipotently  on $\frac{H_j(U^f,\R)}{(\mathfrak m_H)^m H_j(U^f,\R)}$ (i.e. $\gamma^N-\Id$ is a nilpotent operator, where $N$  is the order of the class of $\gamma$ in  $\pi_1(G)/H$) and $\pi_1(G)$ is an abelian group, Theorem~\ref{thm:ss} immediately implies the following result. It uses the terminology of the lesser known $\C$-MHSs, see cf. \cite[Definition 2.1]{EvaMoises} for a definition.

\begin{corollary}[Eigenspace decomposition]\label{cor:eigenspaceDecomposition}
	Let $\gamma\in\pi_1(G)$, let $N$ be the order of the class of $\gamma$ in the  quotient $\pi_1(G)/H$ and let $k=\R,\C$. Let $g(x)\in k[x]$ be a monic irreducible factor of $x^N-1$, and let  $E_{g}^\gamma$ be the kernel of $g(\gamma_{ss}):\frac{H_j(U^f,k)}{(\mathfrak m_H)^m H_j(U^f,k)}\to \frac{H_j(U^f,k)}{(\mathfrak m_H)^m H_j(U^f,k)}$ for $m\gg0$, so in particular, if $g(x)=x-\lambda$ for some $N$-th root of unity $\lambda\in k$, $E_{g}^\gamma$ is the generalized eigenspace of eigenvalue $\lambda$.
	
	 Then, the inclusion $E_{g}^\gamma\hookrightarrow \frac{H_j(U^f,k)}{(\mathfrak m_H)^m H_j(U^f,k)}$ endows $E_{g}^\gamma$ with a $k$-MHS, and the direct sum decomposition $$\frac{H_j(U^f,k)}{(\mathfrak m_H)^m H_j(U^f,k)}=\bigoplus_g E_{g}^\gamma$$
	is a MHS decomposition, that is, the MHS on the right hand side is a direct sum of MHS as in the left hand side.
	
	Moreover, let $n$ be the rank of $\pi_1(G)$, and let $\{\gamma_1,\ldots,\gamma_n\}$ be a basis of $\pi_1(G)$ as a $\Z$-module. Consider all the $n$-tuples $\ov{g}=(g_1,\ldots,g_n)$ such that $g_i(x)\in k[x]$ is a monic irreducible factor of $x^{N_i}-1$, where $N_i$ is the order of the class of $\gamma_i$ in the quotient $\pi_1(G)/H$. Denote $E_{\ov g}\coloneqq \bigcap\limits_{i=1}^n E_{g_i}^{\gamma_i}$. Then,
	$$
	\frac{H_j(U^f,k)}{(\mathfrak m_H)^m H_j(U^f,k)}=\bigoplus_{\ov g} E_{\ov g}
	$$
	is a finer $k$-MHS decomposition.
\end{corollary}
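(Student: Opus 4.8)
The plan is to deduce Corollary~\ref{cor:eigenspaceDecomposition} from Theorem~\ref{thm:ss} by standard linear algebra applied to semisimple operators that preserve a mixed Hodge structure. First I would fix $m\gg 0$ so that $\gamma^N-\Id$ is nilpotent on the (finite-dimensional) quotient $\frac{H_j(U^f,k)}{(\mathfrak m_H)^m H_j(U^f,k)}$; this is possible because $N$ is the order of the class of $\gamma$ in $\pi_1(G)/H$, so $\gamma^N\in H$ acts through the augmentation ideal $\mathfrak m_H$, hence nilpotently modulo $(\mathfrak m_H)^m$. Consequently the minimal polynomial of $\gamma$ divides $(x^N-1)^m$, and the semisimple part $\gamma_{ss}$ satisfies $\gamma_{ss}^N=\Id$, so its minimal polynomial divides $x^N-1$ and is in particular squarefree over $k$. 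Theorem~\ref{thm:ss} (combined with Proposition~\ref{prop:otherideals} so that the quotient by $(\mathfrak m_H)^m$ has a MHS when $k=\R$, and later upgraded to $k=\C$ and to $k=\Q$ in the affine torus case via Corollary~\ref{cor:QMHS}) tells us that $\gamma_{ss}$ is a MHS automorphism. Since $g(x)$ divides $x^N-1$, the polynomial $g(\gamma_{ss})$ is also a MHS endomorphism, so $E_g^\gamma=\ker g(\gamma_{ss})$ is a sub-MHS, which gives the first assertion.

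For the direct sum decomposition, the key input is that a MHS automorphism $\phi=\gamma_{ss}$ with squarefree minimal polynomial $\prod_g g(x)$ over $k$ produces a splitting into sub-MHSs. Concretely, the primary decomposition theorem gives $\frac{H_j(U^f,k)}{(\mathfrak m_H)^m H_j(U^f,k)}=\bigoplus_g \ker g(\phi)$; each summand $\ker g(\phi)=E_g^\gamma$ is a sub-MHS by the previous paragraph; and the projector onto $E_g^\gamma$ along the other summands is a polynomial in $\phi$ (by the Chinese remainder theorem / partial fractions applied to $x^N-1=\prod_g g(x)$), hence is itself a MHS morphism. Therefore the decomposition is compatible with the weight and Hodge filtrations, i.e. it is a MHS decomposition in the sense of loc.\ cit. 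When $k=\C$ and $g(x)=x-\lambda$, $E_g^\gamma$ is literally the $\lambda$-generalized eigenspace, which matches the statement.

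For the finer decomposition indexed by $n$-tuples $\ov g=(g_1,\ldots,g_n)$: each $\gamma_i$ from a $\Z$-basis of $\pi_1(G)$ gives, by the same argument, a MHS automorphism $(\gamma_i)_{ss}$ with squarefree minimal polynomial dividing $x^{N_i}-1$, and the operators $(\gamma_1)_{ss},\ldots,(\gamma_n)_{ss}$ pairwise commute because $\pi_1(G)$ is abelian and the Jordan--Chevalley decomposition is functorial with respect to commuting operators (the semisimple part of a product of commuting operators is the product of the semisimple parts, and semisimple parts of commuting operators commute). A commuting family of diagonalizable MHS automorphisms is simultaneously diagonalizable, so $\frac{H_j(U^f,k)}{(\mathfrak m_H)^m H_j(U^f,k)}=\bigoplus_{\ov g}\bigcap_i E_{g_i}^{\gamma_i}=\bigoplus_{\ov g}E_{\ov g}$, and each $E_{\ov g}$ is a sub-MHS as an intersection of sub-MHSs; the projector onto $E_{\ov g}$ is the product of the commuting MHS projectors onto the $E_{g_i}^{\gamma_i}$, hence a MHS morphism, so this refined decomposition is again a MHS decomposition. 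Finally the passage from $\R$- to $\C$-coefficients uses that all constructions are compatible with $-\otimes_\R\C$ (the $\C$-MHS formalism of \cite[Definition 2.1]{EvaMoises}), and the passage to $\Q$-coefficients when $G$ is an affine torus uses Corollary~\ref{cor:QMHS}. The main obstacle is purely bookkeeping: verifying that ``$\gamma_{ss}$ is a MHS morphism'' (Theorem~\ref{thm:ss}) really does pass to the polynomials $g(\gamma_{ss})$ and their associated Lagrange-interpolation projectors, and that the $\C$- and $\Q$-coefficient versions are legitimate — there is no new geometric content beyond Theorem~\ref{thm:ss} itself.
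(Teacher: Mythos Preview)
Your proposal is correct and follows essentially the same approach as the paper: the paper does not give a detailed proof but simply notes that the corollary follows immediately from Theorem~\ref{thm:ss} using that $\gamma$ acts quasi-unipotently (since $\gamma^N\in H$ implies $\gamma^N-\Id$ is nilpotent on the quotient) and that $\pi_1(G)$ is abelian, which is exactly the content you unpack. One minor remark: the nilpotency of $\gamma^N-\Id$ on $\frac{H_j(U^f,k)}{(\mathfrak m_H)^m H_j(U^f,k)}$ holds for every $m\ge 1$, not just $m\gg 0$, since $(\gamma^N-1)^m\in(\mathfrak m_H)^m$; your phrasing ``fix $m\gg 0$ so that $\gamma^N-\Id$ is nilpotent'' is therefore slightly misleading but harmless.
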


Theorem~\ref{thm:ss} will be proved by passing to the finite cover $U_H$ of $U$, so first, we need to specify certain identifications between local systems on $U$ and $U_H$. Let $\mathcal L_H\coloneqq f_H^{-1}\exp_!\ul{\R}_{TG}$ and $\mathcal L\coloneqq f^{-1}\exp_!\ul{\R}_{TG}$.  From the commutative cube \eqref{eq:commcubeotherideals}, we deduce the following chain of canonical identifications:
\begin{align*}
	&(\pi_H)_*\mathcal L_H=(\pi_H)_*(f_H)^{-1}\exp_!\underline \R_{TG_H}=(\pi_H)_!(f_H)^{-1}\exp_!\underline \R_{TG_H}=(\pi_H)_!(\pi')_!(\widetilde{f_H})^{-1}\underline k_{TG_H}=\\
	&\pi_!(\theta_H^{-1})_!(\widetilde{f_H})^{-1}\underline \R_{TG_H}=
	\pi_!(\theta_H)^{-1}(\widetilde{f_H})^{-1}\underline \R_{TG_H}=\pi_!(\widetilde f)^{-1} (\wt{p_H}^{-1})^{-1}\underline \R_{TG}= \pi_!(\widetilde f)^{-1} \underline \R_{TG}=\mathcal L.
\end{align*}
Here, the superscript $-1$ is used to describe the inverse image functor when the function is in parentheses, and the inverse of a bijective map when there are no parentheses, we hope that the use is clear from the context.  Recall that the $\R[H]$-module structure on $\mathcal L_H$ side is by deck transformations of $\pi':U_H^{f_H}\to U_H$, and the $\R[H]$-module structure on $\mathcal L$ is by seeing $H$ inside of $\pi_1(G)$ and thus considering the elements of $H$ as deck transformations of $\pi:U^{f}\to U$.  Hence, the chain of identifications above induces an isomorphism of $\R[H]$-modules between the (conjugate) local systems
$$
\theta_{\overline{\mathcal L_H}}:(\pi_H)_*\overline{\mathcal L_H}\to\overline{\mathcal L}.
$$

Let $\gamma\in\pi_1(G)$, and let $\cT_\gamma:U^f\to U^f$ be the corresponding deck transformation of $\pi$.
By definition, $\gamma\in \R[\pi_1(G)]$ has an action on $\cL$ induced by $\cT_\gamma$, i.e.  if $\iota$ is a local section of $\cL$ seen as a map $\iota:U\to TG$ such that $\exp\circ\iota=f$, $\gamma\cdot\iota = \iota+\log\gamma$. 
The action of $\cT_{\gamma}$ descends to the deck transformation of $\pi_H$, which we will also denote $\cT_{\gamma}\colon U_H\to U_H$. 

Now, note that for all $(u,z)\in U^f\subset U\times TG$, $\gamma\cdot (u,z)=(u,z+\log\gamma)$, where $\log\gamma$ is the element of $H_1(G,\Z)$ corresponding to $\gamma$ through the abelianization. In that sense, we can think of $\gamma\in\pi_1(G)$ as acting on $U_H^{f_H}$ through the isomorphism $\theta_H$ as follows: for every $((u,s),w)\in U_H^{f_H}\subset U_H\times TG_H\subset U\times G_H\times TG_H$, $$\gamma\cdot ((u,s),w)\coloneqq\theta_H\circ(\gamma\cdot)\circ\theta_H^{-1}((u,s),w)=\left(\left(u,s\cdot\exp(\wt{p_H}^{-1}(\log\gamma))\right),w+\wt{p_H}^{-1}(\log\gamma)\right).$$

The following result below is a generalization of \cite[Lemma 3.1]{EvaMoises} (where $G=\C^*$). The proof is a straightforward verification on local sections which follows the same steps as in \cite[Lemma 3.1]{EvaMoises}, so we omit it. In it, we describe the $\gamma$-action on $\ov{\cL_H}$, similarly as to how we have described the $\gamma$-action on $U_H^{f_H}$.

\begin{lemma}\label{lem:t-liftstoEquiv}
	Let $\gamma\in\pi_1(G)$. There is a morphism of sheaves $\cM_\gamma\colon  \overline{\cL_H}  \to (\cT_\gamma)_* \overline{\cL_H} $ such that after taking $(\pi_H)_*$ it becomes multiplication by $\gamma$, i.e. the following composition is multiplication by $\gamma$:
	\[
	\ov\cL \xrightarrow[\sim]{\theta_{ \overline{\cL_H} }^{-1}} (\pi_H)_* \overline{\cL_H}  \xrightarrow{(\pi_H)_* \cM_\gamma} (\pi_H)_* (\cT_\gamma)_* \overline{\cL_H}    = (\pi_H\circ \cT_\gamma)_*  \overline{\cL_H}  = (\pi_H)_* \overline{\cL_H}  \xrightarrow[\sim]{\theta_{ \overline{\cL_H} }} \ov\cL.
	\]
	Furthermore, for every local section $\ov\iota$ of $\ov{\cL_H}$ (seen as a map $\iota:U_H\to TG_H$ such that $\exp\circ\iota=f_H$), $\cM_\gamma$ is given by $$\cM_\gamma(\ov\iota)= (\wt{p_H}^{-1}\circ(-\log \gamma)\circ\wt{p_H}) \circ \ov\iota\circ \cT_\gamma=(\ov\iota-\wt{p_H}^{-1}\log\gamma)\circ \cT_\gamma,$$ where $(-\log \gamma):TG\to TG$ is defined by $z\to z-\log\gamma$. 
\end{lemma}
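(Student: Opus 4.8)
\textbf{Proof plan for Lemma~\ref{lem:t-liftstoEquiv}.}

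The plan is to construct $\cM_\gamma$ explicitly by the formula in the statement and then verify the two required properties (that it is a well-defined morphism of sheaves, and that after applying $(\pi_H)_*$ and conjugating by $\theta_{\overline{\cL_H}}$ it becomes multiplication by $\gamma$) by checking everything on local sections. Recall from Remark~\ref{remk:sectionsOfL} that a local section of $\cL_H$ over an open set $V\subseteq U_H$ is a map $\iota\colon V\to TG_H$ with $\exp\circ\iota = f_H$, and that these form a local $\R$-basis; by Notation~\ref{not:iotaBar} the same maps, written $\ov\iota$, form a local $\R$-basis of $\overline{\cL_H}$, with $\delta\cdot\ov\iota = \ov{\delta^{-1}\cdot\iota}$ for $\delta\in H$. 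First I would record that, since $\cT_\gamma\colon U_H\to U_H$ is the deck transformation of $\pi_H$ associated to (the class of) $\gamma$, and $\wt{p_H}\colon TG_H\to TG$ is the linear isomorphism with $\exp\circ\wt{p_H} = p_H\circ\exp$, the map $\wt{p_H}^{-1}\log\gamma\in TG_H$ is well-defined; one checks $\exp\bigl((\ov\iota - \wt{p_H}^{-1}\log\gamma)\circ\cT_\gamma\bigr) = f_H$ using $f_H\circ\cT_\gamma = f_H$ (as $\cT_\gamma$ is a deck transformation of $\pi_H$ and $f_H$ is pulled back from $G_H$ via $\pi_H$ in the way dictated by the cube~\eqref{eq:commcubeotherideals})—wait, more precisely $f_H\circ\cT_\gamma$ differs from $f_H$ by translation by $p_H(\exp(\wt{p_H}^{-1}\log\gamma))$, and $\exp$ of the correction term $-\wt{p_H}^{-1}\log\gamma$ cancels exactly this translation. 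So $\cM_\gamma(\ov\iota)$ is a genuine local section of $(\cT_\gamma)_*\overline{\cL_H}$, and extending $\R$-linearly gives a morphism of sheaves $\cM_\gamma\colon\overline{\cL_H}\to(\cT_\gamma)_*\overline{\cL_H}$.

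Next I would check $\cM_\gamma$ is $\R[H]$-linear, i.e. compatible with the deck action of $H$ on both sides, so that it descends/pushes forward sensibly: for $\delta\in H$, one computes $\cM_\gamma(\delta\cdot\ov\iota) = \cM_\gamma(\ov{\delta^{-1}\cdot\iota}) = (\ov{\delta^{-1}\cdot\iota} - \wt{p_H}^{-1}\log\gamma)\circ\cT_\gamma$; since $\delta^{-1}\cdot\iota = \iota - \log\delta$ (with $\log\delta\in H_1(G_H,\Z)\subset TG_H$) and translations commute, this equals $\delta\cdot\cM_\gamma(\ov\iota)$ after identifying the $H$-actions on $\overline{\cL_H}$ and $(\cT_\gamma)_*\overline{\cL_H}$. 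Then I would push forward by $\pi_H$: using $\pi_H\circ\cT_\gamma = \pi_H$, we get $(\pi_H)_*(\cT_\gamma)_*\overline{\cL_H} = (\pi_H)_*\overline{\cL_H}$, and the composite in the statement is a map $\ov\cL\to\ov\cL$. To evaluate it, pick a local section $\iota_0\colon U\to TG$ of $\cL$ (a local lift of $f$); under the chain of identifications of Remark~\ref{rem:homologyVsCohomologyFiniteIndex}/the discussion preceding Lemma~\ref{lem:t-liftstoEquiv}, $\theta_{\overline{\cL_H}}^{-1}$ sends $\ov{\iota_0}$ to (the class of) a section of $\overline{\cL_H}$ built from $\wt{p_H}^{-1}\circ\iota_0$ composed with the section $U_H\to U\times G_H$ of $\pi_H$; applying $\cM_\gamma$ introduces the translation by $-\wt{p_H}^{-1}\log\gamma$ and precomposes with $\cT_\gamma$; applying $\theta_{\overline{\cL_H}}$ back translates this into $\ov{\iota_0 - \log\gamma} = \ov{\gamma\cdot\iota_0}$ inside $\ov\cL$—wait, I need to be careful about the conjugate module structure: $\gamma\cdot\ov{\iota_0} = \ov{\gamma^{-1}\cdot\iota_0} = \ov{\iota_0 - \log\gamma}$ by Notation~\ref{not:iotaBar}, which is exactly what the correction term produces. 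So the composite is multiplication by $\gamma$ as an element of $R$ acting on $\ov\cL$.

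The main obstacle, and the step that needs the most care, is tracking the sequence of canonical identifications between $(\pi_H)_*\overline{\cL_H}$ and $\overline{\cL}$ through the commutative cube~\eqref{eq:commcubeotherideals}: one must keep straight which maps are inverse-image functors versus inverses of bijections (the excerpt itself flags this ambiguity), and verify that the translation by $\wt{p_H}^{-1}\log\gamma$ inside $TG_H$ corresponds, under $\theta_H$ and $\wt{p_H}$, precisely to the translation by $\log\gamma$ inside $TG$ that defines the $\gamma$-action on $\cL$. Since the excerpt explicitly says this is a straightforward verification following the same steps as \cite[Lemma 3.1]{EvaMoises} and omits it, I would likewise present only the construction of $\cM_\gamma$ and the key compatibility computations, citing the $G = \C^*$ case for the routine bookkeeping; the only genuinely new ingredient relative to loc. cit. is the presence of the isomorphism $\wt{p_H}\colon TG_H\xrightarrow{\sim} TG$, which replaces the multiplication-by-$N$ map used there, and everything goes through verbatim with $\wt{p_H}$ in place of that map.
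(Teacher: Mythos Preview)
Your proposal is correct and follows exactly the approach the paper indicates: a direct verification on local sections, generalizing \cite[Lemma 3.1]{EvaMoises} by replacing the multiplication-by-$N$ map on $T\C^*$ with the linear isomorphism $\wt{p_H}$. One small slip in your aside: the translation relating $f_H\circ\cT_\gamma$ and $f_H$ takes place in $G_H$, so it is by $\exp(\wt{p_H}^{-1}\log\gamma)\in G_H$ (not by its image under $p_H$), but your subsequent cancellation with the correction term $-\wt{p_H}^{-1}\log\gamma$ is correct regardless.
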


\begin{remark}
	From now on, we are going to be working on complexes of sheaves defined over $U_H$. We will look at the map that $\cM_\gamma$ induces between $R_m^H\otimes_{R^H} \overline{\cL_H}$ and itself, namely $\Id\otimes\cM_\gamma$. For simplicity in the notation, we will denote $\Id\otimes\cM_\gamma$ also by $\cM_\gamma$ from now on. Note 
\end{remark}

The following is a higher dimensional generalization of \cite[Lemma 3.3]{EvaMoises}.
\begin{lemma}\label{lem:t-liftstodR}
	Let $m\in \Z\setminus\{0\}$. Under the quasi-isomorphism $$e^{-(\Phi_{\R}^{G_H})^\vee} \colon  R_{m}^H\otimes_R\overline{\cL_H}  \to \left( R_{m}^H\otimes_{\R}\cA^\bullet_{U_H,\R}, d+(f_H)^*\circ \Phi_{\R}^{G_H}(\varepsilon_\R)\right)$$ from Construction~\ref{con:nu}, the map $\cM_\gamma$ (where $\cM_\gamma$ was defined in Lemma~\ref{lem:t-liftstoEquiv}) becomes the following morphism of complexes of sheaves, defined $R_\infty^H$-linearly as
	\[
	\f{\wt\cM_\gamma}{\left( R_{m}^H\otimes_{\R}\cA^\bullet_{U_H,\R}, d+(f_H)^*\circ \Phi_{\R}^{G_H}(\varepsilon_\R)\right)}{(\cT_\gamma)_*\left( R_{m}^H\otimes_{\R}\cA^\bullet_{U_H,\R}, d+(f_H)^*\circ \Phi_{\R}^{G_H}(\varepsilon_\R)\right)}{1\otimes\omega}{e^{\wt{p_H}^{-1}(\log\gamma)}\otimes(\cT_\gamma)^*\omega.}
	\]
	In other words, the following diagram commutes:
	\[
	\begin{tikzcd}
		R_{m}^H\otimes_{R^H}\overline{\cL_H} \arrow[r,"\cM_\gamma"]\arrow[d,"e^{-(\Phi_{\R}^{G_H})^\vee}"] &
		(\cT_\gamma)_* R_{m}^H\otimes_{R^H}\overline{\cL_H} \arrow[d,"(\cT_\gamma)_*e^{-(\Phi_{\R}^{G_H})^\vee}"] \\
		\left( R_{m}^H\otimes_{\R}\cA^\bullet_{U_H,\R}, d+(f_H)^*\circ \Phi_{\R}^{G_H}(\varepsilon_\R)\right) \arrow[r,"\wt\cM_\gamma"] &
		(\cT_\gamma)_*\left( R_{m}^H\otimes_{\R}\cA^\bullet_{U_H,\R}, d+(f_H)^*\circ \Phi_{\R}^{G_H}(\varepsilon_\R)\right).
	\end{tikzcd}
	\]
\end{lemma}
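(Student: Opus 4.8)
The plan is to verify the commutativity of the square by a direct computation on local sections, exactly as in the proof of \cite[Lemma 3.3]{EvaMoises}, but keeping track of the extra coordinates coming from the semiabelian structure. First I would fix a local section $\ov\iota$ of $\ov{\cL_H}$, thought of as a map $\iota\colon U_H\to TG_H$ with $\exp\circ\iota=f_H$, and compute both composites applied to a generator $a\otimes\ov\iota$ with $a\in R_m^H$. Going down-then-right: by Construction~\ref{con:nu}, $e^{-(\Phi_\R^{G_H})^\vee}(a\otimes\ov\iota)=a\,e^{-(\Phi_\R^{G_H})^\vee(\ov\iota)}$, and then $\wt\cM_\gamma$ multiplies the first factor by $e^{\wt{p_H}^{-1}(\log\gamma)}$ and pulls back the form part by $\cT_\gamma$. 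Going right-then-down: Lemma~\ref{lem:t-liftstoEquiv} gives $\cM_\gamma(a\otimes\ov\iota)=a\otimes\big((\ov\iota-\wt{p_H}^{-1}\log\gamma)\circ\cT_\gamma\big)$, and then one applies $(\cT_\gamma)_*e^{-(\Phi_\R^{G_H})^\vee}$.

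The key identity to extract is that $(\Phi_\R^{G_H})^\vee$ is the identity on $H_1(G_H,\Z)\subset TG_H$ (noted right after Construction~\ref{con:Phidual}, because $\Phi_\R^{G_H}$ is a section of the cohomology map), so that $(\Phi_\R^{G_H})^\vee(\ov\iota-\wt{p_H}^{-1}\log\gamma)=(\Phi_\R^{G_H})^\vee(\ov\iota)-\wt{p_H}^{-1}\log\gamma$, with $\wt{p_H}^{-1}\log\gamma$ being precisely the element of $H_1(G_H,\Z)$ corresponding to the deck transformation over $U_H$. Since $(\Phi_\R^{G_H})^\vee$ is $\cA^0$-linear (it acts on the $TG_H$-factor only), it commutes with precomposition by $\cT_\gamma$; hence
\begin{align*}
(\cT_\gamma)_*e^{-(\Phi_\R^{G_H})^\vee}\big(a\otimes(\ov\iota-\wt{p_H}^{-1}\log\gamma)\circ\cT_\gamma\big)
&= a\,e^{-\big((\Phi_\R^{G_H})^\vee(\ov\iota)-\wt{p_H}^{-1}\log\gamma\big)\circ\cT_\gamma}\\
&= a\,e^{\wt{p_H}^{-1}\log\gamma}\,(\cT_\gamma)^*\!\left(e^{-(\Phi_\R^{G_H})^\vee(\ov\iota)}\right),
\end{align*}
where in the last step I use that $\wt{p_H}^{-1}\log\gamma$ lies in $H_1(G_H,\R)\subset R_\infty^H$ and is $\cT_\gamma$-invariant (it is a constant, being the image of a homology class), so it can be pulled out of the pullback, and that $a\in R_m^H$ commutes with it in the commutative ring $R_\infty^H$. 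This matches the down-then-right composite term by term.

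Once the diagram commutes on generators it commutes everywhere by $R_m^H$-linearity (equivalently $R_\infty^H$-linearity) of all four maps, which is part of Construction~\ref{con:nu} and of the definition of $\wt\cM_\gamma$. I would also briefly note that $\wt\cM_\gamma$ is genuinely a morphism of complexes, i.e. commutes with the twisted differential $d+(f_H)^*\circ\Phi_\R^{G_H}(\eps_\R)$: this is the same computation as in Lemma~\ref{lem:nuResolves}, using that $(\cT_\gamma)^*$ commutes with $d$ and with multiplication by the pulled-back forms $f_H^*=(\cT_\gamma)^*f_H^*$ (since $f_H\circ\cT_\gamma=f_H$ up to the deck action downstairs, which is absorbed), and that multiplication by the constant $e^{\wt{p_H}^{-1}\log\gamma}$ is central. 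The main obstacle, though a mild one, is purely bookkeeping: correctly matching the three parallel uses of $\wt{p_H}^{-1}$ --- once inside $\cM_\gamma$ as a shift of the $TG_H$-valued section, once as the element of $H_1(G_H,\Z)$ fixed by $(\Phi_\R^{G_H})^\vee$, and once as the multiplier in $R_\infty^H$ --- and checking that the $\cA^0$-linearity of $(\Phi_\R^{G_H})^\vee$ interacts correctly with precomposition by $\cT_\gamma$; everything else is formal.
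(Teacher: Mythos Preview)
Your proposal is correct and follows essentially the same approach as the paper: a direct computation on generators $a\otimes\ov\iota$ using the key fact that $(\Phi_\R^{G_H})^\vee$ fixes $H_1(G_H,\R)$, hence fixes $\wt{p_H}^{-1}\log\gamma$. One small imprecision: your justification that $\wt\cM_\gamma$ is a chain map relies on ``$f_H\circ\cT_\gamma=f_H$ up to the deck action downstairs, which is absorbed'', but in fact $f_H\circ\cT_\gamma=\cT_\gamma\circ f_H$ (with the second $\cT_\gamma$ acting on $G_H$), so what you need is that $(\cT_\gamma)^*$ on $G_H$ fixes the invariant forms in the image of $\Phi_\R^{G_H}$; the paper instead factors through $\pi_H$ and uses $\cT_\gamma^*\circ\pi_H^*=\pi_H^*$ together with surjectivity of $p_H^*$ to reach the same conclusion.
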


\begin{proof}
	Since $p_H^*:H^1(G,\R)\to H^1(G_H,\R)$ is surjective, $\wt\cM_\gamma$ is a morphism of complexes of sheaves. Moreover, since $\cT_\gamma$ is a deck transformation of $\pi_H:U_H\to U$,
	$$\cT_\gamma^*\circ f_H^*\circ\Phi_\R^{G_H}\circ p_H^*=\cT_\gamma^*\circ f_H^*\circ p_H^*\circ\Phi_\R^G=\cT_\gamma^*\circ \pi_H^*\circ f^*\circ\Phi_\R^G=\pi_H^*\circ f^*\Phi_\R^G=f_H^*\circ\Phi_\R^{G_H}.$$
	
	The rest of the proof is a direct application of the definitions of the morphisms involved which can be checked on elements of the form $1\otimes\ov\iota$, where $\ov\iota$ is a local section of $\ov{\cL_H}$. It uses that $(\Phi_{\R}^{G_H})^\vee$ fixes $H_1(G_H,\R)$, so in particular it fixes $\wt{p_H}^{-1}\log\gamma$.
\end{proof}


Let $\wt\cM_\gamma^{ss}$ be the following morphism of complexes of sheaves, defined $R_{\infty}^H$-linearly as
\[
\f{\wt\cM_\gamma^{ss}}{\left( R_{m}^H\otimes_{\R}\cA^\bullet_{U_H,\R}, d+(f_H)^*\circ \Phi_{\R}^{G_H}(\varepsilon_\R)\right)}{(\cT_\gamma)_*\left( R_{m}^H\otimes_{\R}\cA^\bullet_{U_H,\R}, d+(f_H)^*\circ \Phi_{\R}^{G_H}(\varepsilon_\R)\right)}{1\otimes\omega}{1\otimes(\cT_\gamma)^*\omega}
\]

\begin{lemma}\label{lem:gammasscohomology}
	The morphism $\cM_\gamma^{ss}$ induces an isomorphism of MHS in cohomology
	$$
	h_{ss}:H^j(U_H,R_{m}^H\otimes_{R^H} \ov{\cL_H})\to  H^j(U_H,R_{m}^H\otimes_{R^H} \ov{\cL_H})
	$$
	for all $j\geq 0$ and all $m\in \Z\setminus\{0\}$ through the quasi-isomorphism $e^{-(\Phi_{\R}^{G_H})^\vee}$.
\end{lemma}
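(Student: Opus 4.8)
The plan is to realize the map $\cM_\gamma^{ss}$ at the level of the thickened logarithmic Dolbeault mixed Hodge complex of sheaves, rather than just as a map of complexes of sheaves on $U_H$. First I would fix compatible compactifications: choose an allowed compactification $Y_H$ of $G_H$ and a good compactification $X_H$ of $U_H$ such that $f_H$ extends to $\ov{f_H}:X_H\to Y_H$, and also such that the deck transformation $\cT_\gamma:U_H\to U_H$ extends to an automorphism $\ov{\cT_\gamma}:X_H\to X_H$ (which can be arranged by a standard resolution-of-singularities argument, e.g.\ blowing up the closure of the graph). Let $D_H=X_H\setminus U_H$. Then, by the same mechanism as in Definition~\ref{def:endowedMHS}, the cohomology $H^j(U_H,R^H_m\otimes_{R^H}\ov{\cL_H})$ is computed by the thickened complex $(R^H_m\otimes\cN^\bullet_{X_H,D_H,n},d+\ov{f_H}^*\circ\Phi^{Y_H}(\eps))$ (with the Tate twist when $m>0$), and the quasi-isomorphism $e^{-(\Phi_\R^{G_H})^\vee}$ together with the adjunction $\Id\to Rj_*j^{-1}$ are the comparison maps.

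The core step is to show that $\wt\cM_\gamma^{ss}$, which sends $1\otimes\omega\mapsto 1\otimes\cT_\gamma^*\omega$, extends to a morphism of mixed Hodge complexes of sheaves, namely to (the inverse of) the pullback-by-$\ov{\cT_\gamma}$ morphism $(R^H_m\otimes\cN^\bullet_{X_H,D_H,n},d+\ov{f_H}^*\circ\Phi^{Y_H}(\eps))\to R(\ov{\cT_\gamma})_*(R^H_m\otimes\cN^\bullet_{X_H,D_H,n},d+(\ov{\cT_\gamma})^*\ov{f_H}^*\circ\Phi^{Y_H}(\eps))$, followed by $R(\ov{\cT_\gamma})_*$ of the identification of the two thickened complexes coming from the equality $(\ov{\cT_\gamma})^*\ov{f_H}^*\circ\Phi^{Y_H}_k = \ov{f_H}^*\circ\Phi^{Y_H}_k$ for $k=\R,\C$ (this equality holds because $\cT_\gamma$ is a deck transformation of $\pi_H$ and $f_H^*\circ\Phi^{G_H}_k = \pi_H^*\circ f^*\circ\Phi^G_k$, so it is $\cT_\gamma$-invariant — this is the same computation that appears in the proof of Lemma~\ref{lem:t-liftstodR}, and crucially it is the $ss$ version that has no $e^{\wt{p_H}^{-1}(\log\gamma)}$ factor, so nothing escapes the logarithmic complex). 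Since $\ov{\cT_\gamma}$ is algebraic it preserves the weight filtrations $\wt W_\lc$ and hence $W^n_\lc$ for $n\geq\max\{2,\dim_\R U_H\}$, and pullback of forms preserves $F^\lc$; one applies Proposition~\ref{prop:qisoThickenings} (together with Definition~\ref{def:directim} for the derived direct image) to conclude this composite is a morphism of mixed Hodge complexes of sheaves, and since $\ov{\cT_\gamma}$ is an isomorphism, a weak equivalence. Therefore it induces a MHS morphism in hypercohomology, and by Lemma~\ref{lem:t-liftstodR} (its $ss$-analogue, which follows the same direct computation) this hypercohomology morphism agrees with $h_{ss}$ under $e^{-(\Phi_\R^{G_H})^\vee}$.

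Finally I would note that $h_{ss}$ is an isomorphism of vector spaces: it is the map induced by the composite of an honest pullback isomorphism and an identification of complexes, or alternatively one checks directly from the definition of $\wt\cM_\gamma^{ss}$ that it is invertible (its inverse is $\wt\cM_{\gamma^{-1}}^{ss}$ up to the identification of pushforward functors), so being a MHS morphism and a linear isomorphism it is a MHS isomorphism, proving the claim for all $j\geq 0$ and all $m\in\Z\setminus\{0\}$ (restricting to $m\geq 1$, or $m<0$, as needed for the applications).

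The main obstacle I anticipate is the bookkeeping required to extend $\cT_\gamma$ to the compactifications compatibly with the already-fixed extension $\ov{f_H}$ of $f_H$ — one must choose $X_H$ and $Y_H$ so that $\ov{f_H}\circ\ov{\cT_\gamma}$ and $\ov{f_H}$ both make sense, which forces a resolution of the closure of a suitable graph; and then verifying carefully, as in the proof of Theorem~\ref{thm:funcEasy}, that the pullback-by-$\ov{\cT_\gamma}$ morphism is genuinely compatible with the pseudo-morphisms $e^{\ov{f_H}^*\circ\Psi^{Y_H}(\eps_\C)}$ on both sides (this works precisely because $\Psi^{Y_H}$ is also $\cT_\gamma$-invariant, again since it factors through $f^*$). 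Everything else is a routine transplant of the arguments already developed in Section~\ref{sec:functoriality} and Section~\ref{sec:thickeningResolves}.
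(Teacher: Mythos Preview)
Your proposal is essentially correct and follows the same strategy as the paper: lift $\wt\cM_\gamma^{ss}$ (pure pullback by $\ov{\cT_\gamma}$) to a morphism of thickened logarithmic Dolbeault mixed Hodge complexes of sheaves, use that algebraic pullback preserves $W^n_{\lc}$ and $F^{\lc}$, check compatibility with the pseudo-morphism $e^{\ov{f_H}^*\circ\Psi^{Y_H}(\eps_\C)}$, and conclude via Definition~\ref{def:directim} and Proposition~\ref{prop:qisoThickenings}.

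Two small differences from the paper worth noting. First, rather than extending $\cT_\gamma$ to an \emph{automorphism} of a single compactification $X_H$, the paper chooses compatible compactifications $X_1,X_2$ of $U_H$ and $Y_1,Y_2$ of $G_H$ with respect to the commutative square $f_H\circ\cT_\gamma=\cT_\gamma\circ f_H$ (the right-hand $\cT_\gamma$ being the deck transformation of $p_H$ on $G_H$), so that $\ov{\cT_\gamma}\colon X_1\to X_2$ and $\ov{\cT_\gamma}\colon Y_1\to Y_2$ are just morphisms. This sidesteps the equivariant-resolution bookkeeping you flag at the end; your single-compactification version also works (since $\cT_\gamma$ has finite order), but the paper's route is lighter and reuses Lemma~\ref{lem:existenceComp} verbatim. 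Second, your parenthetical explanation of why the semisimple piece is the one that lifts (``nothing escapes the logarithmic complex'') is not quite the point: the factor $e^{\wt{p_H}^{-1}(\log\gamma)}$ acts purely on the $R_m^H$ tensor factor and never leaves the logarithmic complex; the genuine obstruction for the full $\wt\cM_\gamma$ is that multiplication by this unipotent scalar does not preserve the Hodge filtration on $R_m^H$ (e.g.\ $\log\gamma$, being of negative Hodge type, strictly lowers $F^{\lc}$), so $\wt\cM_\gamma$ is not a morphism of mixed Hodge complexes while $\wt\cM_\gamma^{ss}$ is.
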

\begin{proof}
	We denote also by $\cT_\gamma:G_H\to G_H$ be the deck transformation of $p_H:G_H\to G$ induced by $\gamma\in\pi_1(G)$. Let $Y_1, Y_2$ be compactifications of $G_H$ and $X_1, X_2$ be compactifications of $U_H$ such that the first commutative diagram in 
	$$
	\begin{tikzcd}
		X_1\arrow[r, "\ov{\cT_\gamma}"]\arrow[d,"\ov{f_H}"]& X_2\arrow[d,"\ov{f_H}"]\\
		Y_1\arrow[r, "\ov{\cT_\gamma}"]& Y_2
	\end{tikzcd} \quad \quad\quad	\begin{tikzcd}
	U_H\arrow[r, "\cT_\gamma"]\arrow[d,"f_H"]& U_H\arrow[d,"f_H"]\\
	G_H\arrow[r, "\cT_\gamma"]& G_H
\end{tikzcd}
	$$ forms a compatible compactification with respect to the second commutative diagram.
	
	Let $D_i\coloneqq X_i\setminus U_H$ for $i=1,2$. Note that $j_*\wt\cM_\gamma^{ss}$ restricts to a morphism of sheaf complexes
	$$
	\left(R_{m}^H\otimes_\R\cA^\bullet_{X_2,\R}(\log D_2), d+(\ov{f_H})^*\circ\Phi_\R^{Y_2}(\varepsilon_\R)\right)\to (\ov{\cT_\gamma})_*\left(R_{m}^H\otimes_\R\cA^\bullet_{X_1,\R}(\log D_1), d+(\ov{f_H})^*\circ\Phi_\R^{Y_1}(\varepsilon_\R)\right),
	$$
	where $X, Y$ are compatible compactifications of $U_H$, $G_H$ with respect to $f_H$, and $D=X\setminus U_H$, and this restriction is the pullback by $\ov{\cT_\gamma}$.
	
	Recall the definition of the derived direct image of a mixed Hodge complex of sheaves (Definition~\ref{def:directim}). Composing this restriction of $j_*\wt\cM_\gamma^{ss}$  with $(\ov{\cT_\gamma})_*$ of the inclusion of $$\left(R_{m}^H\otimes_\R\cA^\bullet_{X_1,\R}(\log D_1), d+(\ov{f_H})^*\circ\Phi_\R^{Y_1}(\varepsilon_\R)\right)$$ into its Godement resolution, and using that the pullback by algebraic functions respects the weight filtration $\wt W_{\lc}$, we obtain a morphisms of filtered complexes
	\begin{small}
	$$
	\left(R_{m}^H\otimes\cA^\bullet_{X_2,\R}(\log D_2),d+(\ov{f_H})^*\circ\Phi_\R^{Y_2}(\varepsilon_\R),W_{\lc}^n\right)\to R(\ov{\cT_\gamma})_* \left(R_{m}^H\otimes\cA^\bullet_{X_1,\R}(\log D_1),d+(\ov{f_H})^*\circ\Phi_\R^{Y_1}(\varepsilon_\R),W_{\lc}^n\right)
	$$
\end{small}
	given by the pullback by $\ov{\cT_\gamma}$, for any $n\geq\max\{2,\dim_\R U_H\}$. The result follows from the fact that this extends to a morphism of mixed Hodge complexes of sheaves
	$$
	\left(R_m^H\otimes\cN^\bullet_{X_2,D_2,n}, d+\ov{f_H}^*\circ\Phi^{Y_2}(\varepsilon)\right)\to R(\ov{\cT_\gamma})_* \left(R_m^H\otimes\cN^\bullet_{X_1,D_1,n}, d+\ov{f_H}^*\circ\Phi^{Y_1}(\varepsilon)\right),
	$$
	where the morphism between the complex part is also given by the pullback by $\ov{\cT_\gamma}$. Indeed, pullback by $\ov{\cT_\gamma}$ respects both the weight and Hodge filtrations there, and it is straightforward to check that $R(\ov{\cT_\gamma})_*\left(e^{\ov{f_H}^*\circ\Psi^{Y_1}(\varepsilon_\C)}\right)$ composed with the real part of the morphism (tensored by $\otimes_\R\C$) coincides with the composition of the complex part of this morphism and $e^{\ov{f_H}^*\circ\Psi^{Y_2}(\varepsilon_\C)}$.
\end{proof}

\begin{lemma}\label{lem:hss}
	Let $N$ be the order of the class of $\gamma$ in the quotient $\pi_1(G)/H$, and let $j\geq 0$, $m\in\Z\setminus\{0\}$. Let
	$$h_{ss}:H^j(U_H,R_{m}^H\otimes_{R^H} \ov{\cL_H})\to  H^j(U_H,R_{m}^H\otimes_{R^H} \ov{\cL_H})$$
	as in Lemma~\ref{lem:gammasscohomology}, and let $$h:H^j(U_H,R_{m}^H\otimes_{R^H} \ov{\cL_H})\to  H^j(U_H,R_{m}^H\otimes_{R^H} \ov{\cL_H})$$ be the map induced by $\cM_\gamma$ in cohomology. Then,
	\begin{itemize}
		\item $(h_{ss})^N=\Id$.
		\item $(h\circ (h_{ss})^{-1}-\Id)^{|m|}=0$
		\item $h_{ss}$ and $h$ commute.
	\end{itemize}
	In particular, $h_{ss}$ is the semisimple part in the Jordan-Chevalley decomposition of $h$.
\end{lemma}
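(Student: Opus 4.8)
The plan is to derive all three assertions from the factorization of $\wt\cM_\gamma$ furnished by Lemma~\ref{lem:t-liftstodR}. Under the quasi-isomorphism $e^{-(\Phi_\R^{G_H})^\vee}$ the map $\cM_\gamma$ corresponds to $\wt\cM_\gamma$, and comparing the formula $\wt\cM_\gamma(1\otimes\omega)=e^{\wt{p_H}^{-1}(\log\gamma)}\otimes(\cT_\gamma)^*\omega$ with the definition of $\wt\cM_\gamma^{ss}$ gives
\[
\wt\cM_\gamma=\mu_\gamma\circ\wt\cM_\gamma^{ss}=\wt\cM_\gamma^{ss}\circ\mu_\gamma,
\]
where $\mu_\gamma$ is multiplication by the element $e^{\wt{p_H}^{-1}(\log\gamma)}\in R_\infty^H$ on the first tensor factor; the two orders coincide because $\wt\cM_\gamma^{ss}$ is $R_\infty^H$-linear and $R_\infty^H$ is commutative, and $\mu_\gamma$ commutes with the twisted differential since $\Phi^{G_H}$ also acts by multiplication by elements of $R_\infty^H$ on the first factor. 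Since $\cT_\gamma\colon U_H\to U_H$ is a deck transformation of $\pi_H$, we have $\pi_H\circ\cT_\gamma=\pi_H$, so applying $(\pi_H)_*$ turns $\wt\cM_\gamma$, $\wt\cM_\gamma^{ss}$ and $\mu_\gamma$ into endomorphisms of $(\pi_H)_*\big(R_m^H\otimes_\R\cA^\bullet_{U_H,\R},d+(f_H)^*\circ\Phi_\R^{G_H}(\varepsilon_\R)\big)$, a complex of sheaves on $U$ whose hypercohomology is $H^*(U_H,R_m^H\otimes_{R^H}\ov{\cL_H})$ (because $\pi_H$ is finite and $e^{-(\Phi_\R^{G_H})^\vee}$ is a quasi-isomorphism). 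Write $h$, $h_{ss}$, $h_u$ for the operators they induce on cohomology. Then $h=h_u\circ h_{ss}=h_{ss}\circ h_u$, from which $h_{ss}$ commutes with $h$ (the third bullet), and $h_u=h\circ h_{ss}^{-1}=h_{ss}^{-1}\circ h$ once we know $h_{ss}$ is invertible, which follows from Lemma~\ref{lem:gammasscohomology} (or from the first bullet).

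For the first bullet, iterate: the $N$-fold composite of $(\pi_H)_*\wt\cM_\gamma^{ss}$ is induced by the morphism $1\otimes\omega\mapsto 1\otimes(\cT_\gamma^N)^*\omega$. Using the explicit description of $\cT_\gamma$ on $U_H$ given before Lemma~\ref{lem:t-liftstoEquiv}, namely $\cT_\gamma(u,s)=(u,s\cdot\exp_{G_H}(\wt{p_H}^{-1}(\log\gamma)))$, together with $\gamma^N\in H$ — so that $\wt{p_H}^{-1}(\log\gamma^N)\in\pi_1(G_H)=\ker\exp_{G_H}$ — one gets $\cT_\gamma^N=\Id_{U_H}$, hence $(\cT_\gamma^N)^*=\Id$ and $(h_{ss})^N=\Id$. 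For the second bullet, $\wt{p_H}^{-1}$ restricts to an isomorphism $H_1(G,\R)\xrightarrow{\sim}H_1(G_H,\R)$, so $\wt{p_H}^{-1}(\log\gamma)\in H_1(G_H,\R)$ and
\[
e^{\wt{p_H}^{-1}(\log\gamma)}-1=\sum_{j\geq 1}\frac{\big(\wt{p_H}^{-1}(\log\gamma)\big)^{j}}{j!}
\]
lies in the maximal ideal $\mathfrak a^H$ of $R_\infty^H$; since $(\mathfrak a^H)^{|m|}=0$ in $R_m^H$, multiplication by $\big(e^{\wt{p_H}^{-1}(\log\gamma)}-1\big)^{|m|}$ is the zero operator on $R_m^H\otimes_\R\cA^\bullet_{U_H,\R}$, so $(h\circ h_{ss}^{-1}-\Id)^{|m|}=(h_u-\Id)^{|m|}=0$.

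Finally, $h=h_{ss}\cdot h_u$ with $h_{ss}$ and $h_u$ commuting, $h_u$ unipotent, and $h_{ss}$ semisimple (its minimal polynomial divides $x^N-1$, which is separable, hence squarefree, over both $\R$ and $\C$); by uniqueness of the Jordan--Chevalley decomposition, $h_{ss}$ and $h_u$ are respectively its semisimple and unipotent parts. This argument is the multivariable analogue of the one in \cite{EvaMoises}; the only point that requires genuine care is the bookkeeping in the iteration for the first bullet — checking that passing to $(\pi_H)_*$ and using $\pi_H\circ\cT_\gamma=\pi_H$ legitimately turns $\wt\cM_\gamma^{ss}$ into a self-map whose $N$-th power untwists to the identity — everything else being formal once the factorization $\wt\cM_\gamma=\mu_\gamma\circ\wt\cM_\gamma^{ss}$ is established.
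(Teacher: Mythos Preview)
Your proof is correct and follows essentially the same approach as the paper: both arguments reduce to the factorization $\wt\cM_\gamma=\mu_\gamma\circ\wt\cM_\gamma^{ss}$ (with $\mu_\gamma$ multiplication by $e^{\wt{p_H}^{-1}(\log\gamma)}$), the identity $(\cT_\gamma)^N=\Id_{U_H}$, and the nilpotence of $e^{\wt{p_H}^{-1}(\log\gamma)}-1$ on $R_m^H$. The paper's proof is quite terse, essentially pointing to the formulas for $\wt\cM_\gamma$ and $\wt\cM_\gamma^{ss}$; you spell out the verifications more carefully, including the (harmless but unnecessary) detour through $(\pi_H)_*$ to turn the maps into honest endomorphisms, whereas the paper implicitly uses the identification $H^*(U_H,(\cT_\gamma)_*\cF)\cong H^*(U_H,\cF)$ coming from $\cT_\gamma$ being an automorphism of $U_H$.
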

\begin{proof}
	Note that $h_{ss}$ and $h$ are the maps induced in cohomology by $\wt\cM_\gamma$ and $\wt\cM_\gamma^{ss}$  through the quasi-isomorphism $e^{-(\Phi_{\R}^{GH})^\vee}$. These statements can be easily checked by looking at $\wt\cM_\gamma$ and $\wt\cM_\gamma^{ss}$.
	The first statement is a consequence of the fact that $(\cT_\gamma)^N$ is the identity on $U_H$. The second is a consequence of the fact that $(e^{\wt{p_H}^{-1}(\log\gamma)}-1)^{|m|}\in R_{\infty}^H$ acts as multiplication by $0$ in $R_{m}^H$.
\end{proof}

We are now ready to prove the main result in this section.
\begin{proof}[Proof of Theorem~\ref{thm:ss}]
	Let $m>0$. By Lemma~\ref{lem:t-liftstoEquiv}, multiplication by $\gamma\in \pi_1(G)\subset\R[\pi_1(G)]$ in $\ov\cL$ determines an isomorphism of sheaves from $\gamma:R_{-m}^H\otimes_{R^H} \ov\cL\to R_{-m}^H\otimes_{R^H} \ov\cL$ to itself, which, through the isomorphism $\theta_{\ov{\cL_H}}$ determines an isomorphism
	$$
	(\pi_H)_*\cM_\gamma:(\pi_H)_*\left(R_{-m}^H\otimes_{R^H} \ov{\cL_H}\right)\to (\pi_H)_*\left(R_{-m}^H\otimes_{R^H} \ov{\cL_H}\right)
	$$
	Let $\gamma_{ss}:H^j(U,R_{-m}^H\otimes_{R^H} \ov\cL)\to H^j(U,R_{-m}^H\otimes_{R^H} \ov\cL)$ be the semisimple part of the isomorphism induced by $\gamma$ in cohomology. Now, taking duals and applying Lemma~\ref{lem:hss}, we obtain the commutative diagram
	$$
	\begin{tikzcd}
		\Hom_\R\left(H^j(U,R_{-m}^H\otimes_{R^H} \ov\cL),\R\right)\arrow[d,"(\gamma_{ss})^\vee"]\arrow[r, "\cong"] & \Hom_\R\left(H^j(U_H,R_{-m}^H\otimes_{R^H} \ov{\cL_H}),\R\right)\arrow[d, "(h_{ss})^\vee", "\text{MHS}"']\\
		\Hom_\R\left(H^j(U,R_{-m}^H\otimes_{R^H} \ov\cL),\R\right)\arrow[r, "\cong"] & \Hom_\R\left(H^j(U_H,R_{-m}^H\otimes_{R^H} \ov{\cL_H}),\R\right),
	\end{tikzcd}
	$$
	where the arrow on the right is a MHS morphism by Lemma~\ref{lem:gammasscohomology}.
	
	We can apply Remark~\ref{rem:homologyVsCohomology} to the right column of this diagram and Remark~\ref{rem:homologyVsCohomologyFiniteIndex} to the left column to obtain
	\begin{equation}\label{eq:beforeLimit}
		\begin{tikzcd}
			H_j(U,R_{m}^H\otimes_{R^H} \cL)\arrow[d,"\gamma_{ss}"]\arrow[r, "\cong"] & H_j(U_H,R_{m}^H\otimes_{R^H} \cL_H)\arrow[d, "(h_{ss})^\vee", "\text{MHS}"']\\
			H_j(U,R_{m}^H\otimes_{R^H} \cL)\arrow[r, "\cong"] & H_j(U_H,R_{m}^H\otimes_{R^H} \cL_H).
		\end{tikzcd}
	\end{equation}
	Note that the arrow at the left has been labeled $\gamma_{ss}$ because the dual of multiplication by $\gamma$ is multiplication by $\gamma$, and taking duals respects the Jordan-Chevalley decomposition. Now, these maps are defined for all $m>0$ and commute with taking inverse limits, so by Corollary~\ref{cor:completionHomology2},
	\begin{equation}\label{eq:afterLimit}
		\begin{tikzcd}
			R_{\infty}^H\otimes_{R^H} H_j(U, \cL)\arrow[d,"\varprojlim\limits_m\gamma_{ss}"]\arrow[r, "\cong"] & R_{\infty}^H\otimes_{R^H} H_j(U_H, \cL_H)\arrow[d, "\varprojlim\limits_m(h_{ss})^\vee", "\text{pro-MHS}"']\\
			R_{\infty}^H\otimes_{R^H} H_j(U, \cL)\arrow[r, "\cong"] & R_{\infty}^H\otimes_{R^H} H_j(U_H, \cL_H).
		\end{tikzcd}
	\end{equation}
	In the previous commutative diagram, the horizontal arrows were induced by $\theta_{\ov{\cL_H}}$ after tensoring by $R_{-m}^H$ over $R^H$, taking the $j$-th cohomology, taking $\R$-duals and performing an inverse limit. However, notice that this is  the map induced in homology by the identification $\theta_H:U^f\to U_H^{f_H}$ from \eqref{eq:commcubeotherideals}, under the identification from Remark~\ref{rem:coverVsL}. Indeed, both of these maps come from the natural identifications arising from the commutative cube \eqref{eq:commcubeotherideals}.
	
	Let us see that the maps in \eqref{eq:beforeLimit} all commute with multiplication by any element of $H_1(G_H,\R)\subset R_{\infty}^H$. It is a well known fact of the Jordan-Chevalley decomposition that the semisimple part of a matrix $A$ with real entries can be written as a polynomial on $A$ as follows: if $p_A(x)=\prod_{k=1}^l(x-\lambda_k)^{n_k}$ is the characteristic polynomial of $A$ for $\lambda_1,\ldots,\lambda_l$ distinct elements in $\C$, Bézout's identity implies that we can pick polynomials $C_k(x),D_k(x)\in\C[x]$ such that $C_k(X)\cdot (x-\lambda_k)^{n_k}+D_k(x)\cdot \prod_{j\neq k}(x-\lambda_j)^{n_j} =1$, and $D_k$ can be chosen so that its constant term is $0$. Let $P(x)\coloneqq \sum_{k=1}^l \lambda_k D_k(x)\cdot \prod_{j\neq k}(x-\lambda_j)^{n_j}$, which is a polynomial with $0$ constant term. Since every vector in $\ker (A-\lambda_k I)^{n_k}$ is an eigenvector of $P(A)$ of eigenvalue $\lambda_k$ for all $k=1,\ldots,l$, $P(A)$ is the semisimple part of $A$. Now, since the Jordan-Chevalley decomposition commutes with taking duals, we have that $(h_{ss})^\vee$ is a polynomial in $h^\vee$ with no constant term, so in particular $(h_{ss})^\vee$ commutes with every linear operator that commutes with $h^\vee$. Since the action of $\gamma$ on the left hand side of \eqref{eq:beforeLimit} commutes with multiplication by any element of $H_1(G_H,\R)$, we obtain that it commutes with all the maps in \eqref{eq:beforeLimit}, as desired.
	
	In Corollary~\ref{cor:quotientI} $R_m^H\otimes_{R^H} H_j(U_H, \cL_H)$ is endowed with a MHS as the cokernel of a multiplication map
	$$
	\underbrace{H_1(G_H,\R)\otimes\ldots\otimes H_1(G_H,\R)}_m\otimes \left(R_\infty^H\otimes_{R^H} H_j(U_H,\cL_H)\right)\to R_\infty^H\otimes_{R^H} H_j(U_H,\cL_H),
	$$
	so, by \eqref{eq:afterLimit},
	$$
	(h_{ss})^\vee:R_m^H\otimes_{R^H} H_j(U_H,\cL_H)\to R_m^H \otimes_{R^H} H_j(U_H,\cL_H)
	$$
	is a mixed Hodge structure isomorphism. Note that there exists $m'\gg1$ such that the natural map
	$$
	 H_j(U_H,R_{m'}^H\otimes_{R^H}\cL_H)\to R_m^H\otimes_{R^H} H_j(U_H,\cL_H)
	$$
	is surjective and a MHS morphism. Since this surjection commutes with $(h_{ss})^\vee$, Lemma~\ref{lem:gammasscohomology} implies that $\left((h_{ss})^\vee\right)^N:R_m^H\otimes_{R^H} H_j(U_H,\cL_H)\to R_m^H\otimes_{R^H} H_j(U_H,\cL_H)$ is the identity, it commutes with $h$, and $h^\vee\circ\left((h_{ss})^\vee\right)^{-1}$ is unipotent. In other words,  $(h_{ss})^\vee:R_m^H\otimes_{R^H} H_j(U_H,\cL_H)\to R_m^H\otimes_{R^H} H_j(U_H,\cL_H)$ is the semisimple part in the Jordan-Chevalley decomposition of $h^\vee$, and it is a MHS isomorphism.
	
	Recall the definition of the MHS on $\frac{H_j(U^f,\R)}{\fm_H H_j(U^f,\R)}$ from Proposition~\ref{prop:otherideals}, which uses the MHS from Definition~\ref{def:MHSalexander}. Under the isomorphism $H_j(U_H,\cL_H)\cong H_j(U,\cL)$ coming from $\theta_H$ and the identification from Remark~\ref{rem:coverVsL}, $(h_{ss})^\vee$ corresponds to the semisimple part of the map induced by $\gamma$, so
	$$
	\gamma_{ss}: \frac{H_j(U^f,\R)}{\fm_H H_j(U^f,\R)}\cong R_m^H\otimes_{R^H} H_j(U,\cL)\to R_m^H\otimes_{R^H} H_j(U,\cL)\cong \frac{H_j(U^f,\R)}{\fm_H H_j(U^f,\R)}
	$$
	is a mixed Hodge structure isomorphism, concluding the proof.
%
%
\end{proof}

\section{The \texorpdfstring{$\Q$}{Q}-MHS in the case \texorpdfstring{$G=(\C^*)^n$}{G = C*n}}\label{s:comparison}

In \ref{def:endowedMHS}, a canonical MHS was defined on $H^*(U,R_m\otimes_R\ov\cL)$ for all $m\in\Z\setminus\{0\}$. All of the other MHS defined in this paper are induced from these ones through morphisms defined over $\Q$. The goal of this section is to prove that the MHSs of this paper are actually defined over $\Q$, in the specific case where $G=(\C^*)^n$, although we expect the result to be true in general. Note that the construction in Section~\ref{sec:MHS} only uses morphisms defined over $\Q$,  and the results in Sections~\ref{sec:MHS}, \ref{sec:functoriality}, \ref{sec:otherideals} and~\ref{sec:eigenspace} only involve morphisms defined over $\Q$. Therefore, the results therein also hold for the MHS with $\Q$-coefficients.

Let $U$ be a smooth connected complex algebraic variety, and let $f:U\to (\C^*)^n$ be an algebraic morphism, where $n\geq 1$. Let $X$ and $(\mathbb P^1)^n$ be compactifications of $U$ and $(\C^*)^n$ which are compatible with $f$, and let $\ov f:X\to (\mathbb P^1)^n$ be the extension of $f$ to those compactifications. Let $D\coloneqq X\setminus U$ and let $n'\geq\max\{2,\dim_\R U\}$. Pick coordinates $(z_1,z_2,\ldots,z_n)$ of $(\C^*)^n$, and note that $\left\{\left[\frac{1}{2\pi i}\frac{dz_j}{z_j}\right]\mid j=1,\ldots, n\right\}$ form a basis of $H^1((\C^*)^n,\Z)$. With this choice of coordinates, we have that $f=(f_1,\ldots,f_n)$, where $f_j:U\to \C^*$ for all $j=1,\ldots, n$.

Note that $G_A$ in the Chevalley decomposition of $G=(\C^*)^n$ is a point, and $G=G_T$. Hence, by Definition-Proposition~\ref{defprop:PhiPsiG}, $\Phi^{(\mathbb P^1)^n}_\C$ factors through $\Gamma\left((\mathbb P^1)^n, \Omega_{(\mathbb P^1)^n}^1(\log E)\right)$, where $E=(\mathbb P^1)^n\setminus(\C^*)^n$. It is straightforward to see that maps $\ov f^*\circ\Phi_{\C}^{(\mathbb P^1)^n}, 	\ov f^*\circ\Phi_{\R}^{(\mathbb P^1)^n}$, and $\ov f^*\circ\Psi^{(\mathbb P^1)^n}$ appearing in Definition~\ref{def:thickening} have the following form:
\begin{small}
\begin{align}\label{eq:explicitPhiPsi}
	\begin{split}
		&\f{	\ov f^*\circ\Phi_{\C}^{(\mathbb P^1)^n}}{\left(H^1((\C^*)^n,\C),W_{\lc}[1],F^{\lc}\right)}{\left(\Gamma(X,\Omega^{1}_{X}(\log D)),W_{\lc},F^{\lc}\right)\subset\left(\Gamma(X,\cA^{1}_{X,\C}(\log D)),W_{\lc}^{n'},F^{\lc}\right)}{\left[\frac{1}{2\pi i}\frac{dz_j}{z_j}\right]}{\frac{1}{2\pi i}\frac{df_j}{f_j},}\\
		&\f{\ov f^*\circ\Phi_{\R}^{(\mathbb P^1)^n}}{\left(H^1((\C^*)^n,\R),W_{\lc}[1]\right)}{\left(\Gamma(X,\cA^{1}_{X,\R}(\log D)),W_{\lc}^{n'}\right)}{\left[\frac{1}{2\pi i}\frac{dz_j}{z_j}\right]}{\Re\frac{1}{2\pi i}\frac{df_j}{f_j}=\frac{1}{2\pi}\Im \frac{df_j}{f_j},}\\
		&
		\f{\ov f^*\circ\Psi^{(\mathbb P^1)^n}}
    {\left(H^1((\C^*)^n,\C),W_{\lc}[1]\right)}
    {\left(\Gamma(X,\C\otimes_{\R}\cA_{X,\R}^0(\log D)),W_{\lc}^{n'}\right)}
    {\left[\frac{1}{2\pi i}\frac{df_j}{f_j}\right]}
    {-\frac{1}{2\pi i}\log(|f_j|).}
	\end{split}
\end{align}
	\end{small}
Recall the filtrations in the target of the first of these maps, which were defined in Section~\ref{ss:analyticDolbeault}, and note that it respects the filtrations (recall that $H^1((\C^*)^n,\R)$ is pure of type $(1,1)$).

Let $j:U\to X$ be the inclusion.  Consider the multiplicative $\Q$-mixed Hodge complex of sheaves on $X$ of \cite[Theorem 2.37]{mhsalexander}
\begin{equation}\label{eq:rationalMHC}
((\cK^\bullet_\infty,\wt W_{\lc}), (\Omega_X^\bullet(\log D), W_{\lc}, F^{\lc}),\varphi_\infty),
\end{equation}
which coincides with the one from \cite[Section 4.4]{peters2008mixed} except for a slight modification in the weight filtration of the rational part so as to make it biregular.

For the purposes of this section, we just need to recall this much of the definition of this mixed Hodge complex of sheaves:
\begin{align*}
	&\cK_\infty^p\coloneqq \varinjlim_{m\to \infty} \left(\Sym_\Q^{m-p}(\mathcal O_X)\otimes\left(\bigwedge_\Q^p\mathcal M_{X,D}^{\mathrm{gp}}\otimes_\Z\right)\right),
	\quad\wt W_m 	\cK_\infty^\bullet\coloneqq\left\{\begin{array}{lr} \cK_m^\bullet & \text{if }m\leq\dim_\C X,\\
		\cK_\infty^\bullet & \text{otherwise.}
	\end{array}\right. ,\\ &\f{\varphi_\infty}{\quad\quad\quad\quad\quad\quad\cK_\infty^p\quad\quad\quad\quad\quad\quad}{\Omega_X^p(\log D)}{g_1\cdot\ldots\cdot g_{m-p}\otimes y_1\wedge\ldots\wedge y_p}{\frac{1}{(2\pi i)^p}g_1\cdot\ldots\cdot g_{m-p}\frac{dy_1}{y_1}\wedge\ldots\wedge\frac{dy_p}{y_p}}
\end{align*}
where $\mathcal O_X$ is the sheaf of holomorphic functions on $X$ and $\mathcal M_{X,D}^{\mathrm{gp}}$ is the sheaf of abelian groups associated to $\mathcal M_{X,D}\coloneqq \mathcal O_X\cap j_*\mathcal O_U^*$, where $\mathcal O_U^*$ is the sheaf of non-vanishing holomorphic functions on $U$, as a sheaf of groups under multiplication.

\begin{remark}
	Note that $f_j\in \Gamma(X,j_*\cO_U^*)$ for all $j=1,\ldots, n$. These functions can be extended to $X$ as the quotient of two holomorphic functions, and thus $f_j\in\Gamma(X, \mathcal M_{X,D}^{\mathrm{gp}})$ for all $j=1,\ldots,n$.
\end{remark}

\begin{definition}
	We define the morphism $\Phi_\Q$ as
	$$
	\f{\Phi_\Q}{\left(H^1((\C^*)^n,\Q),W_{\lc}[1]\right)}{\Gamma\left(X,(\cK_{\infty}^{1,\cl},\wt W_{\lc})\right)}{\left[\frac{1}{2\pi i}\frac{dz_i}{z_i}\right]}{1\otimes f_i.}$$
\end{definition}

Clearly, $\Phi_\Q$ preserves the weights and, using \eqref{eq:explicitPhiPsi}, it is straightforward to see that $\varphi_\infty\circ\Phi_\Q=\ov f^*\circ\Phi_\C^{(\mathbb P^1)^n}$ in $H^1((\C^*)^n,\Q)$. In particular, we may apply Definition-Proposition~\ref{defprop:thickMHC} to get the following mixed Hodge complex of sheaves.

\begin{definition}[Thickened rational mixed Hodge complex of sheaves]\label{def:thickQMHC}
	Let $m\in\Z\setminus\{0\}$. The following is a $\Q$-mixed Hodge complex of sheaves in $X$:
	$$
	\left(\left((R_m\otimes_\Q \cK^\bullet_\infty,d+\Phi_\Q(\eps_\Q)), W_{\lc}\right), \left((R_m\otimes_\C \Omega_{X}^\bullet(\log D), d+\ov f^*\circ\Phi^{(\mathbb P^1)^n}_\C(\eps_\C)), W_{\lc}, F^{\lc}\right), \Id\otimes\varphi_\infty\right),
	$$
	where the filtrations are the tensor filtrations corresponding to $R_m$ and the mixed Hodge complex \eqref{eq:rationalMHC}, and $\Id\otimes\varphi_\infty:R_m\otimes_\Q \cK^\bullet_\infty\to R_m\otimes_\C \Omega_{X}^\bullet(\log D)$ is a quasi-isomorphism after tensoring the domain with $\C$ over $\Q$.
\end{definition}

\begin{remark}
	Let $m\in\Z\setminus\{0\}$, and let $n'\geq\max\{2,\dim_\R U\}$. Note that the mixed Hodge complex of sheaves from Definition~\ref{def:thickQMHC} can be given an extra term so that its complex part coincides with the complex part in the mixed Hodge complex of sheaves $\left(R_m\otimes\cN^\bullet_{X,D,n'}, d+\ov f^*\circ\Phi^{(\mathbb P^1)^n}(\eps)\right)$ of Definition~\ref{def:thickening}. Indeed,
 by Proposition~\ref{prop:qisoThickenings}, the composition of the bi-filtered quasi-isomorphisms from Theorem~\ref{thm:propertiesNA} and Definition-Proposition~\ref{defprop:modifiedNA}
		$(\Omega_X^\bullet(\log D),W_{\lc},F^{\lc})\hookrightarrow(\cA^\bullet_{X,\C}(\log D),W_{\lc}^{n'},F^{\lc})$ given by inclusion extends to a bi-filtered quasi-isomorphism between the complex parts of the thickened complexes of Definitions~\ref{def:thickQMHC}  and~\ref{def:thickening}
		$$
		\left((R_{m}\otimes_{\C}\Omega_X^\bullet(\log D),d+\ov f^*\circ \Phi_{\C}^Y(\eps_{\C})),W_{\lc},F^{\lc}\right)\hookrightarrow\left((R_{m}\otimes_{\C}\cA^\bullet_{X,\C}(\log D),d+\ov f^*\circ \Phi_{\C}^Y(\eps_{\C})),W_{\lc}^{n'},F^{\lc}\right)
		$$
\end{remark}

\begin{notation}\label{not:LQ}
	Let $\ov\cL_\Q$ (resp. $\ov\cL_\R$) be as $\ov\cL$ in Definition~\ref{def:L} but with $\Q$ (resp. $\R$) coefficients. For $m\in\Z\setminus\{0\}$, $R_m$ (resp. $R$)  in the expression $R_m\otimes_R\ov\cL_\Q$ will be as in Definition~\ref{def:Rminfty} for $k=\Q$  (resp. $\Q\left[\pi_1\left((\C^*)^n\right)\right]$), and similarly for $\R$-coefficients.
\end{notation}

We now introduce some notation. Let $k=\Q, \R$ or $\C$, depending on context. Let $s_j^\vee\coloneqq \left[\frac{1}{2\pi i}\frac{dz_j}{z_j}\right]$, let $\{s_j\mid j=1,\ldots,n\}\subset H_1((\C^*)^n,k)$ be the dual basis of $\{s_j^\vee\mid j=1,\ldots,n\}$. Let $\gamma_j$ be a loop around the origin in the $j$-th coordinate $\C^*$ of $(\C^*)^n$ for all $j=1,\ldots, n$. Identify $T(\C^*)^n$ with $\C^n$ (with coordinates $(w_1,\ldots, w_n)$) in a way that $\exp(w_1,\ldots,w_n)=(e^{w_1},\ldots, e^{w_n})$. With those identifications, $\log \gamma_j$ is seen in $T(\C^*)^n$ as $2\pi i e_j$, where $e_j$ is the $j$-th element of the canonical basis of $\C^n$. Denote by $x_j$ and $y_j$ the real and imaginary parts of $z_j$. Note that $\{e_1,\ldots,e_n\}\cup\{\log \gamma_i,\ldots,\log\gamma_n\}$ form an $\R$-basis of $T(\C^*)^n$. Note that $\Phi_\R^{(\C^*)^n}\left(\left[\frac{1}{2\pi i}\frac{dz_j}{z_j}\right]\right)=\Re\frac{1}{2\pi i} \frac{dz_j}{z_j}$, which, at the identity element of $(\C^*)^n$ takes the value $\frac{1}{2\pi}dy_j$. The form  $\frac{1}{2\pi}dy_j$  takes the value $1$ in $\log \gamma_j$ and $0$ in the rest of the elements of the fixed $\R$ basis of $T(\C^*)^n$. Hence, under these identifications, $(\Phi_\R^{(\C^*)^n})^\vee$ (as introduced in Construction~\ref{con:nu}) takes the following form:
$$
\begin{array}{cccc}
(\Phi_\R^{(\C^*)^n})^\vee: &TG&\longrightarrow&H_1(G,\R)\\
\ & \log \gamma_j &\longmapsto & \log \gamma_j=s_j\\
\ & e_j &\longmapsto & 0
\end{array}
$$
Let $V$ be a small open set in $U$ and let $\iota:V\to T(\C^*)^n$ be a holomorphic map such that $\exp\circ\iota=f$. Note that such $\iota$ form a $\Q$-basis of $\ov\cL_\Q$ in $V$. Under the identifications above, $\iota=(\iota_1,\ldots,\iota_n)$, where $\iota_j:V\to \C$ is holomorphic. Hence, it makes sense to talk about $\Re\iota_j$ and $\Im \iota_j$ for all $j=1,\ldots, n$. Notice that $\exp(\iota_j)=f_j$, so $\exp(\Re\iota_j)=|f_j|$.

We now do the analogue of Construction~\ref{con:nu} but for $\Q$-coefficients.
\begin{construction}\label{con:nuQ}
	Let $m\in\Z\setminus\{0\}$. Then, we can define an $R=\Q\left[\pi_1\left(\C^*\right)^n\right]$-linear morphism of sheaves $\nu_\Q:R_m\otimes_R\ov\cL_\Q\to j^{-1}(R_m\otimes_\Q \cK^0_\infty)$ locally by
	$$
	\f{\nu_\Q}{R_m\otimes_R\ov\cL_\Q}{j^{-1}(R_m\otimes_\Q \cK^0_\infty)}{\alpha\otimes\iota}{\alpha \exp\left(-\frac{1}{2\pi i}\sum\limits_{j=1}^n(s_j\otimes(\iota_j\otimes 1))\right).}
	$$
\end{construction}

The proof that $\nu_\Q$ is well-defined on the tensor product (over $R$) and that it is $R$-linear follows similar steps as its analogue for $\R$-coefficients (Proposition~\ref{prop:nuR-linear}), so we omit it. This time, it needs to use that for all $a_1,\ldots,a_n\in \Z$, $(\prod_{j=1}^n\gamma_j^{a_j})\cdot\iota=(\iota_1-2\pi ia_1,\ldots,\iota_n-2\pi ia_n)$. 

\begin{proposition}\label{prop:Qcomparison}
	Let $m\in\Z\setminus\{0\}$. Then, the restriction of the morphism $e^{f^*\circ\Psi^{(\C^*)^n}(\eps_\C)}\cdot e^{-(\Phi_\R^{(\C^*)^n})^\vee}:R_m\otimes_R \ov\cL_\R\to j^{-1}\left(R_m\otimes_\C \cA^0_{X,\C}(\log D)\right)$ to $R_m\otimes_R \ov\cL_\Q$ coincides with the composition $(\Id\otimes\varphi_\infty)\circ\nu_\Q$.
\end{proposition}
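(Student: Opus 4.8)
The statement is an identity of morphisms of sheaves of $R_m$-modules on $U$, so it suffices to verify it locally, on a small simply connected open set $V\subseteq U$ on which a holomorphic lift $\iota=(\iota_1,\ldots,\iota_n)\colon V\to T(\C^*)^n$ of $f$ exists, and on the $\Q$-basis of $R_m\otimes_R\ov\cL_\Q$ over $V$ given by elements of the form $\alpha\otimes\iota$ with $\alpha\in R_m$. Since all three maps $e^{f^*\circ\Psi^{(\C^*)^n}(\eps_\C)}$, $e^{-(\Phi_\R^{(\C^*)^n})^\vee}$ and $\nu_\Q$ act on the $R_m$-factor by multiplication by elements of $R_\infty$ (and $\varphi_\infty$ leaves the $R_m$-factor unchanged), one reduces to checking the identity on a single local generator $\iota$, i.e. to comparing the two resulting elements of $R_\infty\otimes_\C\cA^0_{X,\C}(\log D)$ (or rather its germ along $V$).

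\textbf{Key steps.} First I would compute $\nu_\Q(\iota)$ using Construction~\ref{con:nuQ}: it is $\exp\bigl(-\frac{1}{2\pi i}\sum_{j=1}^n s_j\otimes(\iota_j\otimes 1)\bigr)$, and then apply $\Id\otimes\varphi_\infty$ to land in $R_m\otimes_\C\Omega_X^\bullet(\log D)\subset R_m\otimes_\C\cA^0_{X,\C}(\log D)$; since $\varphi_\infty$ sends an element $1\otimes(\text{function})$ in degree $0$ to that function, $(\Id\otimes\varphi_\infty)\circ\nu_\Q(\iota)=\exp\bigl(-\frac{1}{2\pi i}\sum_j s_j\,\iota_j\bigr)$, an element of $R_\infty\otimes_\C\cA^0$. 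Next I would compute $e^{-(\Phi_\R^{(\C^*)^n})^\vee}(\iota)$ from Construction~\ref{con:nu}: writing $\ov\iota=\sum_j(\log\gamma_j\otimes\Re\iota_j^{\,\text{part}} + e_j\otimes h_j)$ in the real basis $\{\log\gamma_i,e_j\}$ of $T(\C^*)^n$ and using the explicit form of $(\Phi_\R^{(\C^*)^n})^\vee$ recorded just before Construction~\ref{con:nuQ} (it sends $\log\gamma_j\mapsto s_j$, $e_j\mapsto 0$), one gets $e^{-(\Phi_\R^{(\C^*)^n})^\vee}(\ov\iota)=\exp\bigl(-\sum_j s_j\otimes g_j\bigr)$ where $g_j$ is the coefficient function of $\log\gamma_j$; by Proposition~\ref{prop:sectionDualBasis} (or a direct coordinate computation) $g_j=\frac{1}{2\pi}\Im\iota_j$, since $\log\gamma_j$ pairs to $1$ with $\frac{1}{2\pi}dy_j=\Phi_\R^{(\C^*)^n}(s_j^\vee)$. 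Then I would compute the homotopy factor using \eqref{eq:explicitPhiPsi}: $f^*\circ\Psi^{(\C^*)^n}(\eps_\C)=-\frac{1}{2\pi i}\sum_j s_j\otimes\log|f_j|=-\frac{1}{2\pi i}\sum_j s_j\otimes\Re\iota_j$ (using $\exp(\Re\iota_j)=|f_j|$). Finally I would multiply the two exponentials:
\[
e^{f^*\circ\Psi^{(\C^*)^n}(\eps_\C)}\cdot e^{-(\Phi_\R^{(\C^*)^n})^\vee}(\iota)=\exp\Bigl(-\tfrac{1}{2\pi i}\sum_j s_j\,\Re\iota_j-\tfrac{1}{2\pi}\sum_j s_j\,\Im\iota_j\Bigr)=\exp\Bigl(-\tfrac{1}{2\pi i}\sum_j s_j(\Re\iota_j+i\,\Im\iota_j)\Bigr)=\exp\Bigl(-\tfrac{1}{2\pi i}\sum_j s_j\,\iota_j\Bigr),
\]
where the middle equality uses $\frac{1}{2\pi}=\frac{i}{2\pi i}$. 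This matches $(\Id\otimes\varphi_\infty)\circ\nu_\Q(\iota)$, which completes the local check.

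\textbf{Main obstacle.} The only subtle point is bookkeeping the real/imaginary decomposition: one must be careful that, with the identifications $\exp(w_1,\ldots,w_n)=(e^{w_1},\ldots,e^{w_n})$ and $\log\gamma_j=2\pi i e_j$ in $T(\C^*)^n$, the coefficient of $\log\gamma_j$ in the expansion of $\ov\iota$ is exactly $\frac{1}{2\pi}\Im\iota_j$ (not $\frac{1}{2\pi i}$ times $\iota_j$ or $\Re\iota_j$), and that $\log|f_j|=\Re\iota_j$. Once these two scalar identifications are pinned down, the combination of the $\Psi$-term (contributing the $\Re\iota_j$ part with coefficient $-\frac{1}{2\pi i}$) and the $(\Phi_\R^{(\C^*)^n})^\vee$-term (contributing the $\Im\iota_j$ part with coefficient $-\frac{1}{2\pi}=-\frac{i}{2\pi i}$) reassembles the holomorphic $\iota_j$ with the single coefficient $-\frac{1}{2\pi i}$, which is precisely what $\nu_\Q$ followed by $\varphi_\infty$ produces. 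I would also remark that well-definedness on the tensor product over $R$ (on both sides) has already been established — for the left side via Construction~\ref{con:nu} and Proposition~\ref{prop:nuR-linear} together with Lemma~\ref{lem:dglas}, and for the right side via the $R$-linearity of $\nu_\Q$ noted after Construction~\ref{con:nuQ} — so the local computation on a single generator genuinely suffices.
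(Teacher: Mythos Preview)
Your proof is correct and follows essentially the same approach as the paper: a direct local computation on a generator $\alpha\otimes\iota$, decomposing $\iota_j$ into real and imaginary parts, applying the explicit formulas for $(\Phi_\R^{(\C^*)^n})^\vee$ and $\Psi^{(\C^*)^n}$ from \eqref{eq:explicitPhiPsi}, and reassembling $-\tfrac{1}{2\pi i}\Re\iota_j-\tfrac{1}{2\pi}\Im\iota_j=-\tfrac{1}{2\pi i}\iota_j$. The paper's proof is a single displayed chain of equalities containing exactly this computation; you have simply made the reduction steps and the coordinate bookkeeping more explicit.
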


\begin{proof}
	The proof is a direct computation:
\begin{align*}
	e^{f^*\circ\Psi^{(\C^*)^n}(\eps_\C)}\cdot e^{-(\Phi_\R^{(\C^*)^n})^\vee}(\alpha\otimes \iota)&=\alpha\cdot e^{f^*\circ\Psi^{(\C^*)^n}(\eps_\C)}\cdot e^{-(\Phi_\R^{(\C^*)^n})^\vee\left(\sum_{j=1}^n e_j\otimes\Re\iota_j+\frac{1}{2\pi}\sum_{j=1}^n \log\gamma_j\otimes\Im\iota_j\right)}\\
	&=\alpha\cdot\exp\left(-\frac{1}{2\pi i}\sum_{j=1}^ns_j\otimes\log(|f_j|)\right)\cdot \exp\left(-\frac{1}{2\pi}\sum_{j=1}^n s_j\otimes\Im\iota_j\right)\\
	&=\alpha\cdot\exp\left(-\frac{1}{2\pi i}\sum_{j=1}^ns_j\otimes(\Re\iota_j+i\Im\iota_j)\right)\\
	&=\alpha\cdot\exp\left(-\frac{1}{2\pi i}\sum_{j=1}^ns_j\otimes\iota_j\right)=(\Id\otimes\varphi_\infty)\circ\nu_\Q(\alpha\otimes \iota)
\end{align*}
\end{proof}

\begin{remark}
	Since $\Id\otimes\varphi_\infty$ (resp. $e^{f^*\circ\Psi^{(\C^*)^n}(\eps_\C)}$) is a quasi-isomorphism when the domain is tensored by $\C$ over $\Q$ (resp. over $\R$) and $\C$ is faithfully flat over $\Q$ (resp. over $\R$), we have that $j^{-1}\left((R_m\otimes_\Q \cK^\bullet_\infty, d+\Phi_\Q(\eps_\Q))\right)$ resolves a free rank $1$ $R$-local system. Using Proposition~\ref{prop:Qcomparison} and Lemma~\ref{lem:nuResolves} we get that $\nu_\Q$ induces a quasi-isomorphism
	$$
	\nu_\Q:R_m\otimes_R\ov\cL_\Q\to j^{-1}\left(R_m\otimes_\Q \cK^\bullet_\infty, d+\Phi_\Q(\eps_\Q)\right).
	$$
	In particular, the mixed Hodge complex of sheaves from Definition~\ref{def:thickQMHC} endows $H^*(U,R_m\otimes_R\ov\cL_\Q)$ with a $\Q$-MHS following the same steps as in Definition~\ref{def:endowedMHS} (with the same shifts if $m>0$), using the adjunction $\Id\to Rj_*j^{-1}$.
\end{remark}

\begin{corollary}\label{cor:QMHS}
	Let $m\in\Z\setminus\{0\}$, and suppose that $G=(\C^*)^n$ for some $n\geq 1$. The MHS on $H^*(U,R_m\otimes_R\ov\cL_\R)$ from Definition~\ref{def:endowedMHS} is defined over $\Q$. In particular, all of the MHSs defined in this paper are defined over $\Q$ in this case, and the results in Sections~\ref{sec:MHS}, \ref{sec:functoriality}, \ref{sec:otherideals} and~\ref{sec:eigenspace} also hold for $\Q$-coefficients.
\end{corollary}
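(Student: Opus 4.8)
The plan is to show that the $\R$-MHS from Definition~\ref{def:endowedMHS} on $H^*(U,R_m\otimes_R\ov\cL_\R)$ is the extension of scalars of a $\Q$-MHS, namely the one induced by the thickened rational mixed Hodge complex of sheaves of Definition~\ref{def:thickQMHC}. Concretely, I would argue that the two mixed Hodge complexes of sheaves---the thickened Navarro Aznar complex $\left(R_m\otimes\cN^\bullet_{X,D,n'}, d+\ov f^*\circ\Phi^{(\mathbb P^1)^n}(\eps)\right)$ of Definition~\ref{def:thickening} (for $n'\geq\max\{2,\dim_\R U\}$) and the thickened rational complex of Definition~\ref{def:thickQMHC}---induce, through the respective adjunction-and-resolution diagrams, the \emph{same} $\R$-MHS on $H^*(U,R_m\otimes_R\ov\cL_\R)$ after applying $-\otimes_\Q\R$ to the latter. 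Since the rational complex is a genuine $\Q$-mixed Hodge complex of sheaves and Proposition~\ref{prop:Qcomparison} together with the remark following it establish that $\nu_\Q$ is a quasi-isomorphism computing $H^*(U,R_m\otimes_R\ov\cL_\Q)$, this would immediately give that the $\R$-MHS is defined over $\Q$, i.e. is obtained from the $\Q$-MHS by extension of scalars. The ``in particular'' clauses follow because every other MHS in the paper---those of Corollary~\ref{cor:quotientI}, Definition~\ref{def:MHSalexander}, Proposition~\ref{prop:otherideals}, and the eigenspace decomposition---is built from these via the multiplication maps, projections, and isomorphisms that are all visibly defined over $\Q$ (they involve only $R_m$, $\Sym^j H_1(G,\Q)$, the augmentation ideals, and $\theta_H$, none of which introduce transcendental scalars), and the category of $\Q$-MHS behaves well under images and cokernels by Remark~\ref{rem:proMHScategory}.

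The first step is to compare the two thickened complexes of sheaves over $\C$. Here I would invoke the remark immediately following Definition~\ref{def:thickQMHC}, which already produces a bi-filtered quasi-isomorphism of complex parts
\[
\left((R_{m}\otimes_{\C}\Omega_X^\bullet(\log D),d+\ov f^*\circ \Phi_{\C}^{(\mathbb P^1)^n}(\eps_{\C})),W_{\lc},F^{\lc}\right)\hookrightarrow\left((R_{m}\otimes_{\C}\cA^\bullet_{X,\C}(\log D),d+\ov f^*\circ \Phi_{\C}^{(\mathbb P^1)^n}(\eps_{\C})),W_{\lc}^{n'},F^{\lc}\right),
\]
obtained by applying Proposition~\ref{prop:qisoThickenings} to the inclusion $\Omega_X^\bullet(\log D)\hookrightarrow\cA^\bullet_{X,\C}(\log D)$ of Theorem~\ref{thm:propertiesNA} and Definition-Proposition~\ref{defprop:modifiedNA}. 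This realizes the complex part of the rational thickening as (bi-filtered quasi-isomorphic to) the complex part of the logarithmic Dolbeault thickening. Next, the key compatibility at the level of the resolving morphisms is exactly Proposition~\ref{prop:Qcomparison}: the composite $(\Id\otimes\varphi_\infty)\circ\nu_\Q$ agrees, on $R_m\otimes_R\ov\cL_\Q$, with the restriction of the morphism $e^{f^*\circ\Psi^{(\C^*)^n}(\eps_\C)}\cdot e^{-(\Phi_\R^{(\C^*)^n})^\vee}$ used in Definition~\ref{def:endowedMHS} (for the real part, the pseudomorphism $\alpha$ of the thickened Navarro Aznar complex is precisely $e^{f^*\circ\Psi^{(\C^*)^n}(\eps_\C)}$, and $e^{-(\Phi_\R^{(\C^*)^n})^\vee}$ is the Construction~\ref{con:nu} resolution). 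So the diagram whose commutativity must be checked---comparing the $\R$-MHS obtained via $\nu_\Q$ then $\otimes_\Q\R$ with the $\R$-MHS obtained via $e^{-(\Phi_\R^{(\C^*)^n})^\vee}$ directly---commutes essentially by these two cited results, plus the observation that $\varphi_\infty$ itself (the pseudomorphism of the rational complex \eqref{eq:rationalMHC}) is compatible, under the inclusion $\Omega_X^\bullet(\log D)\hookrightarrow \cA_{X,\C}^\bullet(\log D)$ and the identity $\alpha\otimes 1$ of Navarro Aznar's complex, with the pseudomorphism of the Navarro Aznar complex---this is just Corollary~\ref{cor:comparisonMHC} composed with $\varphi_\infty$, twisted by $\Phi$ as in Proposition~\ref{prop:qisoThickenings}.

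The second step is to assemble these into a morphism of mixed Hodge complexes of sheaves identifying the two $\R$-structures. One passes to Godement resolutions and applies $Rj_*$ as in Definition~\ref{def:endowedMHS}, using that $\cA^\bullet_{U,\C}$ is $j_*$-acyclic (Remark~\ref{rmk:acyclicity}) and that, via Proposition~\ref{prop:thickenedQuiso}, the twisted versions remain $j_*$-acyclic. The upshot is a chain of (bi-)filtered quasi-isomorphisms, all compatible with the $R$-structures and with the $\Q$-structure on the rational side, linking $Rj_*(R_m\otimes_R\ov\cL_\R)$ simultaneously to the $\R$-scalar-extension of the rational thickening and to the logarithmic Dolbeault thickening; hence the induced $\R$-MHS on $H^*(U,R_m\otimes_R\ov\cL_\R)$ coincides with $H^*(U,R_m\otimes_R\ov\cL_\Q)\otimes_\Q\R$. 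I expect the main obstacle to be purely bookkeeping rather than conceptual: one must carefully track, across the several complexes ($\cK^\bullet_\infty$, $\Omega_X^\bullet(\log D)$, $\cA^\bullet_{X,\C}(\log D)$, $\cA^\bullet_{X,\R}(\log D)$ and their thickenings and Godement resolutions), that the pseudomorphism maps $\varphi_\infty$, the identity $\alpha$, the inclusions, and the exponential twists $e^{\Psi}$ all commute in the appropriate derived-category sense after twisting by $\Phi_\Q(\eps)$ resp. $\Phi^Y(\eps)$---and that the biregularization of the weight filtration in \eqref{eq:rationalMHC} matches the $W^{n'}_\lc$ truncation so that the graded pieces agree. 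Once this diagram chase is organized (it parallels the independence-of-choices arguments of Section~\ref{ss:independence} and the proof of Theorem~\ref{thm:funcEasy}), the corollary follows, and the final sentence about Sections~\ref{sec:MHS}, \ref{sec:functoriality}, \ref{sec:otherideals} and~\ref{sec:eigenspace} is immediate since every construction there was phrased using only $\Q$-linear data.
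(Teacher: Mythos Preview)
Your proposal is correct and follows essentially the same approach as the paper: the paper's proof is a single sentence invoking Proposition~\ref{prop:Qcomparison} to conclude that the $\Q$-MHS induced by the thickened rational complex of Definition~\ref{def:thickQMHC}, after tensoring with $\R$, coincides with the $\R$-MHS of Definition~\ref{def:endowedMHS}. Your write-up simply unpacks this sentence in more detail (tracking the bi-filtered comparison of complex parts from the remark after Definition~\ref{def:thickQMHC}, the compatibility of $\nu_\Q$ with $e^{-(\Phi_\R)^\vee}$ via $e^{\Psi}$ and $\varphi_\infty$, and the $\Q$-linearity of the remaining constructions), which is exactly the content the paper leaves implicit.
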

\begin{proof}
	 Proposition~\ref{prop:Qcomparison} implies that, after tensoring by $\R$ over $\Q$, the MHS on $H^*(U,R_m\otimes_R\ov\cL_\Q)$ induced by  the mixed Hodge complex of sheaves from Definition~\ref{def:thickQMHC} coincides with the MHS from Definition~\ref{def:endowedMHS}.
\end{proof}

\begin{remark}[The case $G=\C^*$]\label{rem:comparisonC^*}
	Suppose that $m>0$ and that $G=\C^*$. Let $s$ be a positive oriented loop around the origin in $H_1\left((\C^*)^n,\Q\right)$, and use it to identify $R_m$ with $\Q[s^{\pm 1}]/(s^m)$. Under the identifications and choice of coordinates explained in this section, the mixed Hodge complex of Definition~\ref{def:thickQMHC} and $\nu_\Q$ coincide with those of \cite{mhsalexander} (see Remark 5.12 and Theorem 5.24 therein). Therefore, the $\Q$-MHS on $H^*(U,R_m\otimes_R \ov\cL)$ from both papers is the same. Note that, in \cite{mhsalexander}, this MHS (with the same Tate twist as in Definition~\ref{def:endowedMHS}) was used to endow $\Tors_R H^*(U,\ov\cL)$ with a canonical MHS.
\end{remark}

\section{Relationship with the Milnor fiber of a central hyperplane arrangement complement}\label{sec:hyperplanes}

Let $f_i\in\C[x_1,\ldots,x_n]$ be homogeneous polynomials of degree $1$ for $i=1,\ldots, m$ such that if $i\neq j$, $f_i$ is not a product of $f_j$ by a constant. Suppose that $m>n$. Let $f=\prod_{i=1}^m f_i^{d_i}$ for some $d_i\geq 1$, let $d=\sum_{i=1}^m d_i$, let $H_i=V(f_i)\subset \C^n$, and let  $H=\cup_{i=1}^m H_i$. The $f_i$'s describe a central hyperplane arrangement in $\C^n$, but if we think of it as being determined by $f$, the arrangement is not necessarily reduced.

The Milnor fiber of $f$ is $f^{-1}(1)$, and it is equipped with the monodromy action
$$
\begin{array}{ccc}
	f^{-1}(1)&\longrightarrow& f^{-1}(1)\\
	(x_1,\ldots,x_n)&\longmapsto& (\xi x_1,\ldots,\xi x_n),
\end{array}
$$
where $\xi=e^{\frac{2\pi i}{d}}$. Note that this induces a semisimple action on the reduced homology groups $\wt H^j(f^{-1}(1),\C)$, and its possible eigenvalues are the $d$-th roots of unity.

\begin{definition}[Spectrum of $f$]\label{def:spectrum}
	The spectrum of $f$ is defined by  $\Sp(f)=\sum_{\alpha\in\Q}n_{f,\alpha} t^\alpha$, where
	\begin{itemize}
		\item $n_{f,\alpha}=\sum_{j}(-1)^{j-n+1}\dim_\C \Gr^p_F \wt H^j(f^{-1}(1),\C)_\lambda$,
		\item $\wt H^j(f^{-1}(1),\C)_\lambda$ is the eigenspace of eigenvalue $\lambda$ by the monodromy action on the reduced cohomology groups $\wt H^j(f^{-1}(1),\C)$,
		\item $\lambda=e^{-2\pi i\alpha}$ and
		\item $p=\lfloor n-\alpha\rfloor$.
	\end{itemize}
\end{definition}

The spectrum of a hypersurface singularity was first defined by Steenbrink \cite{steenbrinkSpectrum} as a local invariant of the Hodge filtration of the cohomology of the local Milnor fiber, but in the case of central hyperplane arrangements, the Milnor fibration corresponding to the singularity at the origin comes from a global fibration of the hyperplane complement over $\C^*$, and Definition~\ref{def:spectrum} coincides with Steenbrink's.

\begin{remark}\label{BudurSaito}
	Budur and Saito showed in \cite{budurSaito} that $\Sp(f)$ depends only on the combinatorial data of the (not necessarily reduced) arrangement defined by $f$. 
\end{remark}

Despite this positive result of Budur and Saito, one of the most important open problems of arrangement theory is the following.
\begin{question}\label{question:BettiGen}
	Are the Betti numbers of the Milnor fiber associated to a (reduced) central hyperplane  arrangement in $\C^n$ determined by the combinatorics of the arrangement?
\end{question}
This has been solved if $n=3$ and the projectivized arrangement in $\mathbb P^2$ only has double and triple points by Papadima and Suciu in \cite{PapaSuciu}. However, a general answer to this question is not known even in this particular case:
\begin{question}\label{question:Betti}
	Is the first Betti number of the Milnor fiber associated to a (reduced) central hyperplane  arrangement in $\C^n$ determined by the combinatorics of the arrangement?
\end{question}

In this section, we translate Question~\ref{question:Betti} to a question of whether the dimensions of the filtered pieces by the Hodge filtration of a MHS defined in this paper are combinatorially determined, motivating the future study of the objects introduced in this note.

Suppose that one wants to study Question~\ref{question:Betti}. It is enough to consider the case where the arrangement is essential and the number of hyperplanes is greater than the dimension of the ambient space. Indeed, if the arrangement is not essential, the complement has the homotopy type of a central essential line arrangement in an affine space of smaller dimension. If the arrangement is essential but the number of hyperplanes equals that of the ambient space, the arrangement complement is isomorphic to $(\C^*)^n$, so these arrangements all have the same combinatorics and all have isomorphic Milnor fibers.

Lemma~\ref{lem:reduction} below details the relation between Question~\ref{question:Betti} and the following stronger question. \begin{question}
	Let $f$ be the reduced defining polynomial of a (not necessarily central) essential line arrangement in $\C^2$ of $3$ or more lines, and let $U$ be the corresponding arrangement complement in $\C^2$. Let $\pi:U^f\to U$ be the pullback of $\exp:\C\to \C^*$ by $f:U\to\C^*$ as in \eqref{eq:Uf}. Is the first Betti number of the infinite cyclic cover $\pi:U^f\to U$  of an essential line arrangement complement $U$ in $\C^2$ with $3$ or more lines  determined by the combinatorics of the arrangement?
\end{question}

\begin{lemma}\label{lem:reduction}
	Let $\{H_1,\ldots,H_m\}$ be a (reduced) central arrangement of $m$ different hyperplanes in $\C^n$, where $m>n$, and let $\{L_1,\ldots,L_m\}$ a (reduced, not necessarily central) line arrangement in $\C^2$ which is obtained from $\{H_1,\ldots,H_m\}$ after intersection with $n-2$ generic hyperplanes. Let $f(x,y)\in\C[x,y]$ be a reduced defining polynomial of $\cup_{i=1}^m L_i$, and let $U=\C^2\setminus V(f)$. Let $U^f\to U$ be the pullback by $f$ of $\exp:\C\to\C^*$.
	
	If $\dim_{\C} H_1(U^f,\C)$ is determined by the combinatorics of $\{L_1,\ldots,L_m\}$, then the first Betti number of the Milnor fiber of $\{H_1,\ldots,H_m\}$ is determined by the combinatorics of $\{H_1,\ldots,H_m\}$.
	
	Moreover, the reverse implication holds if $n=3$.
\end{lemma}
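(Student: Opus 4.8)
The plan is to connect three things: (1) the first Betti number of the Milnor fiber $F = f^{-1}(1)$ of a central arrangement; (2) the Hodge-theoretic data (the spectrum) of the arrangement, which is combinatorial by Budur--Saito (Remark~\ref{BudurSaito}); and (3) the first Betti number of the infinite cyclic cover $U^f \to U$ of a generic two-dimensional slice. First I would reduce to the two-dimensional case: by the Lefschetz-type hyperplane theorem for arrangement complements (Hamm, or the general position argument used by Papadima--Suciu), if $U_n = \C^n \setminus \bigcup H_i$ and $U_2 \subset \C^2$ is the complement of the slice line arrangement, then $\pi_1(U_n) \cong \pi_1(U_2)$ and $H_1(U_n,\Z) \cong H_1(U_2,\Z)$, and moreover the degree-$d$ defining polynomials are compatible with the slicing, so that the Milnor fibers $F_n \hookrightarrow F_2$ induce an isomorphism $H_1(F_n,\C) \cong H_1(F_2,\C)$. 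This is where the hypothesis $m > n$ and essentiality get used: after slicing we land in the regime of a line arrangement with at least $3$ lines, and generic slicing preserves the intersection lattice in low codimension, hence preserves combinatorics. So it suffices to treat $n = 2$, which also explains why the ``moreover'' (reverse implication) is clean precisely when $n = 3$: then no slicing is needed at all, the slice is the arrangement itself up to the generic hyperplane, and $\dim_\C H_1(U^f,\C)$ and $b_1(F)$ determine each other.

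Next I would relate $H_1(F,\C)$ to $H_1(U^f,\C)$. For a homogeneous $f$ of degree $d$ on $U = \C^2 \setminus V(f)$, the map $f\colon U \to \C^*$ is a fiber bundle with fiber $F$ and monodromy of order $d$; the cyclic cover $U^f$ is the pullback along $\exp\colon \C \to \C^*$, i.e.\ the infinite cyclic cover associated to the surjection $\pi_1(U) \to \Z$ induced by $f$. Since $F$ is the fiber of the $\C^*$-bundle and the $\Z$-cover $U^f$ is homotopy equivalent to $F$ (the bundle $U \to \C^*$ pulled back to $\C$ is trivial with fiber $F$, and $\C$ is contractible), we get $H_1(U^f,\C) \cong H_1(F,\C)$ as $\C$-vector spaces, compatibly with the $\Z = \langle t \rangle$-action, where $t$ acts on $H_1(U^f,\C)$ by the deck transformation and on $H_1(F,\C)$ by the monodromy. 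Thus $\dim_\C H_1(U^f,\C) = \dim_\C H_1(F,\C) = b_1(F)$ when $f$ is the reduced defining polynomial of a line arrangement in $\C^2$ — in that case $d = m$. Care is needed because a Milnor fiber is usually attached to a \emph{central} arrangement, whereas the sliced line arrangement in $\C^2$ need not be central; but one only ever needs the equivalence ``$H_1(U^f_2,\C)$ combinatorial $\iff$ $H_1(F_2,\C)$ combinatorial'' on the affine side, and then transports along the isomorphism $H_1(F_2,\C) \cong H_1(F_n,\C)$ from the previous paragraph for the central-to-central statement.

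Assembling: if $\dim_\C H_1(U^f,\C)$ (for the sliced line arrangement) is combinatorially determined, then $b_1(F_2)$ is, hence $b_1(F_n)$ is, using the slicing isomorphism and the fact that the combinatorics of $\{L_1,\dots,L_m\}$ is itself determined by (indeed equivalent to, for generic slices in low codimension) the combinatorics of $\{H_1,\dots,H_m\}$. For the reverse implication when $n=3$: the slice is by a single generic hyperplane $\C^2 \subset \C^3$, and the combinatorial data of the line arrangement $\{L_i\}$ recovers that of $\{H_i\}$ and vice versa; combining the bundle identification $b_1(F) = \dim_\C H_1(U^f,\C)$ in both directions gives the equivalence. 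The main obstacle I anticipate is making the slicing step fully rigorous at the level of compatible maps (not just abstract isomorphisms) — one must check that a generic hyperplane section induces an isomorphism on $H_1$ of the \emph{cyclic covers} (equivalently, on $H_1$ of the Milnor fibers) respecting the monodromy, and that the generic intersection lattice truly is a combinatorial invariant of the ambient arrangement; this is standard (it is exactly the mechanism behind Papadima--Suciu's $n=3$ results) but is the crux of the argument. The identification $H_1(U^f,\C)\cong H_1(F,\C)$ and the combinatoriality of the spectrum are then essentially bookkeeping, the latter being quoted from Budur--Saito.
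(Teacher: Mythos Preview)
Your overall strategy is on track, but there is a genuine gap in the step linking $H_1(U^f,\C)$ to a Milnor fiber. You argue that for homogeneous $f$ the map $f\colon U\to\C^*$ is a fiber bundle and hence $U^f\simeq F\times\C$. But the sliced polynomial $f\in\C[x,y]$ is \emph{not} homogeneous: a generic affine $2$-plane in $\C^n$ does not pass through the origin, so the line arrangement $\{L_1,\ldots,L_m\}$ is not central. Therefore $f\colon U\to\C^*$ is not a fibration and the identification $U^f\simeq f^{-1}(1)\times\C$ fails. You flag this (``care is needed\ldots'') but do not resolve it: there is no well-defined global Milnor fiber $F_2$ of the non-central sliced arrangement to serve as the intermediate object, and your proposed workaround via an equivalence ``$H_1(U_2^f,\C)$ combinatorial $\iff$ $H_1(F_2,\C)$ combinatorial'' has no content without one.

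The paper sidesteps this entirely. Instead of introducing $F_2$, it applies the Lefschetz hyperplane theorem to the inclusion of arrangement complements $U\hookrightarrow V\coloneqq\C^n\setminus\bigcup_j H_j$: the ambient complement $V$ is obtained from its $2$-dimensional slice $U$ by attaching cells of dimension $\ge 3$. Since $f$ and $f_2|_U$ induce the same homomorphism $\pi_1(U)\to\Z$, the inclusion lifts to a map of infinite cyclic covers $U^f\to V^{f_2}$, and the same cell-attachment statement holds upstairs, giving $H_1(U^f,\C)\cong H_1(V^{f_2},\C)$. Now $f_2$ \emph{is} homogeneous, so $V^{f_2}\cong f_2^{-1}(1)\times\C$, and the desired isomorphism $H_1(U^f,\C)\cong H_1(f_2^{-1}(1),\C)$ follows. (A side remark: Budur--Saito and the spectrum play no role in this lemma; they enter only later, in Theorem~\ref{thm:combinatorics}.)
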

\begin{proof}
	Note that the combinatorics of $\{H_1,\ldots,H_m\}$ determines the combinatorics of $\{L_1,\ldots,L_m\}$ and the reverse implication also holds if $n=3$. By the Lefschetz hyperplane section theorem, $\C^n\setminus\left(\cup_{j=1}H_j	\right)$ can be obtained (up to homotopy equivalence) by attaching cells of dimensions $3$ and higher to $U$. Consider the commutative diagram
	$$
	\begin{tikzcd}
		U\arrow[rr, hook]\arrow[rd, "f"]& & \C^n\setminus\left(\cup_{j=1}H_j	\right)\arrow[ld, "f_2"]\\
		& \C^*
	\end{tikzcd}
	$$
	(where $f_2$ is a reduced defining polynomial of the arrangement $\{H_1,\ldots,H_m\}$). Since $f_2$ is homogeneous of degree $m$,  $(\C^n\setminus\left(\cup_{j=1}H_j	\right))^{f_2}\cong (f_2)^{-1}(1)\times\C$. Hence, the inclusion in the commutative diagram above induces an isomorphism $H_1(U^f,\C)\to H_1((f_2)^{-1}(1),\C)$. If the dimension of $H_1(U^f,\C)$ is determined by the combinatorics of $\{L_1,\ldots,L_m\}$, then the dimension of the first Betti number of the Milnor fiber of the arrangement $\{H_1,\ldots,H_m\}$ is determined by the combinatorics of $\{H_1,\ldots,H_m\}$.
	%
\end{proof}

%

From now on in this section, this will be our setting: $m\geq 3$, $L_1,L_2,\ldots,L_m$ form an essential arrangement of  $m$ different lines in $\C^2$, and $f_i\in\C[x,y]$ is a polynomial of degree $1$ such that $L_i=V(f_i)$ for all $i=1,\ldots,m$. Let $f=\prod_{i=1}^m f_i$, and let $U\coloneqq \C^2\setminus\left(\cup_{i=1}^m L_i \right)$. The infinite cyclic cover $\pi:U^f\to U$ is constructed as the pullback of $\exp:\C\to \C^*$ by $f:U\to\C^*$. Moreover, we identify $R=\C[\pi_1(\C^*)]$ with $\C[t^{\pm 1}]$ by taking a positively oriented loop around the origin to $t$.

\begin{remark}\label{rem:Nhyperplanes}
By \cite[Proposition 2.24, Corollary 7.21]{mhsalexander}, there exists $N\in \N$ such that $t^N-1$ annihilates $H_1(U^f,\C)$. Moreover, by \cite[Theorem 5]{evaHyperplanes}, $N$ can be taken to be the least common multiple of all the numbers which are greater than 2 and appear as multiplicities of multiple points in the arrangement (so in particular, this choice of $N$ is combinatorially determined).
\end{remark}

\begin{remark}\label{rem:Nvsd}
	Let $N$ be as in Remark~\ref{rem:Nhyperplanes} (given by the least common multiple of the non-2  multiplicities of the multiple points in the arrangement), and suppose that $N<m=\deg f$. We may substitute $N$ by $\min\{Nk\mid k\in\N,\quad Nk> m\}$, which is again combinatorially determined.
\end{remark}

Let $N$ as in Remark~\ref{rem:Nvsd} (a combinatorially determined number such that $N\geq m$ and $t^N-1$ annihilates $H_1(U^f,\C)$), and let $\pi_N:U_N\to U$ be the covering space of $U$ obtained via the pullback diagram

\begin{equation}\label{eq:UN}
	\begin{tikzcd}
		U_N\subset U\times \C^*  \arrow[r,"f_N"] \arrow[d,"\pi_N"]\arrow[dr,phantom,very near start, "\lrcorner"]&
		\C^* \arrow[d,"w\mapsto w^N"] \\
		U\arrow[r,"f"] &
		\C^*,
	\end{tikzcd}
\end{equation}

Notice that if we see $U$ as the affine variety $V(z\cdot f(x,y)-1)\subset\C^3$, then
\begin{align*}
	U_N&=\{(x,y,z,w)\in \C^3\times\C^*\mid zf(x,y)=1,\quad f(x,y)=w^N\}\\
	&\cong\{(x,y,w)\in \C^2\times\C^*\mid f(x,y)=w^N\},
\end{align*}
Let $\wt f(x,y,z)$ be the homogenization of the polynomial $f(x,y)$. The following is an isomorphism of algebraic varieties
\begin{equation}\label{eq:isoMilnor}
	\begin{array}{ccc}
		V(z^{N-m}\wt f(x,y,z)=1)\subset\C^3&\longleftrightarrow &U_N=\{(x,y,w)\in \C^2\times\C^*\mid f(x,y)=w^N\}\subset\C^3\\
		(x,y,z)&\longmapsto &\left(\frac{x}{z},\frac{y}{z},\frac{1}{z}\right)\\
	\end{array}
\end{equation}
and under this identification, $\pi_N$ and $f_N$ in the pullback diagram~\eqref{eq:UN} become
$$
\f{\pi_N}{U_N=V(z^{N-m}\wt f(x,y,z)=1)\subset\C^3}{U=\C^2\setminus V( f)\subset\C^2}{(x,y,z)}{\left(\frac{x}{z},\frac{y}{z}\right)}
$$
and
$$
\f{f_N}{U_N=V(z^{N-m}\wt f(x,y,z)=1)\subset\C^3}{\C^*}{(x,y,z)}{\frac{1}{z}}
$$
respectively. Note that $U_N$ is a (possibly non-reduced) Milnor fiber of an essential central hyperplane arrangement in $\C^3$, so it makes sense to talk about $\Sp(z^{N-m}\wt f(x,y,z))$. Under the identification of $R$ with $\C[t^{\pm 1}]$, the $t$-action on $U_N$ given by Deck transformations of $\pi_N$ is
$$
\f{t}{U_N=V(z^{N-m}\wt f(x,y,z)=1)\subset\C^3}{U_N}{(x,y,z)}{e^{-\frac{2\pi i}{N}}(x,y,z)},
$$
which is the inverse of the monodromy of the Milnor fiber.

\begin{theorem}\label{thm:combinatorics}
	Let $\{L_1,\ldots,L_m\}$ be a (reduced) essential line arrangement in $\C^2$, with $m\geq 3$. Let $f(x,y)\in\C[x,y]$ be a reduced defining polynomial of $\cup_{i=1}^m L_i$, and let $U=\C^2\setminus V(f)$. Let $\cL=f^{-1}\exp_!\ul \C_\C$, and let $N$ as Remark~\ref{rem:Nvsd}, which is determined by the combinatorics of $\{L_1,\ldots,L_m\}$. Then, the following hold.
	\begin{enumerate}
		\item\label{combinatorics0} $\dim_{\C}\Gr^{-p}_F \frac{H_2(U^f,\R)}{(t^N-1)H_2(U^f,\R)}\neq 0$ $\Rightarrow$ $p=0,1,2$.
		\item\label{combinatorics1} $\dim_{\C}\Gr^{-1}_F \frac{H_2(U^f,\R)}{(t^N-1)H_2(U^f,\R)}$ is determined by the combinatorics of $\{L_1,\ldots,L_m\}$.
		\item\label{combinatorics2} $H_2(U^f,\R)$ is a free $\C[t^{\pm1}]$-module of rank $\chi(U)$, so $\dim_{\C} \frac{H_2(U^f,\R)}{(t^N-1)H_2(U^f,\R)}=N\chi(U)$, which is determined by the combinatorics of $\{L_1,\ldots,L_m\}$.
		\item\label{combinatorics3} The following are equivalent:
		\begin{itemize}
			\item $\dim_{\C} F^{-1} \frac{H_2(U^f,\C)}{(t^N-1)H_2(U^f,\C)}$ is determined by the combinatorics of $\{L_1,\ldots,L_m\}$.
			\item $\dim_{\C} F^{0} \frac{H_2(U^f,\C)}{(t^N-1)H_2(U^f,\C)}$ is determined by the combinatorics of $\{L_1,\ldots,L_m\}$.
			\item $\dim_\C  H_1(U^f,\C)=\dim_\C \Tors_R H_1(U^f,\C)$ is determined by the combinatorics of $\{L_1,\ldots,L_m\}$.
		\end{itemize}
	\end{enumerate}
\end{theorem}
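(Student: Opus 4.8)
I would compare $V:=\frac{H_2(U^f,\C)}{(t^N-1)H_2(U^f,\C)}$, equipped with the MHS of this paper, with the cohomology of the Milnor fibre $F:=g^{-1}(1)$ of the central essential (in general non-reduced) hyperplane arrangement $g:=z^{N-m}\wt f(x,y,z)$ in $\C^3$, and then feed in the combinatorial invariance of $\Sp(g)$ (Budur--Saito, Remark~\ref{BudurSaito}). Part~(\ref{combinatorics2}) is the easiest and is used throughout: since $U$ is a smooth affine surface it has the homotopy type of a finite $2$-dimensional CW complex, so $H_\bullet(U^f,\C)$ is computed by a complex $0\to C_2\to C_1\to C_0\to0$ of finitely generated free $R=\C[t^{\pm1}]$-modules. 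Hence $H_2(U^f,\C)=\ker(C_2\to C_1)$ is a submodule of a free module over the PID $R$, so it is free, and its rank is $\chi(U)$ because $H_0(U^f,\C)$ and $H_1(U^f,\C)$ are torsion (the latter by Remark~\ref{rem:Nhyperplanes}) while $\sum(-1)^j\operatorname{rank}_RC_j=\chi(U)$. Thus $V\cong\bigl(\C[t^{\pm1}]/(t^N-1)\bigr)^{\chi(U)}$, of dimension $N\chi(U)$, which is combinatorial since the Poincaré polynomial of $U$ is (Orlik--Solomon). In particular $t$ acts semisimply on $V$ with every $N$-th root of unity occurring with multiplicity $\chi(U)$, so by Corollary~\ref{cor:eigenspaceDecomposition} the eigenvalue decomposition $V=\bigoplus_{\zeta^N=1}V_\zeta$ is a decomposition of MHS.

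\emph{The comparison sequence and part~(\ref{combinatorics0}).} Using \eqref{eq:isoMilnor} to write $U_N\cong F$, and the isomorphism $\theta_N\colon U^f\xrightarrow{\sim}U_N^{f_N}$ of $\Z$-covers over $U_N$ from Section~\ref{sec:otherideals}, I identify $V$ with $\frac{H_2(U_N^{f_N},\C)}{(s-1)H_2(U_N^{f_N},\C)}$. The Milnor/Tor exact sequence for the infinite cyclic cover $U_N^{f_N}\to U_N$, together with $s-1=t^N-1$ annihilating $H_1(U^f,\C)$, yields
\[
0\longrightarrow V\longrightarrow H_2(F,\C)\longrightarrow H_1(U^f,\C)\longrightarrow0,
\]
whose first arrow is the map induced by the covering $\pi'\colon U^f=U_N^{f_N}\to U_N$, hence a morphism of MHS (Corollary~\ref{cor:compatibleDeligne}), with $H_2(F,\C)$ carrying Deligne's MHS. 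So $V$ is a sub-MHS of $H_2(F,\C)$, and the cokernel $Q$ is a MHS with $\dim_\C Q=\dim_\C\Tors_RH_1(U^f,\C)$. Part~(\ref{combinatorics0}) is now immediate: $F$ is a smooth affine surface, so $\Gr^p_FH^k(F,\C)=0$ for $p\notin[0,k]$, hence $\Gr^{-p}_FH_2(F,\C)=0$ for $p\notin\{0,1,2\}$, and a sub-MHS inherits this.

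\emph{Parts~(\ref{combinatorics1}) and~(\ref{combinatorics3}).} Here I would combine: (i) transfer, $H^\bullet(F,\C)_1=H^\bullet(F,\C)^{\Z/N}=H^\bullet(U,\C)$, which is pure Hodge--Tate and combinatorial (Orlik--Solomon); (ii) purity of weight $1$ of $H^1(F,\C)_{\ne1}=\bigoplus_{\zeta\ne1}H^1(U,L_\zeta)$ (cohomology of unitary rank-one local systems on the smooth quasi-projective surface $U$); (iii) the combinatorial invariance of $\Sp(g)$, which encodes the sums $\Sigma_p:=\sum_{\lfloor 3-\alpha\rfloor=p}n_{g,\alpha}$ as explicit combinations of the Hodge numbers $\dim_\C\Gr^p_FH^j(F,\C)_\lambda$; (iv) strictness of MHS morphisms in the comparison sequence together with the duality $\dim_\C\Gr^{-p}_FH_k=\dim_\C\Gr^p_FH^k$. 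Putting these together, $\dim_\C\Gr^{-1}_FV=\dim_\C\Gr^1_FH^2(F,\C)-\dim_\C\Gr^{-1}_FQ$; the only non-combinatorial invariant in sight, $\dim_\C H^1(F,\C)_{\ne1}$ (which equals $\dim_\C H_1(U^f,\C)-(m-1)$), enters $\dim_\C\Gr^1_FH^2(F,\C)$ and $\dim_\C\Gr^{-1}_FQ$ with the same multiplicity and therefore cancels, leaving $\dim_\C\Gr^{-1}_FV$ combinatorial, which is~(\ref{combinatorics1}). For~(\ref{combinatorics3}) the first equivalence is formal ($\dim_\C F^{-1}V=\dim_\C F^0V+\dim_\C\Gr^{-1}_FV$ with the last term combinatorial by~(\ref{combinatorics1}); and $\dim_\C F^{-2}V=\dim_\C V=N\chi(U)$ is always combinatorial), while the equivalence with $\dim_\C H_1(U^f,\C)=\dim_\C\Tors_RH_1(U^f,\C)$ follows from $\dim_\C F^0V=\dim_\C F^0H_2(F,\C)-\dim_\C F^0Q$, $\dim_\C F^0H_2(F,\C)=b_2(F)-\dim_\C F^1H^2(F,\C)$ and $b_2(F)=N\chi(U)+\dim_\C H_1(U^f,\C)$, together with the fact that $\dim_\C F^1H^2(F,\C)$ and $\dim_\C F^0Q$ are combinatorially determined apart from a single surviving copy of $\tfrac12\dim_\C H^1(F,\C)_{\ne1}$ (again using purity).

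\emph{Main obstacle.} The technical core, which is what makes~(\ref{combinatorics1}) and the second equivalence in~(\ref{combinatorics3}) work, is the precise Hodge-theoretic bookkeeping around the comparison sequence: one must identify the quotient MHS $Q$ — in particular the correct Tate twist, which I expect to be $Q\cong\Tors_RH_1(U^f,\C)(1)$ for the MHS of \cite{mhsalexander} — and verify that $\dim_\C H^1(F,\C)_{\ne1}$ contributes to $\dim_\C\Gr^1_FH^2(F,\C)$ and to $\dim_\C\Gr^{-1}_FQ$ with equal multiplicity, so that it cancels in the middle graded piece $\Gr^{-1}_FV$ but survives in $\Gr^0_FV$ and $\Gr^{-2}_FV$. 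Pinning down $Q$ (including the twist) is exactly where one needs the compatibility between the MHS of this paper, the MHS on torsion Alexander modules of \cite{mhsalexander} (via \cite{compatibility} and Remark~\ref{rem:generalization}), and the limit MHS on the Milnor fibre.
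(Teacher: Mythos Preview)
Your strategy is essentially the paper's: compare $V$ to $H_2(U_N,\C)$ via the Milnor long exact sequence (where $U_N\cong g^{-1}(1)$ by \eqref{eq:isoMilnor}), use the semisimple eigenspace decomposition, and feed in Budur--Saito. Two points deserve sharpening.

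\textbf{The purity input.} Your justification ``cohomology of unitary rank-one local systems on the smooth quasi-projective surface $U$'' is not a valid general principle: $H^1$ of a finite-order local system on a smooth quasi-projective variety need not be pure. The paper instead invokes \cite[Theorem~10.5]{mhsalexander}, a result specific to hyperplane arrangement complements, which says $\Tors_RH_1(U^f,\C)_{\ne1}$ is pure of weight $-1$ and $\Tors_RH_1(U^f,\C)_1$ is pure of type $(-1,-1)$ with dimension $m-1$. Via the second Milnor short exact sequence this is equivalent to your purity claim for $H^1(F,\C)_{\ne1}$, but the source is the arrangement-specific theorem, not a general fact.

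\textbf{The ``main obstacle''.} You correctly anticipate $Q\cong\Tors_RH_1(U^f,\C)(1)$ and that establishing this MHS identification is the crux. The paper resolves this not through \cite{compatibility} but via \cite[Corollary~5.9]{EvaMoises}, which proves directly that the Milnor long exact sequence, with exactly this Tate twist on the connecting map, is a sequence of MHS morphisms (for the MHS of \cite{mhsalexander} on the torsion terms). Combined with Corollary~\ref{cor:compatibleDeligne} and Proposition~\ref{prop:otherideals} for the map $V\to H_2(U_N,\C)$, this gives the three short exact sequences of MHS you need. Once you have these, the paper carries out the bookkeeping eigenvalue by eigenvalue: for $\lambda\ne1$ the purity forces $\dim\Gr^0_F(\Tors_RH_1)_\lambda=\dim\Gr^{-1}_F(\Tors_RH_1)_\lambda$, so they cancel in the spectrum relation and one reads off $\dim\Gr^{-1}_FV_\lambda=n_{\wt f,\,l/N+1}$ directly; the $\lambda=1$ term contributes the extra $m$. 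This makes the cancellation you describe completely explicit and yields the closed formula $\dim\Gr^{-1}_FV=\bigl(\sum_{l=1}^N n_{\wt f,\,l/N+1}\bigr)+m$.
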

\begin{proof}
	The statement in \eqref{combinatorics2} is true by \cite[Theorem 4]{evaHyperplanes}. The covering $\pi:U^f\to U$ factors through $U_N$ as $\pi_N\circ\pi'$, where $\pi':U^f\to U_N$ is a covering space. The short exact sequence at the level of singular chains
	$$
	0\to C_\bullet(U^f)\xrightarrow{t^N-1} C_\bullet(U^f)\xrightarrow{(\pi')_*}C_\bullet(U_N)\to 0
	$$
	yields the Milnor long exact sequence in homology, which we claim gives rise to the following exact sequences of MHS, where $(1)$ denotes a Tate twist.
\begin{align*}
	&0\to \frac{H_2(U^f,\C)}{(t^N-1)H_2(U^f,\C)}\to H_2(U_N,\C)\to \Tors_R H_1(U^f,\C)(1)\to 0,	\\
		&0\to \Tors_R H_1(U^f,\C)=H_1(U^f,\C)\to H_1(U_N,\C)\to H_0(U^f,\C)=\Tors_R H_0(U^f,\C)(1)\to 0,\\
		&0\to H_0(U^f,\C)=\Tors_R H_0(U^f,\C)\to H_0(U_N,\C)\to  0,
\end{align*}
Let us see that these are indeed short exact sequences of MHS, where $\Tors_R H_i(U^f,\C)$ is endowed with the MHS from \cite{mhsalexander}, $H_1(U_N,\C)$ is endowed with Deligne's MHS, and $\frac{H_2(U^f,\C)}{(t^N-1)H_2(U^f,\C)}$ is endowed with the MHS from Proposition~\ref{prop:otherideals}. Indeed, $H_i(U^f,\C)=\Tors_R H_i(U^f,\C)$ for $i=0,1$ by \cite[Theorem 4]{evaHyperplanes}. We have that  $\frac{H_2(U^f,\C)}{(t^N-1)H_2(U^f,\C)}\to H_2(U_N,\C)$ is a MHS morphism by Corollary~\ref{cor:compatibleDeligne} and Proposition~\ref{prop:otherideals}. The remaining maps are shown to be MHS morphisms in \cite[Corollary 5.9]{EvaMoises}.

The statement in \eqref{combinatorics0} holds by observing the first of these sequences, because since $U_N$ is smooth, the analogous statement holds for $H_2(U_N,\C)$ (see \cite[Corollaire 3.2.15]{DeligneII}).

The $t$-action is semisimple in all of the homology groups appearing in the three exact sequences above, in fact, $t^N$ acts as the identity. In the case of $U_N$, it acts by deck transformations realized by an algebraic isomorphism, so it is a MHS isomorphism in homology. By Theorem~\ref{thm:ss} (and its counterpart for the torsion in \cite[Theorem 1.3]{EvaMoises}) multiplication by $t$ is a MHS isomorphism for the rest of those homology groups. The exact sequences above induce exact sequences of MHSs in the corresponding eigenspaces, which in turn induce the following exact sequences:
	\begin{align*}
&	0\to \Gr_F^{-p} \left(\frac{H_2(U^f,\C)}{(t^N-1)H_2(U^f,\C)}\right)_\lambda\to \Gr_F^{-p} H_2(U_N,\C)_\lambda\to \Gr_F^{-p+1}\left(\Tors_R H_1(U^f,\C)\right)_\lambda\to 0,\\
&	0\to \Gr_F^{-p}\left(\Tors_R H_1(U^f,\C)\right)_\lambda\to \Gr_F^{-p} H_1(U_N,\C)_\lambda\to \Gr_F^{-p+1} \left(\Tors_R H_0(U^f,\C)\right)_\lambda\to 0,\\
	&\Gr_F^{-p} \left(\Tors_R H_0(U^f,\C)\right)_\lambda\cong 	\Gr_F^{-p} H_0(U_N,\C)_\lambda.
\end{align*}
By \cite[Theorem 10.5]{mhsalexander}, the following hold:
\begin{itemize}
	\item $\Tors_R H_1(U^f,\C)_{\neq 1}$ is a pure Hodge structure of weight $-1$, where the subindex $\neq 1$ denotes the direct sum of all of the eigenspaces of eigenvalue other than $1$. Hence, $$\dim_\C\Gr_F^0\left(\Tors_R H_1(U^f,\C)\right)_{\neq 1}=\dim_\C\Gr_F^{-1}\left(\Tors_R H_1(U^f,\C)\right)_{\neq 1}.$$
	\item $\left(\Tors_R H_1(U^f,\C)\right)_{1}$ is a pure Hodge structure of type $(-1,-1)$, so its only non-zero graded piece is $\Gr_F^{-1}\left(\Tors_R H_1(U^f,\C)\right)_{1}$. Moreover, this graded piece has dimension $m-1$, which is combinatorially determined.
	\item $\Tors_R H_0(U^f,\C)$ is a Hodge structure of weight $0$ and dimension $1$. The only nontrivial eigenspace is the eigenspace of eigenvalue $1$, which has dimension $1$, and its only nontrivial graded piece is $\Gr_F^{0}\left(\Tors_R H_0(U^f,\C)\right)_{1}$.
\end{itemize} 

 Recall from Remark~\ref{BudurSaito} that, by Budur and Saito's result, $n_{\wt f,\alpha}:=\sum_j (-1)^j\dim_\C \Gr_F^p \wt H^j(U_N,\C)_{\lambda}$ is a combinatorial invariant, where $\wt f(x,y,z)\coloneqq z^{N-m}\ov f(x,y,z)$, $\lambda=e^{2\pi i \alpha}$, $\wt H^j(U_N,\C)_{\lambda}$ is the \linebreak eigenspace of eigenvalue $\lambda$ for the $t$-action on the reduced cohomology groups $\wt H^j(U_N,\C)$ (which is the inverse of the monodromy action), and $p=\lfloor 3-\alpha\rfloor$. Let $\lambda_l\coloneqq e^{2\pi i\frac{l}{N}}$ for $l=1,\ldots, N$ be all the $N$-th roots of unity. We have that 
\begin{equation}
	n_{\wt f,\frac{l}{N}+(2-p)}=\dim_\C \Gr_F^{-p} H_2(U_N,\C)_{\lambda_l}-\dim_\C \Gr_F^{-p} H_1(U_N,\C)_{\lambda_l}
\end{equation}
is combinatorially determined for all $p=0,1,2$ (the only possibly non-zero graded pieces). Now, using the exact sequences above, we get that, for all  $l\neq N$,
	 \begin{align*}
	 	n_{\wt f,\frac{l}{N}+2}=&\dim_\C \Gr_F^{0} \left(\frac{H_2(U^f,\C)}{(t^N-1)H_2(U^f,\C)}\right)_{\lambda_l}-\dim_\C \Gr_F^{0} \left(\Tors_R H_1(U^f,\C)\right)_{\lambda_l},\\
	 n_{\wt f,\frac{l}{N}+1}=&\dim_\C \Gr_F^{-1} \left(\frac{H_2(U^f,\C)}{(t^N-1)H_2(U^f,\C)}\right)_{\lambda_l}+\dim_\C \Gr_F^{0} \left(\Tors_R H_1(U^f,\C)\right)_{\lambda_l}\\
	 &-\dim_\C \Gr_F^{-1} \left(\Tors_R H_1(U^f,\C)\right)_{\lambda_l},
\end{align*}
and for $l=N$,
	 \begin{align*}
	 		n_{\wt f,\frac{l}{N}+2}&=\dim_\C \Gr_F^{0} \left(\frac{H_2(U^f,\C)}{(t^N-1)H_2(U^f,\C)}\right)_{\lambda_l},\\
	n_{\wt f,\frac{l}{N}+1}&=\dim_\C \Gr_F^{-1} \left(\frac{H_2(U^f,\C)}{(t^N-1)H_2(U^f,\C)}\right)_{\lambda_l}-m,
\end{align*}
Hence, $\dim_\C \Gr_F^{-1} \frac{H_2(U^f,\C)}{(t^N-1)H_2(U^f,\C)}=\left(\sum_{l=1}^N n_{\wt f,\frac{l}{N}+1}\right)+m$ is combinatorially determined, which concludes the proof of the statement in \eqref{combinatorics1}. For the statement in \eqref{combinatorics3}, just note that
$$
\dim_{\C} \Gr_F^{-1} \frac{H_2(U^f,\R)}{(t^N-1)H_2(U^f,\R)}=\dim_{\C} F^{-1} \frac{H_2(U^f,\R)}{(t^N-1)H_2(U^f,\R)}-\dim_{\C} F^{0} \frac{H_2(U^f,\R)}{(t^N-1)H_2(U^f,\R)}
$$
and that
\begin{align*}
	\dim_{\C} F^{0} \frac{H_2(U^f,\C)}{(t^N-1)H_2(U^f,\C)}&=\dim_{\C} \Gr_F^{0} \frac{H_2(U^f,\C)}{(t^N-1)H_2(U^f,\C)}\\
	&=\left(\sum_{l=1}^N
	n_{\wt f,\frac{l}{N}+2}\right)-\frac{1}{2}\dim_\C \left(\Tors_R H_1(U^f,\C)\right)_{\neq 1}\\
	&=\left(\sum_{l=1}^N
	n_{\wt f,\frac{l}{N}+2}\right)-\frac{1}{2}\left(\dim_\C \Tors_R H_1(U^f,\C)-(m-1)\right).
\end{align*}
\end{proof}

\begin{remark}
	Note that, by  Theorem~\ref{thm:combinatorics}, $\frac{H_2(U^f,\C)}{(t^N-1)H_2(U^f,\C)}$ is a space whose dimension is combinatorially determined, and the dimension of one out of its three possible non-zero graded pieces by the Hodge filtration is also combinatorially determined.
\end{remark}

The following corollary summarizes the work done in this section.

\begin{corollary}\label{cor:summary}
		Let $\{L_1,\ldots,L_m\}$ be a (reduced) essential line arrangement in $\C^2$, with $m\geq 3$. Let $f(x,y)\in\C[x,y]$ be a reduced defining polynomial of $\cup_{i=1}^m L_i$, and let $U=\C^2\setminus V(f)$. Let  $N$ as Remark~\ref{rem:Nvsd}, which is determined by the combinatorics of $\{L_1,\ldots,L_m\}$. Consider the MHS on $\frac{H_2(U^f,\R)}{(t^N-1)H_2(U^f,\R)}$ from Definition~\ref{def:MHSalexander}. Then, if $\dim_{\C} F^{0} \frac{H_2(U^f,\C)}{(t^N-1)H_2(U^f,\C)}$ is always determined by the combinatorics of $\{L_1,\ldots,L_m\}$, Question~\ref{question:Betti} has a positive answer. 
\end{corollary}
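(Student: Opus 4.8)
The plan is to assemble Corollary~\ref{cor:summary} from the pieces already established in this section, chaining Lemma~\ref{lem:reduction} with Theorem~\ref{thm:combinatorics} and the combinatorial invariance of the spectrum from Remark~\ref{BudurSaito}. First I would reduce Question~\ref{question:Betti} to the two-dimensional situation: as explained in the paragraph preceding Lemma~\ref{lem:reduction}, it suffices to consider essential central arrangements in $\C^n$ with $m>n$ hyperplanes, and by Lemma~\ref{lem:reduction} (with $n=3$, where the reverse implication holds) the first Betti number of the Milnor fiber of every such arrangement is combinatorial if and only if $\dim_\C H_1(U^f,\C)$ is combinatorial for every essential line arrangement $\{L_1,\dots,L_m\}$ in $\C^2$ with $m\geq 3$, where $U=\C^2\setminus V(f)$, $f$ is the reduced defining polynomial, and $U^f\to U$ is the infinite cyclic cover from~\eqref{eq:Uf}. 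This is exactly the quantity controlled by Theorem~\ref{thm:combinatorics}.

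Next I would invoke Theorem~\ref{thm:combinatorics}\eqref{combinatorics3}: for a fixed essential line arrangement, $\dim_\C H_1(U^f,\C)=\dim_\C\Tors_R H_1(U^f,\C)$ is combinatorial if and only if $\dim_\C F^0\frac{H_2(U^f,\C)}{(t^N-1)H_2(U^f,\C)}$ is combinatorial, where $N$ is the combinatorially determined integer of Remark~\ref{rem:Nvsd}. Since $t^N-1$ annihilates $H_2(U^f,\C)$ by the choice of $N$ together with the rank computation in Theorem~\ref{thm:combinatorics}\eqref{combinatorics2} (so that passing to the quotient by $t^N-1$ loses nothing relevant, and the subgroup $\langle t^N\rangle\leq\pi_1(\C^*)$ is the one whose augmentation ideal $\fm_H=(t^N-1)$ appears), the MHS on $\frac{H_2(U^f,\R)}{(t^N-1)H_2(U^f,\R)}$ is precisely the one from Definition~\ref{def:MHSalexander} (via Proposition~\ref{prop:otherideals}), and by Corollary~\ref{cor:QMHS} it is defined over $\Q$, so its Hodge numbers over $\C$ coincide with those of the $\R$-structure. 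Hence if $\dim_\C F^0\frac{H_2(U^f,\C)}{(t^N-1)H_2(U^f,\C)}$ is always combinatorial, then $\dim_\C H_1(U^f,\C)$ is always combinatorial, and by the first paragraph Question~\ref{question:Betti} has a positive answer.

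Concretely, the write-up is short: cite the reduction paragraph and Lemma~\ref{lem:reduction} to pass to line arrangements in $\C^2$; cite Theorem~\ref{thm:combinatorics}\eqref{combinatorics3} to trade $\dim_\C F^0$ for $\dim_\C H_1(U^f,\C)$; and observe that the MHS referred to in the statement of the corollary (Definition~\ref{def:MHSalexander}, applied to the augmentation ideal $(t^N-1)$ of $\C[\langle t^N\rangle]$) is the same one used in Theorem~\ref{thm:combinatorics}, so the hypothesis of the corollary matches the last bullet of Theorem~\ref{thm:combinatorics}\eqref{combinatorics3}. I do not expect a genuine obstacle here — the substance is all in Theorem~\ref{thm:combinatorics} and Lemma~\ref{lem:reduction}. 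The only point requiring a little care is bookkeeping: making sure the integer $N$ of Remark~\ref{rem:Nvsd} is used consistently (it must simultaneously satisfy $N\geq m$, be combinatorial, and have $t^N-1$ annihilate $H_1(U^f,\C)$), and making sure that in Lemma~\ref{lem:reduction} we are genuinely in the case $n=3$ so that the ``moreover'' clause gives the converse direction needed to conclude that combinatoriality of $\dim_\C H_1(U^f,\C)$ for \emph{all} such line arrangements forces combinatoriality of the first Betti number of the Milnor fiber for \emph{all} essential central arrangements in $\C^n$, $n$ arbitrary — using that a generic plane section of an essential central arrangement in $\C^n$ with $m>n$ hyperplanes is an essential line arrangement in $\C^2$ with $m\geq 3$ lines having the same intersection lattice in the relevant range.
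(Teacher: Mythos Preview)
Your chaining of Lemma~\ref{lem:reduction} and Theorem~\ref{thm:combinatorics}\eqref{combinatorics3} is exactly how the paper intends Corollary~\ref{cor:summary} to follow (the paper gives no separate proof, stating only that the corollary ``summarizes the work done in this section''). Two small corrections are in order, neither of which affects the overall argument.

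First, the parenthetical claim that ``$t^N-1$ annihilates $H_2(U^f,\C)$'' is false: by Theorem~\ref{thm:combinatorics}\eqref{combinatorics2}, $H_2(U^f,\C)$ is a \emph{free} $\C[t^{\pm 1}]$-module of rank $\chi(U)$, so $t^N-1$ acts injectively on it. This sentence is unnecessary anyway; the identification of the MHS on $\frac{H_2(U^f,\R)}{(t^N-1)H_2(U^f,\R)}$ with the one from Definition~\ref{def:MHSalexander} (via Proposition~\ref{prop:otherideals}) is immediate and requires no such annihilation.

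Second, your emphasis on the $n=3$ ``moreover'' clause of Lemma~\ref{lem:reduction} is misplaced. You only need the \emph{forward} direction of that lemma, which holds for arbitrary $n$: given any essential central arrangement in $\C^n$ with $m>n$, slice it generically to a line arrangement in $\C^2$; by hypothesis $\dim_\C F^0$ is combinatorial for this slice, hence by Theorem~\ref{thm:combinatorics}\eqref{combinatorics3} so is $\dim_\C H_1(U^f,\C)$, and then Lemma~\ref{lem:reduction} (forward direction) gives that the first Betti number of the original Milnor fiber is combinatorial. The converse at $n=3$ is irrelevant here, and your ``if and only if'' in the first paragraph should simply be ``if''.
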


\bibliographystyle{plain}
\bibliography{Bibliography}
\end{document}